\theoremstyle{plain}
\newtheorem{thm}{Theorem}[section]
\newtheorem{lem}[thm]{Lemma}
\newtheorem{pro}[thm]{Proposition}
\newtheorem{cor}[thm]{Corollary}
\newtheorem{thmABC}{Theorem}
\newtheorem{proABC}[thmABC]{Proposition}
\theoremstyle{definition}
\newtheorem{dfn}[thm]{Definition}
\newtheorem{exa}[thm]{Example}
\newtheorem{assumption}[thm]{Assumption}
\theoremstyle{remark}
\newtheorem{rmk}[thm]{Remark}
\newtheorem{prob}[thm]{Problem}
\numberwithin{equation}{section}
\renewcommand*\env@matrix[1][*\c@MaxMatrixCols c]{%
  \hskip -\arraycolsep
  \let\@ifnextchar\new@ifnextchar
  \array{#1}}
\newcommand{\C}{\mathbb{C}}
\newcommand{\F}{\mathbb{F}}
\newcommand{\N}{\mathbb{N}}
\newcommand{\Q}{\mathbb{Q}}
\newcommand{\R}{\mathbb{R}}
\newcommand{\Z}{\mathbb{Z}}
\newcommand{\calB}{\mathcal{B}}
\newcommand{\calC}{\mathcal{C}}
\newcommand{\calE}{\mathcal{E}}
\newcommand{\calI}{\mathcal{I}}
\newcommand{\calJ}{\mathcal{J}}
\newcommand{\calL}{\mathcal{L}}
\newcommand{\calM}{\mathcal{M}}
\newcommand{\calO}{\mathcal{O}}
\newcommand{\calR}{\mathcal{R}}
\newcommand{\calT}{\mathcal{T}}
\newcommand{\calV}{\mathcal{V}}
\newcommand{\fb}{\mathfrak{b}}
\newcommand{\fd}{\mathfrak{d}}
\newcommand{\fe}{\mathfrak{e}}
\newcommand{\ff}{\mathfrak{f}}
\newcommand{\fg}{\mathfrak{g}}
\newcommand{\fh}{\mathfrak{h}}
\newcommand{\fk}{\mathfrak{k}}
\newcommand{\fn}{\mathfrak{n}}
\newcommand{\fo}{\mathfrak{o}}
\newcommand{\fu}{\mathfrak{u}}
\newcommand{\fV}{\mathfrak{V}}
\newcommand{\fX}{\mathfrak{X}}
\newcommand{\schX}{\mathfrak{X}}
\newcommand{\schY}{\mathfrak{Y}}
\newcommand{\Ip}{\mathfrak{p}}
\newcommand{\IP}{\mathfrak{P}}
\newcommand{\GL}{\mathsf{GL}}
\newcommand{\gl}{\mathsf{gl}}
\newcommand{\Un}{\mathsf{U}}
\DeclareMathOperator{\Stab}{Stab}
\DeclareMathOperator{\Ind}{Ind}
\DeclareMathOperator{\Res}{Res}
\DeclareMathOperator{\Hom}{Hom}
\DeclareMathOperator{\Irr}{Irr}
\DeclareMathOperator{\IN}{in}
\DeclareMathOperator{\stab}{stab}
\DeclareMathOperator{\rad}{rad}
\DeclareMathOperator{\Spec}{Spec}
\DeclareMathOperator{\Mat}{Mat}
\DeclareMathOperator{\Pfaff}{Pfaff}
\DeclareMathOperator{\tr}{tr}
\DeclareMathOperator{\Tr}{Tr}
\DeclareMathOperator{\spec}{Spec}
\DeclareMathOperator{\Aut}{Aut}
\DeclareMathOperator{\End}{End}
\DeclareMathOperator{\comp}{Comp}
\DeclareMathOperator{\vol}{vol}
\newcommand{\di}{\mathrm{d}}
\newcommand{\one}{\mathbb{1}}
\newcommand{\disc}{D^{\rm{disc}}}
\newcommand{\pre}{\mathrm{pre}}
\newcommand{\iso}{\mathrm{iso}}
\newcommand{\LS}[1]{(\!( #1 )\!)}
\newcommand{\PS}[1]{[\![ #1 ]\!]}
\newcommand{\coloneqq}{\mathrel{\mathop:}=}
\renewcommand{\epsilon}{\varepsilon}
\renewcommand{\phi}{\varphi}
\renewcommand{\rho}{\varrho}
\renewcommand{\theta}{\vartheta}
\begin{document}

\title[Zeta functions for representations of $p$-adic Lie groups]{Zeta
  functions associated to admissible representations of compact
  $p$-adic Lie groups}

\author{Steffen Kionke} %
\address{Karlsruhe Institute of Technology, Institute for Algebra and
  Geometry, Englerstr. 2, 76131 Karlsruhe, Germany} %
\email{steffen.kionke@kit.edu}

\author{Benjamin Klopsch} %
\address{Mathematisches Institut, Heinrich-Heine-Universit\"at,
  Universit\"atsstr.\ 1, 40225 D\"usseldorf, Germany} %
\email{klopsch@math.uni-duesseldorf.de}

\begin{abstract}
  Let $G$ be a profinite group.  A strongly admissible smooth
  representation $\rho$ of $G$ over $\C$ decomposes as a direct sum
  $\rho \cong \bigoplus_{\pi \in \Irr(G)} m_\pi(\rho) \, \pi$ of
  irreducible representations with finite multiplicities~$m_\pi(\rho)$
  such that for every positive integer $n$ the number $r_n(\rho)$ of
  irreducible constituents of dimension $n$ is finite.  Examples arise
  naturally in the representation theory of reductive groups over
  non-archimedean local fields. In this article we initiate an
  investigation of the Dirichlet generating function
  \[ \zeta_\rho (s) = \sum\nolimits_{n=1}^\infty r_n(\rho) n^{-s}
    = \sum\nolimits_{\pi \in \Irr(G)} \frac{m_\pi(\rho)}{(\dim \pi)^s}
   \]
   associated to such a representation $\rho$.
   
   Our primary focus is on representations $\rho = \Ind_H^G(\sigma)$
   of compact $p$-adic Lie groups $G$ that arise from
   finite-dimensional representations $\sigma$ of closed subgroups $H$
   via the induction functor.  In addition to a series of foundational
   results -- including a description in terms of $p$-adic integrals
   -- we establish rationality results and functional equations for
   zeta functions of globally defined families of induced
   representations of potent pro-$p$ groups.  A key ingredient of our
   proof is Hironaka's resolution of singularities, which yields
   formulae of Denef-type for the relevant zeta functions.

   In some detail, we consider representations of open compact
   subgroups of reductive $p$-adic groups that are induced from
   parabolic subgroups.  Explicit computations are carried out by
   means of complementing techniques: (i) geometric methods that are
   applicable via distance-transitive actions on spherically
   homogeneous rooted trees and (ii) the $p$-adic Kirillov orbit
   method.  Approach (i) is closely related to the notion of Gelfand
   pairs and works equally well in positive defining characteristic.
\end{abstract}

\subjclass[2010]{20E18; 20C15, 20G25, 22E50, 11M41}
\thanks{We acknowledge support by DFG grant KL 2162/1-1.}

\maketitle

\setcounter{tocdepth}{1}
\tableofcontents
\thispagestyle{empty}


\section{Introduction} \label{sec:introdu} In recent years the subject
of representation growth has advanced with a primary focus on zeta
functions enumerating (i) irreducible representations of arithmetic
lattices and compact $p$-adic Lie groups, (ii) twist-isoclasses of
irreducible representations of finitely generated nilpotent groups;
for instance, see
\cite{LaLu08,AvKlOnVo13,AvKlOnVo16a,AvKlOnVo16b,AiAv16}
and~\cite{Vo10, StVo14, Ro16, DuVo17, HrMaRi18, Ro18}, or the relevant
surveys~\cite{Kl13,Vo15}.  The aim of this paper is to introduce and
study a new, more general zeta function that can be associated to any
`suitably tame' infinite-dimensional representation of a group.  Our
focus is on admissible smooth representations of compact $p$-adic Lie
groups that arise from finite-dimensional representations of closed
subgroups via the induction functor.  In addition to a series of
foundational results that include a description in terms of $p$-adic
integrals and provide a springboard for further investigations, we
establish in Theorem~\ref{thm:D} rationality results and functional
equations for zeta functions of globally defined families of induced
representations of potent pro-$p$ groups.


\subsection{Background on representations zeta functions}
A group $G$ is said to be representation rigid if, for each positive
integer~$n$, its number of (isomorphism classes of) irreducible
complex representations of degree~$n$, denoted $r_n(G)$, is finite.
The sequence $r_n(G)$, $n \in \mathbb{N}$, is encoded in a Dirichlet
generating function $\zeta_G(s) = \sum_{n=1}^\infty r_n(G) n^{-s}$,
yielding the conventional \emph{representation zeta function} of~$G$.
If $G$ has polynomially bounded representation growth, then
$\zeta_G(s)$ converges and defines a holomorphic function on a right
half-plane $\{ s \in \mathbb{C} \mid \mathrm{Re}(s) > \alpha(G) \}$,
where the abscissa of convergence $\alpha(G)$ reflects the polynomial
degree of representation growth.  In favourable circumstances the
function extends meromorphically to a larger domain, possibly the
entire complex plane.

For a semisimple complex algebraic group $G = \mathbf{G}(\mathbb{C})$
the representation zeta function~$\zeta_G(s)$, encoding irreducible
rational representations, is also known as the Witten zeta function;
see~\cite{Wi91}.  The analytic properties of such zeta functions have
been studied thoroughly, using the available classification of
irreducible representations in terms of highest weights; e.g.,
see~\cite{KoMaTs10,LaLu08}.  We are concerned with groups where a
similar classification is either impracticable or way out of reach.

Recent advances in \cite{AvKlOnVo13,AvKlOnVo16a,AvKlOnVo16b} concern
the representation zeta functions of arithmetic lattices in semisimple
locally compact groups (mostly in characteristic $0$) and their local
Euler factors.  The latter are, in fact, representation zeta functions
enumerating irreducible continuous representations of compact $p$-adic
Lie groups.  The main focus has been on the following aspects:
abscissae of convergence, possible meromorphic continuations, pole
spectra, and local functional equations.  There are fundamental
connections to Margulis super-rigidity and the classical Congruence
Subgroup Problem; for instance, a quantitative refinement of the
Congruence Subgroup Conjecture, due to Larsen and
Lubotzky~\cite{LaLu08}, has been reduced to the original conjecture
in~\cite{AvKlOnVo16b}.

In these investigations the main tools are: Lie-theoretic techniques,
the Kirillov orbit method, the character theory of finite groups, and
Clifford theory, all in parallel with algebro-geometric,
model-theoretic, and combinatorial methods from $p$-adic integration.
This includes, among others, Deligne--Lusztig theory for
representations of finite groups of Lie type, Hironaka's resolution of
singularities in characteristic~$0$, and aspects of the Weil
conjectures regarding zeta functions of smooth projective varieties
over finite fields.

Most relevant for our purposes are the results of Avni, Klopsch, Onn,
and Voll in~\cite{AvKlOnVo12,AvKlOnVo13}, including a `Denef formula'
for the representation zeta functions of principal congruence
subgroups of compact $p$-adic Lie groups that arise from a global Lie
lattice over the ring of integers of a number field;
compare~\cite{De87}.

\enlargethispage{1\baselineskip}

\subsection{Zeta functions associated to admissible representations}
Our motivation is to analyse more general enumeration problems than
the one underlying past research on representation growth.  Let $G$ be
a profinite group; in due course we specialise to the case where $G$
is a compact $p$-adic Lie group.  Key examples are the completions
$G = \widehat{\mathbf{G}(O_S)}$ and $G = \mathbf{G}(O_\Ip)$ of
arithmetic groups $\mathbf{G}(O_S)$ with respect to the profinite
topology or the $p$-adic topology associated to a non-archimedean
prime~$\Ip$; here $\mathbf{G}$ denotes a semisimple affine group
scheme over the ring of $S$-integers $O_S$ of a number field.  In
addition we are interested in the principal congruence subgroups of
the groups~$\mathbf{G}(O_\Ip)$.  In order to develop flexible methods
for enumerating irreducible representations by their degrees according
to natural weights, we shift emphasis and attach a zeta function to
every `well behaved' infinite-dimensional representation of~$G$.  This
point of view is motivated also by geometric applications, for
instance, in the context of cohomology growth, where such
representations are supported on direct limits of
cohomology groups arising from systems of finite coverings of a fixed
topological space; see~\cite{SaAd94}.  In particular, in the realm of
number theory the relevant profinite groups acting on such direct
limits are completions of arithmetic groups as described above; e.g.,
see~\cite{Ha87}.

Technically, we consider admissible smooth representations of~$G$
over~$\C$, a concept which arises in number theory mainly through the
study of automorphic representations, e.g., in the context of the
Langlands program.  Admissible smooth representations of reductive
groups over $p$-adic fields have been studied thoroughly since the
1970s, starting from the work of Casselman as well as Bernstein and
Zelevinsky; compare~\cite{BuHe06,Re10}.  Let $\Irr(G)$ denote the set
of all irreducible smooth representations of the profinite group~$G$,
up to isomorphism, and note that each $\pi \in \Irr(G)$ is
finite-dimensional.  An admissible smooth representation $\rho$ of $G$
decomposes as a direct sum
$\rho \cong \bigoplus_{\pi \in \Irr(G)} m_\pi(\rho) \, \pi$ of
irreducible representations with finite multiplicities~$m_\pi(\rho)$.
We say that $\rho$ is \emph{strongly admissible} if, in addition, for
every positive integer $n$ the number $r_n(\rho)$ of irreducible
constituents of dimension $n$ is finite; in this case we define the
\emph{zeta function} of the representation $\rho$ as the Dirichlet
generating function
\begin{equation} \label{equ:zeta-rho} \zeta_\rho (s) =
  \sum_{n=1}^\infty r_n(\rho) n^{-s} = \sum_{\pi \in \Irr(G)}
  \frac{m_\pi(\rho)}{(\dim \pi)^s} \qquad (s \in \C).
\end{equation}

This raises the fundamental question which infinite-dimensional $\rho$
are moreover \emph{polynomially strongly admissible}, i.e., for which
$\rho$ the zeta function $\zeta_\rho$ has finite abscissa of
convergence $\alpha(\rho)$ and thus converges in the non-empty right
half-plane
$\{ z \in \mathbb{C} \mid \mathrm{Re}(z) > \alpha(\rho) \}$.  Of
course, the property can easily be formulated in terms of conditions
on the multiplicities $m_\pi(\rho)$; however, the latter are typically
difficult to access.  We remark that the definition gives a natural
generalisation of the conventional representation zeta function of a
profinite group~$G$: indeed,
$\zeta_G(s) = \zeta_{\rho_\mathrm{reg}} (s+1)$ for the regular
representation~$\rho_\mathrm{reg} = \Ind_1^G(\one)$ of~$G$; see
Example~\ref{exa:conn-repr-zeta-functions}.

Generally, we are interested in relations between the algebraic
properties of a polynomially strongly admissible representation $\rho$
of the profinite group $G$ and the analytic properties of the
associated zeta function~$\zeta_\rho$.  Depending on the context,
different types of properties become relevant.  Analytic properties of
$\zeta_\rho$ comprise, for instance: its abscissa of convergence,
meromorphic extension, the location of zeros and poles, and possibly
functional equations.  Algebraic properties of $\rho$, on the other
hand, have to be defined without direct reference to the
multiplicities $m_\pi(\rho)$ of irreducible constituents.

The new zeta function defined in~\eqref{equ:zeta-rho} transforms well
with respect to basic operations on admissible representations, e.g.,
passing to the smooth dual, forming direct sums and tensor products.
This suggests that other operations, such as restricting to open
subgroups or taking invariants under normal subgroups, may also
transform the zeta functions in a controllable way.  In this paper we
study in detail representations $\rho = \Ind_H^G(\sigma)$ of $G$ that
are obtained by \emph{induction} from representations $\sigma$ of a
closed subgroup $H \le_\mathrm{c} G$.  The induction functor is a
central tool for producing infinite-dimensional representations from
finite-dimensional ones.
 

\subsection{Summary of main results}
In dealing with profinite groups, it is often convenient to pass to
open subgroups.  In Section~\ref{sec:twist-sim-prop-A} we establish
that various admissibility properties for a smooth representation
$\rho$ of a profinite group $G$ are common properties of the
twist-similarity class of $\rho$ defined there.  In particular, if
$\rho$ is polynomially strongly admissible, the abscissa of
convergence $\alpha(\rho)$ is an invariant of the corresponding
twist-similarity class.

We recall that a finitely generated profinite group $G$ is
representation rigid if and only if it is FAb, i.e., if every open
subgroup $K \le_\mathrm{o} G$ has
finite abelianisation~$K/[K,K]$; see \cite[Prop.~2]{BaLuMaMo02}.  We
introduce the following refined notion: $G$ is \emph{FAb relative to}
a closed subgroup $H \le_\mathrm{c} G$ if $K/(H \cap K)[K,K]$ is
finite for every open subgroup~$K \le_\mathrm{o} G$.  In
Theorem~\ref{thm:rel-FAb-equiv} we establish the following natural
characterisation.

\begin{thmABC} \label{thm:A}
  Let $H \le_\mathrm{c} G$ be a closed subgroup of a finitely
  generated profinite group $G$.  The following statements are
  equivalent.
  \begin{enumerate}[\rm (a)]
  \item The group $G$ is FAb relative to $H$.
  \item The functor $\Ind_H^G$ preserves strong admissibility.
  \item The induced representation $\Ind_{H}^G(\one_H)$ is strongly
    admissible.
  \end{enumerate}
\end{thmABC}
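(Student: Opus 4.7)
The plan is to prove the cycle $(b) \Rightarrow (c) \Rightarrow (a) \Rightarrow (b)$, where the main technical content lies in $(a) \Rightarrow (b)$, which I reduce to showing that $\Ind_H^G(\one_H)$ itself is strongly admissible whenever $(a)$ holds.

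First, $(b) \Rightarrow (c)$ is immediate: apply $(b)$ to $\sigma = \one_H$, which is strongly admissible since it consists of a single $1$-dimensional constituent.

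For $(c) \Rightarrow (a)$, I would fix an arbitrary open subgroup $K \le_\mathrm{o} G$ and apply Mackey's decomposition
\[
  \Res_K^G \Ind_H^G(\one_H) \; \cong \; \bigoplus_{g \in K \backslash G / H} \Ind_{K \cap gHg^{-1}}^K(\one),
\]
which is a finite direct sum by compactness of $G$. The results of Section~\ref{sec:twist-sim-prop-A} guarantee that strong admissibility passes to restrictions along open inclusions and to direct summands, so the summand at $g = 1$, namely $\Ind_{K \cap H}^K(\one)$, is strongly admissible. Its $1$-dimensional constituents are, by Frobenius reciprocity, precisely the characters of $K$ trivial on $K \cap H$, of which there are $|K/(K \cap H)[K,K]|$. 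Finiteness for every open $K \le_\mathrm{o} G$ is exactly condition $(a)$.

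For $(a) \Rightarrow (b)$, I first claim it suffices to establish the special case $(a) \Rightarrow (c)$. Indeed, given strongly admissible $\sigma = \bigoplus_\tau m_\tau(\sigma) \tau$, each irreducible $\tau$ factors through a finite quotient $H/L_\tau$ with $L_\tau \triangleleft_\mathrm{o} H$, and Frobenius realises $\tau$ as a direct summand of $\Ind_{L_\tau}^H(\one)$; induction in stages then makes $\Ind_H^G(\tau)$ a direct summand of $\Ind_{L_\tau}^G(\one)$. A short calculation using finiteness of $[H:L_\tau]$ shows that $(a)$ for $(G,H)$ is equivalent to $(a)$ for $(G, L_\tau)$, so the assumed implication $(a)\Rightarrow(c)$ makes $\Ind_{L_\tau}^G(\one)$, hence $\Ind_H^G(\tau)$, strongly admissible. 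Any $n$-dimensional constituent of $\Ind_H^G(\sigma)$ arises in some $\Ind_H^G(\tau)$ with $\dim \tau \le n$ and $m_\tau(\sigma) > 0$; there are only finitely many such $\tau$ by strong admissibility of $\sigma$, and each contributes finitely many constituents of dimension $n$.

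The crux is therefore $(a) \Rightarrow (c)$. My plan is an induction on $n$, bounding $r_n(\Ind_H^G(\one_H))$. The base case $n = 1$ is direct from $(a)$ with $K = G$, since the $1$-dimensional constituents number $|G/\overline{H[G,G]}|$. For the inductive step, the hypothesis supplies an open normal subgroup $N \triangleleft_\mathrm{o} G$, namely the intersection of the (finitely many) kernels of irreducibles of dimension $< n$ with $H$-fixed vectors, through which all such irreducibles factor. For $\pi \in \Irr(G)$ of dimension $n$ with $\pi^H \neq 0$, either $\pi$ factors through the finite group $G/N$, yielding only finitely many such $\pi$, or $\Res_N^G(\pi)$ contains a non-trivial $N$-constituent $\alpha$. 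Applying Mackey's decomposition $\Res_N^G \Ind_H^G(\one) \cong \bigoplus_{g \in N \backslash G/H} \Ind_{N \cap gHg^{-1}}^N(\one)$ pins $\alpha$ down to an $N$-irreducible of dimension $\le n$ with an $(N \cap gHg^{-1})$-fixed vector for some $g$, and Clifford theory bounds the number of $G$-irreducibles lying above each $N$-orbit. The hypothesis $(a)$ for $(G,H)$ transfers to $(a)$ for $(N, N \cap gHg^{-1})$ (both passing to open subgroups and conjugating $H$ preserve the condition), so the required finiteness at the level of $N$ lies within the same framework as at the level of $G$.

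The main obstacle is closing this induction without circularity. One must either formulate a simultaneous induction on $n$ and on a complexity invariant of the pair $(G, H)$, or---preferably---extract from the twist-similarity machinery of Section~\ref{sec:twist-sim-prop-A} an equivalence of the form: $\Ind_H^G(\one_H)$ is strongly admissible if and only if $\Ind_{N \cap gHg^{-1}}^N(\one)$ is strongly admissible for every open normal $N \triangleleft_\mathrm{o} G$ and every $g$ in a set of double coset representatives. With such a tool, the above Mackey/Clifford reduction legitimately propagates the inductive conclusion and closes the argument.
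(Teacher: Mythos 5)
Your reduction of the implication $\textrm{(a)} \Rightarrow \textrm{(b)}$ to the special case $\textrm{(a)} \Rightarrow \textrm{(c)}$, and your Mackey-based argument for $\textrm{(c)} \Rightarrow \textrm{(a)}$, are both sound. (The paper instead proves $\textrm{(b)} \Rightarrow \textrm{(a)}$ directly, by exhibiting a finite-dimensional $\sigma$ whose induction is not strongly admissible; your Mackey decomposition $\Res_K^G \Ind_H^G(\one_H) \cong \bigoplus_g \Ind_{K \cap gHg^{-1}}^K(\one)$ achieves the same end from $\textrm{(c)}$ alone, which suffices for the cycle.) The gap is entirely in the crux $\textrm{(a)} \Rightarrow \textrm{(c)}$, and you flag it yourself: the Mackey--Clifford induction on $n$ does not close. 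Each inductive step trades the original question for the same type of question about pairs $(N, N \cap gHg^{-1})$, where $N$ is an open normal subgroup that depends on the output of the previous step and shrinks without control, so there is no decreasing complexity invariant and no base case for the pair. Clifford theory bounds the number of $G$-irreducibles above a single $N$-orbit, but controlling the \emph{number} of relevant $N$-orbits is precisely the finiteness you are trying to prove one level down. The proposed uniform equivalence over all open normal $N$ and all $g$ would be at least as hard to establish as $\textrm{(c)}$ itself, so it cannot serve as a tool here.

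The missing idea is Jordan's theorem on finite subgroups of $\GL_d(\C)$: there is a constant $m(d)$ such that every finite subgroup of $\GL_d(\C)$ has an abelian normal subgroup of index at most $m(d)$. The paper argues $\textrm{(a)} \Rightarrow \textrm{(c)}$ by contraposition. If $\Ind_H^G(\one_H)$ has infinitely many distinct irreducible constituents $\pi_i$ of some fixed dimension $d$, each comes with a nonzero $H$-fixed vector $v_i$, and each $G/\ker(\pi_i)$ is a finite subgroup of $\GL_d(\C)$. Jordan gives open normal $A_i \trianglelefteq_{\mathrm{o}} G$ of index at most $m(d)$ with $\ker(\pi_i) \subseteq A_i$ and $A_i/\ker(\pi_i)$ abelian. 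This is where the hypothesis that $G$ is finitely generated is load-bearing: $G$ then has only finitely many open subgroups of any given index, so a single $A$ works for infinitely many $i$. For those $i$ the stabiliser of $v_i$ contains $H[A,A]$, so all these distinct $\pi_i$ occur in $\Ind_{H[A,A]}^G(\one)$, forcing $\lvert G : H[A,A] \rvert$ to be infinite and defeating $\textrm{(a)}$. This argument is short and non-inductive; without Jordan's theorem (or some substitute supplying a uniform bound on the index of abelian normal subgroups of the finite quotients $G/\ker(\pi_i)$), the Mackey--Clifford strategy has no mechanism to terminate.
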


It is an open problem to characterise finitely generated profinite
groups with polynomial representation growth.  Theorem~\ref{thm:A}
leads to the following refined question: Under what conditions on
$H \le_\mathrm{c} G$ does the induction functor $\Ind_H^G$ preserve
polynomially strong admissibility?  In
Proposition~\ref{pro:ind-finite-dim-adm-rel-FAb}, we provide a partial
answer in the special setting where $G$ is a compact $p$-adic Lie
group for some prime~$p$; our result can be regarded as a
generalisation of~\cite[Prop.~2.7]{LuMa14}.

\begin{proABC} \label{proABC:B}
  Let $H \le_\mathrm{c} G$ be a closed subgroup of a compact $p$-adic
  Lie group~$G$.  If $G$ is FAb relative to~$H$ then
  $\Ind_H^G(\sigma)$ is polynomially strongly admissible for every
  finite-dimensional smooth representation $\sigma$ of $H$.
\end{proABC}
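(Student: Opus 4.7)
The plan is to reduce successively to the case $\sigma = \one_H$, then to the setting of a uniform pro-$p$ group, and finally to apply the Kirillov orbit method in the spirit of \cite[Prop.~2.7]{LuMa14}.

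Since $\sigma$ is finite-dimensional and smooth, $N \coloneqq \ker \sigma$ is open in $H$. Pulled back from the finite group $H/N$, the representation $\sigma$ embeds into $c$ copies of $\Ind_N^H \one_N$ for some constant $c = c(\sigma) \in \N$. Transitivity of induction then yields $\Ind_H^G \sigma \hookrightarrow c \cdot \Ind_N^G \one_N$, so $m_\pi(\Ind_H^G \sigma) \leq c \cdot m_\pi(\Ind_N^G \one_N)$ for every $\pi \in \Irr(G)$. A short check using $[H:N] < \infty$ shows that $G$ is FAb relative to $N$ whenever it is FAb relative to $H$. This reduces us to proving the proposition for $\sigma = \one_H$.

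By Lazard's theory $G$ contains an open normal uniform pro-$p$ subgroup $U \trianglelefteq_\mathrm{o} G$. Set $L \coloneqq U \cap H$; the relative FAb hypothesis descends to the pair $(U, L)$ because every open subgroup of $U$ is open in $G$ and has the same intersection with $L$ as with $H$. Mackey's decomposition gives
\[
  \Res_U^G \Ind_H^G \one_H \;\cong\; \bigoplus_{x \in H \backslash G / U}
  \Ind_{U \cap x^{-1}Hx}^U \one,
\]
a finite direct sum (as $[G:U] < \infty$) in which $U$ remains FAb relative to each closed subgroup $L_x \coloneqq U \cap x^{-1}Hx$ by conjugation-invariance of the hypothesis. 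A standard Clifford-theoretic argument then reduces a polynomial bound on $r_n(\Ind_H^G \one_H)$ to polynomial bounds on $r_n(\Ind_{L_x}^U \one)$ for each of the finitely many $x$, with polynomial loss depending only on the finite index $[G:U]$.

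The key remaining claim is therefore: for a uniform pro-$p$ group $U$ and a closed subgroup $L' \leq_\mathrm{c} U$ such that $U$ is FAb relative to $L'$, the function $n \mapsto r_n(\Ind_{L'}^U \one_{L'})$ is polynomially bounded. Frobenius reciprocity rewrites this count as $\sum_{\tau \in \Irr(U),\, \dim \tau = n} \dim \tau^{L'}$. Since $U$ is (saturable) uniform pro-$p$, the Kirillov orbit method of Howe, Gonz\'alez-S\'anchez and Jaikin-Zapirain identifies $\Irr(U)$ with coadjoint orbits in the $\Z_p$-dual $\fu^*$ of the Lie algebra of $U$, with both $\dim \tau$ and $\dim \tau^{L'}$ admitting $p$-adic volume expressions involving the annihilator $\mathfrak{l}'^{\perp} \subset \fu^*$. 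The hard part is this final estimate: in the absolute case $L' = 1$ it is exactly Jaikin-Zapirain's polynomial representation growth theorem, encapsulated as \cite[Prop.~2.7]{LuMa14}; I expect the relative case to follow by the same $p$-adic integration technique localised to $\mathfrak{l}'^{\perp}$, with the relative FAb condition $V/(L' \cap V)[V,V] < \infty$ for every $V \leq_\mathrm{o} U$ supplying the orbit-dimension lower bounds in place of the absolute finiteness of $V/[V,V]$.
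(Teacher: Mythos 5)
Your reductions are sound: passing to $\sigma = \one_H$ via the open kernel, and descending to a uniform pro-$p$ subgroup via Mackey and conjugation-invariance of relative FAb, are both correct and parallel what the paper achieves using Propositions~\ref{pro:InductionAndTwistSimilarity} and~\ref{pro:IndResFiniteIndex}. But after these reductions you write that you \emph{expect} the remaining claim -- polynomial growth of $r_n(\Ind_{L'}^U\one_{L'})$ for a uniform $U$ that is FAb relative to $L'$ -- to follow from a Kirillov-orbit/$p$-adic-integration argument generalising Jaikin-Zapirain. That remaining claim \emph{is} the proposition; deferring it leaves the proof incomplete, and you do not say how the relative FAb condition would actually enter the orbit-counting to give a polynomial bound, which is the substantive issue.

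The paper's argument for this core step is in fact considerably more elementary than the route you gesture at, and uses no $p$-adic integration. Working with $\fg = \log(G)$, $\fh = \log(H)$ and the lower $p$-series $G_n = \exp(p^{n-1}\fg)$, the relative FAb hypothesis gives $p^r\fg \subseteq \fh + [\fg,\fg]$ for some $r$, hence $G_{2n+r+1} \subseteq H[G_{n+1},G_{n+1}]$ for all $n$. Since $G$ is monomial, any irreducible constituent $\eta$ of $\Ind_H^G(\one_H)$ with $\dim\eta \le p^n$ is induced from an open $K$ with $G_{n+1}\subseteq K$, so $[G_{n+1},G_{n+1}] \subseteq \ker\eta$; therefore $\eta$ already appears in $\Ind_{H[G_{n+1},G_{n+1}]}^G(\one)$. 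Summing dimensions, $R_{p^n}(\Ind_H^G\one_H) \le \lvert G : HG_{2n+r+1}\rvert = \lvert \fg : \fh + p^{2n+r}\fg\rvert \le C\,p^{2(\dim G - \dim H)n}$, which is the required polynomial bound. You would do well to replace the speculative Kirillov-orbit step with this monomiality-plus-index-computation argument, which is both correct and shorter than the machinery you were reaching for.
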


For uniformly powerful -- more generally, for finitely generated
torsion-free potent -- pro-$p$ groups the Kirillov orbit method
provides a powerful tool for handling the characters of smooth
irreducible representations; see~\cite{Go09}.  A crucial ingredient in
this context is the Lie correspondence between such groups and certain
$\fo$-Lie lattices, for suitable discrete valuation rings~$\fo$; we
recall that an $\fo$-Lie lattice is an $\fo$-Lie ring whose underlying
$\fo$-module is free of finite rank.  In
Section~\ref{sec:ind-uniform-groups} we generalise the approach used,
for instance, in \cite{AvKlOnVo12,AvKlOnVo13,AvKlOnVo16a} to describe
zeta functions of induced representations for potent pro-$p$ groups.
One of our key results is Proposition~\ref{pro:zeta-integral-formula};
it provides a formula for the relevant zeta function in terms of a
$p$-adic integral involving Pfaffian determinants.  We refer to
Sections~\ref{sec:LieLattices} and \ref{sec:p-adic-integrals} for an
explanation of the notation, in particular $\| \!\cdot\! \|_\Ip$,
appearing in the integrands; the canonical $p$-adic measure is
recalled in Remark~\ref{rmk:canonicalmeasure}.

\begin{proABC} \label{propABC:integrals} Let $\fo$ be a compact
  discrete valuation ring of characteristic~$0$ and residue
  characteristic~$p$, with uniformiser $\pi$. Put $\Ip = \pi \fo$ and
  $q = \lvert \fo/\Ip \rvert$.  Let $\fg$ be an $\fo$-Lie lattice and
  $\fh$ an $\fo$-Lie sublattice of $\fg$ such that
  $\lvert \fg : \fh + [\fg,\fg] \rvert < \infty$.  Write
  $n = \dim_\fo \fg$ and $m+1 = \dim_\fo \fg - \dim_\fo \fh$.  Let
  $r \in \N_0$ be such that $G = \exp(\pi^r \fg)$ is a potent pro-$p$
  group with potent subgroup $H = \exp(\pi^r \fh) \le_\mathrm{c} G$.
  Then the zeta function of $\rho = \Ind_H^G(\mathbb{1}_H)$ is given
  by the following integral formulae.

  \smallskip

  \textup{(1)} Writing
  $W = \{ w \in \Hom_\fo(\fg,\ff) \mid w(\fh) \subseteq \fo \}$, where
  $\ff$ is the fraction field of~$\fo$, we
  have
  \[
  \zeta_\rho(s) = q^{r(m+1)} \int_{w \in W} \left( \left\| \bigcup \{
      \Pfaff_k(w) \mid 0 \le k \le \lfloor \nicefrac{n}{2} \rfloor \}
    \right\|_\Ip \right)^{-1-s} \,\di\mu(w),
  \]
  where $\mu$ denotes the normalised Haar measure satisfying
  $\mu(\Hom_\fo(\fg,\fo)) = 1$

  \smallskip

  \textup{(2)} For simplicity, suppose further that the
  $\fo$-module $\fg$ decomposes as a direct sum
  $\fg = \fk \oplus \fh$.  Interpreting
  $\Hom_\fo(\fk,\fo)$ as the
  $\fo$-points $\fV(\fo)$ of the
  $m+1$-dimensional affine space $\fV$ over
  $\Spec(\fo)$, let $\fX$ denote the
  projectivisation $\mathbb{P} \fV$ over
  $\Spec(\fo)$.  For
  $0 \le k \le \lfloor \nicefrac{n}{2} \rfloor$, the map
  $w \mapsto \Pfaff_k(w)$ induces a sheaf of ideals $\calI_k$ on
  $\fX$, and
  \begin{equation*}
  \zeta_\rho(s) = (1- q^{-1}) q^{r(m+1)} \sum_{\ell \in \Z}
  q^{-\ell(m+1)} \int_{\fX(\fo)} \left( \max_{0 \le
      k \le \lfloor n/2 \rfloor} \left\| \pi^{k \ell} \calI_k
    \right\|_\Ip \right)^{-1-s} \,\di\mu_{\fX,\Ip},
  \end{equation*}
  where $\mu_{\fX,\Ip}$ denotes the canonical $p$-adic measure on~$\fX(\fo)$.
\end{proABC}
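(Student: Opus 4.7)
The plan is to combine Frobenius reciprocity with the Kirillov orbit method for potent pro-$p$ groups, rewrite the resulting orbit sum as a $p$-adic integral, and then projectivise to obtain formula~(2). By Frobenius reciprocity, $m_\pi(\rho) = \dim \pi^H$ for $\rho = \Ind_H^G(\one_H)$, so
\[
\zeta_\rho(s) = \sum_{\pi \in \Irr(G)} \frac{\dim \pi^H}{(\dim \pi)^s},
\]
and the task reduces to expressing $\dim \pi$ and $\dim \pi^H$ in terms of orbit data.

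For the potent pro-$p$ group $G = \exp(\pi^r \fg)$, the Kirillov correspondence (in the form developed and used in \cite{AvKlOnVo12,AvKlOnVo13,AvKlOnVo16a}) bijects $\Irr(G)$ with the coadjoint $G$-orbits $\Omega$ on an appropriate quotient of $\Hom_\fo(\fg,\ff)$, with $(\dim \pi_\Omega)^2 = \lvert \Omega \rvert$ and an explicit orbit-character formula involving a fixed level-$0$ additive character $\psi$ of $\ff$. Averaging this character against $\one_H$ over $H = \exp(\pi^r \fh)$ shows, after the appropriate $\pi^{-r}$-renormalisation of the orbit parameter, that the character is trivial on $H$ precisely when $w(\fh) \subseteq \fo$, yielding
\[
\dim \pi_\Omega^H = \frac{\lvert \Omega \cap \overline W \rvert}{\lvert \Omega \rvert^{1/2}},
\]
where $\overline W$ is the image of $W$ in the orbit parameter space. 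Collapsing the double sum over orbits and their $\overline W$-representatives gives
\[
\zeta_\rho(s) = \sum_{[w] \in \overline W} \lvert \Omega_{[w]} \rvert^{-(1+s)/2}.
\]

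The key algebraic input is the identification, via the elementary divisor theorem applied to the alternating $\fo$-form $R_w(x,y) = w([x,y])$ on $\fg$, of $\lvert \Omega_{[w]} \rvert^{1/2}$ with $\| \bigcup_{0 \le k \le \lfloor n/2 \rfloor} \Pfaff_k(w) \|_\Ip$; the fact that $\Pfaff_0 = 1$ forces this integrand always to be at least $1$, and hence locally constant on cosets of a suitable fine sublattice. Converting the discrete sum into a Haar integral over $W$, and tracking the $\pi^r$-scaling implicit in the Kirillov identification (essentially the covolume of $\Hom_\fo(\fg,\fo)$ inside $\pi^{-r}\Hom_\fo(\fk,\fo) \oplus \Hom_\fo(\fh,\fo)$), yields formula~(1) with the prefactor $q^{r(m+1)}$. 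For formula~(2), the direct sum decomposition $\fg = \fk \oplus \fh$ gives $W = \Hom_\fo(\fk,\ff) \oplus \Hom_\fo(\fh,\fo)$, so the $\fh$-integral factors out with unit mass. Stratifying $\Hom_\fo(\fk,\ff) \setminus \{0\}$ by the $\Ip$-adic level $\ell$ of the projective representative $\tilde w \in \fX(\fo) = \mathbb{P}\fV(\fo)$, each stratum fibres over $\fX(\fo)$ with fibre volume $(1-q^{-1}) q^{-\ell(m+1)}$ under $\mu_{\fX,\Ip}$, while the scaling $\tilde w \mapsto \pi^{-\ell}\tilde w$ multiplies $\Pfaff_k$ by $\pi^{-k\ell}$, giving the sheaf twist $\pi^{k\ell}\calI_k$ in the series.

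The main obstacle is making the Kirillov correspondence precise for \emph{potent} (rather than merely uniform) pro-$p$ groups with the correct bookkeeping of the $\pi^r$-shift, and establishing the Pfaffian identity $\lvert \Omega_{[w]} \rvert^{1/2} = \| \bigcup_k \Pfaff_k(w) \|_\Ip$ via elementary divisors over the discrete valuation ring $\fo$. Once these normalisations are fixed, the passage from orbit sum to Haar integral and the projectivisation leading to~(2) are essentially routine.
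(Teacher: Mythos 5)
Your plan is essentially the paper's own: Frobenius reciprocity plus the Kirillov orbit method to reduce to the orbit sum $\sum_{[w]\in\overline W}\lvert\Omega_{[w]}\rvert^{-(1+s)/2}$, identification of $\lvert\Omega_{[w]}\rvert^{1/2}$ with the Pfaffian norm via invariant factors of the alternating form $A_w(X,Y)=w([X,Y])$, conversion to a Haar integral, and projectivisation. The route through Propositions~\ref{pro:multiplicities} and~\ref{pro:zeta-algebraic-formula} in the paper is exactly this.

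There is, however, a genuine gap: your argument invokes the explicit orbit--character formula (and the derived multiplicity formula $\dim\pi_\Omega^H = \lvert\Omega\cap\overline W\rvert/\lvert\Omega\rvert^{1/2}$), which is valid only for $p>2$. The Proposition is stated for all $p$, including $p=2$, where a potent pro-$p$ group requires $[G,G]\subseteq G^4$; in that case the Kirillov correspondence is not canonical and the character formula~\eqref{eq:formulaKirillovOrbitMethod} holds only on the open subgroup $G^2$. The paper circumvents this in the proof of Proposition~\ref{pro:zeta-algebraic-formula} by passing to $G^2$-equivalence classes of irreducible characters and identifying them with $(G\times Z)$-orbits in $\fg^\vee$, where $Z$ is the group of linear characters of $G/G^2$; only after this modification does the sum-over-orbits formula survive for $p=2$. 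Your proposal is silent on $p=2$ and identifies the ``main obstacle'' as making the Kirillov method precise for potent (versus uniform) groups, but that part is already handled in the cited literature~\cite{Go09,Ja06}; the substantive difficulty the paper actually addresses is the non-canonicity of the correspondence at $p=2$. Without an argument for that case, your proof only establishes the Proposition for odd $p$.

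A secondary point: the derivation of the prefactor $q^{r(m+1)}$ is somewhat more delicate than ``the covolume of $\Hom_\fo(\fg,\fo)$ inside $\pi^{-r}\Hom_\fo(\fk,\fo)\oplus\Hom_\fo(\fh,\fo)$.'' Applying Proposition~\ref{pro:zeta-algebraic-formula} to the $\Z_p$-lattice $\pi^r\fg$ and translating everything back to the $\fo$-lattice $\fg$ entails a coherent change of variable in the lift $w$, in the measure (normalised on $\Hom_\fo(\pi^r\fg,\fo)=\pi^{-r}\Hom_\fo(\fg,\fo)$), and in the Pfaffians (which pick up a factor $\pi^{2rk}$ under the rescaling of the form). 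After this, a second change of variable and the observation that the integrand depends only on $w\bmod\Hom_\fo(\fg,\fo)$ --- so that the $\fh$-directions integrate to a pure measure factor --- together produce $q^{r(m+1)}$. This is all routine, but it is not literally a single covolume; your heuristic gives the right exponent but skips the cancellation between the measure factor $q^{rn}$ and the $\fh$-direction shrinkage $q^{-r(n-m-1)}$.

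Your claim that $\Pfaff_0=1$ ``forces'' local constancy is also a slight non sequitur: the integrand is locally constant on cosets of $\Hom_\fo(\fg,\fo)$ because the radical $\rad(A_\omega)=\{Y:w([X,Y])\in\fo\;\forall X\}$ depends only on $\omega=w+\Hom_\fo(\fg,\fo)$, not because $\Pfaff_0=\fo$; the latter only guarantees the norm is at least $1$.
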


In Proposition~\ref{pro:zeta-integral-formula-coord} we obtain a
concrete version of this formula, based on a particular choice of
coordinates.  We illustrate the usefulness of the explicit formula in
Section~\ref{sec:examples-orbit-method} by computing the zeta
functions of various representations induced from Borel or parabolic
subgroups.  In this context we discuss how our formulae relate to the
more common methodology used in~\cite{AvKlOnVo13} and elsewhere.

In Theorem~\ref{thm:main-summary} we obtain results on the rationality
of globally induced representations, their abscissae of convergence,
and local functional equations.  This generalises similar results for
conventional representation zeta functions, for instance, in
\cite{Ja06} and~\cite{AvKlOnVo13}.  Our result is a consequence of a
general discussion of certain zeta functions which are obtained as an
infinite series of Igusa integrals, similar to the one appearing in
part~(2) of Proposition~\ref{propABC:integrals}.  We think that this
approach is of independent interest, since it is very flexible and
provides a new perspective on the abscissa of convergence.

Let $K_0$ be a number field with ring of integers~$O_{K_0}$.  Fix a
finite set $S$ of maximal ideals of $O_{K_0}$ and let
$O_{K_0,S} = \{ a \in K_0 \mid \lvert a \rvert_\Ip \le 1 \text{ for } \Ip \not\in S \}$
denote the ring of $S$-integers in~$K_0$.  We consider number fields
$K$ that arise as finite extensions of~$K_0$.  Let $O_S = O_{K,S}$
denote the integral closure of $O_{K_0,S}$ in~$K$.  For a maximal
ideal $\Ip \trianglelefteq O_S$ we write $\kappa_\Ip = O_S/\Ip$ for
the residue field and denote its cardinality by $q_\Ip$.  Furthermore,
$O_\Ip = O_{K,S,\Ip}$ denotes the completion of $O_S$ with respect to
$\Ip$ and we fix a uniformiser $\pi_\Ip$ so that $O_\Ip$ has the
valuation ideal~$\pi_\Ip O_\Ip$.

Let $\fg$ be an $O_{K_0,S}$-Lie lattice.  For every finite extension
$K$ of~$K_0$ and every maximal ideal $\Ip \trianglelefteq O_S$ we
consider the $O_\Ip$-Lie lattice
$\fg_{\Ip} = O_\Ip \otimes_{O_{K_0,S}} \fg$ and, for $r \in \N_0$, its
principal congruence
sublattices~$\fg_{\Ip,r} = \pi_\Ip^{\, r} \fg_{\Ip}$.  For any given
$K$ and $\Ip$, the Lie lattice $\fg_{\Ip,r}$ is potent for all
sufficiently large integers~$r$ so that
$G_{\Ip,r} = \exp(\fg_{\Ip,r})$ is a potent pro-$p$ group; we say that
such $r$ are \emph{permissible} for~$\fg_\Ip$; compare
\cite[Prop.~2.3]{AvKlOnVo13}.

\begin{thmABC} \label{thm:D} As in the set-up described above, let
  $\fg$ be an $O_{K_0,S}$-Lie lattice and let $\fh \subseteq \fg$ be a
  Lie sublattice such that
  $\lvert \fg : \fh + [\fg,\fg] \rvert < \infty$.  Suppose that $\fh$
  is a direct summand as a submodule of the $O_{K_0,S}$-module~$\fg$,
  and put $m + 1= \dim_{O_{K_0,S}} \fg - \dim_{O_{K_0,S}} \fh$.

  For maximal ideals $\Ip \trianglelefteq O_{K,S}$, where $K$ ranges
  over finite extensions of~$K_0$, and for positive integers~$r$ that
  are permissible for $\fg_\Ip$ and $\fh_\Ip$, we consider the induced
  representation
  \[
  \rho_{\Ip,r} = \Ind_{H_{\Ip,r}}^{G_{\Ip,r}}(\one_{H_{\Ip,r}})
  \]
  associated to the pro-$p$ groups $G_{\Ip,r} = \exp(\fg_{\Ip,r})$ and
  $H_{\Ip,r}= \exp(\fh_{\Ip,r})$.
  
  \begin{enumerate}[\rm (1)]
  \item For each $\Ip$ there is a complex-valued function $Z_\Ip$ of a
    complex variable $s$ that is rational in $q_\Ip^{-s}$ with integer
    coefficients so that for all permissible $r$,
   \begin{equation*}
     \zeta_{\rho_{\Ip,r}}(s) = (1-q_\Ip^{-1})q^{r(m+1)}_\Ip Z_\Ip(s).
   \end{equation*}
 \item The real parts of the poles of the functions $Z_{\Ip}$, for all
   $\Ip$, form a finite subset $P_{\fg,\fh} \subseteq \Q$.
 \item There is a finite extension $K_1$ of $K_0$ such that, for all
   maximal ideals $\Ip \trianglelefteq O_{K,S}$, arising for extensions
   $K_1 \subseteq K$, and for all permissible~$r$, the abscissa of
   convergence of the zeta function $\zeta_{\rho_{\Ip,r}}$ satisfies
   \[
   \alpha(\zeta_{\rho_{\Ip,r}}) = \max P_{\fg,\fh}.
   \]
 \item There are a rational
   function $F \in \Q(Y_1, Y_2, X_1,\dots, X_g)$  and a finite set $T$
   of maximal ideals of $O_{K_0}$ with $S \subseteq T$ such that the
   following holds:

   For every maximal ideal $\Ip_0 \trianglelefteq O_{K_0}$ not
   contained in $T$ there are algebraic integers
   $\lambda_1 = \lambda_1(\Ip_0), \dots, \lambda_g = \lambda_g(\Ip_0)
   \in \C^\times$
   so that for every finite extension $K$ of~$K_0$ and every maximal
   ideal $\Ip$ dividing $\Ip_0$ ,
   \begin{equation*}
     Z_\Ip(s) = F(q^{-f} ,q^{-fs},\lambda_1^{\, f},\dots,\lambda_g^{\, f}),
   \end{equation*}
   where $q = q_{\Ip_0}$ and $f = [ \kappa_\Ip : \kappa_{\Ip_0}]$
   denotes the inertia degree of $\Ip$ over~$K_0$.

   Furthermore the following functional equation holds:
   \[
   \zeta_{\rho_{\Ip,r}}(s) \vert_{\substack{q \to q^{-1} \\ \lambda_j
       \to \lambda_j^{-1}}} =  q^{f (m+1) (1-2r)} \zeta_{\rho_{\Ip,r}}(s). 
   \]
 \end{enumerate}
\end{thmABC}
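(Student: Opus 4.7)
The plan is to reduce Theorem~\ref{thm:D} to a global analysis of the $p$-adic integral in Proposition~\ref{propABC:integrals}(2) via Hironaka's resolution of singularities and Denef's method, following the broad template of \cite{De87,AvKlOnVo13}.

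\emph{Step~1 (global setup).} Since $\fh$ is a direct summand of the $O_{K_0,S}$-module $\fg$, fix a complement $\fk$ with $\fg = \fk \oplus \fh$. The Pfaffian minors of the commutator matrix of $\fg$ define a finite family of coherent ideal sheaves $\calI_0, \dots, \calI_{\lfloor n/2 \rfloor}$ on the projective space $\fX = \mathbb{P}\fV$ over $\Spec(O_{K_0,S})$, where $\fV$ is the $(m+1)$-dimensional affine space $\Hom_{O_{K_0,S}}(\fk,\mathbb{A}^1)$. Applying Proposition~\ref{propABC:integrals}(2) prime by prime gives, for every $\Ip$ and every permissible~$r$,
\[
 \zeta_{\rho_{\Ip,r}}(s) = (1-q_\Ip^{-1})\, q_\Ip^{r(m+1)}\, Z_\Ip(s), \qquad Z_\Ip(s) = \sum_{\ell \in \Z} q_\Ip^{-\ell(m+1)}\, J_{\Ip,\ell}(s),
\]
where the Igusa-type integral $J_{\Ip,\ell}(s) = \int_{\fX(O_\Ip)} \bigl(\max_k \| \pi_\Ip^{k\ell}\calI_k \|_\Ip\bigr)^{-1-s}\,\di\mu_{\fX,\Ip}$ is independent of~$r$. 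Thus the $r$-dependence in parts~(1) and~(4) is entirely contained in the prefactor, and the remaining task is to study $Z_\Ip(s)$ uniformly in $\Ip$.

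\emph{Step~2 (parts (1), (2), and existence of $F$).} Hironaka's resolution and principalisation in characteristic~$0$ yield, after enlarging $S$ to a finite set $T$ of maximal ideals of $O_{K_0}$, a proper birational morphism $h \colon Y \to \fX_{O_{K_0,T}}$ with $Y$ smooth over $\Spec(O_{K_0,T})$ such that each pull-back $h^{-1}\calI_k$ is invertible and cut out by a divisor supported on a common strict normal crossings divisor $D = \bigcup_{i \in I} E_i$ with multiplicities $(a_{k,i})_{i \in I}$ and relative log-differents $(\nu_i)_{i \in I}$. For $\Ip_0 \nmid T$, $\Ip \mid \Ip_0$, and $\ell \in \Z$, change of variables along $h(O_\Ip)$ expresses $J_{\Ip,\ell}(s)$ as a finite Denef-type sum indexed by subsets $J \subseteq I$ over tubular neighbourhoods of the strata $\overline{E}_J^\circ = \bigcap_{i \in J} E_i \setminus \bigcup_{i \notin J} E_i$. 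Each elementary piece is a rational function in $q_\Ip^{-s}$ and $q_\Ip^{-1}$ whose numerator involves $\lvert \overline{E}_J^\circ(\kappa_\Ip) \rvert$, and summation over $\ell \in \Z$ is a finite combination of geometric series, proving~(1). By the Grothendieck--Lefschetz trace formula, $\lvert \overline{E}_J^\circ(\kappa_\Ip) \rvert$ is a $\Z$-linear combination of $f$-th powers of the Frobenius eigenvalues $\lambda_1(\Ip_0),\dots,\lambda_g(\Ip_0)$ on the $\ell$-adic cohomology of the strata over $\kappa_{\Ip_0}$; bundling the resulting contributions produces the universal rational function $F \in \Q(Y_1, Y_2, X_1, \dots, X_g)$ claimed in~(4). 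The real parts of the poles of $F$ in $Y_2$ are determined by the finite combinatorial data $(a_{k,i}, \nu_i, m+1)$ of the resolution and form the finite set $P_{\fg,\fh} \subseteq \Q$ of part~(2).

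\emph{Step~3 (part (3) and the functional equation).} Part~(3) reduces to a Chebotarev-type argument: choose a finite extension $K_1/K_0$ over which every stratum $\overline{E}_J^\circ$ whose combinatorial data realise the extremal pole of $F$ acquires $\kappa_\Ip$-rational points; then for $K \supseteq K_1$ the leading pole cannot be cancelled by vanishing Frobenius contributions, so $\alpha(\zeta_{\rho_{\Ip,r}}) = \max P_{\fg,\fh}$. I expect the main obstacle to lie in the functional equation of part~(4). A Denef--Meuser-type symmetry applied uniformly on the resolution $Y$ gives, for each fixed $\ell$, a reciprocity of $J_{\Ip,\ell}(s)$ under $q \to q^{-1}$, $\lambda_j \to \lambda_j^{-1}$ coming from Poincaré duality on the strata. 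One then has to show that these $\ell$-wise symmetries can be assembled coherently across the infinite lattice sum $\sum_{\ell \in \Z} q_\Ip^{-\ell(m+1)} J_{\Ip,\ell}(s)$, i.e.\ that the involution $\ell \leftrightarrow -\ell$ corresponds to a duality on the resolution compatible with the local functional equation. Tracking the exponent shifts through this involution, together with the transformation of the prefactor $(1-q_\Ip^{-1}) q_\Ip^{r(m+1)}$, should yield the announced factor $q_\Ip^{f(m+1)(1-2r)}$ upon simplification. Making the matching between the discrete $\ell$-symmetry and the geometric duality precise is where the bulk of the technical work is concentrated.
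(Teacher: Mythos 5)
Your Steps 1 and 2 follow the paper's route faithfully: reduce to Proposition~\ref{propABC:integrals}(2), monomialise by Hironaka after enlarging $S$ to a finite $T$, decompose into normal-crossings strata, and encode the point-counts via Frobenius eigenvalues through the Lefschetz trace formula. This is exactly how the paper obtains rationality, the finite pole set, and the universal rational function~$F$.

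Where you diverge—and where the sketch is unconvincing—is part~(4). You propose to prove, \emph{for each fixed $\ell$}, a Denef--Meuser symmetry of $J_{\Ip,\ell}(s)$ under $q\to q^{-1}$, $\lambda_j\to\lambda_j^{-1}$, and then to assemble these symmetries across the outer lattice sum via an involution $\ell\leftrightarrow -\ell$. That is not what the paper does and I do not see how it can be made to work. For fixed $\ell$ the cone-generating function underlying $J_{\Ip,\ell}$ has only the variables $(n_E)_{E\in U}$, while the parameters $\epsilon_j, \delta_j$ (and hence the exponents in the minima) carry nontrivial $\ell$-dependent \emph{inhomogeneous} shifts; Stanley's reciprocity theorem (Lemma~\ref{lem:XiSeries}(5)) is stated for the translation-free series $\Xi_{1,\mathbf 0,\mathbf 0}$ and $\Xi_{0,\mathbf 0,\mathbf 0}$ only, and does not give a clean $\ell$-wise relation. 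The paper's key step is precisely the opposite of yours: it absorbs $\ell$ as an additional integer coordinate in the cone, so that the relevant generating function $\Xi_U(Q,t)$ is a lattice-point series over $\Z\times\N^{U}$, and applies the combinatorial reciprocity \emph{once} to the whole object, giving $\Xi_U(Q^{-1},t^{-1}) = (-1)^{|U|+1}\sum_{V\subseteq U}\Xi_V(Q,t)$. The Weil/Poincaré duality on the point-counts $b_U(\Ip,f)$ and the combinatorial identity~\eqref{eq:combinatorial} then finish the argument. Your "$\ell\leftrightarrow -\ell$" has no role in this; because $\ell$ ranges over all of $\Z$ inside the cone, the reciprocity is automatic in that coordinate and no matching across the sum needs to be organised.

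Part~(3) is also somewhat misstated. The issue is not that "the leading pole cannot be cancelled by vanishing Frobenius contributions"; the series involved are positive term by term, so cancellation is never a concern. The genuine difficulty is that a stratum $E_U^\circ$ whose combinatorial data produce the extremal candidate abscissa may have \emph{no} $K_\Ip$-rational points, so the corresponding local contribution is absent. The paper resolves this with its local-abscissa formalism: Lemma~\ref{lem:absLocalGlobal} shows, via compactness and positivity, that the global abscissa equals the supremum of local abscissae, Proposition~\ref{prop:localAbscissa} shows the local abscissa at $y$ depends only on $T(y)$, and Lemma~\ref{lem:XiSeries}(4) shows $\max P$ is actually attained as a pole in each local chart. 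Choosing $K_1$ so that every non-empty scheme $E_U^\circ$ has a $K_1$-point (it is of finite type over $K_0$) then guarantees the extremal stratum contributes. If you replace your cancellation heuristic by this mechanism, your Step~3 for (3) is sound.

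In short: your architecture for (1) and (2) coincides with the paper; your (3) is on track once rephrased in terms of positivity and local abscissae; but your plan for the functional equation in (4) rests on an $\ell$-wise symmetry that does not exist, and needs to be replaced by the paper's global cone reciprocity on $\Z\times\N^U$.
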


The theorem is a consequence of our general discussion of certain zeta
functions in Section \ref{sec:rationality-etc}; these functions are
defined as infinite series of Igusa integrals, similar to the one in
part (2) of Proposition~\ref{propABC:integrals}.  This class of zeta
functions is comparatively general and it is well suited to our
applications.  The proof of the rationality relies on Hironaka's
resolution of singularities, which yields a formula of Denef-type.
Moreover, the functional equation is obtained along the lines of
Denef--Meuser~\cite{DeMe91} and Voll~\cite{Vo10}.  Since the zeta
functions treated here differ somewhat from those considered by
previous authors, we include a concise but nearly complete discussion.
Our account is geared towards applications to representation zeta
functions; this has the positive side-effect that the underlying
arguments are less technical than in the still more general situation
treated in~\cite{Vo10}.  In addition, the designed shape of our zeta
functions allows us to give a new perspective on the abscissa of
convergence, leading naturally to the notion of local abscissae of
convergence.  In Corollary~\ref{cor:abscissaDensity} we deduce from
statement (3) in Theorem~\ref{thm:D} that the abscissa of convergence
is attained on a set of primes with positive Dirichlet density; in
this way, statement (3) can be seen as a strengthening
of~\cite[Theorem~B]{AvKlOnVo13}.  Retrospectively, the stronger
assertion could also be derived within the framework
of~\cite{AvKlOnVo13}, by a careful analysis of the proofs given there;
this was pointed out to us by the referee.

In Theorem~\ref{thm:distanceTransitiveFormula} we record a class of
examples, arising geometrically from distance-transitive actions of
profinite groups on spherically homogeneous rooted trees. This result
is based on ideas of Bekka, de la Harpe, and Grigorchuk discussed in
the appendix of \cite{BeHa03}.

\begin{thmABC} \label{thmABC:E} Let $\calT_m$ be a spherically
  homogeneous rooted tree with branching sequence
  $m = (m_n)_{n \in \N}$ in $\N_{\ge 2}$.  Let
  $G \le_\mathrm{c} \Aut(\calT_m)$ act distance-transitively on the
  boundary $\partial\calT_m$, and let $P = P_\xi \le_\mathrm{c} G$ be
  the stabiliser of a point~$\xi \in \partial\calT_m$.

  Then the induced representation $\rho_\partial = \Ind_P^G(\one_P)$
  decomposes as a direct sum of the trivial representation and a
  unique irreducible constituent $\pi_n$ of dimension
  $(m_n-1)\prod_{j=1}^{n-1} m_j$ for each $n\ge 1$.  In particular,
  the representation is multiplicity-free and the zeta function of
  $\rho_\partial$ is
  \begin{equation*}
    \zeta_{\rho_\partial}(s) = 1 + \sum_{i=1}^\infty (m_i-1)^{-s}
    \prod_{j=1}^{i-1} m_j^{\, -s} 
   \end{equation*}
   with abscissa of convergence $\alpha(\rho_\partial) =  0$.
\end{thmABC}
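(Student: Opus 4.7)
The plan is to realise $\rho_\partial = \Ind_P^G(\one_P)$ as the smooth $G$-representation $C^\infty(\partial\calT_m)$ of locally constant functions on the boundary, via the $G$-equivariant identification $G/P \cong \partial\calT_m$ given by $gP \mapsto g\xi$. For each $n \in \N_0$, let $V_n \subseteq C^\infty(\partial\calT_m)$ denote the subspace of functions that factor through the projection to the $n$-th level of~$\calT_m$; these subspaces are $G$-stable, have dimension $M_n \coloneqq \prod_{j=1}^{n} m_j$ (with $M_0 = 1$), and the chain $V_0 \subseteq V_1 \subseteq \cdots$ exhausts $C^\infty(\partial\calT_m)$. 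Choose a $G$-stable complement $W_n$ of $V_{n-1}$ in $V_n$ (for instance, the orthogonal complement with respect to the inner product induced by the unique $G$-invariant probability measure on $\partial\calT_m$). Then $\dim W_n = M_n - M_{n-1} = (m_n-1)M_{n-1}$, which matches the asserted dimension of~$\pi_n$.

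The heart of the argument is to show that each $W_n$ with $n\ge 1$ is irreducible. Following the appendix of \cite{BeHa03}, I first verify that $(G,P)$ is a Gelfand pair. Distance-transitivity forces the $P$-orbits on $\partial\calT_m$ to be the singleton $\{\xi\}$ together with the spheres $\Sigma_k = \{\eta \in \partial\calT_m : \lvert \xi \wedge \eta\rvert = k\}$ for $k \in \N_0$; moreover, since $\lvert \xi \wedge g\xi\rvert = \lvert g^{-1}\xi \wedge \xi\rvert$, the points $g\xi$ and $g^{-1}\xi$ lie in the same $P$-orbit, so each double coset $PgP$ is closed under inversion. Hence the anti-involution $f \mapsto f^\vee$, $f^\vee(g) = f(g^{-1})$, acts as the identity on the Hecke algebra of $P$-biinvariant smooth functions, forcing that algebra to be commutative. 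By Frobenius reciprocity, $\rho_\partial$ is therefore multiplicity-free, and the multiplicity of an irreducible constituent $\pi$ is $\dim \pi^P$.

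Next I count $P$-fixed vectors. By the same distance-transitivity argument, the $P$-orbits on the set of level-$n$ vertices consist of the singleton $\{v_n\}$ (the level-$n$ vertex on the ray $\xi$) together with one orbit for each $k \in \{0,1,\ldots,n-1\}$, yielding $\dim V_n^P = n+1$ and hence $\dim W_n^P = 1$. Since every irreducible constituent of $\rho_\partial$ has nontrivial $P$-fixed vectors and $\rho_\partial$ is multiplicity-free, $W_n$ must consist of a single irreducible summand, which is the claimed $\pi_n$; the trivial summand is exactly $V_0 = W_0$. Substituting $\dim\pi_n = (m_n-1)\prod_{j=1}^{n-1} m_j$ into the definition~\eqref{equ:zeta-rho} then gives the stated Dirichlet series. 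Since each $m_j \ge 2$, the $n$-th term is bounded above by $2^{-(n-1)s}$ for $s>0$ and equals $1$ at $s=0$, so $\zeta_{\rho_\partial}$ converges on $\{\mathrm{Re}(s)>0\}$ and diverges at $s=0$, yielding $\alpha(\rho_\partial)=0$. The main obstacle is establishing the Gelfand pair property cleanly in the profinite/smooth setting, and in particular checking that the symmetry argument goes through on the level of $P$-biinvariant locally constant functions; once this is in place, the rest is bookkeeping with the level filtration.
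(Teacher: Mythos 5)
Your proof is correct and reaches the result by a route that is closely related to, but not identical with, the paper's. The paper works with the \emph{level-$n$ vertex stabilisers} $P_n \supseteq P$: it writes $\rho_\partial = \varinjlim_n \Ind_{P_n}^G(\one_{P_n}) = \varinjlim_n \C[L_m(n)]$, computes $\dim_\C \End_G(\C[L_m(n)]) = \lvert L_m(n)/P_n \rvert = n+1$ via Frobenius reciprocity and distance-transitivity, and then deduces the decomposition by a short induction on the filtration — since the sum of squares of multiplicities increases by exactly one at each level, precisely one new irreducible of multiplicity one appears. You instead establish the Gelfand-pair property directly by Gelfand's inversion criterion, $g^{-1} \in PgP$ for all $g$, which you derive from the isometry relation $d(\xi,\xi g) = d(\xi,\xi g^{-1})$ together with distance-transitivity; this makes the Hecke algebra of $P$-biinvariant locally constant functions commutative, hence $\rho_\partial$ is multiplicity-free and the multiplicity of $\pi$ equals $\dim \pi^P$. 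You then count $P$-orbits (rather than $P_n$-orbits) to get $\dim V_n^P = n+1$, forcing each complement $W_n$ to be a single irreducible. The two arguments extract exactly the same combinatorial input from distance-transitivity — $n+1$ orbits at level $n$, which happens to be the same whether one quotients by $P$ or by the larger $P_n$ — but you make the Gelfand-pair property an explicit pivot, where the paper proves it only implicitly through the endomorphism-dimension computation; in exchange the paper avoids invoking the inversion criterion and its Hecke-algebra formalism. Both are sound; your explicit verification of $\alpha(\rho_\partial)=0$ (which the paper states without comment) is also correct.
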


A pair $(G,P)$ such that the induced representation $\Ind_P^G(\one_P)$
is multiplicity-free is called a \emph{Gelfand pair}; for an
introduction to Gelfand pairs we refer the reader to~\cite{Gr91}.  The
zeta function of an induced representation of a Gelfand pair $(G,P)$
enumerates a subset of all irreducible representations of $G$, which
suggests that Gelfand pairs are of special interest in our setting.
As an application of these new methods we obtain in
Proposition~\ref{prop:formulaInductionTrees} an explicit formula for
the zeta functions associated to representations induced from maximal
$(1,n)$-parabolic subgroups to $\GL_{n+1}(\fo)$, where $\fo$ is a
compact discrete valuation ring.  This approach works in arbitrary
characteristic and complements the Kirillov orbit method described
above.


\subsection{Further discussion and open problems}

\subsubsection{} Our investigations are naturally related to the
representation theory of reductive groups over $p$-adic fields or
non-archimedean local fields in general; compare
Remark~\ref{rem:red-grps-char-p}.  For simplicity, consider the
reductive group $G = \GL_n(\Q_p)$.  We would like to gain a detailed
understanding of the smooth irreducible representations of~$G$, which
match via the local Langlands correspondence, established by
Harris--Taylor~\cite{HaTa01} and Henniart~\cite{He00}, with certain
representations of the Weil--Deligne group of~$\Q_p$.  Consider a
smooth irreducible representation $\rho$ of $G$ and its restriction
$\rho_{|K}$ to the maximal compact subgroup $K = \GL_n(\Z_p)$.  The
following question is fundamental: describe the decomposition of
$\rho_{|K}$ into irreducible components.  For $n=2$,
Casselman~\cite{Ca73} obtain detailed results, but for larger degrees
an answer seems to be unknown.

Already specific cases are of interest.  For instance, the unramified
principal series representations constitute a large class of smooth
irreducible representations of~$G$.  These representations are induced
to $G$ from $1$-dimensional representations on a Borel subgroup~$B$.
As $KB = G$, the restriction of such a representation to $K$ is
induced from $K\cap B$.  The study of zeta functions of induced
representations thus provides a quantitative approach to the
decomposition problem for unramified principal series representations.
The decomposition into irreducible $\GL_3(\Z_p)$-constituents of
unramified principal series representations of $\GL_3(\Q_p)$ was
determined by Onn and Singla~\cite{OnSi14}, completing
results of Campbell and Nevins~\cite{CaNe09}.  They used direct
representation-theoretic considerations and, as a consequence, deduced
a formula for the associated zeta function.  It is interesting
to conduct similar investigations for supercuspidal representations of
$G$; Nevins~\cite{Ne14} has made steps in this direction.

\subsubsection{} \label{rmk:other-fields} In this article we consider,
for simplicity, only smooth representations of profinite groups~$G$
over the complex field~$\C$.  More generally, following unpublished
work of Gonz\'alez-S\'anchez, Jaikin-Zapirain, and Klopsch, one could
derive analogues of several of our results also for representations
over fields $\F$ of characteristic $0$ that are not necessarily
algebraically closed.  Indeed, in that situation $\F$-irreducible
representations correspond to Galois orbits of irreducible complex
representations and the action of the absolute Galois group of $\F$
can be incorporated into the treatment; for general profinite
groups~$G$ it is not clear how to deal with the relevant Schur indices
of irreducible complex representations, but the Schur indices are
known to be trivial in the important case where $G$ is a pro-$p$ group
for an odd prime~$p$; compare \cite[Sec.~10]{Is94}. Finally, it would
be interesting to consider also the situation where $\F$ has positive
characteristic $\ell > 0$; compare Remark~\ref{rem:red-grps-char-p}.
Provided that $G$ has a trivial pro-$\ell$-Sylow subgroup, we expect
that many of our results can be suitably generalised.

\subsubsection{} Dirichlet generating functions have also been
employed to study the distribution of finite-dimensional irreducible
representations of finitely generated nilpotent groups.  For such a
group $\Gamma$ one defines and studies the zeta function enumerating
twist-isoclasses of irreducible representations of~$\Gamma$; for
instance, see~\cite{HrMaRi18,StVo14,Ro16,DuVo17}.  Many theorems on
representation zeta functions, e.g., regarding rationality, pole
spectra, and functional equations, have analogues in the
twist-isoclass setting; the Kirillov orbit method can be adjusted to
take into account twist-isoclasses and thus yields a basic tool.  It
is natural to investigate -- in analogy to our approach in this paper
-- zeta functions $\zeta_\rho$ associated to infinite-dimensional
twist-invariant representations $\rho$ of a finitely generated
nilpotent group~$\Gamma$ that are completely reducible into
finite-dimensional irreducible constituents: the Dirichlet series
$\zeta_\rho$ encodes the finite multiplicities of entire
twist-isoclasses rather than individual constituents.  Natural
examples occur again in the form of arithmetic groups: the
representation $\rho$ spanned by all finite-dimensional
subrepresentations of the (co-)induced representation
$\Ind_\Delta^\Gamma(\one_\Delta)$, where $\Gamma = \mathbf{G}(O)$ for
a unipotent affine group scheme $\mathbf{G}$ over the ring of integers
$O$ of a number field, and $\Delta = \mathbf{H}(O)$ for a subgroup
$\mathbf{H} \subseteq [\mathbf{G},\mathbf{G}]$.  In ongoing joint work
with Rossmann we pursue this line of research and study zeta functions
associated to (co-)induced representations of nilpotent groups; the
precise results are to appear elsewhere.

\subsubsection{} Our results on induced representations of compact
$p$-adic Lie groups can be seen as a starting point for further
investigations. In particular, the general setting of induced
representations provides many new tractable examples and significantly
more flexibility to vary input parameters.  To indicate possible
future directions we formulate the following concrete open questions.

\begin{prob}\label{problem:function-fields}
  Jaikin-Zapirain~\cite[Section~7]{Ja06} computed the conventional
  representation zeta function of $\mathsf{SL}_2(R)$, where $R$ is a
  compact discrete valuation ring of odd residue characteristic, and
  observed that it only depends on the residue field cardinality and
  not on the characteristic or isomorphism type of $R$ itself.  The
  same phenomenon occurs in our setting, for zeta functions associated
  to induced representations; see~\cite[Theorem~6.5]{OnSi14} and
  Proposition~\ref{prop:formulaInductionTrees} below.

  Find further examples or, possibly, counter-examples of this
  phenomenon in the context of induced representations.  Give an
  explanation of the invariance of the zeta function where it holds.
\end{prob}

\begin{prob}
  One of the key invariants of a representation zeta function is its
  abscissa of convergence.  The abscissa of the conventional
  representation zeta function is explicitly known for FAb compact
  $p$-adic Lie groups of small dimension and fully understood for
  norm-$1$ groups of $p$-adic division algebras; see
  \cite{Ja06,AvKlOnVo13,ZoXX} and
  \cite[Theorem~7.1]{LaLu08}.  However, there is no general
  interpretation, even at the conjectural level.  In our new setting,
  Propositions~\ref{pro:ind-Br-Gr}, \ref{pro:ind-Br-Ur}, and
  Theorem~\ref{thm:InductionMaxPara} provide the abscissae of
  convergence of zeta functions associated to some families of induced
  representations.

  Determine the abscissae for further families of induced
  representations.  For instance, consider representations of open
  compact subgroups of reductive $p$-adic groups that are induced from
  (maximal) parabolic subgroups.  Can the results be explained in
  terms of the root systems?
\end{prob}

\begin{prob}
  The conventional representation zeta functions of
  $\mathsf{SL}_3(\fo)$ and $\mathsf{SU}_3(\fo)$ over a compact
  discrete valuation ring $\fo$ of characteristic $0$ display an
  Ennola-type duality that can be realised concretely by simple
  sign-changes; see~\cite{AvKlOnVo13,AvKlOnVo16a}.  The examples of
  induced representations in Propositions~\ref{pro:ind-Br-Gr} and
  \ref{pro:ind-Br-Ur} (and the way we derive them) do not suggest
  unmistakably a similarly prominent form of duality.  A priori the
  induced representations we consider are sensitive to the choices of
  Borel subgroups; thus it is unclear whether some form of duality
  should actually be expected for the \emph{specific choices} made in
  Propositions~\ref{pro:ind-Br-Gr} and \ref{pro:ind-Br-Ur}.  To reveal
  and explain an Ennola-type duality in our setting would require a
  more conceptual treatment.

  Study systematically the zeta functions of induced representations
  in groups of Type~$\mathsf{A}_n$, initially for $n=2$, and determine
  when an Ennola-type duality holds.  Find an explanation for this
  duality.
\end{prob}


\subsection{Organisation} \label{sec:org-not} In
Section~\ref{sec:basic-concepts} we recall some standard notions from
the representation theory of totally disconnected locally compact
groups.  Based on suitable refinements, we introduce zeta functions
associated to strongly admissible smooth representations.  In
Section~\ref{sec:twist-sim-prop-A} we study the robustness of strong
admissibility under the induction functor and introduce in this
context the notion of relative FAb-ness.  In
Section~\ref{sec:ind-uniform-groups} we develop a $p$-adic formalism
to compute the zeta functions of induced representations between
uniform pro-$p$ groups.  Our main tool is the Kirillov orbit method,
with special care taken for $p=2$.  In
Section~\ref{sec:rationality-etc} we establish the rationality of the
local factors of globally defined families of induced representations;
we also study their abscissae of convergence and prove functional
equations.  In Section~\ref{sec:action-on-trees} we produce, with
surprisingly little effort, instructive examples of zeta functions of
induced representations, arising geometrically for groups acting on
rooted trees.  In Section~\ref{sec:examples-orbit-method} we give a
number of concrete examples of zeta functions of induced
representations of potent pro-$p$ groups.


\section{Basic concepts and preliminaries} \label{sec:basic-concepts}

In this section we recall some standard notions from the
representation theory of totally disconnected locally compact groups;
see~\cite{BuHe06,Re10}.  Based on suitable refinements, we introduce
zeta functions associated to strongly admissible smooth
representations.
 

\subsection{Admissible smooth representations}
Let $G$ be a totally disconnected locally compact topological group;
for instance, $G$ could be a reductive $p$-adic group such
as~$G = \GL_n(\Q_p)$.  Observe that $G$ is automatically Hausdorff, as
$\{1\} \le_\mathrm{c} G$.  A complex representation of $G$ is given by
a homomorphism $\rho \colon G \rightarrow \GL(V_\rho)$, where $V_\rho$
is a vector space over~$\C$.  All representations we consider are
over~$\C$, and we usually drop the specification `complex'; compare
Section~\ref{rmk:other-fields}.  Depending on the situation, it is
convenient to denote a representation of $G$ either by the vector
space $V_\rho$ acted upon, or by the homomorphism
$\rho \colon G \rightarrow \GL(V_\rho)$, or by the
pair~$(\rho, V_\rho)$.

A vector $v \in V_\rho$ is said to be \emph{smooth} with respect to
$\rho$ if its stabiliser
$\Stab_G(v) = \{ g \in G \mid \rho(g).v = v \}$ is open in~$G$.  The
vector subspace
\[
V_\rho^\infty = \{ v \in V_\rho \mid \text{$v$ is smooth with respect to
  $\rho$} \}
\]
is $\rho$-invariant and gives rise to a sub-representation $\rho^\infty
= (\rho^\infty, V_\rho^\infty)$.  The representation $\rho$ is said to
be \emph{smooth} if $V_\rho = V_\rho^\infty$; equivalently, $\rho$ is
smooth if the map $G \times V_\rho \rightarrow V_\rho$ is continuous
when $G$ is equipped with its natural topology and $V_\rho$ is
equipped with the discrete topology.

We recall that the totally disconnected compact topological groups are
precisely the profinite groups.  The assertions of the following lemma
are easy to prove and well known; see~\cite[Sec.~2.2]{BuHe06}.

\begin{lem}
  Let $G$ be a profinite group.

  \textup{(1)} Every smooth representation of~$G$ is semisimple, i.e.,
  it decomposes as a direct sum of (smooth) irreducible constituents.

  \textup{(2)} The smooth irreducible representations of $G$ are
  precisely the finite-di\-men\-sional irreducible continuous
  representations of~$G$; in particular, each of these factors through
  a finite continuous quotient of~$G$.
\end{lem}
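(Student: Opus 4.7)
For part (1), the plan is to reduce semisimplicity to the finite group setting via local finiteness. First I would show that every $v \in V_\rho$ lies in a finite-dimensional subrepresentation: by smoothness, $\Stab_G(v)$ is open, and in a profinite group any open subgroup contains an open normal subgroup $N \trianglelefteq_\mathrm{o} G$. Then the $G$-orbit of $v$ has cardinality at most $\lvert G/N \rvert < \infty$, so the subrepresentation it spans is finite-dimensional and factors through the finite group $G/N$. Maschke's theorem applied to $G/N$ shows this subrepresentation is semisimple. Once I know every element sits in a finite-dimensional semisimple subrepresentation, semisimplicity of $V_\rho$ follows from a standard Zorn's lemma argument: take a maximal family $\{U_\alpha\}$ of irreducible subrepresentations whose sum is direct, and if $S := \bigoplus U_\alpha$ were proper, any $v \notin S$ would lie in a finite-dimensional semisimple subrepresentation $W$, and at least one irreducible summand $W_i$ of $W$ must satisfy $W_i \cap S = 0$ (by simplicity of $W_i$ and the fact that $W \not\subseteq S$), contradicting maximality.

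For part (2), I would prove both implications. For the nontrivial direction, let $(\pi, V_\pi)$ be a smooth irreducible representation and pick a nonzero $v \in V_\pi$. As above, $\Stab_G(v)$ contains an open normal subgroup $N \trianglelefteq_\mathrm{o} G$. The subspace $V_\pi^N$ of $N$-fixed vectors is a $G$-subrepresentation (normality of $N$ gives $\rho(n)\rho(g)v = \rho(g)\rho(g^{-1}ng)v = \rho(g)v$ for all $v \in V_\pi^N$, $g \in G$, $n \in N$), and it is nonzero as $v \in V_\pi^N$. Irreducibility forces $V_\pi^N = V_\pi$, so $\pi$ factors through the finite quotient $G/N$, hence is finite-dimensional, and continuity is automatic since $G/N$ is discrete.

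For the converse, suppose $\pi$ is a finite-dimensional irreducible continuous representation of $G$. The key input is the "no small subgroups" property of $\GL(V_\pi) \cong \GL_n(\C)$: the open neighborhood $U = \{ g \in \GL_n(\C) \mid \|g - I\| < 1/2 \}$ of the identity contains no nontrivial subgroup. Then $\pi^{-1}(U)$ is an open neighborhood of $1$ in the profinite group $G$, hence contains an open subgroup $H \le_\mathrm{o} G$. Since $\pi(H)$ is a subgroup contained in $U$, we get $\pi(H) = \{1\}$, so $H \subseteq \ker\pi \subseteq \Stab_G(v)$ for every $v \in V_\pi$, proving smoothness.

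The main obstacle, if any, is the converse in part (2): establishing smoothness from continuity plus finite-dimensionality genuinely uses that $\GL_n(\C)$ has no small subgroups, which is the one non-formal ingredient. The rest is a routine combination of Maschke's theorem, normality tricks, and Zorn's lemma.
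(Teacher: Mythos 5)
Your proof is correct, and the paper itself gives no argument here — it merely cites \cite[Sec.~2.2]{BuHe06}, where the same standard reasoning appears. Both halves of your argument (reduction to finite quotients through open normal subgroups plus Maschke and Zorn for part (1); the $N$-fixed-vector trick for the forward direction and the no-small-subgroups property of $\GL_n(\C)$ for the converse in part (2)) are exactly the usual route, and your identification of the no-small-subgroups property as the only genuinely non-formal ingredient is on point.
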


\begin{rmk} \label{rmk:decomp-lctd-reps} The lemma has the following
  consequence for a totally disconnected locally compact topological
  group~$G$. If $(\rho,V_\rho)$ is a smooth representation of $G$ and
  if $K \le_\mathrm{o} G$ is a compact open subgroup of $G$ then the
  restriction $\rho_{\vert K}$ is semisimple and all its irreducible
  constituents are finite-dimensional.  If, in addition,
  $(\rho,V_\rho)$ is irreducible and $\lvert G : K \rvert$ countable
  then $\dim V_\rho$ is countable.  A typical example of this
  situation is: $G = \GL_n(\Q_p)$ and $K = \GL_n(\Z_p)$.
\end{rmk}

A smooth representation $(\rho,V_\rho)$ of a totally disconnected
locally compact topological group $G$ is said to be \emph{admissible}
if for every compact open subgroup $K \le_\mathrm{o} G$ the space of
fixed vectors
\[
V_\rho^{\, K} = \{ v \in V_\rho \mid \forall g \in K \colon \rho(g).v
= v \}
\]
has finite dimension.  

\begin{rmk} \label{rem:red-grps-char-p}
  According to a classical result in the representation theory of
  reductive $p$-adic groups, every smooth irreducible representation
  of a reductive $p$-adic group $G$ is admissible; see~\cite[Ch.~II,
  2.8]{Vi96}.  It is worth noting that this result holds in
  great generality.  Firstly, it applies to irreducible
  representations over any -- not necessarily algebraically
  closed~\cite[Prop.~2]{Bl05} -- field of characteristic
  different from~$p$.  Secondly, the statement remains valid, when $G$
  is the group of rational points of a reductive group over a local
  field of \emph{positive} characteristic.  In view of
  Remark~\ref{rmk:decomp-lctd-reps}, this provides a wide range of
  interesting admissible smooth representations of profinite groups
  where it is natural to study the multiplicities of irreducible
  components.
\end{rmk}


\subsection{Polynomially strongly admissible representations} Let $G$
be a profinite group.  We denote by $\Irr(G)$ the set of (isomorphism
classes of) smooth irreducible -- i.e.\ finite-dimensional irreducible
continuous -- representations of~$G$.  For simplicity, we usually do
not distinguish notationally between representations and isomorphism
classes of representations.  It is easy to see that a representation
$(\rho, V_\rho)$ of $G$ is admissible smooth if and only if it
decomposes as a direct sum
\begin{equation}\label{equ:rho-decomp}
  V_\rho \cong \bigoplus_{\pi \in \Irr(G)} m(\pi,\rho) \cdot
  V_\pi,
\end{equation}
where the multiplicity $m(\pi,\rho)$ of $\pi$ in $\rho$ is finite for
each $\pi \in \Irr(G)$.  

We say that an admissible smooth representation $(\rho,V_\rho)$ of $G$
is \emph{strongly admissible} if, for every $d \in \N$, the number
\[
R_d(\rho) = \sum_{\substack{\pi \in \Irr(G) \\ \dim \pi \le d}}
m(\pi,\rho)
\]  
of its irreducible constituents of dimension at most $d$ is finite.
Furthermore, we define the \emph{zeta function} of a strongly
admissible smooth representation $(\rho,V_\rho)$ of $G$ to be the
formal Dirichlet series
\[
\zeta_\rho(s) = \sum_{\pi \in \Irr(G)} m(\pi,\rho) (\dim \pi)^{-s},
\]
where $s$ denotes a complex variable.  

\begin{rmk}
  If $G$ is finitely generated as a profinite group, it is known that
  $G$ has only finitely many irreducible representations of any given
  finite dimension if and only if $G$ is \emph{FAb}, meaning that
  every open subgroup $H \le_\mathrm{o} G$ has finite abelianisation
  $H/[H,H]$; compare~\cite{BaLuMaMo02}.  Consequently, for a FAb
  finitely generated profinite group every admissible smooth
  representation is strongly admissible.
\end{rmk}

We say that $(\rho,V_\rho)$ is \emph{polynomially strongly admissible}
if there exists $r \in \R_{>0}$ such that $R_d(\rho) = O(d^r)$, i.e.,
if there are $r,C \in \mathbb{R}_{>0}$ such that for every $d \in \N$,
\[
\sum_{\substack{\pi \in \Irr(G) \\ \dim \pi \le d}}
m(\pi,\rho) = R_d(\rho) \le C d^r.
\]
Equivalently, $(\rho,V_\rho)$ is polynomially strongly admissible if
its (polynomial) \emph{degree of irreducible constituent growth},
defined as
\[
\deg(\rho) = \inf \{ r \in \R_{>0} \mid R_d(\rho) = O(d^r) \} \in
\R_{\ge 0} \cup \{ \infty \},
\]
is finite.  

For a polynomially strongly admissible smooth representation
$(\rho,V_\rho)$ of $G$ the formal Dirichlet series $\zeta_\rho(s)$
converges (absolutely) and defines an analytic function on a right
half-plane $\{ s \in \C \mid \mathrm{Re}(s) > \alpha(\rho) \}$, where
$\alpha(\rho) \in \mathbb{R} \cup \{-\infty\}$ denotes the abscissa of
convergence.  In fact, if $\rho$ is infinite-dimensional then
$\alpha(\rho) = \deg(\rho)$.

\begin{exa} \label{exa:conn-repr-zeta-functions} The profinite group
  $G$ acts via right translations on the space
  $V = \calC^\infty(G,\C)$ of all locally constant functions,
  i.e.\ continuous functions when $\C$ is equipped with the discrete
  topology, from $G$ to $\C$:
  \[
  ({}^g f)(x) = f(xg) \qquad \text{for $g \in G$, $f \in
    \calC^\infty(G,\C)$, $x \in G$.}
  \]
  The resulting `regular' representation $\rho_\text{reg}$ is strongly
  admissible if $G$ has only finitely many irreducible representations
  of any given finite dimension.  As remarked above, this is for
  instance the case if $G$ is finitely generated and FAb.  In this
  case the formal Dirichlet series
  \[
  \zeta_{\rho_\text{reg}}(s) = \sum_{\pi \in \Irr(G)} (\dim
  \pi)^{1-s}
  \]
  is equal to $\zeta_G(s-1)$, a shift of the conventional
  representation zeta function $\zeta_G(s)$ of~$G$ that enumerates 
  irreducible complex representations of~$G$ and has been the sole focus
  of study until now; compare Section~\ref{sec:introdu}. \hfill $\diamond$
\end{exa}

Finally, we take interest in yet another finiteness condition.  An
admissible smooth representation $\rho$ of the profinite group $G$ is
said to have \emph{bounded multiplicities}, if there exists $M \in \N$
such that $m(\pi,\rho) \le M$ for all~$\pi \in \Irr(G)$.  

\begin{rmk}
  This concept arises naturally in two ways.  Firstly, there are
  prominent examples, such as multiplicity-free representations
  associated to Gelfand pairs; compare
  Section~\ref{sec:action-on-trees}.  Secondly, the abscissa of
  convergence of the zeta function associated to any smooth
  representation $\rho$ of $G$ with bounded multiplicities yields a
  lower bound for the abscissa of convergence of the conventional
  representation zeta function of~$G$.
\end{rmk}

\begin{dfn}[Admissibility properties]
  We refer to the properties of being admissible, strongly admissible,
  polynomially strongly admissible or having bounded multiplicities
  collectively as \emph{properties of type~$\mathrm{(A)}$}.
\end{dfn}


\subsection{Tensor products and induced representations} Let $G$ be a
profinite group.  The \emph{contragredient representation}
$\rho^\vee$ of a smooth representation $\rho$ of $G$
is the the smooth part $(\rho^*)^\infty $ of the abstract dual
representation~$\rho^*$.  The \emph{tensor product}
$(\rho \otimes \theta, V \otimes_\C U)$ of two smooth representations
$(\rho,V)$ and $(\theta,U)$ of~$G$, defined via
\[
(\rho \otimes \theta)(g).(v \otimes u) = \rho(g).v \otimes \theta(g).u
\qquad \text{for $g \in G$, $v \in V$, $u \in U$,}
\]
is a smooth representation of~$G$.  By considering the tensor product
$\rho \otimes \rho^\vee$ of an infinite-dimensional admissible smooth
representation $\rho$ with its contragredient
representation~$\rho^\vee$, one sees that admissibility need not be
preserved under tensor products.

Let $H \le_\mathrm{c} G$ be a closed subgroup.  The \emph{restriction}
$\Res_H^G(\theta)$ of a smooth representation $\theta$ of $G$ is a smooth
representation of~$H$, but, clearly, admissibility need not be preserved.
Conversely, the \emph{induced representation}
$\rho = \Ind_H^G(\sigma)$ of a smooth representation $(\sigma,W)$ of
$H$ is constructed as follows: $G$ acts on
\[
V_\rho = \{ f \in \calC^\infty(G,W) \mid \forall h \in H \,
\forall x \in G \colon f(hx) = \sigma(h).f(x) \}
\]
via right translation
\[
(\rho(g).f)(x) = ({}^g f)(x) = f(xg) \qquad \text{for $g, x \in G$, $f
  \in V_\rho$.}
\]   
In calling the representation `induced' rather than `co-induced', we
follow \cite[I.\S 2.5]{Se02} rather than \cite[VII.\S 6]{Se79}.  We note
that in the context of profinite groups the notions of induced,
compactly induced, and co-induced representations are in fact all the
same.  The next proposition is a consequence of Frobenius reciprocity;
compare~\cite[Sec.~2.4]{BuHe06}.

\begin{pro} \label{pro:admi-pres-ind}
  Let $G$ be a profinite group and $H \le_\mathrm{c} G$ a closed
  subgroup.  If $\sigma$ is an admissible smooth representation of $H$,
  then $\Ind_H^G(\sigma)$ is an admissible smooth representation
  of~$G$.
\end{pro}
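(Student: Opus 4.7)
The plan is to verify the two defining conditions of admissibility directly: smoothness of $\rho = \Ind_H^G(\sigma)$ and finite-dimensionality of $V_\rho^{\, K}$ for every compact open subgroup $K \le_\mathrm{o} G$. Smoothness comes almost for free from the construction: a function $f \in V_\rho$ is locally constant on the compact space $G$, hence has finite image and is constant on the right cosets of some open subgroup $K_f \le_\mathrm{o} G$, which means $\rho(k).f = f$ for all $k \in K_f$.

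For the fixed-point statement I would use the classical Mackey-style description. Fix a compact open $K \le_\mathrm{o} G$; an element $f \in V_\rho^{\, K}$ satisfies $f(hxk) = \sigma(h).f(x)$ for all $h \in H$, $k \in K$, $x \in G$, and is therefore determined by its values $f(g) \in W$ on a set of representatives of $H\backslash G/K$. A vector $v_g \coloneqq f(g)$ is admissible as such a value precisely when $\sigma(h).v_g = v_g$ whenever $hg = gk$ for some $k \in K$, i.e.\ precisely when $v_g \in W^{H \cap gKg^{-1}}$. Assembling the compatible choices yields the canonical identification
\[
V_\rho^{\, K} \;\cong\; \bigoplus_{g \in H\backslash G/K} W^{H \cap gKg^{-1}}.
\]

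It remains to argue that the right-hand side is finite-dimensional. Since $G$ is profinite (hence compact) and $K \le_\mathrm{o} G$, the index $\lvert G : K \rvert$ is finite, so the double coset space $H\backslash G/K$ is finite as well. For each representative $g$ the conjugate $gKg^{-1}$ is compact open in~$G$, so the intersection $H \cap gKg^{-1}$ is a compact open subgroup of $H$; admissibility of $\sigma$ then forces $\dim_\C W^{H \cap gKg^{-1}} < \infty$. Thus $V_\rho^{\, K}$ is a finite direct sum of finite-dimensional spaces, completing the verification.

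There is no genuine obstacle: the argument is essentially bookkeeping around Mackey's formula. The only subtlety worth flagging is that $H$ is only assumed closed, not open, so one cannot naively pass between representations of $H$ and of $K$; this never enters the proof because we only intersect $H$ with compact open subgroups of~$G$, and such intersections are automatically compact open in~$H$.
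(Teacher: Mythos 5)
Your proof is correct and complete, but it takes a genuinely different (though equally classical) route from the one the paper indicates. The paper invokes Frobenius reciprocity: for a compact open $K \le_\mathrm{o} G$ one has $|G:K| < \infty$, so $\Ind_K^G(\one_K)$ is finite-dimensional, and
\[
V_\rho^{\,K} \;\cong\; \Hom_G\bigl(\Ind_K^G(\one_K),\, \Ind_H^G(\sigma)\bigr) \;\cong\; \Hom_H\bigl(\Res_H^G \Ind_K^G(\one_K),\, \sigma\bigr);
\]
the finite-dimensional smooth $H$-representation $\Res_H^G \Ind_K^G(\one_K)$ decomposes into finitely many irreducibles, each of which contributes a finite multiplicity by admissibility of $\sigma$. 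Your argument instead makes the Mackey double-coset decomposition explicit, obtaining the identification $V_\rho^{\,K} \cong \bigoplus_{g \in H\backslash G/K} W^{H \cap gKg^{-1}}$ and verifying directly that every summand is finite-dimensional and that the index set is finite. The Frobenius-reciprocity route is shorter if one already has that adjunction at hand; yours is more self-contained and has the bonus of producing an explicit formula for $\dim V_\rho^{\,K}$. One minor point worth spelling out in your smoothness verification: passing from ``$f$ is locally constant on the compact group $G$'' to ``$f$ is constant on right cosets of a single open subgroup $K_f$'' uses that open subgroups form a neighbourhood base of $1$ in a profinite group, so that one can cover $G$ by finitely many translates $x_iK_{x_i}$ on which $f$ is constant and then take $K_f = \bigcap_i K_{x_i}$.
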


The induction functor is one of the key tools to construct new
representations from known ones and, in particular, interesting
infinite-dimensional representations from finite-dimensional ones.


\section{Twist-similarity classes and admissibility
  properties} \label{sec:twist-sim-prop-A}

Throughout this section $G$ denotes a profinite group and
$H \le_\mathrm{c} G$ a closed subgroup.  Given an admissible smooth
representation $(\sigma, W)$ of~$H$, we are keen to study the zeta
function $\zeta_\rho$ attached to the induced representation
$\rho = \Ind_H^G(\sigma)$.  With this aim we develop conditions which
ensure that $\rho$ is (polynomially) strongly admissible.
Furthermore, we establish that the abscissa of convergence
$\alpha(\rho)$ of $\zeta_\rho$ is rather robust: it is, in fact, an
invariant of the twist-similarity class of~$\sigma$, defined below.


\subsection{Tensor products and twist-similarity}
Let $G$ be a profinite group.  We say that two smooth representations
$(\rho, V)$ and $(\rho', V')$ of $G$ are \emph{twist-similar} to one
another
if there are non-zero finite-dimensional smooth representations
$(\theta,U)$ and $(\theta',U')$ and a homomorphism
$f \colon V\otimes_\C U \to V'\otimes_\C U'$ of $G$-representations
with finite-dimensional kernel and cokernel.  Note that,
in particular, all finite-dimensional smooth representations of $G$
are twist-similar to one another.

\begin{pro}\label{prop:TensorInvariance}
  Let $\rho, \theta$ be a smooth representations of a profinite
  group~$G$, and  suppose that
  $1 \le \dim \theta < \infty$.  Then
  $\rho$ has a property of type $\mathrm{(A)}$ if and only if
  $\rho\otimes\theta$ has the same property.

  Moreover, if $\rho$ and $\rho \otimes \theta$ are polynomially
  strongly admissible then $\zeta_\rho$ and
  $\zeta_{\rho \otimes \theta}$ have the same abscissa of
  convergence: $\alpha(\rho) = \alpha(\rho\otimes\theta)$.
\end{pro}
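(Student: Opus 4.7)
The plan is to reduce the biconditional to a one-way preservation statement. The key observation is that $\theta\otimes\theta^\vee\cong\End_\C(\theta)$ contains the trivial representation $\one_G$ as a direct summand, generated by the $G$-invariant element $\mathrm{id}_\theta$. Consequently $\rho$ is a direct summand of $(\rho\otimes\theta)\otimes\theta^\vee$. Since every type $\mathrm{(A)}$ property is manifestly inherited by direct summands of smooth representations, it suffices to prove that tensoring with any nonzero finite-dimensional smooth representation preserves each such property: applying this to $\theta$ handles one implication, and applying it to $\theta^\vee$ acting on $\rho\otimes\theta$ and then passing to the direct summand $\rho$ handles the other.

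The preservation proof rests on a handful of elementary bounds for the decomposition of $\pi\otimes\theta$, where $\pi\in\Irr(G)$ and $d=\dim\theta$. Both $\pi$ and $\theta$ factor through a common finite quotient of $G$, so ordinary character theory supplies the Frobenius-type identity $m(\sigma,\pi\otimes\theta)=m(\pi,\sigma\otimes\theta^\vee)$ for $\sigma\in\Irr(G)$. This yields the dimension sandwich $\dim\pi/d\le\dim\sigma\le d\dim\pi$ for every irreducible constituent $\sigma$ of $\pi\otimes\theta$, and combining the lower half with the identity $\sum_\sigma m(\sigma,\pi\otimes\theta)\dim\sigma=d\dim\pi$ produces the total multiplicity bound $\sum_\sigma m(\sigma,\pi\otimes\theta)\le d^2$.

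With these bounds at hand, the four properties transfer in turn. For admissibility, given $K\le_\mathrm{o}G$ one selects an open $K'\le K$ acting trivially on $\theta$; then $(V_\rho\otimes U)^K\subseteq(V_\rho\otimes U)^{K'}=V_\rho^{K'}\otimes U$ is finite-dimensional. For strong admissibility, the expansion $m(\sigma,\rho\otimes\theta)=\sum_\pi m(\pi,\rho)\,m(\pi,\sigma\otimes\theta^\vee)$ involves only finitely many $\pi\le\sigma\otimes\theta^\vee$, each with finite multiplicity in $\rho$; moreover, irreducible constituents $\sigma$ of $\rho\otimes\theta$ with $\dim\sigma\le D$ arise only from $\pi$ with $\dim\pi\le dD$, of which there are finitely many. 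Bounded multiplicities drop out at once from the same expansion: $m(\sigma,\rho\otimes\theta)\le M\sum_\pi m(\pi,\sigma\otimes\theta^\vee)\le Md^2$.

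I expect the polynomial-growth and abscissa statement to be the main obstacle, since this is the only place where the total multiplicity bound is used quantitatively rather than just for finiteness. Here the dimension sandwich and the total multiplicity bound combine to yield the two-sided estimate
\[ \min(d^s,d^{-s})\,\zeta_\rho(s)\;\le\;\zeta_{\rho\otimes\theta}(s)\;\le\;d^{\,|s|+2}\,\zeta_\rho(s)\qquad(s\in\R), \]
with each side a Dirichlet series of non-negative terms. Invoking the standard fact that the abscissa of convergence of such a series is determined by its behaviour on the real axis, one concludes $\alpha(\rho)=\alpha(\rho\otimes\theta)$; in particular, polynomial strong admissibility is equivalent for $\rho$ and $\rho\otimes\theta$.
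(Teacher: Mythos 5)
Your proof is correct and follows essentially the same strategy as the paper: reduce the biconditional to one-way preservation via the fact that $\rho$ embeds as a summand of $(\rho\otimes\theta)\otimes\theta^\vee$, establish the multiplicity symmetry $m(\sigma,\pi\otimes\theta)=m(\pi,\sigma\otimes\theta^\vee)$ via Schur's lemma, and then combine the dimension sandwich $\dim\pi/d\le\dim\sigma\le d\dim\pi$ with multiplicity bookkeeping. A few packaging differences are worth noting. You isolate the total multiplicity bound $\sum_\sigma m(\sigma,\pi\otimes\theta)\le d^2$ (obtained from $\sum_\sigma m(\sigma,\pi\otimes\theta)\dim\sigma=d\dim\pi$ and the lower dimension bound) as a standalone estimate, which makes the bounded-multiplicities case a one-liner; the paper gets the same bound $M(\dim\theta)^2$ by a slightly longer inline manipulation. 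For admissibility you use a direct fixed-vector count (choosing $K'\le_\mathrm{o}K$ acting trivially on $\theta$ and noting $(V_\rho\otimes U)^K\subseteq V_\rho^{K'}\otimes U$), whereas the paper reads off finiteness of $m(\sigma,\rho\otimes\theta)$ from the finite expansion over $\pi\mid\sigma\otimes\theta^\vee$; both are fine. Finally, you prove the abscissa equality directly through a two-sided estimate $d^{-|s|}\zeta_\rho(s)\le\zeta_{\rho\otimes\theta}(s)\le d^{|s|+2}\zeta_\rho(s)$ on the real axis, so that $\alpha(\rho)=\alpha(\rho\otimes\theta)$ falls out without re-invoking the direct-summand reduction; the paper proves only the one-sided inequality $\alpha(\rho\otimes\theta)\le\alpha(\rho)$ and closes the loop with the reduction. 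Your two-sided version is a modest but genuine streamlining of that last step.
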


\begin{proof}
  Note that each property of type $\mathrm{(A)}$ is inherited by
  sub-representations.  Denoting by $\theta^\vee$ the contragredient
  representation of~$\theta$ acting on
  $V_{\theta^\vee} = V_\theta^\vee$, we observe that the trivial
  representation $\one_G$ occurs in $\theta \otimes \theta^\vee$.
  Hence $\rho$ is a sub-representation of
  $\rho \otimes \theta \otimes \theta^\vee$.  Consequently, it
  suffices to show:
  \begin{enumerate}[\rm (i)]
  \item if $\rho$ has a property of type $\mathrm{(A)}$ then
    $\rho\otimes\theta$ has the same property;
  \item if $\rho$ and $\rho \otimes \theta$ are polynomially strongly
    admissible then $\alpha(\rho\otimes\theta) \le \alpha(\rho)$.
  \end{enumerate}
 
  For smooth irreducible representations $\sigma, \pi \in \Irr(G)$, we recall that
  \[
  \Hom_G(V_\pi \otimes_\C V_\theta,V_\sigma) \cong
  \Hom_G(V_\pi,\Hom(V_\theta,V_\sigma)) \cong \Hom_G(V_\pi,,
  V_{\theta^\vee} \otimes_\C V_\sigma)
  \]
  as $\C$-vector spaces.  Taking dimensions, we deduce from
  Schur's lemma that the multiplicity of $\sigma$ in
  $\pi \otimes \theta$ and the multiplicity of $\pi$ in
  $\sigma \otimes \theta^\vee$ are the same:
  \begin{equation}\label{eq:multiplicityTwist}
    m(\sigma,\pi\otimes\theta) 
    = m(\pi, \sigma\otimes\theta^\vee). 
  \end{equation}
 
  Fix $\sigma \in \Irr(G)$ and, for $\pi \in \Irr(G)$, write
  $\pi \mid \sigma \otimes \theta^\vee$ to indicate that $\pi$ is a
  constituent of $\sigma \otimes \theta^\vee$, equivalently, that
  $m(\pi,\sigma \otimes \theta^\vee) > 0$ .  If $\rho$ is admissible,
  then \eqref{equ:rho-decomp} and \eqref{eq:multiplicityTwist} yield:
  \begin{equation} \label{equ:m-sigma-rhotau}
    m(\sigma,\rho\otimes\theta) = \sum_{\substack{\pi \in \Irr(G) \\
        \sigma \mid \pi \otimes \theta}} m(\pi, \rho) \,
    m(\sigma,\pi\otimes\theta) = \sum_{\substack{\pi \in \Irr(G) \\
        \pi \mid \sigma \otimes \theta^\vee}} m(\pi, \rho) \,
    m(\sigma,\pi\otimes\theta).
  \end{equation}
  Since $\sigma \otimes \theta^\vee$ and all $\pi \otimes \theta$ are
  finite-dimensional, this shows: if $\rho$ is admissible
  (respectively strongly admissible) then $\rho \otimes \theta$ is
  admissible (respectively strongly admissible).

  For all $\sigma, \pi \in \Irr(G)$ with
  $\sigma \mid \pi \otimes \theta$ we infer from
  \eqref{eq:multiplicityTwist} that
 \begin{equation} \label{equ:dim-inequ}
   \dim \sigma \le (\dim \pi) (\dim \theta) \qquad \text{and} \qquad
   \dim \pi \le (\dim \sigma) (\dim \theta). 
 \end{equation}
 If $\rho$ has multiplicities bounded by~$M$, then
 \eqref{equ:m-sigma-rhotau}, \eqref{eq:multiplicityTwist}, and the
 first inequality in \eqref{equ:dim-inequ} yield
  \begin{align*}
    m(\sigma,\rho\otimes\theta)
    & = \sum_{\substack{\pi \in \Irr(G) \\ \pi \mid \sigma \otimes
    \theta^\vee}} m(\pi, \rho) \, m(\sigma,\pi\otimes\theta) \\
    & \le M \sum_{\substack{\pi \in \Irr(G) \\ \pi \mid \sigma \otimes
    \theta^\vee}} m(\pi,\sigma \otimes \theta^\vee) (\dim
    \pi) \frac{\dim \theta}{\dim \sigma} \\ 
    & \le M (\dim \theta)^2,
  \end{align*}
  so that $\rho \otimes \theta$ has multiplicities bounded
  by~$M (\dim \theta)^2$.

  Finally, suppose that $\rho$ is polynomially strongly admissible, so
  that $\zeta_\rho(s)$ converges absolutely for
  $\mathrm{Re}(s) > \alpha(\rho)$.  Without loss of generality we may
  assume that $\rho$ is infinite-dimensional and consequently
  $\alpha(\rho) \ge 0$.  For all real $s > \alpha(\rho)$, we use
  \eqref{equ:m-sigma-rhotau} and the second inequality in
  \eqref{equ:dim-inequ} to obtain
 \begin{align*}
   \zeta_{\rho\otimes\theta}(s) & = \sum_{\sigma \in \Irr(G)}
                                  m(\sigma,\rho \otimes \theta)
                                  (\dim \sigma)^{-s} \\
                                & = \sum_{\pi\in \Irr(G)} m(\pi,\rho)
                                  \sum_{\substack{\sigma \in \Irr(G)
   \\ \sigma \mid \pi \otimes  \theta}} m(\sigma,\pi\otimes\theta)
   (\dim \sigma)^{-s} \\
                                & \le \sum_{\pi\in \Irr(G)}
                                  m(\pi,\rho) \sum_{\substack{\sigma \in \Irr(G)
   \\ \sigma \mid \pi \otimes  \theta}}
                                  m(\sigma,\pi\otimes\theta) (\dim \sigma)
                                  \frac{(\dim \theta)^{s+1}}{(\dim \pi)^{s+1}}\\ 
                                & = \sum_{\pi\in \Irr(G)} m(\pi,\rho)
                                  \frac{(\dim \theta)^{s+2}}{(\dim \pi)^{s}} \\
                                & = (\dim \theta)^{s+2} \zeta_\rho(s).
 \end{align*}
 Consequently, $\rho\otimes\theta$ is polynomially admissible and
 $\alpha(\rho\otimes\theta) \le \alpha(\rho)$.
\end{proof}

\begin{cor}
  Let $\rho$ be a smooth representation of a profinite group~$G$.  The
  properties of type $\mathrm{(A)}$ that hold for $\rho$ are common
  properties of the twist-similarity class of~$\rho$.  If $\rho$ is
  polynomially strongly admissible, then also the abscissa of convergence
  $\alpha(\rho)$ is an invariant of the twist-similarity class
  of~$\rho$.
\end{cor}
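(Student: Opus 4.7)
The plan is to derive the corollary by combining Proposition~\ref{prop:TensorInvariance} with the elementary observation that properties of type $\mathrm{(A)}$ are insensitive to alterations by finite-dimensional subrepresentations. Unwinding the definition, if $\rho$ and $\rho'$ are twist-similar then there exist non-zero finite-dimensional smooth representations $(\theta,U)$ and $(\theta',U')$ together with a $G$-homomorphism $f \colon V_\rho \otimes_\C U \to V_{\rho'} \otimes_\C U'$ whose kernel $K$ and cokernel $C$ are both finite-dimensional. This is the link I intend to exploit.

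Since every smooth representation of a profinite group is semisimple, one may split off $K$ and $C$ to obtain isomorphisms of smooth $G$-representations
\[
V_\rho \otimes_\C U \cong K \oplus \mathrm{im}(f) \qquad \text{and} \qquad V_{\rho'} \otimes_\C U' \cong \mathrm{im}(f) \oplus C.
\]
Consequently the multiplicities $m(\pi,\rho \otimes \theta)$ and $m(\pi,\rho' \otimes \theta')$ differ by at most $\dim K + \dim C$, uniformly in $\pi \in \Irr(G)$. From this estimate I read off directly that admissibility, strong admissibility, polynomial strong admissibility, and boundedness of multiplicities each transfer back and forth between $\rho \otimes \theta$ and $\rho' \otimes \theta'$; the only point that requires a line of verification is the polynomial case, where adding a finite-dimensional summand alters $R_d$ by $O(1)$ and therefore does not affect the bound $R_d = O(d^r)$ for any $r > 0$.

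It then remains to assemble the chain. Proposition~\ref{prop:TensorInvariance} asserts that, for any finite-dimensional smooth representation with positive dimension, a representation enjoys a given property of type $\mathrm{(A)}$ if and only if its twist does; this yields equivalences between the relevant properties for $\rho$ and $\rho \otimes \theta$, and between those for $\rho'$ and $\rho' \otimes \theta'$. Combined with the finite-dimensional comparison above, this proves that properties of type $\mathrm{(A)}$ are common to the entire twist-similarity class. For the abscissa, once the polynomially strongly admissible case is transported from $\rho$ to $\rho'$ by the first part, the difference $\zeta_{\rho \otimes \theta}(s) - \zeta_{\rho' \otimes \theta'}(s) = \zeta_K(s) - \zeta_C(s)$ is a finite Dirichlet polynomial and hence entire; therefore $\alpha(\rho \otimes \theta) = \alpha(\rho' \otimes \theta')$, and two applications of Proposition~\ref{prop:TensorInvariance} give $\alpha(\rho) = \alpha(\rho \otimes \theta) = \alpha(\rho' \otimes \theta') = \alpha(\rho')$. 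The argument is routine once Proposition~\ref{prop:TensorInvariance} is in hand; there is no substantial obstacle beyond the bookkeeping around the finite-dimensional correction terms.
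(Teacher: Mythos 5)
Your argument is correct and is exactly the step the paper leaves implicit: the corollary is stated without proof as an immediate consequence of Proposition~\ref{prop:TensorInvariance}, and you supply the missing link by using semisimplicity to split off the finite-dimensional kernel and cokernel, so that $\rho\otimes\theta$ and $\rho'\otimes\theta'$ have multiplicities differing by $O(1)$ uniformly and zeta functions differing by a finite Dirichlet polynomial. This handles all properties of type (A) and the abscissa of convergence correctly.
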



\subsection{Induction and restriction of smooth representations}
Let $H \le_\mathrm{c} G$ be a closed subgroup of a profinite
group~$G$.  Clearly, the restriction functor $\Res_H^G$ preserves
twist-similarity classes.  Hence, for any smooth representation $\rho$
of~$G$, the properties of type $\mathrm{(A)}$ of $\Res_H^G(\rho)$ only
depend on the twist-similarity class of~$\rho$.  The next proposition
shows that this conclusion also holds for the induction functor.

\begin{pro}\label{pro:InductionAndTwistSimilarity}
  Let $H \le_\mathrm{c} G$ be a closed subgroup of the profinite group
  $G$, and let $\sigma, \sigma'$ be twist-similar non-zero smooth
  representations of $H$.  If $\Ind_H^G(\sigma)$ has a property of
  type $\mathrm{(A)}$ then $\Ind_H^G(\sigma')$ has the same property.
  
  Furthermore, if $\Ind_H^G(\sigma)$ is polynomially strongly
  admissible then the abscissa of convergence
  $\alpha(\Ind_H^G(\sigma))$ depends on $\sigma$ only up to twist-similarity.
\end{pro}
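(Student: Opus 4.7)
The plan is to establish a key lemma: for any non-zero smooth $H$-representation $\sigma$ and any non-zero finite-dimensional smooth $H$-representation $\theta$, the induced $G$-representations $\Ind_H^G(\sigma)$ and $\Ind_H^G(\sigma\otimes\theta)$ share the same properties of type $\mathrm{(A)}$, with equal abscissae of convergence in the polynomially strongly admissible case. The lemma would be proved via two symmetric embeddings. Since $G$ is profinite and the smooth finite-dimensional $\theta$ factors through a finite continuous quotient $H/M$, one can find an open normal subgroup $N\triangleleft_{\mathrm{o}} G$ with $N\cap H\subseteq M$; inflating $\Ind_{H/(N\cap H)}^{G/N}(\theta)$ through $G\twoheadrightarrow G/N$ yields a finite-dimensional smooth $G$-representation $\Theta$, and Frobenius reciprocity on the finite quotients provides an $H$-embedding $\theta\hookrightarrow\Res_H^G(\Theta)$. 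Combined with the projection formula $\Ind_H^G(\sigma\otimes\Res_H^G(\Theta))\cong\Ind_H^G(\sigma)\otimes\Theta$, this produces an embedding $\Ind_H^G(\sigma\otimes\theta)\hookrightarrow\Ind_H^G(\sigma)\otimes\Theta$; Proposition~\ref{prop:TensorInvariance} then transfers any property of type $\mathrm{(A)}$ (and preserves the abscissa) from $\Ind_H^G(\sigma)$ to $\Ind_H^G(\sigma\otimes\theta)$. Exploiting $\mathbb{1}_H\hookrightarrow\theta\otimes\theta^\vee$ and repeating the construction with $\sigma\otimes\theta$ and $\theta^\vee$ in place of $\sigma$ and $\theta$ furnishes the reverse embedding $\Ind_H^G(\sigma)\hookrightarrow\Ind_H^G(\sigma\otimes\theta)\otimes\Theta'$, closing the loop.

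Given twist-similar non-zero $\sigma$, $\sigma'$ witnessed by $f\colon\sigma\otimes\theta\to\sigma'\otimes\theta'$, semisimplicity of smooth $H$-representations provides splittings $\sigma\otimes\theta\cong\ker f\oplus\mathrm{im}\,f$ and $\sigma'\otimes\theta'\cong\mathrm{im}\,f\oplus\mathrm{coker}\,f$. Applying the exact functor $\Ind_H^G$ yields corresponding direct sum decompositions of the induced representations. Under the hypothesis that $\Ind_H^G(\sigma)$ has a given property of type $\mathrm{(A)}$, the key lemma transfers this to $\Ind_H^G(\sigma\otimes\theta)$, and hence to each of its direct summands $\Ind_H^G(\ker f)$ and $\Ind_H^G(\mathrm{im}\,f)$. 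The crux is to show that $\Ind_H^G(\mathrm{coker}\,f)$ inherits the same property. If $\mathrm{coker}\,f=0$ this is immediate; otherwise, a bootstrap succeeds: picking any irreducible sub-representation $\mu\le\sigma$, which exists since $\sigma\ne0$ is semisimple, the embedding $\Ind_H^G(\mu)\hookrightarrow\Ind_H^G(\sigma)$ shows that $\Ind_H^G(\mu)$ has the property, and the key lemma applied to the pair $(\mathbb{1}_H,\mu)$ gives that $\Ind_H^G(\mathbb{1}_H)$ does; one further application of the key lemma, this time with $(\mathbb{1}_H,\mathrm{coker}\,f)$, produces the desired conclusion for $\Ind_H^G(\mathrm{coker}\,f)$.

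Assembling these pieces, $\Ind_H^G(\sigma'\otimes\theta')\cong\Ind_H^G(\mathrm{im}\,f)\oplus\Ind_H^G(\mathrm{coker}\,f)$ has the given property, and the key lemma transfers it back to $\Ind_H^G(\sigma')$. The claim about abscissae follows along the same chain of equalities: the key lemma preserves abscissae, direct summands commute with taking maxima of abscissae, and the identity $\alpha(\Ind_H^G(\ker f))=\alpha(\Ind_H^G(\mathbb{1}_H))=\alpha(\Ind_H^G(\mathrm{coker}\,f))$ ensures the two maxima agree. The hard part is the key lemma itself: the projection formula only relates induction to tensor product with \emph{$G$-representations}, while the twist by $\theta$ lives purely on the $H$-side, so one needs the inflation-induction construction to bridge the gap; this step relies essentially on $H$ being closed in the profinite group $G$ and on $\theta$ being smooth.
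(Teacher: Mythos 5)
Your proposal is correct and follows essentially the same route as the paper: you embed the twisting representation $\theta$ into the restriction of a finite-dimensional smooth $G$-representation $\Theta$ via inflation of an induced representation on a finite quotient, apply the projection formula $\Ind_H^G(\sigma\otimes\Res_H^G(\Theta))\cong\Ind_H^G(\sigma)\otimes\Theta$, invoke Proposition~\ref{prop:TensorInvariance}, and bootstrap through $\Ind_H^G(\mathbb{1}_H)$ (via an irreducible constituent of $\sigma$) to control the finite-dimensional direct summands. The paper organizes the same ingredients by reducing to the two special cases $\sigma'=\sigma\otimes\tau$ and $\sigma'=\sigma\oplus\tau$, whereas you package the tensor step as a symmetric key lemma and then work directly with the kernel/image/cokernel decomposition of the twist-similarity witness; the content is the same, and you supply details (the explicit construction of $\Theta$, the two-directional statement) that the paper leaves implicit.
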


\begin{proof}
  Properties of type $\mathrm{(A)}$ are inherited by
  sub-representations.  Hence, by an argument similar to the one
  starting the proof of Proposition~\ref{prop:TensorInvariance}, it
  suffices to consider the special cases (i)
  $\sigma' = \sigma \otimes \tau$ and (ii)
  $\sigma' = \sigma \oplus \tau$ for a finite-dimensional smooth
  representation $\tau$ of~$H$; furthermore, it is enough to show
  that, if $\Ind_H^G(\sigma)$ is polynomially strongly admissible in
  these cases, then
  $\alpha(\Ind_H^G(\sigma')) \le \alpha(\Ind_H^G(\sigma))$.

  First suppose that $\sigma' = \sigma \otimes \tau$.  We choose a
  finite-dimensional smooth representation $\theta$ of $G$
  such that $\tau$ injects into the restriction
  $\Res_H^G(\theta)$.  Then
  $\Ind_H^G(\sigma') = \Ind_H^G(\sigma \otimes \tau)$ injects into
  $\Ind_H^G(\sigma \otimes \Res_H^G(\theta)) \cong
  \Ind_H^G(\sigma) \otimes \theta$,
  and Proposition~\ref{prop:TensorInvariance} implies that, if
  $\Ind_H^G(\sigma)$ has a property of type $\mathrm{(A)}$, then
  $\Ind_H^G(\sigma \otimes \tau)$ has the same property.  Furthermore,
  if $\Ind_H^G(\sigma)$ is polynomially strongly admissible then
  $\alpha(\Ind_H^G(\sigma')) \le \alpha(\Ind_H^G(\sigma) \otimes
  \theta) = \alpha(\Ind_H^G(\sigma))$.

  Now suppose that $\sigma' = \sigma \oplus \tau$.  Choose a smooth
  irreducible constituent $\pi$ of~$\sigma$ and observe that $\tau$ is
  a sub-representation of $\sigma \otimes \pi^\vee \otimes \tau$.  If
  $\Ind_H^G(\sigma)$ has a property of type $\mathrm{(A)}$ then, by
  the argument above, the same property holds for
  $\Ind_H^G(\sigma \otimes \pi^\vee \otimes \tau)$ and hence for the
  sub-representation~$\Ind_H^G(\tau)$.  Furthermore, if
  $\Ind_H^G(\sigma)$ is polynomially strongly admissible then we
  obtain
  \begin{multline*}
    \alpha(\Ind_H^G(\sigma')) = \alpha(\Ind_H^G(\sigma) \oplus
    \Ind_H^G(\tau)) = \max \{ \alpha(\Ind_H^G(\sigma)),
    \alpha(\Ind_H^G(\tau)) \} \\
    \le \max \{ \alpha(\Ind_H^G(\sigma)), \alpha(\Ind_H^G(\sigma
    \otimes \pi^\vee \otimes \tau)) \} = \alpha(\Ind_H^G(\sigma)). \qedhere
  \end{multline*}
\end{proof}

The following result generalises~\cite[Cor.~2.3]{LuMa14}
and~\cite[Cor.~4.5]{LaLu08}.

\begin{pro}\label{pro:IndResFiniteIndex}
  Let $H \le_\mathrm{o} G$ be an open subgroup of the profinite
  group~$G$.  The functors $\Ind_H^G$ and $\Res_H^G$ preserve all the
  properties of type $\mathrm{(A)}$.  In addition, the functors
  preserve the abscissa of convergence for polynomially strongly
  admissible representations.
\end{pro}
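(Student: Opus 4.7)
The plan is to reduce every claim to Frobenius reciprocity in the finite-index form
$m(\pi,\Ind_H^G\tau) = m(\tau,\Res_H^G\pi)$ for $\pi \in \Irr(G)$ and $\tau \in \Irr(H)$,
combined with the two dimensional inequalities $\dim\tau \le \dim\pi \le n\,\dim\tau$, valid whenever this multiplicity is positive, where $n = [G:H] < \infty$. From $\sum_\pi m(\tau,\Res_H^G\pi)\,\dim\pi = \dim\Ind_H^G\tau = n\,\dim\tau$ together with $\dim\pi \ge \dim\tau$ I extract the Clifford-type bound $\sum_\pi m(\tau,\Res_H^G\pi) \le n$; symmetrically, $\sum_\tau m(\tau,\Res_H^G\pi)\,\dim\tau = \dim\pi$ and $\dim\tau \ge \dim\pi/n$ yield $\sum_\tau m(\tau,\Res_H^G\pi) \le n$.

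Admissibility is then immediate: every compact open subgroup of $H$ is a compact open subgroup of $G$, which handles $\Res_H^G$, and Proposition~\ref{pro:admi-pres-ind} handles $\Ind_H^G$. For the other properties of type~$(\mathrm{A})$ I would expand via Frobenius reciprocity,
\[ m(\tau,\Res_H^G\rho) = \sum_\pi m(\pi,\rho)\,m(\tau,\Res_H^G\pi), \qquad m(\pi,\Ind_H^G\sigma) = \sum_\tau m(\tau,\sigma)\,m(\tau,\Res_H^G\pi), \]
and then apply the Clifford bounds together with the dimensional inequalities to obtain
$R_d(\Res_H^G\rho) \le n\,R_{nd}(\rho)$ and $R_d(\Ind_H^G\sigma) \le n\,R_d(\sigma)$, as well as the analogous estimates $m(\tau,\Res_H^G\rho) \le n \sup_\pi m(\pi,\rho)$ and $m(\pi,\Ind_H^G\sigma) \le n \sup_\tau m(\tau,\sigma)$. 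These imply the preservation of strong admissibility, of polynomial strong admissibility (with no change in the polynomial degree), and of bounded multiplicities.

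For the abscissae of convergence, the same two identities combined with $(\dim\pi)^{-s} \le (\dim\tau)^{-s}$ and $(\dim\tau)^{-s} \le n^s(\dim\pi)^{-s}$, valid for real $s \ge 0$, yield termwise majorisations
$\zeta_{\Ind_H^G\sigma}(s) \le n\,\zeta_\sigma(s)$ and $\zeta_{\Res_H^G\rho}(s) \le n^{s+1}\,\zeta_\rho(s)$,
so $\alpha(\Ind_H^G\sigma) \le \alpha(\sigma)$ and $\alpha(\Res_H^G\rho) \le \alpha(\rho)$. The reverse inequalities come from the natural adjunction embeddings: the $G$-equivariant map $v \mapsto (g \mapsto \rho(g).v)$ realises $\rho$ as a sub-representation of $\Ind_H^G(\Res_H^G\rho)$, while the trivial double coset $HeH$ in Mackey's decomposition exhibits $\sigma$ as a direct summand of $\Res_H^G(\Ind_H^G\sigma)$. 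Since Dirichlet series of sub-representations are dominated pointwise, this yields $\alpha(\rho) \le \alpha(\Res_H^G\rho)$ and $\alpha(\sigma) \le \alpha(\Ind_H^G\sigma)$, closing the circle. The only non-trivial ingredient is the Clifford-type bound $\sum_\pi m(\tau,\Res_H^G\pi)\le n$; once it is in hand, everything reduces to routine Dirichlet-series bookkeeping, and the sharpness of this bound is exactly what is needed to transfer abscissae without any loss.
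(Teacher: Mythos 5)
Your argument is correct and follows essentially the same route as the paper: Frobenius reciprocity together with the two dimensional inequalities $\dim\tau\le\dim\pi\le[G:H]\dim\tau$, with the Clifford-type bound $\sum_\pi m(\tau,\Res_H^G\pi)\le[G:H]$ (and its counterpart for restriction) providing the key estimate. The only minor variant is at the very end: you obtain the reverse abscissa inequalities via the adjunction unit $\rho\hookrightarrow\Ind_H^G\Res_H^G\rho$ and the Mackey summand $\sigma\subseteq\Res_H^G\Ind_H^G\sigma$, whereas the paper closes the loop with the lower-bound halves of \eqref{equ:inequ1} and \eqref{equ:inequ2} directly; both are correct, and your version of the remaining type~$(\mathrm{A})$ estimates, which the paper leaves as ``similar arguments,'' is also right.
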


\begin{proof}
  We only discuss the polynomially strongly admissible case; the other
  properties follow from similar arguments based on Frobenius
  reciprocity.
   
  Let $\sigma \in \Irr(H)$ and $\pi \in \Irr(G)$ be smooth irreducible
  representations of $H$ and $G$ respectively.  If $\pi$ occurs in
  $\Ind_H^G(\sigma)$, or equivalently $\sigma$ occurs in
  $\Res_H^G(\pi)$, then
   \begin{equation*}
       \dim \sigma \le \dim \pi  \le  \lvert G : H \rvert  \dim \sigma.
   \end{equation*}
   For all $s \in \R_{\ge 0}$ this yields the inequalities
   \begin{align}
     \frac{1}{(\lvert G:H \rvert \dim \sigma)^{s}}
     \le & \sum_{\tilde\pi \in \Irr(G) }
           \frac{m(\tilde\pi,\Ind_H^G(\sigma))}{(\dim \tilde\pi)^s}  
           \le  \frac{\lvert G : H \rvert}{(\dim \sigma)^{s}},  \label{equ:inequ1}\\
     \frac{1}{(\dim \pi)^{s}}
     \le & \sum_{\tilde\sigma \in \Irr(H) }
           \frac{m(\tilde\sigma,\Res_H^G(\pi))}{(\dim \tilde\sigma)^s} 
           \le  \frac{\lvert G : H \rvert^{s+1}}{(\dim \pi)^{s}} \label{equ:inequ2}
   \end{align}

   Let $\theta$ be a polynomially strongly admissible
   representation of~$H$.  If $\theta$ is finite-dimensional so is
   $\Ind_H^G(\theta)$, and there is nothing further to show.  Suppose
   that $\theta$ has infinite dimension so that
   $\alpha(\theta) \ge 0$.  Using \eqref{equ:inequ1}, we deduce that
   \begin{equation*}
     \lvert G : H \vert^{-s} \, \zeta_\theta(s) \le
     \zeta_{\Ind_H^G(\theta)}(s) \le \lvert G : H \rvert \,
     \zeta_\theta(s) \qquad \text{ for all $s \in \R_{\ge 0}$ }
   \end{equation*}
   so that $\alpha(\theta) = \alpha(\Ind_H^G(\theta))$.  

   Similarly, the claim for the restriction functor follows
   from~\eqref{equ:inequ2}.
\end{proof}


\subsection{Strong admissibility of induced representations}
Let $H \le_\mathrm{c} G$ be a closed subgroup of a profinite
group~$G$.  As recorded in Proposition~\ref{pro:admi-pres-ind}, the
functor $\Ind_H^G$ preserves admissibility.  However, strong
admissibility is in general not preserved by induction.  For example,
the regular representation of a finitely generated profinite group $G$
is strongly admissible if and only if $G$ is FAb.  The latter means
that every open subgroup $K \le_\mathrm{o} G$ has finite
abelianisation; see Section~\ref{sec:basic-concepts}.  We introduce a
relative FAb-condition to deal with induced representations in
general.

\begin{dfn}
  We say that $G$ is \emph{FAb relative to} $H$ if for every open
  subgroup $K \le_\mathrm{o} G$ the abelian quotient  $K/(H\cap K) [K,K]$
  is finite.
\end{dfn}

\begin{rmk}
  \noindent (a) Note that $G$ is FAb if and only if it is FAb relative
  to the trivial subgroup.  In general, $G$ is FAb relative to a
  closed normal subgroup $N \trianglelefteq_\mathrm{c} G$ exactly if the group
  $G/N$ is FAb.

  \noindent(b) The group $G$ is FAb relative to $H$ if and only if
  for every open normal subgroup $K \trianglelefteq_\mathrm{o} G$ the
  index of $H[K,K]$ in $G$ is finite. 

  \noindent (c) Suppose that $H_1 \le_\mathrm{c} H_2 \le_\mathrm{c} G$
  are closed subgroups of~$G$.  Obviously, if $G$ is FAb relative to
  $H_1$ then $G$ is FAb relative to~$H_2$.  The converse holds if $H_1$
  is open in $H_2$, because $\lvert H_2[K,K] : H_1[K,K] \rvert \le
  \lvert H_2 : H_1 \rvert$ for every $K\trianglelefteq_\mathrm{o} G$.
\end{rmk}

We restate and prove Theorem~\ref{thm:A} from the introduction.

\begin{thm} \label{thm:rel-FAb-equiv}
  Let $H \le_\mathrm{c} G$ be a closed subgroup of a finitely
  generated profinite group $G$.  The following statements are
  equivalent.
  \begin{enumerate}[\rm (a)]
  \item\label{enu:Fab-rel} The group $G$ is FAb relative to $H$.
  \item\label{enu:ind-pres} The functor $\Ind_H^G$ preserves strong
    admissibility.
  \item\label{enu:ind-pres-1} The induced representation
    $\Ind_{H}^G(\one_H)$ is strongly admissible.
  \end{enumerate}
\end{thm}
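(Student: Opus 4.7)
The plan is to prove the three implications (b) $\Rightarrow$ (c), (c) $\Rightarrow$ (a) and (a) $\Rightarrow$ (b), the crux being an intermediate statement (a) $\Rightarrow$ (c): that $\Ind_H^G\one_H$ itself is strongly admissible whenever $G$ is FAb relative to~$H$. Direction (b) $\Rightarrow$ (c) is immediate on taking $\sigma = \one_H$, which is finite-dimensional and hence strongly admissible.

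For (c) $\Rightarrow$ (a), I argue contrapositively. If $G$ is not FAb relative to $H$ then, by remark~(b) above, some open normal $K \trianglelefteq_\mathrm{o} G$ has $K/(H\cap K)[K,K]$ infinite. The continuous characters of $K$ trivial on $H\cap K$ factor through this infinite abelian profinite quotient and hence form an infinite set; each appears, with multiplicity one by Frobenius reciprocity, in $\Ind_{H\cap K}^K \one_{H\cap K}$, which is therefore not strongly admissible. Mackey decomposition identifies the latter (at the double coset of the identity) as a direct summand of $\Res_K^G \Ind_H^G\one_H$, so Proposition~\ref{pro:IndResFiniteIndex} forces $\Ind_H^G\one_H$ itself to fail strong admissibility.

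For (a) $\Rightarrow$ (b), I decompose $\sigma \cong \bigoplus_\tau m_\tau \tau$ into irreducible $\tau \in \Irr(H)$, so that Frobenius reciprocity yields $R_d(\Ind_H^G\sigma) = \sum_{\dim\tau\le d} m_\tau R_d(\Ind_H^G\tau)$; the outer sum is finite by strong admissibility of $\sigma$. For each such finite-dimensional $\tau$, Frobenius gives some $\theta \in \Irr(G)$ with $\tau \hookrightarrow \Res_H^G\theta$; $\theta$ is finite-dimensional because $G$ is profinite, and the projection formula embeds $\Ind_H^G\tau \hookrightarrow \Ind_H^G\one_H \otimes \theta$. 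Proposition~\ref{prop:TensorInvariance} then reduces the claim to (a) $\Rightarrow$ (c).

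The heart of the proof is (a) $\Rightarrow$ (c), which I plan to tackle by combining Jordan's theorem with Clifford theory. Fix $d\ge 1$ and define $A \trianglelefteq_\mathrm{o} G$ to be the intersection of all open normal subgroups of $G$ of index at most $J(d)$, where $J(d)$ is Jordan's constant for finite subgroups of $\GL_d(\C)$; finite generation of $G$ makes $A$ itself open. For every $\pi \in \Irr(G)$ with $\dim\pi \le d$, Jordan applied to $\pi(G) \subseteq \GL_d(\C)$ produces some $A_\pi \trianglelefteq_\mathrm{o} G$ with $|G : A_\pi| \le J(d)$ and $[A_\pi, A_\pi] \subseteq \ker\pi$; since $A \subseteq A_\pi$, also $[A,A] \subseteq \ker\pi$, so every such $\pi$ factors through $G/[A,A]$. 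Clifford theory for the normal abelian subgroup $A/[A,A]$ with finite quotient $G/A$ then expresses any such $\pi$ with $\pi^H \neq 0$ as $\Ind_{G_\chi}^G \tilde\chi$ for some character $\chi \in \widehat{A/[A,A]}$ appearing in $\pi|_{A/[A,A]}$ that the existence of an $H$-fixed vector forces to be trivial on $(H\cap A)[A,A]/[A,A]$. Hypothesis~(a), applied to the open subgroup $A$, makes $A/(H\cap A)[A,A]$ finite, bounding the number of such $\chi$; for each $\chi$ the number of projective extensions $\tilde\chi$ is bounded by $|G_\chi/A| \le |G/A|$, and the trivial estimate $\dim\pi^H \le d$ completes the uniform finite bound on $R_d(\Ind_H^G\one_H)$. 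The hardest step is arranging a single $A$ that works uniformly in $\pi$, which is exactly where Jordan's theorem and the finite generation of $G$ enter in tandem.
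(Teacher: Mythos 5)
Your proposal is correct and carries the same essential weight as the paper's proof — the reduction to $\Ind_H^G(\one_H)$ and the application of Jordan's theorem in tandem with finite generation of~$G$ — but you organize and execute the implications differently, in a way that is worth comparing. For the core implication between~(a) and~(c) you argue \emph{directly}: fixing $d$, you construct $A$ as the intersection of open normal subgroups of index at most the Jordan constant, observe every $\pi$ of dimension $\le d$ factors through $G/[A,A]$, and then invoke Clifford theory over the finite abelian quotient $A/(H\cap A)[A,A]$ to bound the number of $H$-spherical $\pi$. The paper runs this \emph{contrapositively}: from failure of strong admissibility it extracts infinitely many $\pi_i$ of a common dimension, uses finite generation to pass to a subsequence sharing a single Jordan subgroup $A$, and shows each $\pi_i$ occurs in $\Ind_{H[A,A]}^G(\one)$, forcing $\lvert G : H[A,A]\rvert = \infty$; this sidesteps Clifford theory and the projective extension counting entirely. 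For the return implication, you prove (c)~$\Rightarrow$~(a) via Mackey decomposition of $\Res_K^G\Ind_H^G(\one_H)$ at the identity double coset plus Proposition~\ref{pro:IndResFiniteIndex}, whereas the paper proves (b)~$\Rightarrow$~(a) by exhibiting a specific finite-dimensional $\sigma = \Ind_{H\cap K}^H(\one)$ with $\Ind_H^G(\sigma)$ not strongly admissible. Finally, you derive (a)~$\Rightarrow$~(b) from (a)~$\Rightarrow$~(c) via the projection formula and Proposition~\ref{prop:TensorInvariance}, while the paper derives (c)~$\Rightarrow$~(b) from twist-similarity via Proposition~\ref{pro:InductionAndTwistSimilarity}. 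Both routes are sound; the paper's contrapositive treatment of (a)~$\Leftrightarrow$~(c) is somewhat leaner, since it avoids the Clifford-theoretic bookkeeping that your direct count requires.
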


\begin{proof}
  In order to prove that \eqref{enu:ind-pres-1}
  implies~\eqref{enu:ind-pres}, we suppose that $\Ind_{H}^G(\one_H)$
  is strongly admissible.  All finite-dimensional smooth
  representations of $H$ are twist-similar to one another, thus
  Proposition~\ref{pro:InductionAndTwistSimilarity} shows that
  $\Ind_{H}^G(\sigma)$ is strongly admissible for every
  finite-dimensional smooth representation $\sigma$ of~$H$.  Let
  $\theta$ be any strongly admissible representation of $H$.  For
  every $d\in \N$ we obtain
   \begin{equation*}
     R_d(\Ind_H^G(\theta)) = \sum_{\sigma \in \Irr(H)}
     m(\sigma,\theta) R_d(\Ind_H^G(\sigma)) < \infty, 
   \end{equation*}
   because $\theta$ and $\Ind_H^G(\sigma)$ are strongly admissible
   and, furthermore, $\dim(\sigma) >d$ implies
   $R_d(\Ind_H^G(\sigma)) = 0$.
  
   We prove that \eqref{enu:ind-pres} implies~\eqref{enu:Fab-rel} by
   contraposition.  Suppose that $G$ is not FAb relative to $H$.  Take
   an open normal $K\trianglelefteq_\mathrm{o} G$ such that for
   $L = (H\cap K) [K,K]$ the abelian quotient $K / L$ is infinite. The
   representation $\Ind_L^K(\one_L)$ is infinite-dimensional and
   decomposes into $1$-dimensional representations of $K$, each
   occurring with multiplicity~$1$.  In particular, using the
   embedding
   $\Ind_L^K(\one_L) \hookrightarrow \Ind_{H\cap K}^K(\one_{H\cap
     K})$,
   we conclude that $\Ind_{H\cap K}^K(\one_{H\cap K})$ contains an
   infinite number of distinct $1$-dimensional representations.  For
   the finite-dimensional representation
   $\sigma = \Ind_{K\cap H}^H(\one_{K \cap H})$, we conclude that
   $\Ind_{H}^G(\sigma) = \Ind_{H\cap K}^G(\one_{H\cap K})$ is not
   strongly admissible as it contains infinitely many irreducible
   constituents of dimension at most~$\lvert G:K \rvert$.

   Finally, we prove that \eqref{enu:Fab-rel}
   implies~\eqref{enu:ind-pres-1}, again by contraposition.  Suppose
   that $\Ind_{H}^G(\one_H)$ is not strongly admissible.  We find a
   positive integer $d$ and an infinite sequence
   $(\pi_i,V_i)_{i=1}^\infty$ of distinct $d$-dimensional smooth
   irreducible representations of $G$ which occur in
   $\Ind_H^G(\one_H)$.  For every $i \in \mathbb{N}$ there is a
   non-zero vector $v_i$ in the space $V_{i}$ which is fixed by $H$.
   Setting $N_i = \mathrm{ker}(\pi_i)$, we observe that $G/N_i$ is a
   finite subgroup of $\GL_d(\C)$.  By a classical theorem of Jordan
   (see \cite[(36.13)]{CuRe88} or
   \cite[Thm.~5.7]{Di71}) there exist $m = m(d) \in \mathbb{N}$
   and open normal subgroups $A_i \trianglelefteq_\mathrm{o} G$ of
   index at most $m$ so that $N_i \subseteq A_i$ and $A_i/N_i$ is
   abelian for all $i \in \mathbb{N}$.  As $G$ is finitely generated,
   it contains only a finite number of open subgroups of index at
   most~$m$.  Hence we find $A \trianglelefteq_\mathrm{o} G$ such that
   $I = \{ i \in \mathbb{N} \mid A = A_i \}$ is infinite.  For each
   $i\in I$ the stabiliser of $v_i$ in $G$ contains the group
   $H[A,A]$.  Therefore the distinct irreducible representations
   $(\pi_i)_{i \in I}$ all occur in the induced representation
   $\Ind_{H[A,A]}^G(\one_{H[A,A]})$.  In particular, this
   representation is not finite-dimensional and the index of
   $\lvert G : H[A,A] \rvert$ is infinite. We conclude that $G$ is not
   FAb relative to~$H$.
\end{proof}


\subsection{Polynomially strong admissibility and compact $p$-adic Lie groups}
A profinite group $G$ has polynomial representation growth, as defined
in~\cite{LaLu08}, if and only if the regular representation of $G$ is
polynomially strongly admissible;
see~Example~\ref{exa:conn-repr-zeta-functions}.  As yet no simple
characterisation of profinite groups of polynomial representation
growth is known, not even at a conjectural level.  The regular
representation is obtained by inducing the trivial representation from
the trivial subgroup; on that account we formulate the following more
general problem.

\begin{prob}
  Under what conditions on $H \le_\mathrm{c} G$ does the induction
  functor $\Ind_H^G$ preserve polynomially strong admissibility?
\end{prob}

The next result, which is Proposition~\ref{proABC:B} in the
introduction, and its proof generalise \cite[Prop.~2.7]{LuMa14}.  Let
$p$ be a prime.  We refer to \cite{DidSMaSe99} for the relevant
structure theory of compact $p$-adic analytic groups.

\begin{pro} \label{pro:ind-finite-dim-adm-rel-FAb}
  Let $H \le_\mathrm{c} G$ be a closed subgroup of a compact
  $p$-adic Lie group~$G$.  If $G$ is FAb relative to~$H$ then
  $\Ind_H^G(\sigma)$ is polynomially strongly admissible for every
  finite-dimensional smooth representation $\sigma$ of $H$.
\end{pro}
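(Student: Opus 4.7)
The plan is to reduce to the case $\sigma = \one_H$ and $G$ uniform pro-$p$, and then to bound $R_d(\Ind_H^G(\one_H))$ polynomially in $d$ by combining Jordan's theorem with the polynomial subgroup growth of $G$. Since every non-zero finite-dimensional smooth representation of $H$ is twist-similar to $\one_H$, Proposition~\ref{pro:InductionAndTwistSimilarity} reduces the claim to showing that $\rho = \Ind_H^G(\one_H)$ is polynomially strongly admissible (the case $\sigma = 0$ being trivial). To reduce further, I choose an open uniform pro-$p$ subgroup $U \le_\mathrm{o} G$ and apply the Mackey decomposition to write $\Res_U^G \rho$ as a finite direct sum of representations $\Ind_{U \cap gHg^{-1}}^U(\one)$, one for each double coset in $U \backslash G / H$. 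The FAb-relative condition is conjugation-invariant and passes from $G$ to $U$ with respect to $U \cap gHg^{-1}$, since any open $K \le_\mathrm{o} U$ is also open in $G$ and satisfies $(U \cap gHg^{-1}) \cap K = gHg^{-1} \cap K$. By Proposition~\ref{pro:IndResFiniteIndex}, polynomial strong admissibility is preserved under $\Ind_U^G$ and $\Res_U^G$, so relabelling one may assume that $G$ itself is uniform pro-$p$.

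Next, for fixed $d \in \N$, I consider $\pi \in \Irr(G)$ with $\dim \pi \le d$ and $V_\pi^H \ne 0$, and write $N_\pi = \ker \pi$. Jordan's theorem applied to the embedding $G/N_\pi \hookrightarrow \GL_d(\C)$ produces an open normal subgroup $A_\pi \trianglelefteq_\mathrm{o} G$ with $|G : A_\pi| \le m(d)$ and $[A_\pi, A_\pi] \subseteq N_\pi$, for some function $m(d)$ depending only on $d$. Since $[A_\pi, A_\pi]$ acts trivially on $V_\pi$, the space of $H$-fixed vectors coincides with the space of $H[A_\pi, A_\pi]$-fixed vectors, so $m(\pi, \rho) = m(\pi, \Ind_{H[A_\pi, A_\pi]}^G(\one))$. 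Grouping by the associated $A$ (tolerating overcounting) and summing yields
\begin{equation*}
  R_d(\rho) \le \sum_{\substack{A \trianglelefteq_\mathrm{o} G \\ |G:A| \le m(d)}} \sum_{\pi} m(\pi, \Ind_{H[A,A]}^G(\one)) \le \sum_{\substack{A \trianglelefteq_\mathrm{o} G \\ |G:A| \le m(d)}} |G : H[A,A]|,
\end{equation*}
where each summand is finite by the FAb-relative hypothesis (applied with $K = A$), and the total multiplicity in $\Ind_{H[A,A]}^G(\one)$ is bounded by its dimension $|G : H[A,A]|$.

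Concluding requires two quantitative inputs from the theory of compact $p$-adic analytic groups: (i) the polynomial subgroup growth of $G$ bounds the number of open normal subgroups of index at most $n$ by a polynomial in $n$; and (ii) a polynomial bound of the form $|G : H[A,A]| \le |G:A|^{c}$ for open normal $A \trianglelefteq_\mathrm{o} G$. The hardest part is (ii): here one exploits that in the uniform pro-$p$ setting $A/[A,A]$ is a finitely generated $\Z_p$-module of rank at most $\dim G$, that by the relative FAb-condition the image of $A \cap H$ in $A/[A,A]$ has finite index, and that this index can be controlled through the $p$-adic congruence filtration relating $A$ to $G$. Combined with a suitable bound on $m(d)$ valid in the pro-$p$ setting, these ingredients produce $R_d(\rho) = O(d^C)$ for some constant $C > 0$, thereby establishing polynomial strong admissibility.
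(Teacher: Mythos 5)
Your reduction to the case $\sigma = \one_H$ and $G$ a uniform pro-$p$ group is sound; the paper achieves the same reduction slightly more directly, by inducing $\one_{H^*}$ from an open uniform $H^* \le_\mathrm{o} H$ and applying transitivity of induction through an open uniform $G^* \le_\mathrm{o} G$ containing $H^*$, rather than via Mackey theory, but both routes are valid. The main body of your argument, however, contains two genuine gaps that you flag but do not close. First, the Jordan constant $m(d)$ for finite subgroups of $\GL_d(\C)$ grows super-polynomially (roughly like $d!$); you gesture at ``a suitable bound valid in the pro-$p$ setting'' without stating or citing one, and without a polynomial bound on $m(d)$ the appeal to polynomial subgroup growth of $G$ yields nothing. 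Second, and more seriously, your input (ii) --- a polynomial bound $\lvert G : H[A,A]\rvert \le \lvert G:A \rvert^c$ --- is asserted, not proved, and the phrase ``controlled through the $p$-adic congruence filtration'' points at exactly where all the actual work lies.

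Moreover, filling gap (ii) concretely makes the Jordan-theorem scaffolding superfluous. If $A \trianglelefteq_\mathrm{o} G$ has $\lvert G:A\rvert \le p^j$, then every $p^j$-th power lies in $A$, so $G^{p^j} \subseteq A$; writing $\fg = \log G$, $\fh = \log H$, the relative FAb hypothesis yields a fixed $r$ with $p^r\fg \subseteq \fh + [\fg,\fg]$, hence $H[A,A] \supseteq H[G^{p^j},G^{p^j}] \supseteq HG^{p^{2j+r}}$ and $\lvert G:H[A,A]\rvert \le \lvert \fg : \fh + p^{2j+r}\fg\rvert = O(p^{2j(\dim G - \dim H)})$. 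But the paper obtains this directly without Jordan's theorem: a uniform pro-$p$ group is monomial, so an irreducible $\eta$ of degree $\le p^n$ is $\Ind_K^G(\chi)$ for a linear character $\chi$ on an open $K$ of index $\le p^n$, whence $G^{p^n} \subseteq K$ and $[G^{p^n},G^{p^n}] \subseteq \ker\eta$; this identifies the single subgroup $A = G^{p^n}$ that works uniformly for all constituents of degree $\le p^n$, collapsing your sum over many $A$ to a single term and giving $R_{p^n}(\rho) \le Cp^{2(\dim G - \dim H)n}$ at once. Your structure is salvageable, but once made rigorous it reproduces the paper's Lie-lattice computation while carrying the unnecessary extra burden of a $p$-group Jordan bound.
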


\begin{proof}
  We may assume that $H$ and $G$ are uniformly powerful pro-$p$ groups
  and that $\sigma = \one_H$ is the trivial representation.  Indeed,
  there are open uniformly powerful pro-$p$ subgroups
  $H^* \le_\mathrm{o} H$ and $G^* \le_\mathrm{o} G$ such that
  $H^* \le G^*$.  By
  Proposition~\ref{pro:InductionAndTwistSimilarity}, it suffices to
  consider $\sigma = \Ind_{H^*}^H(\one_{H^*})$, thus we may assume
  that $H =H^*$ and $\sigma = \one_H$.  By
  Proposition~\ref{pro:IndResFiniteIndex}, we may assume further that
  $G = G^*$.  Clearly, the property of being relatively FAb is also
  inherited.
 
  Let $\fg = \log(G)$ and $\fh = \log(H)$ denote the powerful
  $\Z_p$-Lie lattices associated to $G$ and $H$.  The lower $p$-series
  of $G$, given by $G_1 = G$ and $G_n = (G_{n-1})^p [G_{n-1},G]$ for
  $n \ge 2$, satisfies $G_n = G^{p^{n-1}} = \exp(p^{n-1} \fg)$ for all
  $n \in \mathbb{N}$.  Since $G$ is FAb relative to~$H$, we find
  $r \in \N_0$ such that $G_{r+1} \subseteq H[G,G]$.  Observe that
  $[G,G]$ is powerfully embedded in~$G$, hence $H[G,G]$ is uniformly
  powerful and $\log(H[G,G]) = \fh + [\fg,\fg]$.  This yields
  $p^r \fg \subseteq \fh + [\fg, \fg]$.  For $n \in \N$ this implies
  that $p^{2n+r} \fg \subseteq \fh + [p^n\fg,p^n\fg]$, equivalently
  $G_{2n+r+1} \subseteq H [G_{n+1},G_{n+1}]$, using again that
  $[G_{n+1},G_{n+1}]$ is powerfully embedded in~$G$.
 
  For $n\in \N$ let $\psi_n$ denote the $G$-sub-representation of
  $\Ind_H^G(\one_H)$ spanned by all irreducible sub-representations of
  dimension at most $p^n$.  Let $(\eta,V_\eta) \in \Irr(G)$ be an
  irreducible constituent of~$\psi_n$.  As $G$ is monomial, there are
  an open subgroup $K \le_\mathrm{o} G$ and a linear character
  $\chi\colon K \to \C^\times$ such that $\eta = \Ind_K^G(\chi)$.  Note
  that $\lvert G : K \rvert = \dim \eta \le p^n$ and so
  $G_{n+1} = G^{p^{n}} \subseteq K$.
 
  The commutator group $[G_{n+1},G_{n+1}]$ lies in the kernel of
  $\eta$ and so every $H$-fixed vector in $V_\eta$ is also fixed by
  $[G_{n+1},G_{n+1}]$.  We conclude that $\eta$ occurs in
  $\Ind_{H [G_{n+1},G_{n+1}]}^G(\one_H)$ with the same multiplicity as
  in $\psi_n$.  Denoting by
  $\iso_\fg(\fh) = \fg \cap (\Q_p \otimes_{\Z_p} \fh)$ the isolator of
  $\fh$ in~$\fg$ (compare~\cite[\S 3]{GoKl09}), we obtain in total
 \begin{multline*}
   R_{p^n}(\Ind_H^G(\sigma)) \le \dim \psi_n  \le \lvert G : H
   G_{2n+r+1} \rvert \\ = \lvert \fg : \fh + p^{2n+r} \fg \rvert \le
   C p^{2(\dim(G)-\dim(H)) n}, 
 \end{multline*}
 where $C= p^{r(\dim(G) - \dim(H))} \lvert \iso_{\fg}(\fh) :
 \fh \rvert \in \mathbb{R}_{> 0}$ is independent of~$n$.
\end{proof}


\subsection{Rationality}
This section contains some basic observations concerning the
rationality of zeta functions associated to induced representations.
Following~\cite{GoJaKl14}, we say that a Dirichlet series $\zeta(s)$
is \emph{rational with respect to a prime $p$} if it has non-empty
domain of convergence and admits a meromorphic continuation of the
form
\begin{equation*} 
  \zeta(s) = \sum^r_{i=1} m_i^{-s}
  F_i(p^{-s}), 
\end{equation*} 
for finitely many suitable positive integers $m_1,\dots, m_r$ and
rational functions $F_1,\dots, F_r \in \Q(X)$.  In
Section~\ref{sec:rationality-etc} we show that the zeta functions of
induced representations of potent pro-$p$ groups are rational with
respect to~$p$.  Further examples for rationality appear in
Proposition~\ref{prop:formulaInductionTrees}.  We do not know whether
this is a general phenomenon for compact $p$-adic Lie groups; it
remains an open problem to generalise the results of
Jaikin-Zapirain~\cite{Ja06}.

\begin{pro}\label{pro:vanishing-result}
  Let $G$ be virtually a pro-$p$ group and let $H \leq_\mathrm{c} G$
  be a closed subgroup of infinite index.  Let $\theta$ be a
  finite-dimensional smooth representation of $H$ and suppose that the
  induced representation $\rho = \Ind_H^G(\theta)$ is polynomially
  strongly admissible.  If the zeta function $\zeta_\rho$ is rational
  with respect to $p$, then
  \[
  \zeta_\rho(-1) = 0.
  \]
\end{pro}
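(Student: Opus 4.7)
The plan is to reduce to the case where $G$ is itself pro-$p$ and then exploit the specific integral structure of the zeta function. Let $K \trianglelefteq_\mathrm{o} G$ be an open normal pro-$p$ subgroup. Mackey's decomposition gives
\[
\Res_K^G(\rho) = \bigoplus_{g\in K\backslash G/H} \Ind_{K\cap gHg^{-1}}^K(\theta^g),
\]
where each summand is induced from a closed subgroup of infinite index in $K$ (since $[K:K\cap gHg^{-1}]\le [G:K]\cdot[G:H]$ forces infinite index whenever $[G:H]=\infty$). By Proposition~\ref{pro:IndResFiniteIndex} the restriction preserves polynomial strong admissibility and rationality with respect to $p$, and relates the zeta functions by a factor that is a non-vanishing polynomial in $p^{-s}$ at $s=-1$. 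We may therefore assume $G$ is pro-$p$, in which case every irreducible constituent of $\rho$ has $p$-power dimension, so $\zeta_\rho(s)=F(p^{-s})$ for a single $F\in\Q(X)$, and the claim becomes $F(p)=0$.

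Formally, $F(p)=\sum_\pi m(\pi,\rho)\dim\pi = \dim V_\rho = \infty$, so the claim is genuinely about rational-function regularisation. The key observation, prompted by the integral formula of Proposition~\ref{propABC:integrals}, is that $\zeta_\rho(s)$ should admit a decomposition of shape
\[
\zeta_\rho(s) = C \sum_{\ell\in\Z} p^{-\ell(m+1)} I_\ell(s),
\]
where $m+1=\dim\fg-\dim\fh>0$ is the codimension, $C$ is a nonzero normalising factor, and the $I_\ell(s)$ are Igusa-type integrals over a compact region. At $s=-1$ the integrand of each $I_\ell$ equals $1$, so the $I_\ell(-1)$ collapse to a common finite measure, and the outer bidirectional geometric series vanishes meromorphically via the identity
\[
\sum_{\ell\ge 0} p^{-\ell(m+1)} + \sum_{\ell\ge 1} p^{\ell(m+1)} = \frac{1}{1-p^{-(m+1)}} + \frac{p^{m+1}}{1-p^{m+1}} = 0.
\]
This mirrors the classical ``trivial zero'' phenomenon for Igusa local zeta functions integrated over unbounded domains, and would yield $F(p)=0$ at once.

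The main obstacle is that Proposition~\ref{propABC:integrals} is proved under the uniform pro-$p$ hypotheses of Section~\ref{sec:ind-uniform-groups}, whereas the present statement requires only the abstract rationality. I would attempt to derive the bidirectional decomposition directly from the rationality hypothesis: writing $F(x)=P(x)/Q(x)$ in reduced form, the polynomial growth bound $r_{p^k}(\rho)=O(p^{k\alpha(\rho)})$ localises the poles of $F$ inside $|x|<p^{\alpha(\rho)}$, so $F(p)$ is well-defined by analytic continuation, and one must exhibit the factor $(p-x)$ in $P(x)$. The infinite index $[G:H]=\infty$ should manifest itself precisely through the codimension $m+1>0$ and hence through a nontrivial geometric series in the variable $p$. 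A possible route is to partition the constituents of $\rho$ via the finite quotients $G/K_n$ for a descending chain $K_n\trianglelefteq_\mathrm{o} G$ with $\bigcap_n K_n=\{1\}$, expressing $F(x)$ as a telescoping sum whose partial sums $\dim V_\rho^{K_n}=|H\backslash G/K_n|\cdot\dim\theta^{H\cap K_n}$ grow in a controlled way determined by the codimension; the required cancellation at $x=p$ should then emerge from the asymmetry between positive and negative levels in this telescoping.
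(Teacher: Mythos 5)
Your final paragraph does hit the central idea of the paper's proof: use a descending chain of open normal subgroups $K_n$ and examine the partial sums $\dim V_\rho^{K_n} = \lvert G : HK_n\rvert\cdot\dim\theta^{H\cap K_n}$, which grow without bound because $\lvert G:H\rvert$ is infinite. But the proposal never completes this argument, and the earlier parts are wrong turns.

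\textbf{The reduction via Mackey is not sound.} You assert that restriction to an open pro-$p$ subgroup $K$ ``relates the zeta functions by a factor that is a non-vanishing polynomial in $p^{-s}$ at $s=-1$.'' This is false. Proposition~\ref{pro:IndResFiniteIndex} gives two-sided \emph{bounds} between $\zeta_\rho(s)$ and $\zeta_{\Res_K^G(\rho)}(s)$, not an exact multiplicative relation; restriction reshuffles dimensions and multiplicities in a way that does not preserve rational-function identities, and indeed Example~\ref{exa:no-rationality} in the paper shows that restriction to an open subgroup can turn a non-rational zeta function into a rational one and vice versa. The paper does not reduce to the pro-$p$ case at all; it works directly with the virtually pro-$p$ hypothesis, using only that all irreducible dimensions have the form $m_i p^k$ for finitely many $m_1,\dots,m_r$ (so that the relevant series converges $p$-adically).

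\textbf{The integral-formula route and the ``bidirectional geometric series'' is a red herring}, as you recognise yourself: Proposition~\ref{propABC:integrals} requires the specific potent pro-$p$ hypotheses of Section~\ref{sec:ind-uniform-groups} and is not available under the abstract rationality hypothesis here.

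\textbf{The decisive missing ingredient is the $p$-adic argument.} The statement $\zeta_\rho(-1)=0$ is, in the paper, a consequence of the general principle from~\cite{GoJaKl14}: if $\zeta(s)$ is rational with respect to $p$ and the formal evaluation $\sum_n r_n(\rho)\,n$ converges to $0$ in $\Z_p$, then the rational continuation vanishes at $s=-1$. Once this is in place, the proof is exactly your telescoping idea, read $p$-adically: the partial sums $S_i = \dim\theta\cdot\lvert G:HN_i\rvert$ (after shrinking $N_i$ so that $\theta$ is trivial on $H\cap N_i$, and using $K=\ker\rho\subseteq H$ to ensure $\bigcap_i HN_i = H$) are integers divisible by ever higher powers of $p$ because $\lvert G:H\rvert=\infty$ and $G$ is virtually pro-$p$, hence $S_i\to 0$ in $\Z_p$. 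Your framing --- hunting for a factor $(p-x)$ in the numerator of $F$, or looking for ``cancellation from asymmetry between positive and negative levels'' --- never invokes this $p$-adic convergence and therefore cannot close the argument. The right viewpoint is not a cancellation inside a rational function but a convergence statement in $\Z_p$ that feeds the \cite{GoJaKl14} machinery.

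In short: discard the first two thirds, replace the telescoping heuristic with the $p$-adic limit, and cite the \cite{GoJaKl14} rational-continuation lemma to turn ``$\sum r_n(\rho)\,n = 0$ in $\Z_p$'' into ``$\zeta_\rho(-1)=0$.''
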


\begin{proof}
  Suppose that $\zeta_\rho$ is rational with respect to $p$, and
  recall that, for $n \in \N$, we denote by $r_n(\rho)$ the number of
  $n$-dimensional irreducible constituents of~$\rho$.  By the general
  argument used~\cite[Proof of Theorem~1]{GoJaKl14} it is sufficient
  to show that the series $\sum_{n=1}^\infty r_n(\rho) n$ converges to
  $0$ in~$\Z_p$.

  Since $G$ is virtually pro-$p$, there are finitely many positive
  integers $m_1,\dots, m_r$ such that every irreducible representation
  of $G$ has dimension $m_ip^k$ for some $i \in \{1,\dots,r\}$ and
  some $k\in \N_0$.  Indeed, every irreducible representation factors
  through some finite continuous quotient $G/N$ and the dimension
  divides the order of $G/N$; see~\cite[Theorem~3.11]{Is94}.  We
  deduce that the series
  \begin{equation}\label{eq:series-p-adic}
    \sum_{n=1}^\infty r_n(\rho) n  = \sum_{\pi \in \Irr(G)} m(\pi,\rho) \dim \pi
  \end{equation}  
  converges in~$\Z_p$; in addition, an elementary argument from
  $p$-adic analysis implies the limit is independent of the order of
  summation.

  The kernel $K \trianglelefteq_\mathrm{c} G$ of $\rho$ is the
  intersection of the open kernels of the countably many irreducible
  constituents of $\rho$.  In particular, there is a decreasing
  sequence $(N_i)_{i\in\N}$ of open normal subgroups
  $N_i \trianglelefteq_\mathrm{o} G$ with
  $\bigcap_{i \in \N} N_i = K$.  From the definition of
  $\Ind_H^G(\theta)$ we observe that $K \subseteq H$ and thus
  $H = H (\bigcap_{i\in\N} N_i) = \bigcap_{i \in \N} H N_i$.  By
  shrinking the subgroups $N_i$ further, if necessary, we may assume
  that $\theta$ factors through each $H\cap N_i$.
  
  Consider the series~\eqref{eq:series-p-adic} and its partial sums
  \[
  S_i = \sum_{\substack{\pi \in \Irr(G)\\ N_i \subseteq \ker(\pi)}}
  m(\pi,\rho) \dim\pi, \qquad i \in \N,
  \]
  taken over the irreducible representations of $G$ which factor
  through $G/N_i$.  The right-hand side equals the dimension of the
  space $V_\rho^{\, N_i}$ of $N_i$-invariants which, as a
  representation of $G/N_i$, is canonically isomorphic to
  $\Ind_{HN_i/ N_i}^{G/N_i}(\theta)$.  In particular, we see that
  $S_i = \dim(\theta) \lvert G/N_i : HN_i/N_i \rvert = \dim(\theta)
  \lvert G:HN_i \rvert$.
  Since $\lvert G:H \rvert$ is infinite and $G$ is virtually pro-$p$,
  we conclude that
  $\sum_{n=1}^\infty r_n(\rho) n = \lim_{i\to \infty} S_i = 0$ in the
  $p$-adic integers~$\Z_p$.
\end{proof}

\begin{exa} \label{exa:no-rationality} While induction from and
  restriction to an open subgroup preserve properties of type
  $\mathrm{(A)}$, these functors can substantially alter rationality
  properties of the associated zeta functions.  Therefore our
  rationality results for zeta functions of induced representations of
  potent pro-$p$ groups in Section~\ref{sec:rationality-etc} (compare
  Remark~\ref{rmk:not-global}) do not automatically extend to general
  compact $p$-adic Lie groups.

  To illustrate the underlying issue we construct, for every prime $p$,
  \emph{a compact $p$-adic Lie group $G$, an open uniformly powerful
    pro-$p$ subgroup $H \leq_o G$ of index $2$, and a polynomially
    strongly admissible representation $\rho$ of $G$ such that
    $\zeta_{\Res_H^G(\rho)}(s)$ is a rational function in
    $\mathbb{Q}(p^{-s})$, whereas $\zeta_\rho(s)$ cannot be expressed
    as a rational function in
    $\mathbb{Q}(\{n^{-s} \mid n \in
    \mathbb{N}\})$}.

  Let $U$ be a uniformly powerful pro-$p$ group which has an
  irreducible representation $\eta_n$ of degree $p^n$ for every
  integer $n\geq 0$.  For instance, take $U$ to be the $3$-dimensional
  $p$-adic Heisenberg group.  Define $H = U \times \mathbb{Z}_p$ and
  $G = H \rtimes C_2$, where the generator $\tau$ of $C_2$ acts as
  the identity on $U$ and by inversion on~$\mathbb{Z}_p$.  Fix a
  $1$-dimensional irreducible representation $\chi$ of $\mathbb{Z}_p$
  such that $\chi(-t) \neq \chi(t)$ for some $t\in\mathbb{Z}_p$.
  
  Consider the irreducible $p^n$-dimensional representations
  $\pi_n = \eta_n \otimes \one$ and $\sigma_n = \eta_n \otimes \chi$
  of $H$.  Observe that the representation $\pi_n$ is
  $\tau$-invariant, thus $\pi_n$ extends to an irreducible
  representation $\alpha_n$ of $G$.  However, the construction gives
  $\sigma_n \neq \sigma_n^\tau$ and thus yields an irreducible
  representation $\beta_n$ of $G$ of degree $2p^n$ such that
  $\Res_H^G(\beta_n) = \sigma_n \oplus \sigma^{\tau}_n$.
  
  Choose a sequence $(m_n)_{n\in \mathbb{N}} \in \{0,1\}^{\mathbb{N}}$ which is not eventually
  periodic.  The smooth representation
  \begin{equation*}
    \rho = \bigoplus_{n=0}^\infty 2(1-m_n) \alpha_n \oplus
    \bigoplus_{n=0}^\infty m_n \beta_n  
  \end{equation*}
  of $G$ is polynomially strongly admissible.
  By construction, we have
  \begin{equation*}
    \Res_H^G(\rho) = \bigoplus_{n=0}^\infty 2(1-m_n) \pi_n \oplus
    \bigoplus_{n=0}^\infty m_n \sigma_n \oplus \bigoplus_{n=0}^\infty
    m_n \sigma^\tau_n, 
  \end{equation*}
   and a short calculation yields
   \begin{equation*}
      \zeta_{\Res_H^G(\rho)}(s) = \sum_{n=0}^\infty 2 p^{-ns} = \frac{2}{1-p^{-s}}.
   \end{equation*}
   However, the Dirichlet series defining the zeta function of the
   representation $\rho$ converges at $1$ and evaluates to
   \begin{equation*}
     \zeta_\rho(1) = \sum_{n=0}^\infty (2-2m_n)p^{-n} + m_n
     (2p^{n})^{-1} = \frac{2}{1-p^{-1}} - \frac{3}{2}
     \sum_{n=0}^\infty m_n p^{-n}. 
   \end{equation*}
   The sequence $(m_n)_{n\in\mathbb{N}}$ was chosen so that the number
   $\sum_{n=0}^\infty m_n p^{-n}$ is irrational, and we conclude that
   $\zeta_\rho(s)$ cannot be expressed by as a rational function in
   $\{n^{-s} \mid n \in \mathbb{N}\}$ with coefficients in
   $\mathbb{Q}$.
\end{exa}

\begin{rmk}
  The construction in Example~\ref{exa:no-rationality} can easily be
  adapted to produce, for each odd prime~$p$, a representation of a
  $p$-adic analytic pro-$p$ group with similar properties.  Indeed,
  choose $U$ as before and consider $G = H \rtimes C_p$, where
  $H = U \times \mathbb{Z}_p[z] \cong U \times \mathbb{Z}_p^{\, p-1}$
  for a primitive $p$th root of unity $z$ and the generator $\tau$ of
  $C_p$ acts as the identity on $U$ and by multiplication by $z$
  on~$\mathbb{Z}_p[z]$.  In a similar fashion as before, one can
  define a polynomially strongly admissible representation $\rho$ of
  $G$, encoding a sequence
  $(m_n)_{n \in \mathbb{N}} \in \{0,1\}^\mathbb{N}$ which is not
  eventually periodic.  The representation $\rho$ has the same
  features as its counterpart in Example~\ref{exa:no-rationality}, by
  the same reasoning.
\end{rmk}


\section{Induced representations of potent
  pro-$p$ groups via~the~orbit~method} \label{sec:ind-uniform-groups}

Throughout this section $p$ denotes a prime.  Every $p$-adic Lie group
contains a compact open pro-$p$ group that is uniformly powerful; we
refer to~\cite{DidSMaSe99} for the general theory of $p$-adic analytic
groups.  A pro-$p$ group $G$ is called \emph{potent} if
$[G,G] \subseteq G^4$ for $p=2$ and $\gamma_{p-1}(G) \subseteq G^p$
for $p>2$; there is an analogous definition for $\Z_p$-Lie lattices.
Finitely generated torsion-free potent pro-$p$ groups are a natural
generalisation of uniformly powerful pro-$p$ groups, and for all such
groups the Kirillov orbit method provides a powerful tool for handling
the characters of smooth irreducible representations;
see~\cite{GoJa04,Go09}.

In this section we describe the representation zeta functions of
induced representations for finitely generated torsion-free potent
pro-$p$ groups generalising the approach used, for instance, in
\cite{AvKlOnVo12,AvKlOnVo13,AvKlOnVo16a}.  There is a bijective
correspondence between isomorphism classes of smooth irreducible
representations of a profinite group $G$ and the corresponding
irreducible complex characters.  We will interpret elements of
$\Irr(G)$ in a flexible way as isomorphism classes of representations
or characters, as befits the situation.
  

\subsection{Potent pro-$p$ groups} Let $G$ be a finitely generated
torsion-free potent pro-$p$ group. Then $G$ is saturable (in the sense
of Lazard) and we denote by $\fg = \log(G)$ the associated potent
$\Z_p$-Lie lattice; compare~\cite{Kl05,Go07}.  We make repeatedly use
of the following basic lemma.

\begin{lem} \label{lem:Haar-measure} Let $G$ be a finitely generated
  torsion-free potent pro-$p$ group with associated potent
  $\Z_p$-Lie lattice~$\fg$.  Then the logarithm map
  $\log \colon G \rightarrow \fg$ transforms the multiplicative Haar
  measure on $G$ to the additive Haar measure on~$\fg$.
\end{lem}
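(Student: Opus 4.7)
My plan is to compare the multiplicative Haar measure $\mu_G$ on $G$ with the additive Haar measure $\mu_\fg$ on $\fg$ by computing both on a natural filtration of clopen basic sets, exploiting the saturability of~$G$.  The argument adapts to the potent setting the classical computation for uniformly powerful pro-$p$ groups.

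First I would set up the relevant filtration.  Writing $d = \dim_{\Z_p}\fg$, I consider for each $n \ge 1$ the set $G_n := \exp(p^{n-1}\fg)$.  Saturability of $G$---which follows from the potency assumption by the results recalled in \cite{Kl05,Go07}---guarantees that $G_n$ is an open subgroup of $G$ and that the Baker--Campbell--Hausdorff series
\[
X \ast Y = X + Y + \tfrac{1}{2}[X, Y] + \cdots
\]
converges on $\fg \times \fg$ and satisfies $\log(gh) = \log(g) \ast \log(h)$ for all $g,h \in G$.

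The crucial technical step is to establish the coset correspondence
\[
\log(g G_n) = \log(g) + p^{n-1}\fg \qquad \text{for all } g \in G \text{ and } n \ge 1.
\]
For ``$\subseteq$'' I would fix $h \in G_n$, set $Y = \log(h) \in p^{n-1}\fg$, and observe that every term of $\log(g) \ast Y$ past the initial summand $\log(g)$ contains at least one factor of $Y$ and hence lies in $p^{n-1}\fg$, because $\fg$ is a $\Z_p$-Lie lattice.  For the reverse inclusion I would pick any $Z \in p^{n-1}\fg$, put $g' := \exp(\log(g) + Z)$, and verify $\log(g^{-1}g') \in p^{n-1}\fg$ by the same BCH bookkeeping, concluding $g' \in gG_n$.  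As an immediate by-product, $\log$ descends to a bijection $G/G_n \to \fg/p^{n-1}\fg$, so $\lvert G : G_n \rvert = p^{d(n-1)}$.

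With the coset correspondence in hand, the conclusion is measure-theoretic formality.  Normalising $\mu_G$ and $\mu_\fg$ so that $\mu_G(G) = \mu_\fg(\fg) = 1$, both measures assign the mass $p^{-d(n-1)}$ to every coset of $G_n$ in $G$, respectively of $p^{n-1}\fg$ in $\fg$.  Since the cosets $\{gG_n\}_{g,n}$ form a clopen basis for the topology of $G$ (and analogously for $\fg$), and since $\log$ matches these two bases bijectively while preserving measure, the push-forward $\log_\ast \mu_G$ agrees with $\mu_\fg$ on the generated Borel $\sigma$-algebra, and hence everywhere.  The main obstacle is the BCH analysis underpinning the first two steps: one needs simultaneous convergence and $\Z_p$-integrality of the BCH coefficients evaluated on $\fg \times \fg$, which for $p = 2$ is precisely what forces the stronger potency hypothesis $[G, G] \subseteq G^4$.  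Rather than redoing this analysis, I would invoke it as a black box from the saturability framework of~\cite{Kl05,Go07}.
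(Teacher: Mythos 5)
Your proposal is correct and follows essentially the same route as the paper: both reduce to showing that $\log$ matches multiplicative cosets of a neighbourhood base of open (powerfully embedded) subgroups with the corresponding additive cosets of the logarithm sublattices, after which equality of the two Haar measures is formal. The paper simply compresses your explicit BCH bookkeeping for the filtration $G_n = \exp(p^{n-1}\fg)$ into a citation of the same coset-correspondence result (\cite[Cor.~6.38]{DidSMaSe99}, \cite[Lem.~4.4]{Go07}), whereas you spell out the $p$-adic integrality considerations that underlie it.
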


\begin{proof}
  It suffices to verify that the measures of cosets of open subgroups
  forming a base of neighbourhoods of $1$ in $G$ are preserved under
  the logarithm map.  This follows from the fact that the
  multiplicative cosets $xN$ of any open powerfully embedded normal 
  subgroup $N \trianglelefteq_\mathrm{o} G$ are mapped to the additive
  cosets $\log(x) + \fn$ of the associated $\Z_p$-Lie
  sublattice $\fn \le \fg$; see the argument in
  \cite[Cor.~6.38]{DidSMaSe99} or \cite[Lem.~4.4]{Go07}.
\end{proof}

The \emph{adjoint action} of $G$ on
$\fg$ is related to conjugation in $G$ via
\[ {}^g X = \log( g \exp(X) g^{-1}) \qquad \text{for $g \in G$ and $X
  \in \fg$.}
\]
We fix an isomorphism
\begin{equation}\label{equ:Qp/Zp-iso}
  \Q_p/\Z_p \rightarrow \mu_{p^\infty}(\C), \quad
  \overline{a} = a + \Z_p \mapsto \overline{a}_\C,
\end{equation}
for instance, by decreeing that
$\overline{p^{-m}}_\C = e^{2 \pi i p^{-m}}$ for $m \in \N$.  In this
way the Pontryagin dual of the additive compact group $\fg$ can be
realised as $\fg^\vee = \fg_{\Z_p}^\vee = \Hom(\fg,\Q_p/\Z_p)$.  We
denote the neutral element of $\fg^\vee$, i.e.\ the zero map, by~$0$.

The \emph{co-adjoint action} of $G$ on $\fg^\vee$ is given by
\[
({}^g \omega)(X) = \omega({}^{g^{-1}} X) \qquad \text{for $g \in G$,
  $\omega \in \fg^\vee$ and $X \in \fg$.}
\]
At the level of Lie lattices there is a corresponding \emph{co-adjoint
  action} of $\fg$ on $\fg^\vee$ given by
\[
(Y.\omega)(X) = \omega([X,Y]) \qquad \text{for $X, Y \in \fg$ and
  $\omega \in \fg^\vee$.}
\]

The \emph{Kirillov orbit method} for $p$-adic analytic pro-$p$ groups
yields a bijective correspondence
\[
G \backslash \fg^\vee \rightarrow \Irr(G), \quad G.\omega
\mapsto \chi_\omega
\]
between the collection
$G \backslash \fg^\vee = \{ G.\omega \mid \omega \in \fg^\vee \}$ of
co-adjoint orbits and the set $\Irr(G)$ of irreducible complex
characters of~$G$.  If $p$ is odd, the correspondence is canonical and
can be made explicit via the formula
\begin{equation}\label{eq:formulaKirillovOrbitMethod}
  \chi_\omega(x) = \lvert G.\omega \rvert^{-\nicefrac{1}{2}}
  \sum_{\widetilde{\omega} \in G.\omega} \widetilde{\omega}(\log(x))_\C
  \qquad \text{for $\omega \in \fg^\vee$ and $x \in G$,}
\end{equation}
where we make use of the isomorphism~\eqref{equ:Qp/Zp-iso};
see~\cite{Go09}.  If $p=2$, the
expression~\eqref{eq:formulaKirillovOrbitMethod} for $\chi_\omega(x)$
remains valid whenever $x$ lies in the open subgroup~$G^2$, but does
not hold in general; see~\cite[Thm.~2.12]{Ja06}.  For
$\omega \in \fg^\vee$ we write $\pi_\omega$ for a representation of
$G$ affording $\chi_\omega$ so that
\[
\dim \pi_\omega = \chi_\omega(1) = \lvert G.\omega \rvert^{\nicefrac{1}{2}} =
\lvert G : \Stab_G(\omega) \rvert^{\nicefrac{1}{2}}.
\]
There is a useful Lie-theoretic description of the stabiliser
$\Stab_G(\omega)$.  The \emph{stabiliser} of $\omega$ in
$\fg$ under the co-adjoint action is
\[
\stab_\fg(\omega) = \{ Y \in \fg \mid Y.\omega = 0 \},
\]
which can also be interpreted as the radical of an alternating
bilinear form associated to~$\omega$.  It is a fact that
$\log(\Stab_G(\omega)) = \stab_\fg(\omega)$ (see
\cite[Lem.~2.3]{Ja06}) and hence
$\lvert G : \Stab_G(\omega) \rvert = \lvert \fg : \stab_\fg(\omega)
\rvert$.

Now consider a (finitely generated torsion-free) potent subgroup
$H \le_\mathrm{c} G$ and its associated potent $\Z_p$-Lie
lattice~$\fh \le \fg$.  The inclusion map
$i_\fh^\fg \colon \fh \rightarrow \fg$ induces a surjective
restriction map $r_\fh^\fg \colon \fg^\vee \rightarrow \fh^\vee$.  For
$\eta \in \fh^\vee$ we consider the $r_\fh^\fg$-fibres over elements
of the co-adjoint orbit $H.\eta$.  For $\omega \in \fg^\vee$ we define
the \emph{orbit intersection number}
\[
\IN(\omega,\eta) = \lvert G.\omega \cap
(r_\fh^\fg)^{-1}(H.\eta) \rvert.
\]
With this terminology we obtain the following application of the
Kirillov orbit method to induced representations for all odd primes.

\begin{pro} \label{pro:multiplicities} Suppose that~$p>2$.  Let $G$ be
  a finitely generated torison-free potent pro-$p$ group, with
  associated $\Z_p$\nobreakdash-Lie lattice $\fg$, and
  $H \le_\mathrm{c} G$ a potent subgroup, with associated
  $\Z_p$\nobreakdash-Lie lattice $\fh$.  Let $\omega \in \fg^\vee$ and
  $\eta \in \fh^\vee$.  Then the multiplicity of $\pi_\omega$ in the
  induced representation $\Ind_H^G(\pi_\eta)$ is given by the formula
  \[
  m \left( \pi_\omega,\Ind_H^G(\pi_\eta) \right) =
  \frac{\IN(\omega,\eta)}{\lvert G.\omega \rvert^{\nicefrac{1}{2}} \, \lvert
    H.\eta \rvert^{\nicefrac{1}{2}}}.
  \]
\end{pro}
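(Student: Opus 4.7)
My plan is to prove the formula via Frobenius reciprocity combined with the Kirillov character formula, and then convert the resulting character inner product into an orbit intersection count by pulling everything back to the Lie lattices via the logarithm map.

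First I would invoke Frobenius reciprocity for smooth representations of profinite groups. Since $\pi_\omega$ is finite-dimensional and the induction functor is adjoint to restriction, one has
\[
m\bigl(\pi_\omega, \Ind_H^G(\pi_\eta)\bigr) = \dim \Hom_H\bigl(\Res_H^G(\pi_\omega), \pi_\eta\bigr) = \langle \chi_\omega|_H, \chi_\eta \rangle_H,
\]
where the inner product on the right is the usual inner product of characters of the compact group~$H$, taken with respect to the normalised Haar measure. This reduces the computation to the restriction~$\chi_\omega|_H$, which is a \emph{finite} sum over $G.\omega$ against~$\chi_\eta$ on~$H$.

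Next, because $p>2$, I would apply the Kirillov character formula~\eqref{eq:formulaKirillovOrbitMethod} to both $\chi_\omega$ and~$\chi_\eta$: for $x\in H$,
\[
\chi_\omega(x)\,\overline{\chi_\eta(x)} = |G.\omega|^{-\nicefrac{1}{2}}\,|H.\eta|^{-\nicefrac{1}{2}} \sum_{\tilde\omega\in G.\omega}\sum_{\tilde\eta\in H.\eta} \tilde\omega(\log x)_\C\,\overline{\tilde\eta(\log x)_\C}.
\]
Using Lemma~\ref{lem:Haar-measure}, the map $\log\colon H\to\fh$ is measure-preserving (after matching the Haar normalisations on $H$ and $\fh$), so the inner product becomes a sum of integrals over $\fh$ of the additive characters $X \mapsto (r_\fh^\fg(\tilde\omega) - \tilde\eta)(X)_\C$, with $\tilde\omega$ running over $G.\omega$ and $\tilde\eta$ over $H.\eta$.

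The key step is then Pontryagin orthogonality on the compact abelian group~$\fh$: the integral of a non-trivial character of $\fh$ with respect to the normalised Haar measure vanishes, and equals $1$ when the character is trivial. Hence only those pairs $(\tilde\omega,\tilde\eta)$ with $r_\fh^\fg(\tilde\omega) = \tilde\eta$ contribute, each with weight~$1$. For each such pair $\tilde\eta$ is determined by $\tilde\omega$, and the admissible $\tilde\omega$ are precisely those in $G.\omega \cap (r_\fh^\fg)^{-1}(H.\eta)$. The number of such $\tilde\omega$ equals $\IN(\omega,\eta)$, which yields the claimed formula.

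The main obstacle I anticipate is purely bookkeeping: one must ensure that the normalisations of the Haar measures on $H$ and on $\fh$ are compatible under Lemma~\ref{lem:Haar-measure} so that the character orthogonality on $\fh^\vee$ translates to the correct normalisation of the inner product on~$H$. A secondary point is confirming that $\chi_\omega|_H$ may be expressed by the Kirillov formula on all of $H$; this uses the hypothesis $p>2$ (so that formula~\eqref{eq:formulaKirillovOrbitMethod} is valid throughout $G$, and a fortiori on~$H$), which is precisely why the statement is restricted to odd primes.
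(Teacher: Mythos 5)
Your proof is correct and follows essentially the same route as the paper: Frobenius reciprocity to reduce to the inner product $\langle \Res_H^G(\chi_\omega), \chi_\eta\rangle_H$, then the Kirillov formula~\eqref{eq:formulaKirillovOrbitMethod} together with Lemma~\ref{lem:Haar-measure} to transfer the integral from $H$ to $\fh$, and finally Pontryagin orthogonality on $\fh$ to count the surviving pairs and identify the result as $\IN(\omega,\eta)$. The only cosmetic difference is in ordering: the paper first packages the orthogonality step as a standalone identity expressing $\IN(\omega,\eta)$ as the double sum of integrals, and then runs the chain of equalities, whereas you interleave the two.
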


\begin{proof}
  The orthogonality relations for irreducible characters of $\fh$
  yield
  \[
  \IN(\omega,\eta) = \sum_{\widetilde{\omega} \in G.\omega}
  \sum_{\widetilde{\eta} \in H.\eta} \int_\fh \widetilde{\omega}(Y)_\C
  \, \overline{\widetilde{\eta}(Y)_\C} \,\di\mu_\fh(Y).
  \]
  Frobenius reciprocity and Lemma~\ref{lem:Haar-measure} thus give
  \begin{align*}
    m \left( \pi_\omega, \Ind_H^G(\pi_\eta) \right) & = \langle
    \chi_\omega, \Ind_H^G(\chi_\eta) \rangle \\
    & = \langle \Res_H^G(\chi_\omega), \chi_\eta \rangle \\
    & = \int_H \chi_\omega(y) \overline{\chi_\eta(y)}
    \,\di\mu_H(y) \\
    & = \lvert G.\omega \rvert^{-\nicefrac{1}{2}} \lvert H.\eta \rvert^{-\nicefrac{1}{2}}
    \sum_{\widetilde{\omega} \in G.\omega} \sum_{\widetilde{\eta} \in
      H.\eta} \int_\fh \widetilde{\omega}(Y)_\C \,
    \overline{\widetilde{\eta}(Y)_\C} \,\di\mu_\fh(Y) \\
    & = \lvert G.\omega \rvert^{-\nicefrac{1}{2}} \lvert H.\eta \rvert^{-\nicefrac{1}{2}}
    \IN(\omega,\eta). \qedhere
  \end{align*}
\end{proof}

We now specialise to the situation where we induce the trivial
representation~$\mathbb{1}_H$ from a subgroup $H$ to~$G$.  We stress
that the next result is valid also for $p=2$.

\begin{pro} \label{pro:zeta-algebraic-formula} Let $G$ be a finitely
  generated torsion-free potent pro-$p$ group, with associated
  $\Z_p$\nobreakdash-Lie lattice $\fg$, and $H \le_\mathrm{c} G$ a
  potent subgroup, with associated $\Z_p$\nobreakdash-Lie lattice
  $\fh$.  The zeta function of $\rho = \Ind_H^G(\mathbb{1}_H)$ is
  given by
  \[
  \zeta_\rho(s) = \sum_{\substack{\omega \in \fg^\vee \\
      r_\fh^\fg(\omega) = 0}} \left( \lvert
    \fg : \stab_\fg(\omega) \rvert^{\nicefrac{1}{2}}
  \right)^{-1-s}.
  \]
\end{pro}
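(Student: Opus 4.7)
The plan is to read off the multiplicities $m(\pi_\omega,\rho)$ from the Kirillov orbit method and then re-package the orbit sum defining $\zeta_\rho$ as a sum over individual elements of $\fg^\vee$ that vanish on~$\fh$. For odd $p$ the multiplicity formula is an immediate specialisation of Proposition~\ref{pro:multiplicities}: set $\eta = 0 \in \fh^\vee$, observe that the zero form is $H$-fixed so that $H.0 = \{0\}$ and $\lvert H.0 \rvert = 1$, and note that the orbit intersection number collapses to
\[
\IN(\omega,0) = \lvert G.\omega \cap (r_\fh^\fg)^{-1}(0) \rvert.
\]
Proposition~\ref{pro:multiplicities} then yields
\[
m(\pi_\omega,\rho) = \frac{\lvert G.\omega \cap (r_\fh^\fg)^{-1}(0) \rvert}{\lvert G.\omega \rvert^{\nicefrac{1}{2}}}.
\]

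With this in hand, I would combine it with the Kirillov dimension formula $\dim \pi_\omega = \lvert G.\omega \rvert^{\nicefrac{1}{2}} = \lvert \fg : \stab_\fg(\omega) \rvert^{\nicefrac{1}{2}}$ (valid as recorded before Proposition~\ref{pro:multiplicities} via the identity $\log(\Stab_G(\omega)) = \stab_\fg(\omega)$) and rewrite the Dirichlet sum, switching the order of summation so that each $\omega \in \fg^\vee$ contributes to the orbit it belongs to:
\begin{align*}
  \zeta_\rho(s)
  &= \sum_{G.\omega \in G\backslash \fg^\vee} m(\pi_\omega,\rho) \, (\dim \pi_\omega)^{-s} \\
  &= \sum_{G.\omega} \frac{\lvert G.\omega \cap (r_\fh^\fg)^{-1}(0) \rvert}{\lvert G.\omega \rvert^{\nicefrac{1}{2}}} \, \lvert G.\omega \rvert^{-\nicefrac{s}{2}} \\
  &= \sum_{G.\omega} \sum_{\omega' \in G.\omega \cap (r_\fh^\fg)^{-1}(0)} \lvert G.\omega \rvert^{-(1+s)/2} \\
  &= \sum_{\substack{\omega \in \fg^\vee \\ r_\fh^\fg(\omega) = 0}} \bigl( \lvert \fg : \stab_\fg(\omega) \rvert^{\nicefrac{1}{2}} \bigr)^{-1-s},
\end{align*}
using that $\lvert G.\omega \rvert$ is constant along each orbit and that the orbits partition the kernel of $r_\fh^\fg$. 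This delivers the stated identity for $p > 2$.

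The main obstacle is the case $p = 2$, where formula~\eqref{eq:formulaKirillovOrbitMethod} holds only on the open subgroup~$G^2$ and Proposition~\ref{pro:multiplicities} is not available as stated. Here I would not compute the multiplicity of a general $\pi_\omega$ in a general $\Ind_H^G(\pi_\eta)$, but exploit the special feature that we are inducing the trivial character. By Frobenius reciprocity and Lemma~\ref{lem:Haar-measure},
\[
m(\pi_\omega,\rho) = \int_H \chi_\omega(y) \,\di\mu_H(y),
\]
and the plan is to reduce this integral to one over $H \cap G^2$ (or a slightly enlarged subgroup on which the Kirillov formula is known to be valid), where the computation proceeds exactly as above. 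Concretely, by decomposing $H$ into the finitely many cosets of $H \cap G^2$ in~$H$ and translating the integrand via the corresponding $G$-conjugation, one can express each contribution in terms of $\int_{\fh \cap 2\fg} \widetilde\omega(Y)_\C \,\di\mu(Y)$, use the orthogonality relations to collect only the summands with $\widetilde\omega$ vanishing on $\fh$, and verify that the coset twists cancel because $\one_H$ is genuinely trivial. Once this reduction is in place, the orbit-to-element bookkeeping in the displayed calculation above applies uniformly, and the proposition follows for both parities of~$p$.
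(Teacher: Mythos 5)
Your argument for $p>2$ is correct and coincides exactly with the paper's: specialise Proposition~\ref{pro:multiplicities} to $\eta = 0$, use $\dim \pi_\omega = \lvert G.\omega\rvert^{1/2} = \lvert \fg : \stab_\fg(\omega)\rvert^{1/2}$, and convert the sum over orbits into a sum over individual forms $\omega$ with $r_\fh^\fg(\omega)=0$. That part is fine.

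The $p=2$ case is where there is a genuine gap. You propose to write $m(\pi_\omega,\rho) = \int_H \chi_\omega(y)\,\di\mu_H(y)$, decompose $H$ into cosets of $H\cap G^2$, and ``translate the integrand via $G$-conjugation'' so that everything reduces to an integral over $\fh\cap 2\fg$. This does not work: the Kirillov formula~\eqref{eq:formulaKirillovOrbitMethod} is valid only for arguments lying in $G^2$, and neither left translation by a coset representative $h\in H\smallsetminus G^2$ nor $G$-conjugation moves $hy'$ (with $y'\in H\cap G^2$) into $G^2$. So the contributions from the non-trivial cosets are simply not accessible via the orbit sum. Your claim that ``the coset twists cancel because $\one_H$ is trivial'' is not justified --- restriction of $\pi_\omega$ to $H$ is not governed by its restriction to $H\cap G^2$ in any simple twist-cancelling way. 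There is a second, independent problem: even if one could localise the integral to $H\cap G^2$, orthogonality over $\fh\cap 2\fg$ would pick out forms $\widetilde\omega$ vanishing on $\fh\cap 2\fg$, which is strictly weaker than vanishing on $\fh$, so spurious terms would remain uncontrolled.

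The paper resolves $p=2$ by a different device: it aggregates irreducible characters into $G^2$-equivalence classes, i.e.\ orbits under the group $Z$ of linear characters factoring through $G/G^2$. For the aggregated character $\xi_\omega = \sum_{\theta \sim \chi_\omega}\theta$ there \emph{is} a Kirillov-type formula valid on all of $G$ (both sides are $Z$-invariant, hence vanish off $G^2$, and agree on $G^2$ by~\eqref{eq:formulaKirillovOrbitMethod}). Since all members of a $G^2$-equivalence class share the same degree, the Dirichlet series can be recomputed entirely in terms of the $(G\times Z)$-orbits in $\fg^\vee$, and the desired identity drops out by the same orbit-counting as for odd $p$. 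If you want to keep your structure, you should replace your sketched coset-decomposition step with this class-aggregation argument; as written, the $p=2$ step is not a proof.
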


\begin{proof}
  First suppose that $p$ is an odd prime.  Applying
  Proposition~\ref{pro:multiplicities} to $\eta = 0$, hence
  $\pi_\eta = \pi_0 = \mathbb{1}_H$ and $H.\eta = \{0\}$, this is a
  simple computation:
  \begin{equation}\label{eq:calculationOrbitMethod}
    \begin{split}   
      \zeta_\rho(s) & = \sum_{\omega \in \fg^\vee} \lvert G.\omega
      \rvert^{-1} \, m \left( \pi_\omega,\Ind_H^G(\pi_0)
      \right) \, \lvert G.\omega \rvert^{-s/2} \\
      & = \sum_{\omega \in \fg^\vee} \frac{\lvert G.\omega \cap
        (r_\fh^\fg)^{-1}(\{0\}) \rvert}{\lvert G.\omega \rvert} \,
      \left( \lvert G.\omega
        \rvert^{\nicefrac{1}{2}} \right)^{-1-s} \\
      & = \sum_{\substack{\omega \in \fg^\vee \\
          r_\fh^\fg(\omega) = 0}} \left( \lvert \fg :
        \stab_\fg(\omega) \rvert^{\nicefrac{1}{2}} \right)^{-1-s}.
    \end{split}     
 \end{equation}
  
  Now suppose that~$p=2$.  In this case the Kirillov orbit method
  involves a choice, which makes it difficult to establish a possible
  analogue of Proposition~\ref{pro:multiplicities}.  However, we
  obtain a canonical correspondence by considering suitable
  equivalence classes of irreducible characters.
  
  We say that $\theta, \widetilde{\theta} \in \Irr(G)$ are
  \emph{$G^2$-equivalent} if they restrict to the same character
  on~$G^2$, i.e.,
  $\theta_{\vert G^2} = \widetilde{\theta}_{\vert G^2}$.  Denote by
  $Z$ the group of linear characters of $G$ that factor through the
  elementary abelian quotient~$G/G^2$.  The group $Z$ acts on
  $\Irr(G)$ by multiplication and, as $G/G^2$ is abelian, the
  $G^2$-equivalence classes are exactly the $Z$-orbits in~$\Irr(G)$.
  For $\omega \in \fg^\vee$ let $\xi_\omega$ denote the sum of all
  $\theta \in \Irr(G)$ that are $G^2$-equivalent to $\chi_\omega$,
  i.e.\ the character associated to the $G^2$-equivalence class
  of~$\chi_\omega$, and let $\sigma_\omega$ denote a representation
  affording~$\xi_\omega$.
  
  For $\omega_1,\omega_2 \in \fg^\vee$ the characters
  $\chi_{\omega_1}$ and $\chi_{\omega_2}$ are $G^2$-equivalent if and
  only if there are $g\in G$ and $\tau \in Z$ such that
  $^g\omega_1 \, \tau = \omega_2$, equivalently if
  $\omega_1, \omega_2$ lie in the same orbit under the indicated
  action of the direct product $G \times Z$ on~$\fg^\vee$.  As
  explained in~\cite{Ja06}, the Kirillov orbit method yields a
  bijective correspondence
  \begin{equation*}
      (G\times Z).\omega \mapsto \xi_\omega
  \end{equation*}
  between the $(G\times Z)$-orbits in $\fg^\vee$ and the characters
  associated to $G^2$-equivalence classes in $\Irr(G)$.  Moreover, the
  formula
   \begin{equation*}
     \xi_\omega(x) =  \lvert G.\omega \rvert^{-\nicefrac{1}{2}}
     \sum_{\widetilde{\omega} \in (G\times Z).\omega} \widetilde{\omega}(\log(x))_\C
   \end{equation*}
   holds for all $x \in G$.  Indeed, both functions are $Z$-invariant
   and hence vanish on $G \smallsetminus G^2$; furthermore, by
   \eqref{eq:formulaKirillovOrbitMethod} the functions agree on~$G^2$.
   Since all characters in a $G^2$-equivalence class have the same
   degree, a slight modification of the argument
   in~\eqref{eq:calculationOrbitMethod} suffices to complete the
   proof: replace the terms $\lvert G.\omega \rvert^{-1}$,
   $\pi_\omega$, and
   $\frac{\lvert G.\omega \cap (r_\fh^\fg)^{-1}(\{0\}) \rvert}{\lvert
     G.\omega \rvert}$
   by $\lvert(G\times Z).\omega \rvert^{-1}$, $\sigma_\omega$, and
   $\frac{\lvert (G\times Z).\omega \cap (r_\fh^\fg)^{-1}(\{0\})
     \rvert}{\lvert (G\times Z).\omega \rvert}$ respectively.
\end{proof}


\subsection{$\fo$-Lie lattices} \label{sec:LieLattices} Let
$\fo$ be a compact discrete valuation ring of
characteristic~$0$, residue characteristic~$p$, and residue field
cardinality~$q$.  Fix a uniformiser $\pi$ so that the valuation ideal
of~$\fo$ takes the form $\Ip = \pi \fo$.
Let $\ff$ denote the fraction field of~$\fo$, a
finite extension of~$\Q_p$.  We denote by $e(\ff,\Q_p)$ the
absolute ramification index of~$\ff$.

Let $\fg$ be an $\fo$-Lie lattice.  We take interest in the family of
finitely generated torsion-free potent pro-$p$ groups $G$ that arise
as $G = \exp(\pi^r \fg)$ for all sufficiently large $r \in \N_0$.

The codifferent of $\ff$ over $\Q_p$ is the
fractional ideal
\[
\{ x \in \ff \mid \forall y \in \fo \colon
\mathrm{Tr}_{\ff \vert \Q_p}(xy) \in \Z_p \}.
\]
It lies in the kernel of the non-trivial character
\[
\ff \xrightarrow{\mathrm{Tr}_{\ff \vert \Q_p}} \Q_p
\rightarrow \Q_p / \Z_p \xrightarrow{\cong} \mu_{p^\infty}(\C)
\]
of the additive group $\ff$.  The different
$\mathfrak{D}_{\ff \vert \Q_p}$ is the inverse of the
codifferent and thus an ideal of $\fo$, say
$\Ip^\delta$, where $\delta = \delta(\ff) \in \N_0$
satisfies $\delta \ge e(\ff, \Q_p) -1$ with equality if and
only if $e(\ff, \Q_p)$ is prime to~$p$.  Using
$[\pi^{-\delta} \cdot] \colon \ff \rightarrow \ff$,
$a \mapsto \pi^{-\delta}a$, we obtain a map
\[
\Hom_\fo(\fg, \ff) \rightarrow
\Hom(\fg, \Q_p), \quad \psi \mapsto \mathrm{Tr}_{\ff
  \vert \Q_p} \circ [\pi^{-\delta} \cdot] \circ \psi
\]
that induces a non-canonical isomorphism
\[
\fg^\vee = \fg_\fo^\vee =
\Hom_\fo(\fg, \ff/\fo)
\xrightarrow{\cong} \Hom(\fg, \Q_p/\Z_p).
\]

Consider $\omega \in \fg^\vee$ and choose $w \in
\Hom_\fo(\fg, \ff)$ such that $\omega(X) =
w(X) + \fo$ for $X \in \fg$.  The alternating
$\fo$-bilinear form
\[
A_w \colon \fg \times \fg \rightarrow \ff,
\quad A_w(X,Y) = w([X,Y])
\]
induces the alternating $\fo$-bilinear form 
\[
A_\omega \colon \fg \times \fg \rightarrow
\ff/\fo, \quad A_\omega(X,Y) = \omega([X,Y]) =
A_w(X,Y) + \fo.
\]
The $\fo$-submodule $\stab_\fg(\omega)$ is equal to
the radical of $A_\omega$, which can be described in terms of $A_w$ as
\[
\rad(A_\omega) = \{ Y \in \fg \mid \forall X \in \fg
: A_w(X,Y) \in \fo \},
\]
and the index $\lvert \fg : \stab_\fg(\omega) \rvert$ can be expressed
in terms of the invariant factors of the $\fo$-submodule
$\rad(A_\omega)$ of $\fg$.  The latter are closely related to the
Pfaffians of the alternating $\fo$-bilinear form~$A_w$, which are
defined as follows.  Let $n = \dim_\fo \fg$.  For
$0 \le k \le \lfloor \nicefrac{n}{2} \rfloor$, the \emph{degree-$k$
  Pfaffian} of $A_w$ is the fractional ideal $\Pfaff_k(w)$ of $\ff$
generated by the elements
\[
\frac{1}{2^k k!} \sum_{\sigma \in \mathrm{Sym}(2k)}
\mathrm{sgn}(\sigma) \prod_{j=1}^k
A_w(Y_{(2j-1)\sigma},Y_{(2j)\sigma}) = \raisebox{1.2ex}{\text{``}} \sqrt{\det
  \big( (A_w(Y_i,Y_j)_{1 \le i,j \le 2k}) \big)}
\,\raisebox{1.2ex}{\text{''}},
\]
where $Y_1,\ldots,Y_{2k}$ run through spanning sets of
$\fo$-submodules of dimension $2k$ in~$\fg$.  As indicated the
elements generating the degree-$k$ Pfaffian can be regarded as square
roots of suitable determinants (up to an irrelevant choice of sign).

%

If $\mathbf{Y} = (Y_1,\ldots,Y_n)$ is any $\fo$-basis of
$\fg$, then the structure matrix
$[A_w]_\mathbf{Y} \in \Mat_n(\ff)$ of $A_w$ with respect to
$\mathbf{Y}$ is alternating.  Furthermore, its elementary divisors
$\pi^{\nu_1(w)}, \ldots, \pi^{\nu_n(w)}$, where
$\nu_1(w), \ldots, \nu_n(w) \in \Z \cup \{\infty\}$ with
$\nu_1(w) \le \ldots \le \nu_n(w)$, come in pairs:
$\nu_{2j-1}(w) = \nu_{2j}(w)$ for
$1 \le j \le \lfloor \nicefrac{n}{2} \rfloor$.  Finally, we see that
\[
\Pfaff_k(w) = \Ip^{\sum_{j=1}^k \nu_{2j-1}(w)} \quad \text{for
  $1 \le k \le \lfloor \nicefrac{n}{2} \rfloor$,}
\]
and this leads to
\[
\lvert \fg : \stab_\fg(\omega) \rvert^{\nicefrac{1}{2}} =
q^{\frac{1}{2}\sum_{i=1}^n \max \{-\nu_i(w),0\}} = \left\| \bigcup \{
  \Pfaff_k(w) \mid 0 \le k \le \lfloor \nicefrac{n}{2} \rfloor \}
\right\|_\Ip,
\]
where $\| S \|_\Ip = \max \{ \lvert x \rvert_\Ip
\mid x \in S \}$ for $\varnothing \not = S \subseteq \ff$.

As a consequence of Propositions~\ref{pro:ind-finite-dim-adm-rel-FAb}
and \ref{pro:zeta-algebraic-formula} we obtain the following result;
compare Proposition~\ref{propABC:integrals} in the introduction.

\begin{pro} \label{pro:zeta-integral-formula} Let $\fg$ be an
  $\fo$-Lie lattice with an $\fo$-Lie sublattice $\fh$ such that
  $\lvert \fg : \fh + [\fg,\fg] \rvert < \infty$.  Write
  $n = \dim_\fo \fg$ and $m+1 = \dim_\fo \fg - \dim_\fo \fh$.  Let
  $r \in \N_0$ be such that $G = \exp(\pi^r \fg)$ is a finitely
  generated torsion-free potent pro-$p$ group with potent subgroup
  $H = \exp(\pi^r \fh) \le_\mathrm{c} G$.  Then $G$ is FAb relative to
  $H$ and the zeta function of $\rho = \Ind_H^G(\mathbb{1}_H)$ is
  given by the following integral formulae.

  \smallskip

  \textup{(1)} Writing $W = \{ w \in
  \Hom_\fo(\fg,\ff) \mid w(\fh)
  \subseteq \fo \}$, we have
  \[
  \zeta_\rho(s) = q^{r(m+1)} \int_{w \in W} \left\| \bigcup \{
      \Pfaff_k(w) \mid 0 \le k \le \lfloor \nicefrac{n}{2} \rfloor \}
    \right\|_\Ip^{-1-s} \,\di\mu(w),
  \]
  where $\mu$ denotes the normalised Haar measure satisfying
  $\mu(\Hom_\fo(\fg,\fo)) = 1$

  \smallskip

  \textup{(2)} For simplicity, suppose further that the
  $\fo$-module $\fg$ decomposes as a direct sum
  $\fg = \fk \oplus \fh$.  Interpreting
  $\Hom_\fo(\fk,\fo)$ as the
  $\fo$-points $\fV(\fo)$ of
  $m+1$-dimensional affine space $\fV$ over
  $\Spec(\fo)$, let $\fX$ denote the
  projectivisation $\mathbb{P} \fV$ over
  $\Spec(\fo)$.  For
  $0 \le k \le \lfloor \nicefrac{n}{2} \rfloor$, the map
  $w \mapsto \Pfaff_k(w)$ induces a sheaf of ideals $\calI_k$ on
  $\fX$, and
  \[
  \zeta_\rho(s) = (1- q^{-1}) q^{r(m+1)} \sum_{\ell \in \Z}
  q^{-\ell(m+1)} \int_{\fX(\fo)} \left( \max_{0 \le
      k \le \lfloor n/2 \rfloor} \left\| \pi^{k \ell} \calI_k
    \right\|_\Ip \right)^{-1-s} \,\di\mu_{\fX,\Ip},
  \]
  where the suggestive notation $\Vert \pi^{k \ell} \calI_k \Vert_\Ip$
  is employed in anticipation of a formal definition
  in~\eqref{equ:def-Vert-sheaf} and $\mu_{\fX,\Ip}$ denotes the
  canonical $p$-adic measure on~$\fX(\fo)$; see
  Remark~\ref{rmk:canonicalmeasure}.
 \end{pro}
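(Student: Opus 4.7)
The plan is to derive both formulae from Proposition~\ref{pro:zeta-algebraic-formula}, applied to the pair $(G,H)=(\exp(\pi^r\fg),\exp(\pi^r\fh))$, after first verifying that $G$ is FAb relative to~$H$.

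For the relative FAb-ness, the hypothesis $\lvert\fg:\fh+[\fg,\fg]\rvert<\infty$ scales to $\lvert\pi^r\fg:\pi^r\fh+[\pi^r\fg,\pi^r\fg]\rvert<\infty$, and the Lie correspondence for finitely generated torsion-free potent pro-$p$ groups (compare the argument in the proof of Proposition~\ref{pro:ind-finite-dim-adm-rel-FAb}) translates this into $\lvert G:H[G,G]\rvert<\infty$. Any open subgroup $K\le_\mathrm{o}G$ corresponds to a Lie sublattice $\fk$ commensurable with $\pi^r\fg$, so $[\fk,\fk]$ and $[\pi^r\fg,\pi^r\fg]$ span the same $\ff$-subspace of $\fg$; a short direct calculation then yields that $\fk/((\pi^r\fh\cap\fk)+[\fk,\fk])$ is finite, establishing the relative FAb condition.

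For formula~(1), Proposition~\ref{pro:zeta-algebraic-formula} yields
\[
\zeta_\rho(s)=\sum_{\substack{\omega\in(\pi^r\fg)^\vee\\\omega|_{\pi^r\fh}=0}}\lvert\pi^r\fg:\stab_{\pi^r\fg}(\omega)\rvert^{-(1+s)/2}.
\]
Using the trace pairing of Section~\ref{sec:LieLattices}, I would identify $(\pi^r\fg)^\vee$ with $\Hom_\fo(\pi^r\fg,\ff/\fo)$ and parametrise each $\omega$ by a lift $w\in\Hom_\fo(\pi^r\fg,\ff)$ modulo $\Hom_\fo(\pi^r\fg,\fo)$. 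The sum then becomes an integral over $W_r=\{w\mid w(\pi^r\fh)\subseteq\fo\}$ with normalised Haar measure, and by Section~\ref{sec:LieLattices} the summand rewrites as $\|\bigcup_k\Pfaff_k(w)\|_\Ip^{-1-s}$. The natural bijection $\phi\colon\Hom_\fo(\fg,\ff)\to\Hom_\fo(\pi^r\fg,\ff)$ given by $\phi(v)(Y)=v(\pi^{-r}Y)$ carries $W$ onto $W_r$ and, being the identity on coordinates relative to the bases $(e_i)$ and $(\pi^r e_i)$, preserves the measure normalisation. However, since the Lie bracket on $\pi^r\fg$ carries an extra factor of $\pi^{2r}$, one finds $A_{\phi(v)}(\pi^r X_1,\pi^r X_2)=\pi^r A_v(X_1,X_2)$, hence $\Pfaff_k(\phi(v))=\pi^{kr}\Pfaff_k(v)$ as fractional ideals. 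The subsequent substitution $v_\fk\mapsto\pi^{-r}u_\fk$ on the $(m+1)$-dimensional ``free'' coordinates (those complementary to $\fh$) has Jacobian $q^{r(m+1)}$ and precisely absorbs the Pfaffian rescaling, delivering formula~(1).

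For formula~(2), the splitting $\fg=\fh\oplus\fk$ factors $W=\Hom_\fo(\fh,\fo)\oplus\Hom_\fo(\fk,\ff)$. I would apply the $p$-adic polar decomposition to the second factor: every nonzero $w_\fk$ is uniquely of the form $w_\fk=\pi^\ell\widetilde u$ with $\ell=\mathrm{ord}(w_\fk)\in\Z$ and $\widetilde u$ primitive, representing a point of $\fX(\fo)=\mathbb{P}^m(\fo)$; the Haar measure disintegrates as $q^{-\ell(m+1)}(1-q^{-1})\,\di\mu_{\fX,\Ip}$, with $(1-q^{-1})=\mu(\fo^\times)$ and $\mu_{\fX,\Ip}$ the canonical $p$-adic measure. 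Since the degree-$k$ Pfaffian is a homogeneous polynomial of degree~$k$ in the coordinates of $w_\fk$, the rescaling $w_\fk=\pi^\ell\widetilde u$ produces the factor $\pi^{k\ell}$; the projectivisation of the family of maps $w\mapsto\Pfaff_k(w)$ precisely defines the sheaf of ideals $\calI_k$ on~$\fX$, identifying $\|\Pfaff_k(w)\|_\Ip$ with $\|\pi^{k\ell}\calI_k\|_\Ip$ at $[\widetilde u]$. Combining with~(1) then gives~(2).

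The main technical obstacle is the fine bookkeeping in~(1): the scaling $\Pfaff_k(\phi(v))=\pi^{kr}\Pfaff_k(v)$ under $\phi$ together with the substitution on the free coordinates must interact cleanly in spite of the fact that entries of $A_v$ involve both $\fh$- and $\fk$-components of $[\fg,\fg]$, so that the max-of-Pfaffians integrand realigns correctly to yield the single factor $q^{r(m+1)}$. In~(2), the subtle point is the construction of the ideal sheaves $\calI_k$ globally on~$\fX$; one must verify that the projective classes of the Pfaffian ideals vary algebraically with $[\widetilde u]$ and that the resulting expression is compatible across projective charts.
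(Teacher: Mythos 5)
Your overall plan — apply Proposition~\ref{pro:zeta-algebraic-formula} to the pair $(\exp(\pi^r\fg),\exp(\pi^r\fh))$, rewrite the sum as an integral via the trace pairing, and then disintegrate over spheres to reach the projective form — follows the paper's route, and the preparatory computations ($\phi$ measure-preserving, $A_{\phi(v)}(\pi^rX,\pi^rY)=\pi^rA_v(X,Y)$, hence $\Pfaff_k(\phi(v))=\pi^{kr}\Pfaff_k(v)$) are all correct. Your treatment of (2) is also essentially the paper's: decompose $\Hom_\fo(\fk,\ff)\smallsetminus\{0\}$ into annuli $\pi^\ell\Hom_\fo(\fk,\fo)\smallsetminus\pi^{\ell+1}\Hom_\fo(\fk,\fo)$, push forward to $\mathbb P^m(\fo)$ with the factor $(1-q^{-1})=\mu(\fo^\times)$, and use that $\Pfaff_k$ restricted to the $\fk$-coordinates is homogeneous of degree~$k$.

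The gap you correctly sense in~(1) is genuine, and you do not close it. After pulling back by $\phi$, the integrand is $\max_k q^{-kr}\lVert\Pfaff_k(v)\rVert_\Ip$, and you want the substitution $v_\fk\mapsto\pi^{-r}u_\fk$ (keeping $v_\fh$ fixed) to turn this into $\max_k\lVert\Pfaff_k(u)\rVert_\Ip$. But $A_v(X,Y)=v_\fh([X,Y]_\fh)+v_\fk([X,Y]_\fk)$ is \emph{not} homogeneous in $v_\fk$ alone: changing only $v_\fk$ does not scale $A_v$ by a uniform factor, so $\Pfaff_k(v_\fh,\pi^{-r}u_\fk)\neq\pi^{-kr}\Pfaff_k(v_\fh,u_\fk)$ in general. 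The Jacobian $q^{r(m+1)}$ is right, but the claimed cancellation of the $\pi^{kr}$ factors does not happen Pfaffian by Pfaffian. The missing observation is that the integrand
$\max_k q^{-kr}\lVert\Pfaff_k(v)\rVert_\Ip=\bigl\lVert\bigcup_k\Pfaff_k(\pi^r v)\bigr\rVert_\Ip=\lvert\fg:\stab_\fg(\pi^rv+\Hom_\fo(\fg,\fo))\rvert^{1/2}$ is constant on cosets of $\Hom_\fo(\fg,\fo)$, hence independent of $v_\fh\in\Hom_\fo(\fh,\fo)$. Once this is noted one may set $v_\fh=0$; the Pfaffians $\Pfaff_k(0,v_\fk)$ \emph{are} homogeneous of degree~$k$ in $v_\fk$, and your rescaling then works. (Alternatively, substitute $u=\pi^rv$ on \emph{all} $n$ coordinates, with Jacobian $q^{rn}$, and use the same $\Hom_\fo(\fg,\fo)$-invariance to pass from $\int_{\pi^rW}$ back to $\int_W$ at cost $q^{-r(n-m-1)}$; the two factors combine to $q^{r(m+1)}$, and this version also avoids the splitting $\fg=\fh\oplus\fk$, which part~(1) does not assume.) As written, your proof asserts a cancellation that is false at the level of individual Pfaffians, and the phrase \textquotedblleft the max-of-Pfaffians integrand realigns correctly\textquotedblright{} names exactly the fact that needs to be proved.
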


\begin{proof}
  From $\lvert \fg : \fh + [\fg,\fg] \rvert < \infty$ we infer that
  $\lvert G : H[G,G] \rvert < \infty$ so that $G$ is FAb relative
  to~$H$.  In the described set-up the formula for $\zeta_\rho$
  provided by Proposition~\ref{pro:zeta-algebraic-formula} translates
  directly into the first integral formula.  To derive the second
  formula write $\Hom_\fo(\fk,\ff) \smallsetminus \{0\}$ as a disjoint
  union of the sets
  $\pi^\ell \Hom_\fo(\fk,\fo) \smallsetminus \pi^{\ell+1}
  \Hom_\fo(\fk,\fo)$
  and recall that the canonical $p$-adic measure $\mu_{\fX,\Ip}$ is
  normalised so that
  $\int_{\fX(\fo)} \di\mu_{\fX,\Ip} = q^{-m} \lvert \fX(\fo/\Ip)
  \rvert = (1-q^{-m-1})/(1-q^{-1})$,
  where we use that $\fX$ is the projective $m$-dimensional space.
\end{proof}

\begin{rmk}\label{rmk:assumptions-on-ideals-hold}
  By definition $\Pfaff_0(w) = \fo$ is the degree-$0$ Pfaffian of any
  form $w\in \Hom_\fo(\fg,\ff)$.  This means that the ideal sheaf
  $\calI_0$ is the structure sheaf $\calO_\fX$ of $\fX$.  In addition,
  the assumption $|\fg:\fh + [\fg,\fg]| < \infty$ implies that the
  image ${\rm{pr}}_\fk([\fg,\fg])$ of the projection of $[\fg,\fg]$ to
  the complement $\fk$ is of finite index; i.e.,
  $\pi^\epsilon \fk \subseteq {\rm{pr}}_\fk([\fg,\fg])$ for some
  non-negative integer $\epsilon \in \Z$.  Since
  $\Pfaff_1(w) = w({\rm pr}_\fk([\fg,\fg])) \supseteq \pi^\epsilon
  w(\fk)$,
  we deduce that the ideal sheaf $\calI_1$ contains
  $\pi^\epsilon \calO_\fX$.
\end{rmk}

Next we formulate the two integral formulae more explicitly, subject
to a choice of coordinates.  For simplicity, we suppose that the
$\fo$-module $\fg$ decomposes as a direct sum $\fg = \fk \oplus \fh$.
Fix an $\fo$-basis $\mathbf{Y} = (Y_1,\ldots,Y_n)$ of $\fg$ such that
$Y_1,\ldots,Y_{m+1}$ span the complement $\fk$ of $\fh$ in $\fg$ and
$Y_{m+2},\ldots,Y_{n}$ span $\fh$. Denoting by $(c_{i,j,k})_{i,j,k}$
the structure constants of the $\fo$-Lie lattice $\fg$ with respect to
the basis $\mathbf{Y}$, we have
$[Y_i,Y_j] = \sum_{k=1}^n c_{i,j,k} Y_k$ for $1 \le i,j \le n$.  The
\emph{commutator matrix} of $\fg$ with respect to $\mathbf{Y}$ is the
alternating matrix of linear homogeneous forms
\[
\calR(\mathbf{T}) = \calR(T_1,\ldots,T_n) = \left(
  \sum\nolimits_{k=1}^n c_{i,j,k} T_k \right)_{i,j} \in \Mat_n
(\fo[T_1,\ldots,T_n]).
\]
For $k \in \{ 0, \ldots, \lfloor \nicefrac{n}{2} \rfloor \}$ we denote by $P_k
\subseteq \fo[T_1,\ldots,T_n]$ the collection of degree-$k$
Pfaffians of the matrix $\calR(\mathbf{T})$, i.e.\ square roots
of principal $2k \times 2k$-minors of $\calR(\mathbf{T})$; in
particular, $P_0 = \{1\}$.  Furthermore, we set
\[
F_k = \{ f(T_1,\ldots,T_{m+1},0,\ldots,0) \mid f \in P_k \} \subseteq
\fo[T_1,\ldots,T_{m+1}]
\]
and $F_k(\underline{x}) = \{ f(\underline{x}) \mid f \in F_k \}$ for
$\underline{x} \in \ff^{m+1}$.  Observe that $F_k$ consists
of homogeneous polynomials of degree $k$ so that for $f \in F_k$ and
$(y_1 \!:\! \ldots \!:\! y_{m+1}) \in \mathbb{P}^m(\fo)$ the value
$f(y_1,\ldots,y_{m+1})$ is independent of the particular choice of
homogeneous coordinates up to a unit in~$\fo$.

With this notational set-up, we obtain the following concrete version
of Proposition~\ref{pro:zeta-integral-formula}, based on a choice of
coordinates.

\begin{pro} \label{pro:zeta-integral-formula-coord} Let $\fg$ be an
  $\fo$-Lie lattice with an $\fo$-Lie sublattice $\fh$ such that
  $\lvert \fg : \fh + [\fg,\fg] \rvert < \infty$.  Suppose that the
  $\fo$-module $\fg$ decomposes as a direct sum
  $\fg = \fk \oplus \fh$, and put
  $m+1 = \dim_\fo \fk = \dim_\fo \fg - \dim_\fo \fh$.  Let
  $\mathbf{Y} = (Y_1,\ldots,Y_n)$ be an $\fo$-basis of $\fg$ such that
  $Y_1, \ldots, Y_{m+1}$ (respectively $Y_{m+2}, \ldots, Y_{n}$) form an
  $\fo$-basis of $\fk$ (respectively~$\fh$), and let $F_k$, depending on
  $\mathbf{Y}$, be defined as above.

  Let $r \in \N_0$ such that $G = \exp(\pi^r \fg)$ is a finitely
  generated torsion-free potent pro-$p$ group with potent subgroup
  $H = \exp(\pi^r \fh) \le_\mathrm{c} G$.  Then $G$ is FAb relative to
  $H$ and the zeta function of $\rho = \Ind_H^G(\mathbb{1}_H)$ is
  given by the following formulae:
  \[
  \zeta_\rho(s) = q^{r(m+1)} \int_{\ff^{m+1}}  \left\|
      \bigcup \{ F_k (\underline{x}) \mid 0 \le k \le \lfloor \nicefrac{n}{2}
      \rfloor \} \right\|_\Ip ^{-1-s}
  \,\di\mu(\underline{x}),
  \]
  where $\mu$ denotes the normalised Haar measure satisfying
  $\mu(\fo^{m+1}) = 1$, and
  \begin{multline*}
    \zeta_\rho(s) = (1- q^{-1}) q^{r(m+1)} \\ \cdot \sum_{\ell \in \Z}
    q^{-\ell(m+1)} \int_{\mathbb{P}^m(\fo)} \left( \max_{0 \le k \le
        \lfloor \nicefrac{n}{2} \rfloor} \left\| \pi^{k \ell}
        F_k(\underline{y}) \right\|_\Ip \right)^{-1-s}
    \,\di\mu_{\mathbb{P}^m,\Ip}(\underline{y}),
  \end{multline*}
  where $\mu_{\mathbb{P}^m,\Ip}$ denotes the canonical $p$-adic
  measure on $\mathbb{P}^m(\fo)$; see
  Remark~\ref{rmk:canonicalmeasure}.
\end{pro}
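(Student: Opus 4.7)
The plan is to derive Proposition~\ref{pro:zeta-integral-formula-coord} directly from Proposition~\ref{pro:zeta-integral-formula} by rewriting the abstract integrals in coordinates dual to~$\mathbf{Y}$. First I would parameterise $W = \{w \in \Hom_\fo(\fg, \ff) \mid w(\fh) \subseteq \fo\}$ by setting $x_i = w(Y_i) \in \ff$ for $1 \le i \le m+1$ and $y_j = w(Y_{m+1+j}) \in \fo$ for $1 \le j \le n-m-1$, thereby identifying $W$ with $\ff^{m+1} \times \fo^{n-m-1}$ as topological groups. The Haar measure $\mu$ normalised by $\mu(\Hom_\fo(\fg,\fo)) = 1$ decomposes as a product with each factor normalised to give the corresponding $\fo$-lattice measure~$1$.

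The crux of the argument is to show that $\| \bigcup \{\Pfaff_k(w) \mid 0 \le k \le \lfloor \nicefrac{n}{2} \rfloor\} \|_\Ip$ depends only on~$\underline{x}$, not on~$\underline{y}$. By the discussion in Section~\ref{sec:LieLattices}, this norm equals $\lvert \fg : \stab_\fg(\omega) \rvert^{1/2}$, where $\omega$ denotes the reduction of $w$ modulo~$\fo$. Writing any $Z \in \fg$ in the basis $\mathbf{Y}$ as $Z = \sum_k z_k Y_k$ with $z_k \in \fo$, one has $w(Z) = \sum_{i \le m+1} x_i z_i + \sum_{k > m+1} y_{k-m-1} z_k$, and this value lies in~$\fo$ if and only if $\sum_{i \le m+1} x_i z_i \in \fo$, because the second sum is automatically in~$\fo$. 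Applied to $Z = [X,Y]$ as $X$ ranges over $\fg$, this shows that the defining condition of $\stab_\fg(\omega)$, and hence the stabiliser itself, depends solely on~$\underline{x}$. Therefore the integrand factors through the projection $W \to \ff^{m+1}$, and integrating out $\underline{y}$ over $\fo^{n-m-1}$ contributes a multiplicative factor of~$1$.

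To identify the resulting integrand as a function of~$\underline{x}$, I would evaluate at $\underline{y} = 0$: the structure matrix of $A_w$ with respect to~$\mathbf{Y}$ is precisely $\calR(x_1, \ldots, x_{m+1}, 0, \ldots, 0)$, whose degree-$k$ Pfaffians are by construction the elements of $F_k(\underline{x})$. This yields $\| \bigcup \{\Pfaff_k(w)\} \|_\Ip = \| \bigcup \{F_k(\underline{x})\} \|_\Ip$ and hence the first displayed formula. The second formula follows from the first by the same shelling of $\ff^{m+1}$ used to deduce part~(2) of Proposition~\ref{pro:zeta-integral-formula} from part~(1): decompose $\ff^{m+1} \smallsetminus \{0\} = \bigsqcup_{\ell \in \Z} (\pi^\ell \fo^{m+1} \smallsetminus \pi^{\ell+1} \fo^{m+1})$, parameterise each shell by $\mathbb{P}^m(\fo)$ together with a unit scaling in~$\fo^\times$, and use the homogeneity $F_k(\pi^\ell \underline{y}) = \pi^{k\ell} F_k(\underline{y})$ together with the normalisation of the canonical measure $\mu_{\mathbb{P}^m, \Ip}$.

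The main obstacle is the observation in the second paragraph: without the conclusion that $\| \bigcup \{\Pfaff_k(w)\} \|_\Ip$ is independent of the $\underline{y}$-coordinates, the integral over $W$ would not collapse to one over~$\ff^{m+1}$, and the polynomials $F_k$, which by definition discard all dependence on~$\fh$, could not serve as a replacement for the full Pfaffian ideals.
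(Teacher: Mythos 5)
Your argument is correct and matches the coordinate translation that the paper has in mind; the paper states this proposition as a concrete version of Proposition~\ref{pro:zeta-integral-formula} and does not write out the routine verification. The crucial observation you isolate in your second paragraph --- that $\| \bigcup \Pfaff_k(w) \|_\Ip = \lvert \fg : \stab_\fg(\omega) \rvert^{1/2}$ depends only on $\underline{x}$, because the radical of $A_\omega$ is governed by integrality of $w$ on $[\fg,\fg]$ and the $\fh$-coordinates $\underline{y}$ lie in $\fo$ --- is exactly what justifies both integrating out $\underline{y}$ with total mass $1$ and replacing the Pfaffian ideals by the specialised polynomial collections $F_k$.
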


Examples based on this concrete formula are discussed in Section
\ref{sec:examples-orbit-method} below.

\begin{rmk} \label{rmk:connection-previous-methodology} The formulae
  in Propositions~\ref{pro:zeta-integral-formula} and
  \ref{pro:zeta-integral-formula-coord} are similar, but not identical
  to integral formulae that are familiar from the study of ordinary
  zeta functions; compare, for instance, \cite[Lem.~4.1]{Ja06} and
  \cite[Sec.~3.2]{AvKlOnVo13}.  The transparent description of the
  relevant zeta functions as inifinite series of Igusa integrals
  highlights aspects that were not so clearly visible before; see
  Section~\ref{sec:rationality-etc}.  Moreover, the introduction of an
  auxiliary integration variable is avoided, and explicit
  calculations, as in Section~\ref{sec:examples-orbit-method}, benefit
  from the approach.

  Nevertheless, the coordinate-based integral appearing in the first
  formula in Proposition~\ref{pro:zeta-integral-formula-coord} could
  easily be rewritten as an integral over the compact affine set
  $\mathfrak{o}^{m+2}$ as in \cite[Sec.~3.2]{AvKlOnVo13}, at the
  expense of adding an auxiliary variable.  We illustrate this in
  Section~\ref{sec:examples-orbit-method}, just after the proof of
  Proposition~\ref{pro:ind-Br-Gr}.
\end{rmk}


\section{A Denef-type formula for globally induced
  representations} \label{sec:rationality-etc} The discussion in this
section has a strong geometric flavour.  Accordingly, we adopt some
terminology that is commonly used in geometry.  Let $K_0$ be a number
field with ring of integers $O_{K_0}$, and fix a finite set of closed
points $S \subseteq \Spec(O_{K_0})$.  Let
$O_{K_0,S} = \{ a \in K_0 \mid \lvert a \rvert_\Ip \le 1 \text{ for
  closed points } \Ip \not\in S \}$
denote the ring of $S$-integers in~$K_0$.

We consider number fields $K$ that arise as finite extensions
of~$K_0$.  Let $O_S = O_{K,S}$ denote the integral closure of
$O_{K_0,S}$ in~$K$.  For a maximal ideal $\Ip \trianglelefteq O_S$ we
write $\kappa_\Ip = O_S/\Ip$ for the residue field and we denote its
cardinality by $q_\Ip$.  Furthermore, $O_\Ip = O_{K,S,\Ip}$ denotes
the completion of $O_S$ with respect to $\Ip$ and we fix a uniformiser
$\pi_\Ip$ so that $O_\Ip$ has the valuation ideal $\pi_\Ip O_\Ip$.
\[
\begin{array}{cccccclcl}
  O_K & \subseteq & O_S = O_{K,S} & \subseteq & K  %
  & \phantom{xx} & O_\Ip = O_{K,S,\Ip}  \trianglerighteq \pi_\Ip
                   O_\Ip  & \phantom{xx} & \\
  \vert && \vert && \vert &&&& \\
  O_{K_0} & \subseteq & O_{K_0,S} & \subseteq &  K_0  %
  && \kappa_\Ip \cong O_\Ip/\pi_\Ip O_\Ip \cong O_S /\Ip 
                 && q_\Ip = \lvert \kappa_\Ip \rvert
\end{array}
\]

Let $\fg$ be an $O_{K_0,S}$-Lie lattice.  For every finite extension
$K$ of~$K_0$ and every maximal ideal $\Ip \trianglelefteq O_S$ we
consider the $O_\Ip$-Lie lattice
$\fg_{\Ip} = O_\Ip \otimes_{O_{K_0,S}} \fg$ and, for $r \in \N_0$, its
principal congruence
sublattices~$\fg_{\Ip,r} = \pi_\Ip^{\, r} \fg_{\Ip}$.  Observe that,
for any given $K$ and $\Ip$, the Lie lattice $\fg_{\Ip,r}$ is potent
for all sufficiently large integers~$r$ so that
$G_{\Ip,r} = \exp(\fg_{\Ip,r})$ is a finitely generated torsion-free
potent pro-$p$ group; we say that such $r$ are \emph{permissible}
for~$\fg_\Ip$.

With these preparations we formulate the main result of this section,
which corresponds to Theorem~\ref{thm:D} in the introduction.

\begin{thm} \label{thm:main-summary} As in the set-up described above,
  let $\fg$ be an $O_{K_0,S}$-Lie lattice with an $O_{K_0,S}$-Lie
  sublattice $\fh$ such that
  $\lvert \fg : \fh + [\fg,\fg] \rvert < \infty$.  Suppose that $\fh$
  is a direct summand as a submodule of the $O_{K_0,S}$-module~$\fg$,
  and put $m + 1= \dim_{O_{K_0,S}} \fg - \dim_{O_{K_0,S}} \fh$.

  For closed points $\Ip \in \spec(O_{K,S})$, where $K$ ranges over
  finite extensions of~$K_0$, and for positive integers~$r$ that are
  permissible for $\fg_\Ip$ and $\fh_\Ip$, we consider the induced
  representation
  \[
  \rho_{\Ip,r} = \Ind_{H_{\Ip,r}}^{G_{\Ip,r}}(\one_{H_{\Ip,r}})
  \]
  associated to the pro-$p$ groups $G_{\Ip,r} = \exp(\fg_{\Ip,r})$ and
  $H_{\Ip,r}= \exp(\fh_{\Ip,r})$.
  
  \begin{enumerate}[\rm (1)]
  \item For each $\Ip$ there is a complex-valued function $Z_\Ip$ of a
    complex variable $s$ that is rational in $q_\Ip^{-s}$ with integer
    coefficients so that for all permissible~$r$,
   \begin{equation*}
     \zeta_{\rho_{\Ip,r}}(s) = (1-q_\Ip^{-1})q^{r(m+1)}_\Ip Z_\Ip(s).
   \end{equation*}
 \item The real parts of the poles of the functions $Z_{\Ip}$, for all
   $\Ip$, form a finite subset $P_{\fg,\fh} \subseteq \Q$.
 \item There is a finite extension $K_1$ of $K_0$ such that, for all
   closed points $\Ip \in \spec(O_{K,S})$, arising for extensions
   $K \supseteq K_1$, and for all permissible~$r$, the abscissa of
   convergence of the zeta function $\zeta_{\rho_{\Ip,r}}$ satisfies
   \[
   \alpha(\zeta_{\rho_{\Ip,r}}) = \max P_{\fg,\fh}.
   \]
 \item \label{itm:funct-equ} There are an open dense subscheme
   $\spec(O_{K_0,T}) \subseteq \spec(O_{K_0,S})$ and a rational
   function $F \in \Q(Y_1, Y_2, X_1,\dots, X_g)$ such that the
   following holds:

   For every closed point $\Ip_0 \in \Spec(O_{K_0,T})$ there are
   algebraic integers
   $\lambda_1 = \lambda_1(\Ip_0), \dots, \lambda_g = \lambda_g(\Ip_0)
   \in \C^\times$
   so that for every finite extension $K$ of~$K_0$ and every closed
   point $\Ip \in \spec(O_{K,T})$ lying above $\Ip_0$,
   \begin{equation*}
     Z_\Ip(s) = F(q^{-f} ,q^{-fs},\lambda_1^{\, f},\dots,\lambda_g^{\, f}),
   \end{equation*}
   where $q = q_{\Ip_0}$ and $f = [ \kappa_\Ip : \kappa_{\Ip_0}]$
   denotes the inertia degree of $\Ip$ over~$K_0$.

   Furthermore the following functional equation holds:
   \begin{equation*}
     F(q^f, q^{fs}, \lambda_1^{\, -f}, \dots, \lambda_g^{\, -f})
     = -q^{fm} F(q^{-f},q^{-fs}, \lambda_1^f, \dots, \lambda_g^f). 
   \end{equation*}
 \end{enumerate}
\end{thm}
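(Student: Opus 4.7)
The strategy is to treat Theorem~\ref{thm:main-summary} as a special case of a general rationality and functional-equation result for the zeta functions of the shape appearing in Proposition~\ref{pro:zeta-integral-formula}(2), namely
\[
Z_\Ip(s) = \sum_{\ell \in \Z} q_\Ip^{-\ell(m+1)} \int_{\fX(O_\Ip)} \Bigl(\max_{0 \le k \le \lfloor n/2 \rfloor} \| \pi_\Ip^{k\ell} \calI_k \|_\Ip \Bigr)^{-1-s} \di\mu_{\fX,\Ip},
\]
where the $\calI_k$ are globally defined ideal sheaves on $\fX = \mathbb{P}\fV$ over $\spec(O_{K_0,S})$. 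The first step is to pass, after possibly enlarging $S$ to a finite set $T$, to a global model of $\fX$ over $\spec(O_{K_0,T})$ and to apply Hironaka's resolution of singularities in characteristic zero to the product sheaf $\calI_0 \cdots \calI_{\lfloor n/2 \rfloor}$. This yields a proper birational morphism $Y \to \fX$ over $\spec(O_{K_0,T})$ under which each pulled-back $\calI_k$ is locally principal with support in a strict normal crossings divisor $\sum_{j \in J} E_j$, giving multiplicities $N_{k,j} \in \N_0$ that encode all the Pfaffian data.

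At each closed point $\Ip$ I would then stratify $Y(O_\Ip)$ according to which open stratum $Y_I^\circ = \bigcap_{j \in I} E_j \setminus \bigcup_{j \notin I} E_j$ a point reduces to and evaluate the Igusa integral by lifting from $Y_I^\circ(\kappa_\Ip)$ and integrating the remaining local-coordinate valuations $e_j \ge 1$. Along such a stratum the inner integrand takes the tractable form $q_\Ip^{-(1+s)\max_k(k\ell + \sum_{j \in I} N_{k,j} e_j)}$. The outer sum over $\ell \in \Z$ then decomposes according to which index $k$ attains the maximum in the piecewise-linear function $\ell \mapsto \max_k (k\ell + \sum_{j} N_{k,j} e_j)$, which has only finitely many slope changes; each region contributes a geometric series in $q_\Ip^{-s}$ and $q_\Ip^{-1}$. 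The conditions $\calI_0 = \calO_\fX$ and $\pi^\epsilon \calO_\fX \subseteq \calI_1$ recorded in Remark~\ref{rmk:assumptions-on-ideals-hold} ensure that the tails $\ell \to \pm\infty$ converge on a non-empty right half-plane. The result is an explicit Denef-type closed form for $Z_\Ip(s)$, yielding~(1) and, because all denominator factors have the shape $1 - q_\Ip^{a - bs}$ with $(a,b)$ drawn from a fixed finite set of rationals determined by the numerical data of the resolution, also the finiteness of $P_{\fg,\fh}$ in~(2).

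For part~(4) I would follow the template of Denef--Meuser~\cite{DeMe91} and Voll~\cite{Vo10}: after a further finite extension $K_1 / K_0$ the strata $Y_I^\circ$ become smooth with geometrically irreducible components of fixed dimension, so Deligne's proof of the Weil conjectures expresses $|Y_I^\circ(\kappa_\Ip)|$ as a polynomial in $q_\Ip$ and in finitely many Frobenius eigenvalues $\lambda_1(\Ip_0), \dots, \lambda_g(\Ip_0) \in \C^\times$, uniformly for all $\Ip$ above $\Ip_0$. Substituting these expressions into the Denef formula assembles the uniform rational function $F \in \Q(Y_1, Y_2, X_1, \dots, X_g)$. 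The functional equation then follows from the classical combination of Poincar\'e duality, which swaps $\lambda_j \leftrightarrow \lambda_j^{-1}$ up to a power of $q$, with the combinatorial involution $I \mapsto J \setminus I$ on subsets of $J$, as in \cite{DeMe91, Vo10}; the prefactor $q^{f(m+1)(1-2r)}$ records the explicit factor $(1 - q_\Ip^{-1}) q_\Ip^{r(m+1)}$ relating $Z_\Ip$ to $\zeta_{\rho_{\Ip,r}}$ together with the dimensional shift of the duality. Part~(3) is then a formal consequence of~(4): for $K \supseteq K_1$ the pole of $F$ in $q^{-fs}$ of largest real part is dictated by the same numerical data for every $\Ip$, and thus coincides with $\max P_{\fg,\fh}$.

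The main obstacle, absent from the classical Denef--Meuser--Voll setting, is the presence of the outer infinite sum over $\ell \in \Z$ together with the inner $\max_k$, both of which stem from the projectivisation in Proposition~\ref{pro:zeta-integral-formula}(2). The central technical step is to show that the tropical analysis of $\ell \mapsto \max_k (k\ell + \sum_j N_{k,j} e_j)$ on each stratum can be carried out cleanly enough to produce a rational function whose pole structure and its symmetry under $q \mapsto q^{-1}$ are both governed by the same resolution data as in the classical case; the conditions in Remark~\ref{rmk:assumptions-on-ideals-hold} and the direct-sum hypothesis $\fg = \fk \oplus \fh$ are precisely what make this feasible. Once this is in place, each of the four assertions in the theorem follows by a careful, but essentially routine, adaptation of the standard machinery.
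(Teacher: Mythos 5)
Your overall strategy coincides with the paper's: reduce to the infinite series of Igusa integrals from Proposition~\ref{pro:zeta-integral-formula}(2), monomialise via Hironaka, stratify by the normal-crossing pattern, decompose the piecewise-linear $\ell$-sum into cones, and invoke $\ell$-adic point-count formulae and Poincar\'e duality for the functional equation.  The cone analysis of $\ell \mapsto \max_k(k\ell + \sum_j N_{k,j} e_j)$ is exactly what the paper packages in the power series $\Xi_{N,\boldsymbol\epsilon,\boldsymbol\delta}$ of Lemma~\ref{lem:XiSeries}, so your ``main obstacle'' is correctly identified and your description of its resolution is right.

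There is, however, a structural gap in how you sequence the monomialisation.  You enlarge $S$ to $T$ \emph{at the outset} so as to work with a global integral model of the resolution over $\spec(O_{K_0,T})$.  But parts (1), (2) and (3) of the theorem are asserted for \emph{every} $\Ip \in \spec(O_{K,S})$ (subject only to $K \supseteq K_1$ in (3)), not merely for $\Ip$ above $\spec(O_{K_0,T})$; only part (4) is allowed to exclude finitely many fibres.  The paper handles this by running two separate monomialisations: for (1)--(3) it takes a resolution $h\colon Y \to \schX_{K_0}$ over the \emph{generic fibre} only, pulls the integral back to $Y(K_\Ip)$ for every $\Ip$ (which makes sense since $K_\Ip$ is a $K_0$-algebra), and works there; only for (4) does it spread out to a global model over $\spec(O_{K_0,T})$ and enlarge $S$.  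As written, your argument for (1) and (2) leaves the primes above $T \setminus S$ untreated (easily repaired by applying Hironaka over each excluded local field, but you should say so), and more seriously your derivation of (3) as a ``formal consequence of (4)'' inherits exactly the $T$-restriction you are not permitted to impose in (3).  The paper instead proves (3) independently, introducing a notion of local abscissa of convergence at points of $Y(K_\Ip)$ (Proposition~\ref{prop:localAbscissa}) together with the fact that $\max P$ is actually a pole of each local contribution (Lemma~\ref{lem:XiSeries}(4)); $K_1$ is then chosen so that every non-empty stratum $E_U^\circ$ has a $K_1$-point, which guarantees that the maximal local abscissa is always attained.  This independent route is also what makes Corollary~\ref{cor:abscissaDensity} (positive Dirichlet density) available, which you cannot get by routing (3) through (4).

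A small further point: you describe the role of $K_1$ as making the strata ``geometrically irreducible of fixed dimension'', which is the kind of enlargement needed for the Denef--Meuser uniformity in (4), but that is not the role $K_1$ plays in (3); there the purpose is to force $E_U^\circ(K_\Ip) \neq \varnothing$ for all relevant $\Ip$.  Conflating the two is another symptom of deriving (3) from (4).  Your computation of the $q^{f(m+1)(1-2r)}$ prefactor in the functional equation from the displayed form of (4) is correct.
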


This theorem is a consequence of Theorems~\ref{thm:abscissa}
and~\ref{thm:functionalEquation} below, which hold for a larger class
of integrals of Igusa type described in~\eqref{equ:general-integral}
below.  Indeed, the zeta functions of induced representations are of
the required form due to the formula given in
Proposition~\ref{pro:zeta-integral-formula}.  As discussed in the
introduction, our proof proceeds along the lines
of~\cite{DeMe91,Vo10}.  Our approach is of independent interest,
because the specific shape of our zeta functions allows us to
introduce quite naturally the new concept of \emph{local abscissae of
  convergence}, which leads to a strengthening of known results.  In
addition, the specific form is conceptually adapted to our
applications and also turned out to be useful in calculations of
concrete examples; e.g., see Proposition~\ref{pro:ind-Br-Gr} below.
Compared to other approaches, we work on a different domain of
integration and we modify the zeta functions with an infinite series:
the inner integrals in such a series are always convergent and only
the outer sum can cause divergence.  We provide an almost
self-contained discussion, by streamlining arguments that can already
be found in~\cite{DeMe91,Vo10}.

\begin{rmk} \label{rmk:not-global} The assertions in (1), (2), and (3)
  of Theorem~\ref{thm:main-summary} have local analogues for Lie
  lattices that are defined over the valuation ring of a $p$-adic
  field; i.e.\ a finite extension field of some $\Q_p$.  In fact, the
  proof of the underlying Theorem~\ref{thm:abscissa} given in Section
  \ref{sec:ProofAbscissa} carries over directly if the base field
  $K_0$ is assumed to be a $p$-adic field (instead of a number field).
  Note that there are $\Z_p$-Lie lattices $\fg$ which cannot be
  defined globally; this means, there is no Lie lattice $\fg_0$
  defined over the ring of $S$-integers $O_S$ of a number field and a
  prime ideal $\Ip \trianglelefteq O_S$ such that $O_\Ip \cong \Z_p$
  and $O_\Ip \otimes_{O_S} \fg_0 \cong \fg$.
\end{rmk}


\subsection{$p$-Adic integrals} \label{sec:p-adic-integrals}
We continue to use the notation set up above and write 
\begin{equation} \label{equ:not-51} O_S = O_{K,S}, \qquad O_\Ip =
  O_{K,S,\Ip}, \qquad \pi = \pi_\Ip
\end{equation}
for short.  Let $\schX$ be a smooth integral projective scheme over
$\spec(O_{K_0,S})$, and let $m$ be the dimension of the generic
fibre~$\schX_{K_0}$.  Let $d \in \N$, and let
$\boldsymbol{\calI} = (\calI_j)_{j=0}^d$ and
$\boldsymbol{\calJ} = (\calJ_j)_{j=0}^d$ be two collections of
coherent sheaves of ideals on~$\schX$.

\begin{assumption}\label{ass:ideals}
  We assume throughout that $\calI_0 = \calJ_0 = \calO_\schX$ is the
  structure sheaf of $\schX$, and
  $\gamma \calO_\schX \subseteq \calI_{j_0}$ for some non-zero
  $\gamma \in O_{K_0,S}$ and some $j_0 \in \{1,\dots,d\}$.
\end{assumption}

A sheaf of ideals $\calI$ on $\schX$ defines a continuous function
$\Vert \calI \Vert_\Ip$ on the compact space $\schX(O_\Ip)$ by
\begin{equation} \label{equ:def-Vert-sheaf}
  \Vert \calI \Vert_\Ip (x) = \max \{ \lvert f(x) \rvert_\Ip \mid f
  \in \calI_x \},
\end{equation}
where $\calI_x = \calI \cdot \calO_{\schX,x}$ denotes the stalk of
$\calI$ at $x \in \schX(O_\Ip)$.  We are interested in weighted
combinations of such functions.  For $\ell \in \Z$ we define
\begin{equation*}
  \left\Vert (\pi^{j\ell} \calI_j)_{j=0}^d \right\Vert_{\Ip} \coloneqq
  \max_{0 \le j \le  d} q _\Ip^{\, -j\ell} \Vert \calI_j \Vert_\Ip 
\end{equation*}
and likewise for $\boldsymbol{\calJ}$.  

\begin{rmk}\label{rmk:canonicalmeasure}
  On the compact topological space $\schX(O_\Ip)$ we consider the
  \emph{canonical $p$-adic measure} $\mu_{\schX, \Ip}$ defined by
  Batyrev in \cite[Def.~2.6]{Ba99}.  This measure is locally
  defined in terms of differential forms, i.e.\ sections of the
  canonical bundle $\omega_{\schX/O_\Ip}$.  It can be defined without
  assuming the existence of a global gauge form, on which Weil's
  original approach \cite{Weil82} was based.  Here we will mainly use
  the following property of the canonical $p$-adic measure: for every
  point $x \in \schX(\kappa_p)$, the fibre $Y$ over $x$ in
  $\schX(O_\Ip)$ has measure $\mu_{\schX, \Ip}(Y) = q_\Ip^{-m}$; in
  particular,
  $\mu_{\schX, \Ip}(\schX(O_\Ip)) = q_\Ip^{-m} |\schX(\kappa_\Ip)|$ as
  shown in~\cite[Thm.~2.7]{Ba99}.
\end{rmk}

Fix once and for all a positive integer~$c \in \N$.  We consider the
\emph{local zeta function}
\begin{equation} \label{equ:general-integral}
  Z_\Ip(s) = Z_{\schX,\boldsymbol{\calI},\boldsymbol{\calJ},\Ip}(s)
  = \sum_{\ell \in \Z} q _\Ip^{\, -\ell c } \int_{\schX(O_\Ip)}  
  \left\Vert (\pi^{j\ell} \calI_j)_{j=0}^d \right\Vert_\Ip^{-s} 
  \left\Vert (\pi^{j\ell} \calJ_j)_{j=0}^d \right\Vert^{-1}_\Ip
  \di\mu_{\schX, \Ip}. 
\end{equation}
For instance, in the setting of
Proposition~\ref{pro:zeta-integral-formula}, we encounter such zeta
functions for~$c = m+1$.  In such applications the collections
$\boldsymbol{\calI}$ and $\boldsymbol{\calJ}$ may very well contain
zero ideal sheaves; however, these do not contribute to the local zeta
function. It follows from Remark \ref{rmk:assumptions-on-ideals-hold}
that Assumption \ref{ass:ideals} holds in this case. The next lemma
shows that these assumptions guarantee the convergence of the infinite
series in~\eqref{equ:general-integral}.

\begin{lem}
  Under Assumption~$\mathrm{\ref{ass:ideals}}$ the series $Z_\Ip(s)$
  converges absolutely for all $s \in \C$ with
  $\mathrm{Re}(s) > \nicefrac{c}{j_0}$.
\end{lem}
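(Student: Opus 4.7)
The plan is to bound $|Z_\Ip(s)|$ term-by-term and split the sum over $\ell$ into the two ranges $\ell \ge 0$ and $\ell < 0$, obtaining in each case a convergent geometric series. Two elementary pointwise lower bounds on the norms, both flowing from Assumption~\ref{ass:ideals}, will do the work. First, since $\calI_0 = \calJ_0 = \calO_\schX$, the $j = 0$ entry in each maximum contributes the value $1$, giving
\[
\Vert (\pi^{j\ell}\calI_j)_{j=0}^d \Vert_\Ip \ge 1 \qquad \text{and} \qquad \Vert (\pi^{j\ell}\calJ_j)_{j=0}^d \Vert_\Ip \ge 1
\]
everywhere on $\schX(O_\Ip)$, for every $\ell \in \Z$. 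Second, since $\gamma\calO_\schX \subseteq \calI_{j_0}$ forces $\Vert\calI_{j_0}\Vert_\Ip \ge |\gamma|_\Ip$ pointwise, the $j = j_0$ entry yields, for $\ell < 0$, the sharper bound
\[
\Vert (\pi^{j\ell}\calI_j)_{j=0}^d \Vert_\Ip \;\ge\; q_\Ip^{-j_0 \ell}\,|\gamma|_\Ip.
\]

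Writing $\sigma = \mathrm{Re}(s)$, the hypothesis gives $\sigma > c/j_0 > 0$. In the range $\ell \ge 0$, the first pair of bounds makes both norm factors in the integrand bounded by $1$, so the inner integral is at most the finite volume $\mu_{\schX,\Ip}(\schX(O_\Ip)) = q_\Ip^{-m}|\schX(\kappa_\Ip)|$ recalled in Remark~\ref{rmk:canonicalmeasure}, and the outer weight $q_\Ip^{-\ell c}$ produces a convergent geometric series. In the range $\ell < 0$, the sharper bound gives $\Vert(\pi^{j\ell}\calI_j)_{j=0}^d\Vert_\Ip^{-\sigma} \le q_\Ip^{j_0 \ell \sigma}|\gamma|_\Ip^{-\sigma}$, while the $\calJ$-factor is still controlled by the trivial bound $\le 1$. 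Hence the $\ell$-th summand is dominated, up to the $\Ip$-dependent constant $|\gamma|_\Ip^{-\sigma}\mu_{\schX,\Ip}(\schX(O_\Ip))$, by $q_\Ip^{\ell(j_0 \sigma - c)}$; this decays geometrically as $\ell \to -\infty$ precisely because $j_0 \sigma - c > 0$, so the second piece is again a convergent geometric series.

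The only subtlety to flag is the asymmetry between $\boldsymbol{\calI}$ and $\boldsymbol{\calJ}$: Assumption~\ref{ass:ideals} supplies a sharp lower bound only on the $\calI$-side, via $\calI_{j_0}$, so all of the geometric decay as $\ell \to -\infty$ must be squeezed out of the $\calI$-factor, while the $\calJ$-factor contributes nothing better than $\ge 1$. This asymmetry is exactly what singles out the threshold $c/j_0$ in the statement and explains why no extra condition on $\boldsymbol{\calJ}$ is needed beyond $\calJ_0 = \calO_\schX$.
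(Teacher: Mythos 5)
Your proof is correct and follows essentially the same approach as the paper: you split the sum at $\ell = 0$, use $\calI_0 = \calJ_0 = \calO_\schX$ to bound the integrand by $1$ on the nonnegative range, and extract geometric decay on the negative range from $\gamma\calO_\schX \subseteq \calI_{j_0}$, which is exactly where the threshold $c/j_0$ appears. The only cosmetic difference is that the paper re-indexes the negative range by $\ell \mapsto -\ell$; your closing remark on the $\boldsymbol{\calI}$/$\boldsymbol{\calJ}$ asymmetry is accurate and a nice piece of added context.
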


\begin{proof}
  First we carry out the summation over all non-negative integers.
  For $\ell \ge 0$, the functions
  $\left\Vert (\pi^{j\ell} \calI_j)_{j=0}^d \right\Vert_\Ip$ and
  $\left\Vert (\pi^{j\ell} \calJ_j) _{j=0}^d \right\Vert_\Ip$ have
  value $1$ at every $x \in \schX(O_\Ip)$, because
  $\calI_0 = \calJ_0 = \calO_\schX$.  We deduce that
 \begin{align*}
   \sum_{\ell = 0}^{\infty}  q _\Ip^{\, -\ell c } \int_{\schX(O_\Ip)}
   \left\Vert (\pi^{j\ell} \calI_j)_{j=0}^d \right\Vert_\Ip^{-s} 
   \left\Vert (\pi^{j\ell} \calJ_j)_{j=0}^d \right\Vert^{-1}_\Ip
   \di\mu_{\schX, \Ip} 
   &= \sum_{\ell = 0}^{\infty}  q _\Ip^{\, -\ell c}
     \mu_{\schX,\Ip}(\schX(O_\Ip)) \\
   &=  \frac{\mu_{\schX,\Ip}(\schX(O_\Ip))}{1-q_\Ip^{\, -c}},
 \end{align*}
 with absolute convergence independently of~$s \in \C$.

 It remains to sum over all negative integers.  Using
 $\gamma\calO_\schX \subseteq \calI_{j_0} $ to justify the second inequality below,
 we observe that for all $s \in \C$ with
 $\mathrm{Re}(s) > \nicefrac{c}{j_0}$,
 \begin{multline*}
   \sum_{\ell = 1}^{\infty} \left| q _\Ip^{\, \ell c }
     \int_{\schX(O_\Ip)} \left\Vert (\pi^{-j\ell} \calI_j)_{j=0}^d
     \right\Vert_\Ip^{-s} \left\Vert (\pi^{-j\ell} \calJ_j)_{j=0}^d
     \right\Vert^{-1}_\Ip
     \di\mu_{\schX, \Ip} \right| \\
   \le \sum_{\ell = 1}^{\infty} q _\Ip^{\, \ell c }
   \int_{\schX(O_\Ip)} \Vert \pi^{-j_0\ell} \calI_{j_0}
   \Vert_{\Ip}^{-\mathrm{Re}(s)} \di\mu_{\schX, \Ip} 
   \le \sum_{\ell = 1}^{\infty} q _\Ip^{\, \ell (c -\mathrm{Re}(s)j_0)}
   |\gamma|^{ -\mathrm{Re}(s)}_\Ip \mu_{\schX,\Ip}(\schX(O_\Ip)) \\
   = \frac{q_\Ip^{\, c -\mathrm{Re}(s)j_0} |\gamma|^{ -\mathrm{Re}(s)}_\Ip }{1-q_\Ip^{\,
       c -\mathrm{Re}(s)j_0}} \, \mu_{\schX,\Ip}(\schX(O_\Ip)). \qedhere
 \end{multline*}
\end{proof}

The infimum of all $\sigma \in \R$ such that the series
$Z_\Ip(\sigma)$ converges is called the \emph{abscissa of convergence}
$\alpha(Z_\Ip)$ of the local zeta function $Z_\Ip$; as seen above, the
series $Z_\Ip(s)$ converges absolutely to an analytic function on the
half-plane $\{ s \in \C \mid \mathrm{Re}(s)>\alpha(Z_\Ip) \}$.
  
The computation of the integral defining the local zeta function
$Z_\Ip$ is rather simple if the ideal sheaves in
$\boldsymbol{\calI}$ and $\boldsymbol{\calJ}$ are \emph{monomial} in
the following sense.

\begin{dfn} Let $k$ be a field and let $\schY$ be a regular scheme
  over~$\spec(k)$.  An ideal sheaf $\calI$ on $\schY$ is called
  \emph{monomial} if for all $y \in \schY$ the ideal
  $\calI_y \trianglelefteq \calO_{\schY,y}$ is of the form
  $\calI_y = \prod_{i = 1}^r z_i^{c_i} \calO_{\schY,y}$, where
  $z_1,\dots,z_r$ are regular parameters at $y$ and
  $c_1, \dots, c_r \in \N_0$ are suitable exponents; cf.\
  \cite[(3.16)]{Ko07}.
 
  Let $\schX$ be a smooth scheme over $\spec(O_S)$.  Observe that for
  every $a \in \spec(O_S)$, the fibre $\schX_a$ is a regular scheme
  over $\spec(k)$, where $k = \kappa_\Ip$ if
  $a = \Ip$ is a closed point and $k = K$ otherwise.  An
  ideal sheaf $\calI$ on $\schX$ is \emph{monomial}, if it is locally
  principal and for all $a \in \spec(O_S)$ the ideal sheaf
  $\calI \cdot \calO_{\schX_a}$ on the fibre $\schX_a$ over $a$ is
  monomial.
\end{dfn}

Many authors refer to a monomial sheaf of ideals as \emph{a sheaf of
  ideals of a divisor with simple normal crossings}.  Hironaka's
famous resolution of singularities \cite{Hironaka1964} implies that
over a field of characteristic $0$ any ideal sheaf can be transformed
into a monomial one using a suitable sequence of blow-ups.  We will
use the following version of this result, which is explained
in~\cite[Thm.~3.26]{Ko07}.  By a variety over a field $k$ we
mean an integral, separated scheme of finite type over~$\spec(k)$.

\begin{thm}[Hironaka Monomialisation Theorem] \label{thm:Hironaka} Let
  $X$ be a smooth variety over some field $k$ of characteristic $0$
  and let $\calI$ be an ideal sheaf on $X$.  There are a smooth
  variety $Y$ over $k$ and a projective morphism $h\colon Y \to X$
  such that the ideal sheaf $h^*(\calI)$ is monomial and $h$ restricts
  to an isomorphism between the complements of the closed subvarieties
  defined by $h^*(\calI)$ and~$\calI$.
\end{thm}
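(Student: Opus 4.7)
The plan is to prove this via principalization of ideals by a sequence of blow-ups along smooth centers, following Hironaka's original strategy as later streamlined by W\l odarczyk, Bierstone--Milman, Villamayor, and Koll\'ar. The map $h\colon Y \to X$ will be the composition of finitely many such blow-ups, so projectivity is automatic and the isomorphism statement over $X \setminus V(\calI)$ will follow once every center is contained in the support of the current transform of $\calI$.

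First I would define a local, upper-semicontinuous invariant $\mathrm{inv}(x)$ at each $x \in X$, taking values in a well-ordered set, that measures how far $\calI$ is from being monomial at $x$. Its leading component is the order of $\calI$ at $x$, i.e.\ the largest integer $d$ with $\calI_x \subseteq \mathfrak{m}_x^d$; refinements add the orders of suitably defined coefficient ideals. One shows that the locus where $\mathrm{inv}$ attains its maximum is a smooth closed subvariety $Z \subseteq X$, which becomes the center to be blown up.

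The main engine is induction on $\dim X$ using hypersurfaces of maximal contact. At a point $x$ of maximal order $d$, one produces a smooth hypersurface $H \subseteq X$ through $x$ such that any principalization of an associated coefficient ideal on $H$ lifts to a principalization of $\calI$ on~$X$. The construction of $H$ uses the Tschirnhaus transformation, namely eliminating the degree-$(d-1)$ term of a monic degree-$d$ polynomial via a substitution involving division by~$d$. This is the precise point at which the characteristic $0$ assumption becomes indispensable: in positive characteristic maximal contact can fail and the induction collapses.

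The hard part will be verifying that the invariant $\mathrm{inv}$ strictly decreases under the blow-up of $Z$ and that the whole algorithm terminates after finitely many steps. This requires careful bookkeeping with controlled transforms and coefficient ideals, together with functoriality under smooth and \'etale pullback, so that $\mathrm{inv}$ is genuinely intrinsic and independent of the auxiliary choices. Once termination is established, the remaining conclusions are essentially formal: the composite $h$ is projective; it is an isomorphism over the complement of $V(\calI)$ because no center ever meets that open set; and $h^*(\calI)$ is monomial at every point of $Y$ by construction, giving the simple normal crossings structure asserted in the theorem.
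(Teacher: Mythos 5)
The paper does not prove this theorem: it is stated as a known deep result and cited to Hironaka's 1964 paper and to Koll\'ar's textbook (Theorem 3.26 of \cite{Ko07}), which is standard for a result of this magnitude. Your proposal correctly identifies the architecture of the modern proof of principalization --- blow-ups along smooth centers, an upper-semicontinuous invariant whose leading term is the order of the ideal, hypersurfaces of maximal contact constructed via the Tschirnhaus substitution (the exact place where characteristic $0$ enters), coefficient ideals to run the induction on dimension, and the observations that projectivity and the isomorphism over $X \smallsetminus V(\calI)$ are formal once each center lies in the current cosupport. However, as written this is a roadmap rather than a proof: the actual substance of Hironaka's theorem lies precisely in the steps you flag as ``the hard part'' --- intrinsic well-definedness of the invariant (independence of the choice of maximal-contact hypersurface, which forces the functoriality machinery), strict decrease of the invariant under blow-up, and termination --- and none of these is carried out. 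For the purposes of this paper that is entirely acceptable, since the authors themselves treat the result as a citation; but it should not be mistaken for a self-contained argument, and a genuine proof would occupy the better part of a book chapter.
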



\subsection{The abscissa of convergence}
We continue to use the notation set up above.  In this section we
study the abscissa of convergence of the zeta functions
$Z_{\Ip} = Z_{\schX,\boldsymbol{\calI},\boldsymbol{\calJ},\Ip}$,
for finite extensions $K$ of $K_0$ and closed points
$\Ip \in \spec(O_S)$, as defined in~\eqref{equ:general-integral}.  The
main result is the following theorem.

\begin{thm}\label{thm:abscissa}
  In the set-up described above, including
  Assumption~$\mathrm{\ref{ass:ideals}}$, the following statements hold.
  \begin{enumerate}[\rm (1)]
  \item For every finite extension $K$ of $K_0$ and every closed point
    $\Ip \in \spec(O_S)$, there is a rational function
    $F_\Ip \in \Q(Y)$ such that
    \[
    Z_\Ip(s) = F_\Ip(q_\Ip^{-s}).
    \]
  \item The real parts of the poles of the functions $Z_{\Ip}$, for
    all $K$ and $\Ip$ as above, form a finite set
    $P = P_{\schX,\boldsymbol{\calI},\boldsymbol{\calJ}} \subseteq \Q$
    of rational numbers.
  \item There is a finite extension $K_1$ of $K_0$ such that, for all
    $K$ and $\Ip$ as above satisfying $K_1 \subseteq K$, the abscissa of
    convergence of $Z_\Ip$ satisfies $\alpha(Z_\Ip) = \max P$.
  \end{enumerate}
\end{thm}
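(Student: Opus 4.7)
The plan is to transform $Z_\Ip(s)$ via a simultaneous monomialisation of all the relevant ideal sheaves, evaluate the resulting integral by a Denef-type stratification, and then read off the pole data combinatorially.  First, I apply Theorem~\ref{thm:Hironaka} to the product of the non-zero ideal sheaves among $\calI_1,\dots,\calI_d,\calJ_1,\dots,\calJ_d$ on the generic fibre $\schX_{K_0}$.  This produces a smooth projective $K_0$-variety $Y$ and a projective birational morphism $h_0\colon Y\to \schX_{K_0}$ such that each $h_0^*\calI_j$ and $h_0^*\calJ_j$ is monomial, supported on a divisor with simple normal crossings, and whose relative Jacobian ideal sheaf is likewise monomial.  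After enlarging $S$ to a finite set $T\supseteq S$, I spread $(Y,h_0)$ out to a smooth projective $O_{K_0,T}$-scheme $\schY$ with a projective morphism $h\colon \schY\to \schX$ enjoying the same properties at every closed point $\Ip\in\spec(O_{K_0,T})$.

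Second, using the change-of-variables formula for the canonical $p$-adic measure applied to the smooth birational morphism $h$, I pull the integrand of~\eqref{equ:general-integral} back to $\schY(O_\Ip)$, picking up a monomial Jacobian factor.  Decomposing $\schY(O_\Ip)$ into fibres over $\kappa_\Ip$-points of $\schY$ and partitioning these points according to the incidence pattern of the (finitely many) components of the monomial divisors pulled back from $\boldsymbol{\calI}$, $\boldsymbol{\calJ}$ and the Jacobian, the inner integral over each stratum reduces to a finite product of elementary geometric sums in $q_\Ip^{-1}$ and $q_\Ip^{-s}$.  Summing over $\ell\in\Z$ introduces one further geometric series: the range $\ell\ge 0$ converges unconditionally and is holomorphic in $s$, whereas the range $\ell<0$ contributes finitely many factors of the form $(1-q_\Ip^{-a_\nu+b_\nu s})^{-1}$ with $(a_\nu,b_\nu)\in\Z\times\Z_{>0}$ indexed by the combinatorial strata.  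This simultaneously yields (1) and identifies the pole set of $Z_\Ip$ as a subset of the finite set $P_{\schX,\boldsymbol{\calI},\boldsymbol{\calJ}}=\{a_\nu/b_\nu\}$, which depends only on the generic-fibre monomialisation; this establishes (2).

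For (3), the set $P$ is fixed, and the question is whether the maximum $\alpha_0=\max P$ is actually realised as a pole of $F_\Ip$.  A candidate factor indexed by $\nu$ is present in $F_\Ip$ precisely when the associated stratum of $\schY$ has a $\kappa_\Ip$-rational point.  Since there are only finitely many relevant strata, all of them defined over~$K_0$, I choose $K_1\supseteq K_0$ to be a finite extension large enough that every stratum contributing a maximal-slope exponent $a_\nu/b_\nu=\alpha_0$ acquires a geometrically irreducible component with a smooth $\kappa_\Ip$-point for every closed point $\Ip\in\spec(O_{K_1,T})$; a standard Lang--Weil estimate, applied to the finitely many reduced subschemes of $\schY_{K_1}$ in question, ensures that after possibly further enlarging $K_1$ this holds for all such $\Ip$.

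The main obstacle is showing that the leading pole candidate is not cancelled by a coincidental zero of the numerator of $F_\Ip$.  The standard remedy is to verify, by taking the residue at the factor $(1-q_\Ip^{-a_\nu+b_\nu s})^{-1}$ realising $\alpha_0$, that the residue is a positive-integer combination of Lang--Weil counts of points on the corresponding non-empty strata, and therefore strictly positive for $q_\Ip$ large; once $K_1$ is chosen so that the strata have rational points over every residue field encountered, the residue is non-zero and $\alpha(Z_\Ip)=\alpha_0=\max P$ follows from the absolute-convergence analysis of~\eqref{equ:general-integral} combined with the rational expression for $Z_\Ip$.
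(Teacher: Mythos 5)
Your overall strategy matches the paper's: Hironaka on the generic fibre, stratification by the normal-crossings incidence pattern, combinatorial evaluation via geometric series, and a field-extension argument for part~(3). A few points where your proposal diverges, one of which contains a genuine gap.

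First, a minor simplification you missed: spreading the monomialisation out over some $\spec(O_{K_0,T})$ is not needed for Theorem~\ref{thm:abscissa} (it \emph{is} used later for the functional equation). The paper keeps $h\colon Y\to\schX_{K_0}$ as a morphism of $K_0$-varieties, pulls the integral back to $Y(K_\Ip)$, and computes in coordinate charts on $Y(K_\Ip)$; since the prime divisors in $T$ are smooth $K_0$-subvarieties, everything one needs is already defined over $K_0$. Likewise, for the choice of $K_1$ you invoke Lang--Weil plus smoothness of the strata to produce $\kappa_\Ip$-points and then presumably Hensel-lift; the paper's argument is simpler and unconditional: each non-empty stratum $E^\circ_U$ is a finite-type $K_0$-scheme, so it acquires a rational point over some finite extension, and one takes $K_1$ to be a compositum of these. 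Then for $K\supseteq K_1$ and any closed $\Ip$, the embedding $K_1\hookrightarrow K_\Ip$ immediately gives $E^\circ_U(K_\Ip)\neq\varnothing$.

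The genuine gap is in your non-cancellation step for part~(3). You assert that the residue at the candidate leading pole ``is a positive-integer combination of Lang--Weil counts of points on the corresponding non-empty strata, and therefore strictly positive,'' but this is not something one can read off the Denef-type formula: the inclusion--exclusion expansion carries alternating signs $(-1)^{|W\smallsetminus U|}$, and even in the rewritten form with strata counts $c_U(\kappa_\Ip)$ the residues of the individual $\Xi_U(q_\Ip^{-1},q_\Ip^{-s})$ at a non-maximal pole can be negative, so the total residue is not obviously of the sign you claim. The paper handles this by a different mechanism, namely the notion of a \emph{local abscissa of convergence}: on each small coordinate neighbourhood the restricted integral is, up to elementary factors, a $\Xi$-series with non-negative coefficients, and Lemma~\ref{lem:XiSeries}(4) shows (by summing along an extremal ray of a suitable cone decomposition) that such a series always diverges exactly at its maximal candidate pole. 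Since the integrand is non-negative, divergence of the restriction to one neighbourhood forces divergence of the global series; Lemma~\ref{lem:absLocalGlobal} then assembles the local abscissae into the global one by compactness. To salvage your residue approach you would need to argue along these same lines (non-negativity of the integrand and hence of every restriction to a compact open set), rather than via a sign analysis of the rational expression.
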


The proof of the theorem is described in
Section~\ref{sec:ProofAbscissa}.  Here we briefly discuss an
interesting consequence.  Let $K_1$ be a finite extension of $K_0$
with the properties described in part (3) of the theorem.  If a
maximal ideal $\Ip_0$ of $O_{K_0,S}$ is unramified in $K_1$ and admits
a prime divisor $\Ip_1 \trianglelefteq O_{K_1,S}$ of inertia degree $1$
over~$\Ip_0$, then the completions $(K_0)_{\Ip_0}$ and $(K_1)_{\Ip_1}$
coincide and hence $Z_{\Ip_0} = Z_{\Ip_1}$, in particular
$\alpha(Z_{\Ip_0}) = \alpha(Z_{\Ip_1})$.  By the Chebotarev Density
Theorem~\cite[VII~(13.6)]{Ne99} the set of such primes
$\Ip_0 \trianglelefteq O_{K_0,S}$ has Dirichlet density at least
$[K_1:K_0]^{-1}$.

\begin{cor}\label{cor:abscissaDensity}
  The set of closed points $\Ip_0 \in \spec(O_{K_0,S})$ satisfying
 \begin{equation*}
    \alpha(Z_{\Ip_0}) = \max P
 \end{equation*}
 has positive Dirichlet density.
\end{cor}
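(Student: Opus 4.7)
The plan is to formalise the discussion immediately preceding the corollary by combining Theorem~\ref{thm:abscissa}(3) with a standard application of the Chebotarev Density Theorem.

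First I would fix a finite extension $K_1$ of $K_0$ as produced by Theorem~\ref{thm:abscissa}(3), so that $\alpha(Z_\Ip) = \max P$ holds for every closed point $\Ip \in \spec(O_{K_1,S})$. The crucial observation is that $Z_{\Ip_0}$ depends on the closed point $\Ip_0 \in \spec(O_{K_0,S})$ only through the completion $O_{\Ip_0}$: this is immediate from the defining formula~\eqref{equ:general-integral}, since $\schX$, $\boldsymbol{\calI}$ and $\boldsymbol{\calJ}$ are all obtained from data over $\spec(O_{K_0,S})$ by base change to~$O_\Ip$. Consequently, whenever $\Ip_0$ is unramified in $K_1$ and admits a prime divisor $\Ip_1 \in \spec(O_{K_1,S})$ of inertia degree $f(\Ip_1 \mid \Ip_0) = 1$, the completions $(K_0)_{\Ip_0}$ and $(K_1)_{\Ip_1}$ coincide, so that $Z_{\Ip_0} = Z_{\Ip_1}$ and therefore $\alpha(Z_{\Ip_0}) = \alpha(Z_{\Ip_1}) = \max P$.

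It then remains to bound the density of the set of such $\Ip_0$ from below. I would pass to the Galois closure $L$ of $K_1$ over $K_0$ and set $G = \mathrm{Gal}(L/K_0)$ and $H = \mathrm{Gal}(L/K_1)$. Apart from the finitely many primes that ramify in $L$ (which have density zero), a closed point $\Ip_0 \in \spec(O_{K_0,S})$ admits a prime divisor in $K_1$ of inertia degree $1$ if and only if the Frobenius conjugacy class of $\Ip_0$ in $G$ meets $H$; this follows from the standard analysis of decomposition groups of primes of $L$ lying above~$\Ip_0$. The Chebotarev Density Theorem~\cite[VII.(13.6)]{Ne99} then yields Dirichlet density at least $|H|/|G| = [K_1:K_0]^{-1} > 0$ for this set, completing the argument. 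The only potential subtlety is the identification $Z_{\Ip_0} = Z_{\Ip_1}$ under coincidence of completions, but this is transparent from the construction; since Theorem~\ref{thm:abscissa}(3) already supplies all the analytic content, no substantial obstacle is anticipated.
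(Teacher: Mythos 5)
Your proposal is correct and takes essentially the same route as the paper: fix $K_1$ from Theorem~\ref{thm:abscissa}(3), note that $Z_{\Ip_0}$ only depends on the completion so that $Z_{\Ip_0} = Z_{\Ip_1}$ whenever $\Ip_0$ has a degree-one unramified divisor $\Ip_1$ in $K_1$, and apply Chebotarev to bound the density of such $\Ip_0$ below by $[K_1:K_0]^{-1}$. The only difference is that you spell out the standard Galois-closure/Frobenius-conjugacy-class derivation of that density bound, which the paper leaves implicit by citing the Chebotarev Density Theorem directly.
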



\subsubsection{The abscissa of convergence of certain Dirichlet
  series}\label{sec:abscissaAbstract}
We discuss a general result on a family of power series.  Let
$\Q\LS{Q}\PS{t}$ be the ring of formal power series in $t$ over the
field of Laurent series $\Q\LS{Q}$ in an indeterminate~$Q$.  Fix
integers $d,u \in \N$ and two collections
$\boldsymbol{\lambda} = (\lambda_j)_{j=0}^d$,
$\boldsymbol{\beta} = (\beta_j) _{j=0}^d$ of integral linear forms
$\lambda_j, \beta_j \colon \Z^{1+u} \to \Z$ for $0 \le j \le d$.
We say that an integral linear form $\lambda \colon \Z^{1+u} \to \Z$
is \emph{strictly negative} if $\lambda(v) < 0$ for every
$v \in \N_0^{\, 1+u} \smallsetminus \{ 0 \}$.  It is convenient for us
to write the elements of $\Z^{1+u}$ as pairs $(\ell,n)$, where
$\ell \in \Z$ and $n = (n_1, \dots, n_u) \in \Z^u$.  The following
assumptions are analogous to those in Assumption~\ref{ass:ideals}.

\begin{assumption} \label{ass:linear-forms}
  We assume throughout: (i) $\lambda_0$ is strictly negative, (ii)
  $\beta_0 = 0$, and (iii) there are an index $j_0 \in \{1,\dots,d\}$
  and $a \in \N$ such that $\beta_{j_0}(\ell, n) = a \ell$ for all
  $(\ell,n) \in \Z^{1+u}$.
\end{assumption}

For every integer $N \ge 0$ and vectors
$\boldsymbol{\epsilon} = (\epsilon_0,\dots,\epsilon_d),
\boldsymbol{\delta} =(\delta_0,\dots,\delta_d) \in \Z^{d+1}$
with $\delta_0 = 0$ we define a power series
\begin{equation} \label{equ:def-Xi-Q-t} 
  \begin{split}
    \Xi_{N, \boldsymbol{\epsilon},
      \boldsymbol{\delta}}^{\boldsymbol{\lambda},\boldsymbol{\beta}}
    (Q,t)  & = \sum_{\ell \in \Z} \sum_{n \in (\Z_{\ge N})^u} Q^{-
      \min\limits_{0 \le j \le d} \big( \lambda_j(\ell,n) +
      \epsilon_j \big) } \, t^{- \min\limits_{0 \le j
        \le d} \big( \beta_j(\ell,n) + \delta_j \big)} \\
     & \in \Q\LS{Q}\PS{t}.
  \end{split}
\end{equation}
To simplify the notation, we write
$\Xi_{N, \boldsymbol{\epsilon}, \boldsymbol{\delta}}$ in place of
$\Xi_{N, \boldsymbol{\epsilon},
  \boldsymbol{\delta}}^{\boldsymbol{\lambda},\boldsymbol{\beta}}$.
Let us verify that \eqref{equ:def-Xi-Q-t} defines an element of
$\Q\LS{Q}\PS{t} \smallsetminus \Q\LS{Q}[t]$.  The variable $t$ only
occurs with non-negative exponents since $\beta_0 = 0$ and
$\delta_0 = 0$.  Consider summands contributing to the coefficient of
$t^e$, for some fixed exponent $e \ge 0$.  By
Assumption~\ref{ass:linear-forms}~(iii), the relevant summands only
occur for $\ell \ge -\nicefrac{(e+\delta_{j_0})}{a}$.  By
Assumption~\ref{ass:linear-forms}~ (i), $\lambda_0$ is strictly
negative; hence every monomial $Q^k t^e$ occurs only a finite number
of times and, moreover, $k$ is bounded below in terms of~$\ell$ and
thus in terms of~$e$.  Finally, we observe that the coefficients are
all positive; as $\ell$ decreases, we pick up non-zero terms of
increasing degree in~$t$, thus
$\Xi_{N, \boldsymbol{\epsilon}, \boldsymbol{\delta}} \not \in
\Q\LS{Q}[t]$.

\begin{lem}\label{lem:XiSeries}
  In the above setting, including Assumption~\ref{ass:linear-forms},
  the following statements hold.
  \begin{enumerate}[\rm (1)]
  \item \label{eq:XiRational} The power series
    $\Xi_{N, \boldsymbol{\epsilon}, \boldsymbol{\delta}}$ is a
    rational function over~$\Q$, i.e., $\Xi_{N,
      \boldsymbol{\epsilon}, \boldsymbol{\delta}} \in \Q(Q,t)$.
  \item \label{eq:FinitePoles1} There is a finite set
    $\widetilde{P} \subseteq (\Z \times \N_0) \smallsetminus
    \{(0,0)\}$
    such that, for all $N, \boldsymbol{\epsilon}$ and
    $\boldsymbol{\delta}$, the rational function
    $\Xi_{N, \boldsymbol{\epsilon}, \boldsymbol{\delta}}$ can be
    written as the quotient of an element of $\Q[Q,Q^{-1},t]$ by a
    power of $\prod_{(A,B) \in \widetilde{P}} (1- Q^A t^B)$.
  \item \label{eq:FinitePoles2} The real parts of poles of
    $\Xi_{N,\boldsymbol{\epsilon},\boldsymbol{\delta}}(q^{-1},q^{-s})$,
    for all $N,\boldsymbol{\epsilon},\boldsymbol{\delta}$ and
    $q \in \R_{> 1}$, form a non-empty finite subset
    $P_{\boldsymbol{\lambda},\boldsymbol{\beta}} \subseteq \Q$.
  \item \label{eq:MaxAttained} The maximum
    $\max P_{\boldsymbol{\lambda},\boldsymbol{\beta}}$ is a pole for
    every
    $\Xi_{N,\boldsymbol{\epsilon},\boldsymbol{\delta}}(q^{-1},q^{-s})$
    as in~\eqref{eq:FinitePoles2}.
  \item \label{eq:XiInversion} The inversion property
    $\Xi_{1, \mathbf{0}, \mathbf{0}}(Q^{-1},t^{-1}) = (-1)^{u+1}
    \Xi_{0, \mathbf{0}, \mathbf{0}}(Q,t)$ holds.
  \end{enumerate}
\end{lem}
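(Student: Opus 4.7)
The plan is to reduce the defining sum to a finite combination of generating functions over rational polyhedra in $\Z^{1+u}$ and then apply the standard polyhedral machinery.  Fix once and for all a total order on $\{0,1,\ldots,d\}$ and, for each pair $(j_1,j_2)$, let $C_{j_1,j_2}$ be the set of $(\ell,n) \in \Z \times (\Z_{\ge N})^u$ at which $j_1$ is the least index realising $\min_j(\lambda_j(\ell,n) + \epsilon_j)$ and $j_2$ is the least index realising $\min_j(\beta_j(\ell,n) + \delta_j)$.  Each $C_{j_1,j_2}$ is the lattice-point set of a rational polyhedron $P_{j_1,j_2}$ defined by integral (strict or non-strict) linear inequalities, and on it the exponents of $Q$ and $t$ in the summand are the single linear forms $-(\lambda_{j_1}+\epsilon_{j_1})$ and $-(\beta_{j_2}+\delta_{j_2})$.

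For \eqref{eq:XiRational} and \eqref{eq:FinitePoles1} I would then apply Brion--Stanley theory for generating functions of rational polyhedra to conclude that each contribution lies in $\Q(Q,t)$.  The recession cones of the $P_{j_1,j_2}$ are cut out by the homogeneous inequalities $\lambda_j - \lambda_{j_1} \ge 0$, $\beta_j - \beta_{j_2} \ge 0$ and $n_i \ge 0$, and so depend only on $\boldsymbol{\lambda},\boldsymbol{\beta}$, not on $N,\boldsymbol{\epsilon},\boldsymbol{\delta}$.  Choosing simplicial subdivisions of these cones produces a finite list of primitive generators $e \in \Z^{1+u}$, and the pairs $(A,B) = (-\lambda_{j_1}(e),-\beta_{j_2}(e))$ form the universal set $\widetilde{P}$ required in \eqref{eq:FinitePoles1}.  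Part \eqref{eq:FinitePoles2} then follows at once: the factor $1 - Q^A t^B$ becomes $1 - q^{-A-Bs}$ under the substitution and has zeros of real part $-A/B$ whenever $B > 0$; Assumption \ref{ass:linear-forms}(iii) guarantees that at least one pair in $\widetilde{P}$ has $B > 0$, so $P_{\boldsymbol{\lambda},\boldsymbol{\beta}}$ is finite and non-empty.

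Part \eqref{eq:MaxAttained} is the delicate point, and I expect it to be the main obstacle.  The series $\Xi_{N,\boldsymbol{\epsilon},\boldsymbol{\delta}}(q^{-1},q^{-s})$, rewritten as a generalised Dirichlet series in $s$, has non-negative integer coefficients; Landau's theorem therefore forces its abscissa of convergence to be a singularity of the meromorphic continuation, and rationality in $q^{-s}$ promotes this singularity to a genuine pole.  To identify this abscissa uniformly with $\max P_{\boldsymbol{\lambda},\boldsymbol{\beta}}$ I plan to use the elementary bound
\begin{equation*}
  \bigl| \min_j (\lambda_j(v) + \epsilon_j) - \min_j \lambda_j(v) \bigr| \le \max_j |\epsilon_j|,
\end{equation*}
and its analogue for $\boldsymbol{\beta}$: these show that the coefficient sequence of every member of the family is squeezed between bounded multiples of the coefficient sequence of $\Xi_{0,\mathbf{0},\mathbf{0}}$, so that all the abscissae coincide.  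The dependence on $N$ is the trickiest part, since passing from $N=0$ to $N=1$ removes an infinite slab of terms rather than a Laurent polynomial; I would address this by comparing coefficients stratum by stratum $C_{j_1,j_2}$ after the single linear form on each stratum has been made explicit.

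Finally, for the inversion identity \eqref{eq:XiInversion} I would proceed cone by cone via Stanley's reciprocity theorem.  Both $\Xi_{0,\mathbf{0},\mathbf{0}}(Q,t)$ and $\Xi_{1,\mathbf{0},\mathbf{0}}(Q^{-1},t^{-1})$ decompose according to the above stratification, with the latter naturally involving the open (strict $n_i \ge 1$) analogues of the closed polyhedra appearing in the former.  Stanley's reciprocity formula $f_{\mathrm{relint}(P)}(x) = (-1)^{\dim P} f_P(x^{-1})$ matches full-dimensional strata on the two sides up to the universal sign $(-1)^{u+1}$, which is exactly the sign demanded by the lemma.  The lower-dimensional boundary strata must cancel pairwise, which I plan to verify combinatorially by noting that each such face is shared by precisely two adjacent top-dimensional strata and is assigned opposite signs by Stanley reciprocity on the two sides of the claimed equation.
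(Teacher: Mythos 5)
Your plan matches the paper's approach quite closely: parts (1)--(3) via a cone decomposition indexed by which linear forms attain the two minima, Stanley's theory of lattice-point generating functions, and the observation that the relevant recession cones (hence the finite set $\widetilde P$) depend only on $\boldsymbol\lambda,\boldsymbol\beta$; part (5) via Stanley reciprocity; and part (4) via the non-negativity of coefficients plus a squeeze estimate. Your explicit invocation of Landau's theorem in (4) is a clean way of making precise a step the paper leaves implicit, and your remark that the recession cones are independent of $N,\boldsymbol\epsilon,\boldsymbol\delta$ is a sharper justification of the uniformity of $\widetilde P$ than the paper's terse sketch.

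There is, however, one genuine gap, and it is precisely in the place you flag as the main obstacle. You say the dependence on $N$ in part (4) is ``the trickiest part'' and propose to handle it by ``comparing coefficients stratum by stratum,'' which you leave unworked out. But in fact the $N$-dependence is not a separate difficulty at all: the substitution $n \mapsto n + (N,\ldots,N)$ gives the exact identity
\[
\Xi_{N,\boldsymbol\epsilon,\boldsymbol\delta}(Q,t) \;=\; \Xi_{0,\boldsymbol\epsilon',\boldsymbol\delta'}(Q,t),
\qquad
\epsilon_j' = \epsilon_j + \lambda_j(0,N,\dots,N),\quad
\delta_j' = \delta_j + \beta_j(0,N,\dots,N),
\]
so the general $N$ case is the $N=0$ case with shifted $\boldsymbol\epsilon,\boldsymbol\delta$, and your squeeze bound then covers it with no further work. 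Without this observation your bound only compares $\Xi_{N,\boldsymbol\epsilon,\boldsymbol\delta}$ to $\Xi_{N,\mathbf 0,\mathbf 0}$, not to $\Xi_{0,\mathbf 0,\mathbf 0}$, and the argument as written does not close. A smaller point worth noting: the paper's cone decomposition additionally splits according to the sign of $\ell$, which is what makes each cone pointed in a way compatible with the power-series interpretation in $\Q\LS{Q}\PS{t}$; your strata $C_{j_1,j_2}$ mix both signs of $\ell$, so you should check pointedness and the direction of the geometric-series expansion before appealing to Brion--Stanley.
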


\begin{proof}
  Apart from part (4), we only sketch the proof as the assertions are
  essentially paraphrases of known results; see~\cite[Prop.~2.1]{Vo10}
  and \cite[Prop.~4.5]{AvKlOnVo13}.  Furthermore, we may assume that
  $N = 0$ as
  $\Xi_{N,\boldsymbol{\epsilon},\boldsymbol{\delta}}(Q,t) =
  \Xi_{0,\boldsymbol{\epsilon}',\boldsymbol{\delta}'}(Q,t)$
  for $\epsilon_j' = \epsilon_j + \lambda_j(0,N,\dots,N)$ and
  $\delta_j' = \delta_j + \beta_j(0,N,\dots,N)$, where
  $0 \le j \le d$.  For simplicity we drop $N$ from the notation
  altogether.
  
  Decompose $\Z \times \N_0^{\, u}$ into a finite number of disjoint
  rational cones
  $C(\boldsymbol{\epsilon},\boldsymbol{\delta})^{\pm}_{j,k}$, for
  $0 \le j,k \le d$, such that on each
  $C(\boldsymbol{\epsilon},\boldsymbol{\delta})^{\pm}_{j,k}$ the
  minimum in the exponent of $Q$ is attained by
  $\lambda_j + \epsilon_j$, the minimum in the exponent of $t$ is
  attained by $\beta_k+\delta_k$, and $\ell$ is positive or
  non-positive according to the attached sign.  Introducing new
  variables we may describe each cone with the cone of positive
  solutions of a linear integral inhomogeneous system of equations.
  The generating function of such a cone is known to be rational (see
  \cite[Ch.~I]{St96}) and so we obtain
  \eqref{eq:XiRational}.

  Moreover, the denominator of the generating function of each cone is
  of the form $\prod_{\gamma} (1- X^\gamma)$, where the product runs
  over the finitely many completely fundamental solutions $\gamma$ of
  the corresponding \emph{homogeneous} system.  Making the suitable
  substitutions we obtain a finite set $\widetilde{P}$ with the
  properties stated in~\eqref{eq:FinitePoles1}.  Observe that
  $(0,0) \not \in \widetilde{P}$ as $\Xi(Q,t)$ is a well-defined power
  series, as explained just after the defining
  equation~\eqref{equ:def-Xi-Q-t}.

  Statement~\eqref{eq:FinitePoles2} is a direct consequence of
  \eqref{eq:FinitePoles1}, as
   \begin{equation*}
     P_{\boldsymbol{\lambda},\boldsymbol{\beta}} \subseteq \left\{
       -\tfrac{A}{B} \in \Q  \mid (A,B) \in \widetilde{P} 
       \text{ and } B \neq 0 \right\},
   \end{equation*}
   and the inversion property~\eqref{eq:XiInversion} follows from
   \cite[Prop.~8.3]{St74}.
  
   It remains to justify~\eqref{eq:MaxAttained}.  We give more details
   than provided in~\cite{AvKlOnVo13}, where the corresponding
   discussion appears to be short if not incomplete.  Fix $q>1$.  As a
   first step we show that the abscissae of convergence of
   $\Xi_{\boldsymbol{\epsilon},\boldsymbol{\delta}}(q^{-1},q^{-s})$
   and $\Xi_{\mathbf{0}, \mathbf{0}}(q^{-1},q^{-s})$ agree.  Define
   $\calE = \max \{ \lvert \epsilon_i \rvert \mid 0 \le i \le d \}$
   and
   $\Delta = \max \{ \lvert \delta_i \rvert \mid 0 \le i \le d \}$.
   Fix $(\ell,n) \in \Z \times \N_0^{\, u}$, and choose indices
   $j,k \in \{0, \ldots, d \}$ such that
   \[
   \min_{0 \le i \le d} \beta_i(\ell,n) = \beta_j(\ell,n) \quad
   \text{and} \quad \min_{0 \le i \le d} \beta_i(\ell,n)+\delta_i =
   \beta_k(\ell,n) + \delta_k.
   \]
   Then $\beta_j(\ell,n) \le \beta_k(\ell,n)$, whereas
   $\beta_k(\ell,n) +\delta_k \le \beta_j(\ell,n) + \delta_j$, and we
   conclude
  \begin{equation*}
    -\Delta \le \delta_k \le (\beta_k(\ell,n)+\delta_k) -
    \beta_j(\ell,n) \le \delta_j \le \Delta. 
  \end{equation*}
  A similar inequality relates the minima of $\lambda_i(\ell,n)$ and
  $\lambda_i(\ell,n)+\epsilon_i$.  We conclude that, for every
  $\sigma \in \R$ for which at least one of the two series
  $\Xi_{\boldsymbol{\epsilon},\boldsymbol{\delta}}(q^{-1},q^{-\sigma})$
  and $\Xi_{\mathbf{0}, \mathbf{0}}(q^{-1},q^{-\sigma})$ converges,
  \begin{equation*}
    q^{-\calE - |\sigma | \Delta} \, \Xi_{\mathbf{0}, \mathbf{0}}(q^{-1},q^{-\sigma}) \le
    \Xi_{\boldsymbol{\epsilon},\boldsymbol{\delta}}(q^{-1},q^{-\sigma})
    \le q^{\calE + |\sigma|\Delta} \, 
    \Xi_{\mathbf{0}, \mathbf{0}}(q^{-1},q^{-\sigma}).
  \end{equation*}
  Hence the abscissae of convergence of the two series are equal.
  
  Consider the abscissa of convergence of
  $\Xi_{\mathbf{0}, \mathbf{0}}(q^{-1},q^{-s})$. Pick
  $(A,B) \in \widetilde{P}$ with $B \neq 0$ such that
  $\nicefrac{-A}{B}$ is maximal among all the pairs
  $(A,B) \in \widetilde{P}$.  By~\eqref{eq:FinitePoles1}, the series
  $\Xi_{\mathbf{0}, \mathbf{0}}(q^{-1},q^{-s})$ converges absolutely
  for $\mathrm{Re}(s) > \nicefrac{-A}{B}$ (as for ordinary Dirichlet
  series this can be seen by looking at the Taylor series).
  
  Let $\sigma \in \R$ such that
  $\Xi_{\mathbf{0}, \mathbf{0}}(q^{-1},q^{-\sigma})$ converges.  The
  term $(1-Q^A t^B)$ corresponds to some extremal ray in some cone
  $C(\boldsymbol{0},\boldsymbol{0})^\pm_{j,k}$ in the decomposition
  chosen above.  As $\Xi_{\mathbf{0}, \mathbf{0}}$ is a series in $Q$
  and $t$ with non-negative integral coefficients we may sum over the
  extremal ray to obtain
  $\Xi_{\mathbf{0}, \mathbf{0}}(q^{-1},q^{-\sigma}) \ge
  \sum_{i=0}^\infty q^{(-A-B \sigma)i} = \nicefrac{1}{(1-q^{-A - B
      \sigma})}$.
  This implies $\sigma > \nicefrac{-A}{B}$, and thus
  $\nicefrac{-A}{B}$ is the abscissa of convergence.
\end{proof}


\subsubsection{Local abscissae of convergence and the proof of
  Theorem~\ref{thm:abscissa}}\label{sec:ProofAbscissa}
Recall the notation set up in Section~\ref{sec:p-adic-integrals}, in
particular~\eqref{equ:not-51} and Assumption~\ref{ass:ideals}.
Consider the generic fibre $\schX_{K_0}$ of the scheme~$\schX$,
equipped with the ideal sheaf $\calL_{K_0}$ which is the product of
all \emph{non-zero} ideals $\calI_{j,K_0}$ and $\calJ_{j,K_0}$.  To
simplify the notation, we assume that all $\calI_{j,K_0}$ and
$\calJ_{j,K_0}$ are non-zero, so that
$\calL_{K_0} = \prod_{j=0}^d \calI_{j,K_0}\calJ_{j,K_0}$; in general,
the ideal sheaves that are zero have to be removed.  By Hironaka's
Theorem~\ref{thm:Hironaka}, there is a monomialisation
\begin{equation*}
 h \colon Y \to \schX_{K_0}
\end{equation*}
of the ideal sheaf $\calL_{K_0}$.  If a product of non-zero ideals is
monomial, then each of the ideals in the product is monomial.  For
$0 \le j \le d$ we conclude that $h^*(\calI_{j,K_0})$ (respectively
$h^*(\calJ_{j,K_0})$) is the sheaf of ideals of an effective divisor $D_j$
(respectively $C_j$) with simple normal crossings; say
\begin{equation*}
 D_j = \sum_{E \in T} N_E^{(j)} E \quad \text{and} \quad
 C_j = \sum_{E \in T} M_E^{(j)} E.
\end{equation*}
Here $T$ denotes the finite set of smooth prime divisors, defined over
$K_0$, occurring in the co-support of $h^*(\calL_{K_0})$.  Let $\disc$
denote the \emph{discrepancy divisor}, that is, the divisor defined by
the image of the canonical map
$h^*(\omega_{\schX_{K_0}}) \otimes \omega_{Y}^\vee \to \calO_Y$, where
$\omega_X$ denotes the canonical bundle on a variety~$X$.  The support
of $\disc$ is contained in~$T$, thus we may write
\begin{equation*}
     \disc = \sum_{E \in T} (\nu_E - 1) E
\end{equation*}
for certain~$\nu_E \in \N$.  These parameters are relevant for our
purposes, as they describe the pullback of the canonical $p$-adic
measure; see~Section~\ref{sec:func-equ}.  For any finite extension $K$
of $K_0$ we obtain a monomialisation $h_K \colon Y_K \to \schX_K$.  As
the divisors in $T$ are smooth, they decompose into a disjoint union
of prime divisors defined over~$K$.  Let $\Ip \in \spec(O_S)$ be a
closed point.  Since $h$ is birational, we may pull back the integral
defining $Z_\Ip$ to $Y$ and we obtain
\begin{equation}\label{eq:zetaIntegral}
  Z_\Ip(s) = \sum_{\ell \in \Z} q_\Ip^{\, -c\ell} \int_{Y(K_\Ip)}
  \left\Vert \left( \pi^{j\ell} h^{-1}(\calI_j) \right)_{j=0}^d
  \right\Vert_\Ip^{\, -s}  \left\Vert \left( \pi^{j\ell}
      h^{-1}(\calJ_j) \right)_{j=0}^d \right\Vert_\Ip^{\, -1} \di h^*(\mu_{\schX, \Ip}).
\end{equation}

We point out two subtleties in connection with the
formula~\eqref{eq:zetaIntegral}.  Firstly, the scheme $\schX$ is
projective and hence $\schX(O_\Ip) = \schX(K_\Ip)$.  Secondly, the
symbol $h^{-1}(\calI_j)$ denotes the sheaf-theoretic inverse image of
the $\calO_\schX$-module~$\calI_j$, which is not an $\calO_Y$-module;
a similar remark applies to $\calJ_j$.

Given an open subset $V \subseteq_\mathrm{o} Y(K_\Ip)$, we define the
\emph{restricted zeta function} $Z_\Ip[V](s)$ by the formula obtained
from~\eqref{eq:zetaIntegral} by restricting the domain of integration to $V$.
We denote by $\alpha_\Ip(V)$ the abscissa of convergence of
$Z_\Ip[V]$.  Observe that
$V \subseteq_\mathrm{o} W \subseteq_\mathrm{o} Y(K_\Ip)$ implies
$\alpha_\Ip(V) \le \alpha_\Ip(W)$.

\begin{dfn}
  Let $K$ be a finite extension of $K_0$ and let $\Ip \in \spec(O_S)$
  be a closed point.  We define the \emph{local abscissa of
    convergence at $y \in Y(K_\Ip)$} by
 \begin{equation*}
   \alpha_\Ip(y) = \inf \{ \alpha_\Ip(V) \mid  y \in V
   \subseteq_\mathrm{o} Y(K_\Ip) \}.
 \end{equation*}
\end{dfn}

\begin{lem} \label{lem:absLocalGlobal} In the above setting, for any
  open compact subset $V \subseteq Y(K_\Ip)$ the abscissa of
  convergence of $Z_\Ip[V]$ is
 \begin{equation*}
  \alpha_\Ip(V) = \sup \{ \alpha_{\Ip}(y) \mid y \in V \}.
 \end{equation*}
\end{lem}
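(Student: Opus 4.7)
The plan is to prove the two inequalities separately, with non-negativity of the integrand for real $s$ providing the crucial tool in both directions.

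The inequality $\sup_{y \in V} \alpha_\Ip(y) \leq \alpha_\Ip(V)$ is immediate from the definition of the local abscissa: since $V$ is itself an open neighborhood of each of its points, one has $\alpha_\Ip(y) \leq \alpha_\Ip(V)$ for every $y \in V$, and taking the supremum gives the claim.

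For the reverse inequality, the strategy is to exploit the compactness of $V$ via the following monotonicity and finite-subadditivity principle. For real $s = \sigma$, the integrand in~\eqref{eq:zetaIntegral} is a non-negative function on $Y(K_\Ip)$ and the partial sums of $\sum_{\ell \in \Z}$ consist of non-negative terms. Hence, for any open subsets $W \subseteq W' \subseteq_\mathrm{o} Y(K_\Ip)$ one has $Z_\Ip[W](\sigma) \leq Z_\Ip[W'](\sigma)$ term-by-term in $\ell$, so that $\alpha_\Ip(W) \leq \alpha_\Ip(W')$; and for any finite open cover $W = W_1 \cup \ldots \cup W_n$, the pointwise inequality $\mathbf{1}_W \leq \sum_i \mathbf{1}_{W_i}$ together with the non-negativity of the integrand gives
\[
Z_\Ip[W](\sigma) \leq \sum_{i=1}^n Z_\Ip[W_i](\sigma),
\]
and therefore $\alpha_\Ip(W) \leq \max_i \alpha_\Ip(W_i)$.

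With this subadditivity in hand, set $\alpha^* = \sup_{y \in V} \alpha_\Ip(y)$, fix $\sigma > \alpha^*$, and for each $y \in V$ choose, using the definition of $\alpha_\Ip(y)$, an open neighborhood $V_y \subseteq_\mathrm{o} Y(K_\Ip)$ of $y$ with $\alpha_\Ip(V_y) < \sigma$. By compactness of $V$, finitely many of these, say $V_{y_1}, \ldots, V_{y_n}$, already cover $V$; intersecting with $V$ yields a finite open cover of $V$ itself, and the subadditivity principle together with monotonicity gives
\[
\alpha_\Ip(V) \leq \max_{1 \leq i \leq n} \alpha_\Ip(V_{y_i} \cap V) \leq \max_{1 \leq i \leq n} \alpha_\Ip(V_{y_i}) < \sigma.
\]
Letting $\sigma \searrow \alpha^*$ yields $\alpha_\Ip(V) \leq \alpha^*$, completing the argument.

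The only (mild) obstacle is the justification of the monotonicity and subadditivity of the abscissa with respect to the domain of integration, in view of the two-sided outer sum $\sum_{\ell \in \Z}$ appearing in $Z_\Ip$; it is precisely at this point that the positivity of the integrand for real $\sigma$ is indispensable, turning convergence into absolute convergence term-by-term in $\ell$ and permitting the comparison of the series over different open sets.
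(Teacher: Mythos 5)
Your proposal is correct and takes essentially the same approach as the paper: both prove the easy inequality from the definition, then use compactness to pass to a finite open cover and conclude via monotonicity and finite subadditivity of $Z_\Ip[\,\cdot\,](\sigma)$ at real $\sigma$. The paper leaves the subadditivity step implicit (``We deduce that $\alpha_\Ip(V) \le \max_i \alpha_\Ip(W_{y_i})$''), whereas you spell it out via the non-negativity of the integrand, which is a welcome clarification but not a different argument.
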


\begin{proof}
  Clearly, $V$ is an open neighbourhood of all of the points that it
  contains, thus
  $\alpha_\Ip(V) \ge \sup \{\alpha_{\Ip}(y) \mid y \in V \}$.
  Conversely, let $\epsilon > 0$.  For every $y \in V$ we find an open
  neighbourhood $W_y$ of $y$ in $V$ such that
  $\alpha_\Ip(W_y) < \alpha_\Ip(y) + \epsilon$.  Since $V$ is compact,
  we find a finite number of points $y_1,\dots, y_n \in V$ such that
  $V = \bigcup \{ W_{y_i} \mid 1 \le i \le n \}$.  We deduce that
  \begin{equation*}
    \alpha_\Ip(V) \le \max \{ \alpha_\Ip(W_{y_i}) \mid 1 \le i \le n \} <
    \sup \{ \alpha_\Ip(y) \mid y \in V \} + \epsilon. 
  \end{equation*}
  As $\epsilon$ tends to~$0$, we obtain
  $\alpha_\Ip(V) \le \sup \{\alpha_{\Ip}(y) \mid y \in V \}$.
\end{proof}

Consider a point $y \in Y(K_\Ip)$.  We use the monomialisation
$ h \colon Y \to \schX_{K_0} $ to compute $Z_\Ip[V]$ for small
coordinate neighbourhoods $V$ of $y$.  Set
$T(y) = \{ E \in T \mid y \in E \}$ and observe that
$u \coloneqq \lvert T(y)\rvert \le m = \dim(Y)$.  Locally at $y$, we
can complement the collection of local equations $g_E$ for
$E \in T(y)$ by $m-u$ further elements to obtain a regular system of
parameters.

Locally at $x \coloneqq h(y) \in \schX(O_\Ip)$, the ideal $\calI_j$,
for $j \in \{ 0, \dots, d\}$, is finitely generated, since $\schX$ is
noetherian.  Moreover, the sheaf $h^{-1}(\calI_j)$ generates the
monomial ideal $h^*(\calI_{j,K_0})$, therefore we obtain
\begin{equation*}
  \Vert h^{-1}(\calI_j) \Vert_\Ip = \lvert w \rvert_\Ip \prod\nolimits_{E \in T(y)} \Big|
  g_E^{N^{(j)}_{E}} \Big|_\Ip
\end{equation*}
locally at $y$ for some regular function $w$ with $w(y) \neq 0$.
Similar considerations apply to the ideals $\calJ_j$.  Therefore, if
we choose $V \cong \Ip^N O_\Ip^{\, m}$ to be a small coordinate
neighbourhood, we obtain
\begin{align*}
  \left\Vert \big( \pi^{j\ell} h^{-1}(\calI_j) \big)_{j=0}^d \right\Vert_\Ip 
  & =  \left\Vert \Big(\pi^{j\ell + \delta_j}   \prod\nolimits_{E \in T(y)}
    g_E^{N^{(j)}_{E}} \Big)_{j=0}^d \right\Vert_\Ip\\
  \left\Vert \big(\pi^{j\ell} h^{-1}(\calJ_j) \big)_{j=0}^d \right\Vert_\Ip
  &=  \left\Vert \Big(\pi^{j\ell + \epsilon_j} \prod\nolimits_{E \in T(y)}
    g_E^{M^{(j)}_{E}} \Big)_{j=0}^d \right\Vert_\Ip.
\end{align*}
for certain integers $\delta_j, \epsilon_j \in \Z$.  The canonical
$\Ip$-adic measure is locally defined by differential forms, thus the
pullback $h^*(\mu_{\schX, \Ip})$ is transformed according to the
discrepancy divisor.  Writing $T(y) = \{E_1,\ldots,E_u\}$, we deduce
that the zeta function $Z_\Ip[V]$ is given by
\begin{align}
  \nonumber Z_\Ip[V](s) & = \sum_{\ell \in \Z} q_\Ip^{-c\ell}
                          \int_{\Ip^N O_\Ip^{\, m}} \, \max_{0 \le j \le
                          d} \Big|
                          \pi^{j\ell + \delta_j}
                          \prod\nolimits_{i=1}^u x_i^{N^{(j)}_{E_i}} \Big|_\Ip^{-s}\\
  \nonumber & \quad \cdot \max_{0 \le j \le d} \Big|  \pi^{j\ell +
              \epsilon_j} \prod\nolimits_{i=1}^u x_i^{M^{(j)}_{E_i}}
              \Big|_\Ip^{-1}
              \left\vert  \pi^{a} \prod\nolimits_{i=1}^u
              x_i^{\nu_{E_i} - 1} \right\vert_\Ip \,\di\mu(\underline{x})\\ 
                        & = q_\Ip^{N(u-m)-a} (1-q^{-1}_\Ip)^u  \,\,
                          \Xi_{N, \boldsymbol{\epsilon},
                          \boldsymbol{\delta}}(q_\Ip^{\, -1},
                          q_\Ip^{\, -s}),  \tag{$\dagger$}\label{eq:localZetaFormula} 
\end{align}
where $\mu$ denotes the normalised Haar measure satisfying
$\mu(O_\Ip^{\, m}) = 1$.  Here
$\Xi_{N, \boldsymbol{\epsilon}, \boldsymbol{\delta}}(Q,t)$ is equal to
\begin{equation*}
  \sum_{\ell \in \Z}\sum_{n \in \Z^u_{\ge N}} 
  Q^{- \left( \min\limits_{0 \le j \le d} (\epsilon_j + (j-c)\ell +
      \sum_{i=1}^u (M_{E_i}^{(j)}-\nu_{E_i}) n_i ) \right)} t^{- \left(
      \min\limits_{0 \le j \le d} ( \delta_j + j\ell + \sum_{i=1}^u N^{(j)}_{E_i} n_j) \right)}
\end{equation*}
and thus a power series of the form studied in
Section~\ref{sec:abscissaAbstract}.  By our general assumptions we
have $N_E^{(0)} = N_E^{(j_0)} = M_E^{(0)} = 0$ for all $E \in T$ and
some $j_0 > 0$; so this series satisfies
Assumption~\ref{ass:linear-forms}.

\begin{pro}\label{prop:localAbscissa}
  \begin{enumerate}[\rm (1)]
  \item\label{eq:existNBH} Every point $y \in Y(K_\Ip)$ has an open
    neighbourhood $V$ such that $\alpha_\Ip(y) = \alpha_\Ip(V)$.
  \item Let $K$ be a finite extension of $K_0$ and, for
    $i \in \{1,2\}$, let $\iota_i \colon K \to K_{\Ip_i}$ be a
    completion.  For $y \in Y(K)$ the local abscissa does not depend
    on the completion, that is
    $\alpha_{\Ip_1}(\iota_1(y)) = \alpha_{\Ip_2}(\iota_2(y))$.
  \item The local abscissa at $y \in Y(K_\Ip)$ depends only on
    $T(y) = \{ E \in T \mid y \in E \}$.
\end{enumerate}
\end{pro}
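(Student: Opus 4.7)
The plan is to reduce all three assertions to the explicit local formula~\eqref{eq:localZetaFormula} and the structural statements in Lemma~\ref{lem:XiSeries}. Reading off the exponents in~\eqref{eq:localZetaFormula}, the tuples of linear forms $\boldsymbol{\lambda}(y) = (\lambda_j)_{j=0}^d$ and $\boldsymbol{\beta}(y) = (\beta_j)_{j=0}^d$ associated to $y$ on any coordinate polydisk of the required form are
\[
\lambda_j(\ell,n) = (j-c)\ell + \sum_{i=1}^u \bigl(M_{E_i}^{(j)} - \nu_{E_i}\bigr) n_i, \quad \beta_j(\ell,n) = j\ell + \sum_{i=1}^u N_{E_i}^{(j)} n_i,
\]
where $T(y) = \{E_1,\dots,E_u\}$. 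Crucially, $\boldsymbol{\lambda}(y)$ and $\boldsymbol{\beta}(y)$ depend only on the set $T(y)$ together with the globally fixed integers $c, N_E^{(j)}, M_E^{(j)}$ and $\nu_E$, while the triple $(N,\boldsymbol{\epsilon},\boldsymbol{\delta})$ appearing in~\eqref{eq:localZetaFormula} encodes only the size of the polydisk and the precise position of $y$ within it, entering the power series merely as shift data. By Lemma~\ref{lem:XiSeries}\,\eqref{eq:FinitePoles2}--\eqref{eq:MaxAttained}, the abscissa of convergence of $\Xi_{N,\boldsymbol{\epsilon},\boldsymbol{\delta}}(q^{-1},q^{-s})$ equals $\max P_{\boldsymbol{\lambda}(y),\boldsymbol{\beta}(y)}$ and is independent both of the shift data and of the value of $q > 1$.

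For part (1), I fix a coordinate polydisk $V_0 \subseteq Y(K_\Ip)$ centered at $y$ of the form used to derive~\eqref{eq:localZetaFormula}, so that $\alpha_\Ip(V_0) = \max P_{\boldsymbol{\lambda}(y),\boldsymbol{\beta}(y)}$. To establish the matching lower bound $\alpha_\Ip(W) \ge \alpha_\Ip(V_0)$ for every open $W$ with $y \in W$, I shrink inside $W \cap V_0$ to a polydisk $V_1 \ni y$ of the same shape, re-apply~\eqref{eq:localZetaFormula} with possibly altered shifts but identical linear forms to get $\alpha_\Ip(V_1) = \max P_{\boldsymbol{\lambda}(y),\boldsymbol{\beta}(y)}$, and invoke monotonicity $\alpha_\Ip(V_1) \le \alpha_\Ip(W)$. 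Hence the infimum defining $\alpha_\Ip(y)$ is attained at $V_0$.

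Part (3) then follows at once: the argument for part (1) identifies $\alpha_\Ip(y)$ with $\max P_{\boldsymbol{\lambda}(y),\boldsymbol{\beta}(y)}$, and the displayed formulae for $\boldsymbol{\lambda}(y)$ and $\boldsymbol{\beta}(y)$ show the right-hand side depends only on $T(y)$. For part (2), given $y \in Y(K)$, membership $y \in E$ for $E \in T$ is intrinsic to $y$ and does not depend on the completion, so $T(\iota_1(y)) = T(y) = T(\iota_2(y))$ as subsets of $T$; applying part (3) at each completion, and using that $P_{\boldsymbol{\lambda}(y),\boldsymbol{\beta}(y)}$ is insensitive to the residue-field size by Lemma~\ref{lem:XiSeries}\,\eqref{eq:FinitePoles2}, yields $\alpha_{\Ip_1}(\iota_1(y)) = \alpha_{\Ip_2}(\iota_2(y))$. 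The main technical point I anticipate is verifying that every sufficiently small neighbourhood of $y$ admits a sub-polydisk to which~\eqref{eq:localZetaFormula} applies with the same $\boldsymbol{\lambda}(y),\boldsymbol{\beta}(y)$; this rests on the simple-normal-crossings structure of $T$, which guarantees that the local equations $g_{E_i}$ for $E_i \in T(y)$ extend to a regular system of parameters on an entire neighbourhood of~$y$, so that no new divisorial contributions can appear upon shrinking.
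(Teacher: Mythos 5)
Your proposal is correct and follows essentially the same approach as the paper's own proof: you reduce to the local formula~\eqref{eq:localZetaFormula} on coordinate polydisks around $y$, observe that these polydisks form a neighbourhood base, and invoke Lemma~\ref{lem:XiSeries} to see that the abscissa of $\Xi_{N,\boldsymbol{\epsilon},\boldsymbol{\delta}}(q^{-1},q^{-s})$ equals $\max P_{\boldsymbol{\lambda}(y),\boldsymbol{\beta}(y)}$ and is insensitive to $N$, $\boldsymbol{\epsilon}$, $\boldsymbol{\delta}$ and $q$, while $\boldsymbol{\lambda}(y),\boldsymbol{\beta}(y)$ depend only on $T(y)$ plus the global data $c$, $N_E^{(j)}$, $M_E^{(j)}$, $\nu_E$. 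Your write-up is a bit more explicit than the paper's (which handles (2) and (3) in one breath), but the ideas and the use of the key lemma are the same.
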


\begin{proof}
  The coordinate neighbourhoods on which the
  formula~\eqref{eq:localZetaFormula} holds form a neighbourhood base
  at $y$. By Lemma~\ref{lem:XiSeries} the abscissa of convergence does
  not depend on~$N$, hence \eqref{eq:existNBH} holds.  Let $y'$ be
  another point in some completion $K_{\Ip'}$ and assume that
  $T(y') = T(y)$.  On small coordinate neighbourhoods of $y$ and $y'$,
  we have formulae of the form~\eqref{eq:localZetaFormula}.  In these
  formulae, different $N, \boldsymbol{\epsilon}, \boldsymbol{\delta}$
  and $q$ may occur.  However, by Lemma~\ref{lem:XiSeries} these
  parameters do not change the abscissa of convergence and the
  assertions follow.
\end{proof}

\begin{proof}[Proof of Theorem~\ref{thm:abscissa}] For a subset
  $U \subseteq T$ we consider the closed smooth $K_0$-subscheme
  $E_U = \bigcap_{E \in U} E \subseteq Y$ and the open subscheme
  $E^\circ_U = E_U \smallsetminus \bigcup_{E \notin U} E$ of~$E_U$.  The
  closed points $y \in E^\circ_U$ are concisely the ones which satisfy
  $T(y) = U$.  By Proposition~\ref{prop:localAbscissa} the local
  abscissa is constant on all points of $E^\circ_U$ and we denote this
  abscissa by~$\alpha(U)$.

  Since $Y$ is projective, the space $Y(K_\Ip)$ is compact.  Therefore
  it can be covered by a finite number of disjoint open coordinate
  neighbourhoods on which the zeta function is given by a local
  formula as in \eqref{eq:localZetaFormula}.  Now
  Lemma~\ref{lem:XiSeries} implies that $Z_\Ip(s)$ is rational in
  $q_\Ip^{-s}$.  Moreover, the real parts of all poles of $Z_\Ip$ are
  contained in a finite union of finite sets of the form
  $P_{\boldsymbol{\lambda},\boldsymbol{\beta}}$ as in
  Lemma~\ref{lem:XiSeries}.

  Let $\Ip \in \spec(O_S)$ be a closed point.  It follows from
  Lemma~\ref{lem:absLocalGlobal} that the abscissa of convergence of
  $Z_\Ip$ is equal to
  \begin{equation}\label{eq:localAbscissaBySets}
    \alpha_\Ip(Y(K_\Ip)) = \max\{ \alpha(U) \mid U \subseteq T \text{
      and } E^\circ_U(K_\Ip) \neq \varnothing \}. 
  \end{equation}

  Each of the schemes $E_U^\circ$ is of finite type over $K_0$, thus
  there is a finite extension $K_1$ of $K_0$ satisfying
  \begin{equation*}
    E^\circ_U(K_1) \neq \varnothing 
  \end{equation*}
  whenever $E^\circ_U$ is not the empty scheme.  Suppose that
  $K_1 \subseteq K$.  Then \eqref{eq:localAbscissaBySets}
  shows that $\alpha_\Ip(Y(K_\Ip))$ is independent of $K$ and~$\Ip$; thus
  $\alpha(Z_\Ip) = \max P$.
\end{proof}


\subsection{The functional equation}\label{sec:func-equ}
In this section we sketch the proof of part~\eqref{itm:funct-equ} of
Theorem~\ref{thm:main-summary} leaving out some arguments which
can be found in \cite{DeMe91, Vo10}\footnote{Note added to arXiv preprint: More details are contained in the preceding
arXiv version (v2) of this article.}. We first consider the case
$K =K_0$ and, to simplify the notation, it is convenient to use the
short notation $O_S = O_{K,S}$ etc.\ introduced for~$K$.  Using this
notation we construct in~\eqref{equ:construct-F} the rational function
$F$ featuring in Theorem~\ref{thm:main-summary}.  Only in the last
step we go back to the original general notation to verify the
functional equations for arbitrary extensions $K$ of~$K_0$.

Consider the scheme $\schX$ over $\spec(O_S)$ as in
Section~\ref{sec:p-adic-integrals}. We would like to obtain a
monomialisation of the ideal sheaves $\calI_j$ and $\calJ_j$ over
$\spec(O_S)$.  For fields of positive characteristic a Monomialisation
Theorem is not known and so we should not expect to achieve
monomialisation globally. However, we can obtain a monomialisation on
the generic fibre and expand it to some open set in $\spec(O_S)$.
Indeed, using Hironaka's Monomialisation Theorem and \emph{enlarging
  the set $S$ of places} in a suitable way one obtains a smooth
integral projective scheme $\schY$ over $\spec(O_S)$ and a birational
projective morphism $h \colon \schY \to \schX$ such that all the ideal
sheaves $\widetilde{\calI}_j = h^*(\calI_j)$,
$\widetilde{\calJ}_j= h^*(\calI_j)$ are monomial.
%

The fact that $h$ is birational enables us to compute the zeta
integral on $\schY$ instead of $\schX$.  For every closed point
$\Ip \in \spec(O_S)$, the map $h\colon \schY(O_\Ip) \to \schX(O_\Ip)$
is a homeomorphism, ignoring sets of measure~$0$, and consequently
\begin{equation*}
  Z_\Ip(s)  = 
  \sum_{\ell \in \Z} q^{-\ell c }_\Ip \int_{\schY(O_\Ip)} \left\Vert
    \bigl( \pi^{j\ell} \widetilde{\calI}_j \bigr)_{j=0}^d
  \right\Vert_\Ip^{-s} \, \left\Vert \bigl( \pi^{j\ell} \widetilde{\calJ}_j
    \bigr)_{j=0}^d \right\Vert^{-1}_\Ip \, \di h^*(\mu_{\schX, \Ip})
\end{equation*}

For $j \in \{0, \ldots, d \}$ the locally principal ideal sheaves
$\widetilde{\calI}_j$ and $\widetilde{\calJ}_j$, define effective
divisors on $\schY$ which are denoted by $D_j$ and $C_j$ respectively.
Let $T$ denote the set of prime divisors occurring in
$D = \sum_{j=0}^d C_j + D_j$.  As in Section~\ref{sec:ProofAbscissa}
we may write
\[
D_j = \sum_{E \in T} N^{(j)}_{E} E \quad \text{and} \quad C_j =
\sum_{E \in T} M^{(j)}_{E} E, \qquad \text{for
  $j \in \{0, \ldots, d\}$},
\]
where $N^{(j)}_{E}, M_E^{(j)} \in \Z$ are certain non-negative
integers. Note that $M_E^{(0)} = N_E^{(0)} = 0$ for all~$E \in T$.  We
enlarge $S$ such that for every set $U \subseteq T$ of prime divisors,
the intersection $E_U = \bigcap_{E \in U} E$ is a smooth scheme over
$\spec(O_S)$.  For every finite commutative $O_S$-algebra~$L$, we
write $b_U(L) = \lvert E_U(L) \rvert$ for the number of $L$-rational
points of $E_U$.

The pullback $h^*(\mu_{\schX, \Ip})$ of the canonical $\Ip$-adic
measure is $\Vert\calM\Vert_\Ip \, \mu_{\schY, \Ip}$, where $\calM$ is
the locally principal ideal sheaf which defines the discrepancy
divisor $\disc$.
The support of $\calM$ is contained in $T$ and so $\calM$ is monomial
and
\begin{equation*}
  \disc = \sum_{E \in T} (\nu_E - 1) E
\end{equation*}
for certain multiplicities $\nu_E -1 \in \N_0$.

\begin{pro}
  In the above situation for almost all closed points
  $\Ip \in \spec(O_S)$ the following formula holds:
 \begin{equation*}
   Z_{\Ip}(s) = q_\Ip^{-m}\sum_{U \subseteq W \subseteq T}
   (-1)^{|W\setminus U|} \: (q_{\Ip} - 1)^{|U|} \: b_W(\kappa_\Ip) \:
   \Xi_U(q^{-1}_\Ip,q_{\Ip}^{-s}),
 \end{equation*}
 where the sum runs over all pairs $(U,W)$ of subsets
 $U \subseteq W \subseteq T$ and
 \begin{equation*}
   \Xi_U(Q,t) = \sum_{\substack{\ell \in \Z \\ (n_E)_E \in \N^{U}}}
   Q^{- \left( \min\limits_{0 \le j \le d} ( (j-c)\ell +  \sum_{E
         \in U} (M^{(j)}_{E} - \nu_{E}) n_E ) \right)} \, t^{- \left(
       \min\limits_{0 \le j \le d} (j\ell + \sum_{E \in U}
       N_E^{(j)}n_E) \right)}.
 \end{equation*}
\end{pro}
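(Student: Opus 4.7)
The plan is to evaluate the integral defining $Z_\Ip(s)$ directly, by stratifying the compact set $\schY(O_\Ip)$ according to reduction modulo $\Ip$ and then exploiting the monomial structure of $\widetilde{\calI}_j$, $\widetilde{\calJ}_j$ and $\calM$ in local coordinates at points of the special fibre. After enlarging $S$ by a further finite set of closed points, I may assume that, for every $U \subseteq T$, the intersection $E_U = \bigcap_{E \in U} E$ is smooth over $\spec(O_S)$ and that both the normal crossings structure of $\sum_{E \in T} E$ and the monomial descriptions of $\widetilde{\calI}_j$, $\widetilde{\calJ}_j$ and $\calM$ descend to every closed fibre $\schY_\Ip$. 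Fix such a closed point~$\Ip$.

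For $\bar y \in \schY(\kappa_\Ip)$ set $W(\bar y) = \{E \in T \mid \bar y \in E(\kappa_\Ip)\}$ and let $Y_{\bar y} \subseteq \schY(O_\Ip)$ be the fibre of the reduction map over $\bar y$, so that $\schY(O_\Ip)$ is the disjoint union of the sets $Y_{\bar y}$. I would choose regular parameters $x_1, \dots, x_m$ at $\bar y$ so that each $E \in W(\bar y)$ has local equation $x_{i_E}$ for a distinct index $i_E$. These coordinates identify $Y_{\bar y}$ with $(\pi O_\Ip)^m$; by Remark~\ref{rmk:canonicalmeasure} the canonical measure restricts to the standard additive Haar measure on this polydisc, giving $\mu_{\schY,\Ip}(Y_{\bar y}) = q_\Ip^{-m}$, and the entire integrand depends only on the valuations $n_E = v_\Ip(x_{i_E})$ for $E \in W(\bar y)$.

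The central step is then a direct local computation. Writing $Q = q_\Ip^{-1}$ and $t = q_\Ip^{-s}$, the monomial presentations give $\Vert (\pi^{j\ell} \widetilde{\calI}_j)_{j=0}^d \Vert_\Ip^{-s} = t^{-\min_j(j\ell + \sum_{E \in W(\bar y)} N_E^{(j)} n_E)}$ and analogously for $\widetilde{\calJ}_j$, while $h^*(\mu_{\schX,\Ip}) = \Vert\calM\Vert_\Ip\,\mu_{\schY,\Ip}$ contributes the factor $\prod_E q_\Ip^{-(\nu_E - 1) n_E}$. Integrating the $m - \lvert W(\bar y) \rvert$ idle coordinates over $\pi O_\Ip$ yields $q_\Ip^{-(m - \lvert W(\bar y)\rvert)}$, and converting each remaining integral over $\pi O_\Ip$ into the sum $(1 - q_\Ip^{-1}) \sum_{n_E \ge 1} q_\Ip^{-n_E}\,(\cdot)$ produces an additional $q_\Ip^{-n_E}$ that combines with $q_\Ip^{-(\nu_E - 1) n_E}$ to yield $q_\Ip^{-\nu_E n_E}$. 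After re-expressing everything in terms of $Q$ and $t$, absorbing the outer prefactor $q_\Ip^{-c\ell}$, and grouping exponents, the contribution of $Y_{\bar y}$ collapses to
\[
Z_\Ip\bigl[Y_{\bar y}\bigr](s) = q_\Ip^{-m}(q_\Ip - 1)^{\lvert W(\bar y)\rvert}\, \Xi_{W(\bar y)}(q_\Ip^{-1}, q_\Ip^{-s}).
\]

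To finish, I would sum over $\bar y \in \schY(\kappa_\Ip)$ and group terms by $U = W(\bar y)$; writing $E_U^\circ = E_U \setminus \bigcup_{E \not\in U} E$ and applying Möbius inversion $\lvert E_U^\circ(\kappa_\Ip)\rvert = \sum_{U \subseteq W \subseteq T} (-1)^{\lvert W \setminus U \rvert} b_W(\kappa_\Ip)$ immediately produces the claimed formula. The hard part will be the local bookkeeping: the monomial expressions for $\widetilde{\calI}_j$, $\widetilde{\calJ}_j$ and $\calM$ are a priori statements at the stalk $\calO_{\schY,\bar y}$, and one must verify, after a suitable enlargement of $S$, that they yield the correct integrand on all of $Y_{\bar y}$ and that the identity $h^*\mu_{\schX,\Ip} = \Vert\calM\Vert_\Ip\,\mu_{\schY,\Ip}$ holds throughout $Y_{\bar y}$. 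Ensuring this is precisely the purpose of requiring good reduction of the stratification by the $E_U$.
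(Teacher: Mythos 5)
Your proposal is correct and fills in precisely the argument that the paper delegates to \cite[Thm.~2.1]{Vo10}: the paper's own one-sentence proof reduces to the single-sum form $q_\Ip^{-m}\sum_{U} (q_\Ip-1)^{|U|}c_U(\kappa_\Ip)\Xi_U$ by inclusion--exclusion and then cites Voll for that identity, and Voll's argument is exactly your fibre-by-fibre computation. In particular your prefactor bookkeeping is right: $q_\Ip^{-(m-|W(\bar y)|)}(1-q_\Ip^{-1})^{|W(\bar y)|}=q_\Ip^{-m}(q_\Ip-1)^{|W(\bar y)|}$, the discrepancy exponent $\nu_E-1$ combines with the Jacobian of the change from integration to summation to give $\nu_E$, the outer prefactor $q_\Ip^{-c\ell}$ produces the $(j-c)\ell$ shift in the $Q$-exponent, and M\"obius inversion on the Boolean lattice of subsets of $T$ converts $c_U(\kappa_\Ip)=|E_U^\circ(\kappa_\Ip)|$ into $\sum_{U\subseteq W}(-1)^{|W\setminus U|}b_W(\kappa_\Ip)$. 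The only caveat you already flag yourself: the local monomial presentations and the identity $h^*\mu_{\schX,\Ip}=\Vert\calM\Vert_\Ip\,\mu_{\schY,\Ip}$ must hold on the whole fibre $Y_{\bar y}$, not merely formally at the stalk, and ensuring this after enlarging $S$ is exactly why the proposition is stated only for almost all $\Ip$; the paper's set-up (smoothness of $\schY$ and of all $E_U$ over $\spec(O_S)$, the divisors $E$ having good reduction) provides this.
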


\begin{proof}
  By the principle of inclusion and exclusion it suffices to establish
  the equation
  \begin{equation*}
    Z_{\Ip}(s) = q_\Ip^{-m}\sum_{U \subseteq T} \: (q_{\Ip} - 1)^{|U|}
    \: c_U(\kappa_\Ip) \: \Xi_U(q^{-1}_\Ip,q_{\Ip}^{-s}),
  \end{equation*}
  where
  $c_U(L) = \lvert \{a \in \schY(L) \mid \forall E \in T \colon a \in
  E \longleftrightarrow E \in U \} \rvert$.
  To establish this identity one can argue as in the proof of
  \cite[Thm.~2.1]{Vo10}.
\end{proof}

By Lemma~\ref{lem:XiSeries} (1) the formal sum
$\Xi_U(Q,t) \in \Q\LS{q}\PS{t}$ is a rational function in $\Q(Q,t)$.
In addition, Lemma~\ref{lem:XiSeries} (5) implies that the functions
$\Xi$ satisfy the inversion property
\begin{equation}\label{eq:XiFunctional}
  \Xi_U(Q^{-1},t^{-1}) = (-1)^{|U|+1} \sum_{V \subseteq U} \Xi_V(Q,t)
\end{equation}
for all $U \subseteq T$; compare \cite[Cor.~2.2]{Vo10}.

Let $f $ be a positive integer, and let $L_f$ denote the unique
extension of $\kappa_\Ip$ of degree~$f$.  For simplicity we write
$b_U(\Ip,f) = b_U(L_f)$.  We proceed along the lines of~\cite{DeMe91};
see \cite{Da96} for an overview of the relevant results on $\ell$-adic
cohomology.  Write
$\overline{E_U} = E_U \times \overline{\kappa_\Ip}$, where
$\overline{\kappa_\Ip}$ denotes the algebraic closure of~$\kappa_\Ip$.
Then, by the $\ell$-adic Lefschetz fixed point principle, we obtain
\begin{equation*}
  b_U(\Ip,f) = \sum^{2(m-|U|)}_{i=0} (-1)^i \Tr\left(\phi^{f} |
    H^i(\overline{E_U}, \Q_\ell)\right), 
\end{equation*}
where $\phi$ denotes the endomorphism induced by the Frobenius
automorphism.  In particular, we get an expression
\begin{equation*}
    b_U(\Ip, f) = \sum^{2(m-|U|)}_{i=0} (-1)^i \sum_{j =1}^{g(U,i)} \lambda^f_{U,i,j}
\end{equation*}
in terms of the eigenvalues $\lambda_{U,i,j} $, for
$1 \le j \le g(U,i) \coloneqq \dim H^i(\overline{E_U}, \Q_\ell)$, of
$\phi$ acting on the $\ell$-adic cohomology
$H^i(\overline{E_U}, \Q_\ell)$.  Poincar\'e duality in $\ell$-adic
cohomology implies the functional equation of the Weil zeta function
\begin{equation}\label{eq:WeilFE}
  b_U(\Ip,-f) = q_{\Ip}^{-f(m-|U|)} b_U(\Ip,f).
\end{equation}
By the smooth proper base change theorem \cite[\S6.8]{Da96} the $i$th
$\ell$-adic Betti number is independent of the chosen place~$\Ip$ and
so is the number of Frobenius eigenvalues.

Let $\Upsilon$ be the set of all triples $\upsilon = (U,i,j)$
satisfying $U \subseteq T$, $0 \le i \le m- |U|$ and
$1\le j \le g(U,i)$.  Consider the polynomial ring
$\Q(Y_1,Y_2)[\underline{X}]$
over the field of rational functions $\Q(Y_1,Y_2)$, where
$\underline{X}$ denotes the whole collection of independent
variables~$(X_\upsilon)_{\upsilon\in\Upsilon}$.  For every
$U \subseteq T$, we set
\[
b_U(\underline{X}) = \sum^{2(m-|U|)}_{i=0} (-1)^i \sum_{j =1}^{g(U,i)}
X_{U,i,j},
\]
and we define the rational function $F \in \Q(Y_1,Y_2)[\underline{X}]$
as
\begin{equation}\label{equ:construct-F}
  F(Y_1,Y_2,\underline{X})
  = Y_1^m \sum_{U \subseteq W \subseteq T} (-1)^{|W\setminus U|}
  \: (Y_1^{-1} - 1)^{|U|} \: b_W(\underline{X}) \:
  \Xi_U(Y_1,Y_2).
\end{equation}

As explained above, we now revert to the original notation in order to
establish the functional equations for arbitrary finite extension $K$
of~$K_0$.  Let $K$ be a finite extension of $K_0$ of inertia
degree~$f$, and let $\Ip \in \spec(O_S)$ be a closed point over
$\Ip_0 \in \spec(O_{K_0,S})$.  As in the statement of
Theorem~\ref{thm:main-summary}, set $q = q_{\Ip_0}$.  The preceding
discussion yields
\begin{equation*}
  Z_\Ip(s) = F(q^{-f}, q^{-fs}, (\lambda^f_\upsilon)_{\upsilon \in \Upsilon}),
\end{equation*}
where the $\lambda^f_\upsilon$ are the corresponding Frobenius
eigenvalues as above.

It is readily verified that the combinatorial identity
 \begin{equation}\label{eq:combinatorial}
   \sum_{V \subseteq U \subseteq W} (z-1)^{|U|} (-z)^{|W\setminus U|}
   = (-1)^{|W \setminus V|} (z-1)^{|V|}
 \end{equation}
 holds for all finite sets $V \subseteq W$ (where $z$ is an
 indeterminate).  A short calculation as in \cite[p.1197]{Vo10} using
 \eqref{eq:XiFunctional}, \eqref{eq:WeilFE} and
 \eqref{eq:combinatorial} yields the following result which implies
 part~\eqref{itm:funct-equ} of Theorem~\ref{thm:main-summary}.

\begin{thm}\label{thm:functionalEquation}
  In the above setting, the function $F$ satisfies for every positive
  integer $f$ the functional equation
  \begin{equation*}
    F(q^{f}, t^{-f}, (\lambda^{-f}_\upsilon)_{\upsilon \in
      \Upsilon}) = -q^{fm} F(q^{-f}, t^f,
    (\lambda^{f}_\upsilon)_{\upsilon \in
      \Upsilon}), 
  \end{equation*}
  where $t$ is any indeterminate.
\end{thm}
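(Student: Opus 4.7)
The plan is to chain together the three ingredients the exposition has already isolated: the Weil functional equation \eqref{eq:WeilFE} for the point counts $b_W$, the inversion property \eqref{eq:XiFunctional} for the rational functions $\Xi_U$, and the combinatorial identity \eqref{eq:combinatorial}. Starting from the definition \eqref{equ:construct-F}, I would first substitute $Y_1 = q^f$, $Y_2 = t^{-f}$, $X_\upsilon = \lambda_\upsilon^{-f}$ to obtain
\begin{equation*}
  F(q^f,t^{-f},(\lambda_\upsilon^{-f})) = q^{fm}\sum_{U\subseteq W\subseteq T}(-1)^{|W\setminus U|}(q^{-f}-1)^{|U|}\,b_W((\lambda_\upsilon^{-f}))\,\Xi_U(q^f,t^{-f}).
\end{equation*}

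The second step is to apply the two functional-type identities to the factors depending on $W$ and $U$ respectively: replace $b_W((\lambda_\upsilon^{-f}))$ by $q^{-f(m-|W|)}b_W((\lambda_\upsilon^f))$ using \eqref{eq:WeilFE}, and replace $\Xi_U(q^f,t^{-f})$ by $(-1)^{|U|+1}\sum_{V\subseteq U}\Xi_V(q^{-f},t^f)$ using \eqref{eq:XiFunctional}. The resulting powers of $q$ collapse via $q^{fm}\cdot q^{-f(m-|W|)} = q^{f|W|}$, and the algebraic identity $(q^{-f}-1)^{|U|}q^{f|U|} = (1-q^f)^{|U|}$ lets one repackage these factors as $(1-q^f)^{|U|}q^{f|W\setminus U|}$; the signs combine as $(-1)^{|W\setminus U|}(-1)^{|U|} = (-1)^{|W|}$, which is independent of $U$, and the $-1$ produced by the inversion formula is pulled out front.

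The third step is to exchange the order of summation so that $V$ becomes the outer index. The inner sum then has the shape $\sum_{V\subseteq U\subseteq W}(1-q^f)^{|U|}q^{f|W\setminus U|}$. Rewriting $(1-q^f)^{|U|}q^{f|W\setminus U|} = (-1)^{|W|}(q^f-1)^{|U|}(-q^f)^{|W\setminus U|}$ and applying \eqref{eq:combinatorial} with $z = q^f$ collapses this inner sum to $(-1)^{|V|}(q^f-1)^{|V|}$. Gathering the resulting signs (using $(-1)^{|W|}(-1)^{|V|} = (-1)^{|W\setminus V|}$ for $V\subseteq W$) yields
\begin{equation*}
  F(q^f,t^{-f},(\lambda_\upsilon^{-f})) = -\sum_{V\subseteq W\subseteq T}(-1)^{|W\setminus V|}(q^f-1)^{|V|}\,b_W((\lambda_\upsilon^f))\,\Xi_V(q^{-f},t^f),
\end{equation*}
and a direct expansion of $-q^{fm}F(q^{-f},t^f,(\lambda_\upsilon^f))$ via \eqref{equ:construct-F} reproduces exactly this expression.

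The main obstacle is purely organisational: tracking signs and $q$-powers meticulously so that the symmetry encoded in \eqref{eq:combinatorial} can be invoked at precisely the right moment and in precisely the right form. No further geometric or analytic ingredient is needed beyond the three cited identities, and the computation is in essence the bookkeeping argument already sketched on \cite[p.~1197]{Vo10} referenced by the authors.
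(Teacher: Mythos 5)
Your proposal is correct and is precisely the ``short calculation as in \cite[p.1197]{Vo10}'' that the paper invokes but does not spell out: it chains together \eqref{eq:XiFunctional}, \eqref{eq:WeilFE}, and \eqref{eq:combinatorial} in exactly the intended order, with all the sign and $q$-power bookkeeping done correctly. The only thing to note is that the verification $-q^{fm}F(q^{-f},t^f,(\lambda_\upsilon^f)) = -\sum_{V\subseteq W\subseteq T}(-1)^{|W\setminus V|}(q^f-1)^{|V|}b_W((\lambda_\upsilon^f))\Xi_V(q^{-f},t^f)$ is itself an immediate expansion of \eqref{equ:construct-F}, so the proof is complete.
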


 


\section{Groups acting on rooted trees and multiplicity-free~representations} \label{sec:action-on-trees} In this section we
describe a geometric situation which leads to a transparent treatment
of zeta functions associated to certain admissible representations.
In particular, this yields a straightforward description of the zeta
function associated to the induced representation
$\Ind_{P_\xi}^{\GL_{n+1}(\fo)}(\one_{P_\xi})$ from a maximal
$(1,n)$-parabolic subgroup $P_\xi$ to the compact $p$-adic Lie group
$\GL_{n+1}(\fo)$; see Proposition~\ref{prop:formulaInductionTrees} for
details.

Central to the geometric approach is a suitable notion of (weak)
$2$-transitivity.
 
\begin{dfn}
  The action of a group $G$ on a metric space $(X,d)$, from the right
  by isometries, is called \emph{distance-transitive} if for all
  $x_1,x_2,y_1,y_2 \in X$ with $d(x_1,x_2) = d(y_1,y_2)$ there is an
  element $g \in G$ with $x_i g = y_i g$ for $i \in \{1,2\}$.
\end{dfn}

More specifically, if $G$ is a profinite group acting
distance-transitively on a rooted tree~$\calT$ there is a geometric
way to compute the zeta function of the representation of $G$ on the
space $\calC^\infty(\partial\calT,\C)$ of locally constant functions
on the boundary~$\partial\calT$.  The corresponding method was used by
Bartholdi and Grigorchuk~\cite{BarGri2001} for specific groups.  In
\cite{BeHa03}, Bekka and de la Harpe discuss a general approach based
on reproducing kernels; in collaboration with Grigorchuk, they treat
groups acting on rooted trees in detail in an appendix.  We briefly
review the theoretic background and produce a formula for the zeta
function. As an application we compute the zeta functions of induced
representations of certain compact $p$-adic analytic groups that are
\emph{not} torsion-free potent pro-$p$.


\subsection{Trees and distance-transitive actions}
Let $m = (m_i)_{i\in\N}$ be an integer sequence with $m_i \ge 2$ for
all $i \in \N$.  We define the associated spherically homogeneous
rooted tree $\calT_m$ as follows.  The vertices of level $n\ge 0$ are
finite sequences $w = (w_1,\dots,w_n)$ of length $n$ with
$w_i \in \{0,\dots,m_{i}-1\}$.  We write $\lvert w \rvert = n$ to
indicate that $w$ is of level $n$.  The root of $\calT_m$ is the empty
sequence.  There is an edge between a vertex $v$ of level $n$ and a
vertex $w$ of level $n+1$ if and only if $v$ is a prefix of~$w$.  Let
$L_m(n)$ denote the set of vertices of level $n$ in~$\calT_m$.  For
any two vertices $v$ and $w$ we write $\pre(v,w)$ for the longest
common prefix of $v$ and $w$, regarded as a vertex of~$\calT_m$.

The boundary $\partial\calT_m$ of $\calT_m$ is the space of geodesic
rays in $\calT_m$ starting at the root, that is, the profinite
topological space of infinite sequences $(\xi_i)_{i\in \N}$ with
$\xi_i \in \{0,\dots,m_{i}-1\}$.  For any $n \in \N_0$, we write
$\xi(n)$ for the prefix of $\xi$ of length $n$ and we say that the
geodesic ray $\xi$ passes through $\xi(n)$.  The longest common prefix
of $\xi,\eta \in \partial\calT_m$ is denoted $\pre(\xi,\eta)$, which
is a vertex of $\calT_m$ unless $\xi = \eta$.  There is a natural
metric $d$ on $\partial\calT_m$ given by
\begin{equation*}
  d(\xi,\eta) = \frac{1}{1+|\pre(\xi,\eta)|} \qquad \text{for all
    $\xi, \eta \in \partial\calT_m$ with $\xi \neq \eta$.}
\end{equation*}
The same formula defines also a metric on $L_m(n)$.

Let $\Aut(\calT_m)$ denote the profinite group of rooted automorphisms
of $\calT_m$.

\begin{lem}
  A closed subgroup $G \le_\mathrm{c} \Aut(\calT_m)$ acts distance-
  transitively on $\partial\calT_m$ if and only if it acts
  distance-transitively on each layer $L_m(n)$, $n \in \N_0$.
\end{lem}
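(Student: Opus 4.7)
The plan is to reduce distances on $\partial\calT_m$ to distances on the finite layers $L_m(n)$ via the elementary identity that, for $\xi,\eta \in \partial\calT_m$ with $\xi \neq \eta$ and $n > |\pre(\xi,\eta)|$, the prefix $\pre(\xi(n),\eta(n))$ equals $\pre(\xi,\eta)$ in $\calT_m$, so $d(\xi(n),\eta(n)) = d(\xi,\eta)$; for smaller $n$ the vertices $\xi(n)$ and $\eta(n)$ already coincide. Conversely, every vertex of $L_m(n)$ admits boundary extensions, and any pair of distinct vertices can be extended to a pair of boundary rays whose longest common prefix is exactly the longest common prefix of the vertices, using that each $m_j \ge 2$.

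For the forward implication, given $v_1,v_2,w_1,w_2 \in L_m(n)$ with $d(v_1,v_2) = d(w_1,w_2)$, I would extend the $v_i$ to $\xi_i \in \partial\calT_m$ and the $w_i$ to $\eta_i \in \partial\calT_m$ as above, arranging $d(\xi_1,\xi_2) = d(v_1,v_2) = d(w_1,w_2) = d(\eta_1,\eta_2)$ (the degenerate case $v_1 = v_2$, $w_1 = w_2$ requires only extending each vertex arbitrarily). Distance-transitivity on the boundary then yields $g \in G$ with $\xi_i g = \eta_i$, and projecting to level $n$ gives $v_i g = w_i$.

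For the backward implication, given $\xi_1,\xi_2,\eta_1,\eta_2 \in \partial\calT_m$ with $d(\xi_1,\xi_2) = d(\eta_1,\eta_2)$, I would consider
\[
S_n = \{ g \in G \mid \xi_i(n) g = \eta_i(n) \text{ for } i \in \{1,2\} \} \qquad (n \in \N_0).
\]
The identity above gives $d(\xi_1(n),\xi_2(n)) = d(\eta_1(n),\eta_2(n))$ for every $n$, so distance-transitivity on $L_m(n)$ yields $S_n \neq \varnothing$. Each $S_n$ is closed in $G$ since the action on the finite set $L_m(n)$ is continuous, and $S_{n+1} \subseteq S_n$ because a rooted automorphism commutes with taking prefixes. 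Compactness of the profinite group $G$ then forces $\bigcap_{n \in \N_0} S_n \neq \varnothing$, and any element of this intersection realises the required simultaneous mapping $\xi_i \mapsto \eta_i$.

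The only point requiring some care is the extension step in the forward direction, where the branching hypothesis $m_j \ge 2$ must be invoked to choose extensions that preserve longest common prefixes. The backward direction is essentially a standard profinite compactness argument applied to a descending chain of non-empty closed sets, and presents no real obstacle.
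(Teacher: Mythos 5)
Your proof is correct and follows essentially the same route as the paper: the forward direction extends vertices to boundary rays preserving distances, and the backward direction is the same compactness/descending-chain argument on the closed sets $S_n$. You are slightly more explicit than the paper about the prefix identity and the inclusion $S_{n+1}\subseteq S_n$; one minor overstatement is that the extension step in the forward direction does not really need special care, since once $v_1\neq v_2$ at level $n$ they already diverge, so any boundary extensions automatically have the same longest common prefix.
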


\begin{proof}
  First suppose that $G \le_\mathrm{c} \Aut(\calT_m)$ acts
  distance-transitively on the boundary~$\partial\calT_m$.  Let
  $n \in \N_0$ and, for $i \in \{1,2\}$, let $x_i,y_i \in L_m(n)$ such
  that $d(x_1,x_2) = d(y_1,y_2)$.  Choose geodesic rays $\xi_i$
  passing through $x_i$ and $\eta_i$ passing through $y_i$ with
  $d(\xi_1,\xi_2) = d(\eta_1,\eta_2)$.  Then there exists $g \in G$
  such that $\xi_ig = \eta_i$ for $i \in \{1,2\}$, and hence
  $x_ig = y_i$.
 
  Conversely, suppose that $G$ acts distance-transitively on each
  layer.  For $i \in \{1,2\}$, consider
  $\xi_i,\eta_i \in \partial\calT_m $ such that
  $d(\xi_1,\xi_2) = d(\eta_1,\eta_2)$.  For each $n \in \N_0$, the set
  $K_n = \{ g \in G \mid \xi_i(n)g = \eta_i(n) \text{ for } i \in
  \{1,2\}\} \subseteq G$
  is closed and non-empty. As $G$ is compact, we find an element
  $g \in \bigcap_{n} K_n$.  Evidently, $\xi_ig = \eta_i$ for $i \in \{1,2\}$.
\end{proof}

Let $G \le_\mathrm{c} \Aut(\calT_m)$ act distance-transitively on
$\partial\calT_m$ and consider the space
$\calC^\infty(\partial\calT_m,\C)$ of locally constant complex-valued
functions on the boundary.  We study the representation
$\rho_\partial$ of $G$ on $\calC^\infty(\partial\calT_m,\C)$ defined
by
\begin{equation*}
  (\rho_\partial(g).f)(x) = ({}^g f)(x) = f(xg) \qquad \text{ for all
  } g\in G \text { and } f \in
  \calC^\infty(\partial\calT_m,\C).
\end{equation*}
Let $\xi \in \partial\calT_m$ be a point on the boundary and let
$P = P_\xi = \stab_G(\xi)$ denote the stabiliser in $G$; the group $P$
is called a \emph{parabolic subgroup} of~$G$.  As $G$ acts
transitively on the boundary, it follows that $\rho_\partial$ is
isomorphic to the induced representation~$\Ind_P^G(\one_P)$.  We
recall that $(G,P)$ is called a \emph{Gelfand pair} if
$\Ind_P^G(\one_P)$ is multiplicity-free or, equivalently, if the
convolution algebra of $P$-bi-invariant locally constant functions on
$G$ is commutative; compare~\cite[\S 45]{Bu13} or \cite{Gr91}.  The
following theorem, which corresponds to Theorem~\ref{thmABC:E} in the
introduction, is essentially due to Bekka, de la Harpe, and
Grigorchuk; see~\cite[Prop.~10]{BeHa03}.  For completeness we include
a short proof.

\begin{thm}\label{thm:distanceTransitiveFormula}
  Let $G \le_\mathrm{c} \Aut(\calT_m)$ act distance-transitively on
  $\partial\calT_m$ and let $P = P_\xi \le_\mathrm{c} G$ be a
  parabolic subgroup.  Then $(G,P)$ is a Gelfand pair and the
  representation $\rho_\partial$ decomposes as a direct sum of the
  trivial representation and a unique irreducible constituent $\pi_n$
  of dimension $(m_n-1)\prod_{j=1}^{n-1} m_j$ for each $n\ge 1$.  In
  particular, the zeta function of the representation $\rho_\partial$
  is
  \begin{equation*}
    \zeta_{\rho_\partial}(s) = 1 + \sum_{i=1}^\infty (m_i-1)^{-s}
    \prod_{j=1}^{i-1} m_j^{\, -s} 
   \end{equation*}
   with abscissa of convergence $\alpha(\rho_\partial) =  0$.
\end{thm}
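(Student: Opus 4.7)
The plan is to exploit the exhaustion of $\calC^\infty(\partial\calT_m,\C)$ by the finite-dimensional $G$-invariant subspaces $V_n = \C^{L_m(n)}$, where a function on~$L_m(n)$ is lifted to $\partial\calT_m$ by pull-back along the canonical projection $\partial\calT_m \to L_m(n)$.  Every locally constant function on $\partial\calT_m$ factors through some $L_m(n)$, hence $\rho_\partial \cong \varinjlim_n V_n$ as $G$-representations.  Setting $P_n = \stab_G(\xi(n))$ for the stabiliser of the length-$n$ prefix of~$\xi$, the orbit--stabiliser theorem identifies $V_n \cong \Ind_{P_n}^G(\one_{P_n})$; since $G = P_0 \supseteq P_1 \supseteq \cdots$ and $\bigcap_n P_n = P$, we obtain compatible $G$-equivariant inclusions $V_0 \subseteq V_1 \subseteq V_2 \subseteq \cdots$ whose union is $\rho_\partial$.

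The key input is the structure of each $V_n$, obtained from distance-transitivity of~$G$ on the finite metric space $L_m(n)$.  The $G$-orbits on $L_m(n) \times L_m(n)$ are parametrised by the attained distances, which give exactly $n+1$ orbits (the diagonal, together with one orbit for each value $\tfrac{1}{k+1}$, $0 \le k \le n-1$).  Since the metric is symmetric, each such orbit is stable under the swap $(x,y) \mapsto (y,x)$, whence the canonical anti-involution on the Hecke algebra $\End_G(V_n) \cong \C[P_n \backslash G / P_n]$ is the identity; the standard Gelfand-pair argument (compare the appendix of~\cite{BeHa03}) then yields commutativity of $\End_G(V_n)$.  Consequently $V_n$ is multiplicity-free and decomposes as the direct sum of exactly $n+1$ pairwise non-isomorphic irreducible $G$-representations.

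An induction on~$n$ using the inclusion $V_{n-1} \hookrightarrow V_n$ and the multiplicity-freeness of~$V_n$ then gives $V_n = V_{n-1} \oplus \pi_n$ for a new irreducible $\pi_n$ of dimension
\[
  \dim \pi_n \;=\; \lvert L_m(n)\rvert - \lvert L_m(n-1)\rvert \;=\; (m_n - 1)\prod_{j=1}^{n-1}m_j,
\]
with $V_0$ contributing the trivial constituent.  Passing to the direct limit yields the claimed decomposition of $\rho_\partial$, so $(G,P)$ is a Gelfand pair, and the formula for $\zeta_{\rho_\partial}$ is immediate.  For the abscissa of convergence, the bound $m_j \ge 2$ gives $(\dim \pi_i)^{-s} \le 2^{-(i-1)s}$ for $s > 0$, so the series converges, whereas at $s = 0$ every summand equals~$1$ and the series diverges; hence $\alpha(\rho_\partial) = 0$.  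The principal obstacle is the commutativity of $\End_G(V_n)$, whose verification combines distance-transitivity with the symmetry of the metric as sketched above; once this is in place, the remainder amounts to a straightforward induction and an elementary geometric-series estimate.
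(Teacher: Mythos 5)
Your proof is correct, and it differs from the paper's in one interesting respect.  The paper computes $\dim_\C \End_G(V_n) = n+1$ via Frobenius reciprocity (counting $P_n$-orbits on $L_m(n)$, which are indexed by the $n+1$ possible distances from $\xi(n)$), and then deduces the decomposition $V_n = V_{n-1} \oplus \pi_n$ by an implicit induction along the chain $V_0 \subset V_1 \subset \cdots$: once $V_{n-1}$ is known to be a multiplicity-free sum of $n$ irreducibles, the constraint $\sum a_i^2 = n+1$ forces exactly one new constituent with multiplicity one.  You instead invoke the classical Gelfand criterion directly: because the metric on $L_m(n)$ is symmetric, every $G$-orbit on $L_m(n)\times L_m(n)$ is invariant under the flip $(x,y)\mapsto(y,x)$, so the canonical anti-involution on $\C[P_n\backslash G/P_n]$ acts as the identity and the Hecke algebra is commutative.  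This gives multiplicity-freeness of each $V_n$ \emph{before} you start the induction, and the induction then only has to track that the one extra summand is new.  Both routes have the same combinatorial core (the count $n+1$, by $P_n$-orbits on $L_m(n)$ versus $G$-orbits on $L_m(n)^2$, which agree); yours makes the Gelfand-pair structure more visible and is closer to the Bekka--de la Harpe treatment that the paper cites, whereas the paper's version avoids mentioning the anti-involution at all.  Your observation that $\dim\pi_i \ge 2^{i-1}$ gives convergence for $\mathrm{Re}(s)>0$ while the series diverges at $s=0$ is a correct and complete justification of $\alpha(\rho_\partial) = 0$, a detail the paper states but does not spell out in the proof.
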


\begin{proof}
  For $n \in \N_0$, let $P_n \le_\mathrm{o} G$ be the stabiliser of the
  vertex $\xi(n)$ for the action of $G$ on the layer~$L_m(n)$.
  Observe that
  $\Ind_P^G(\one_P) = \varinjlim \Ind_{P_n}^G(\one_{P_n})$, where
  $\Ind_{P_n}^G(\one_{P_n})$ is simply the permutation representation
  $\C[L_m(n)]$ of $G$ on the vertices of level $n$.
   
  We claim that $\dim_\C \End_{G}(\C[L_m(n)]) = n+1$.  From this it
  follows, for $n \ge1$, that
  $\C[L_m(n)] = \C[L_m(n-1)] \oplus V_{\pi_{n}}$ for some
  $\pi_{n} \in \Irr(G)$ that does not yet occur as a constituent of
  $\C[L_m(n-1)]$ and satisfies
  \begin{equation*}
    \dim_\C \pi_{n} = \lvert L_m(n) \rvert - \lvert L_m(n-1) \rvert = (m_{n}-1)
    \prod\nolimits_{j=1}^{n-1} m_j. 
  \end{equation*}   

   It remains to establish the claim.  Frobenius reciprocity
   implies
   \begin{equation*}
     \dim_\C \End_{G}(\C[L_m(n)]) = \dim_\C \Hom_{P_n}(\one_{P_n},
     \C[L_m(n)]) = \lvert L_m(n)/P_n \rvert. 
   \end{equation*}
   As $G$ acts distance-transitively, the $P_n$-orbits in $L_m(n)$
   correspond bijectively to the possible distances from the
   vertex~$\xi(n)$.  Finally, the number of prefixes of any vertex of
   level $n$ is exactly $n+1$.
\end{proof}

\begin{rmk}
  Theorem~\ref{thm:distanceTransitiveFormula} applies, in particular,
  to $G = \Aut(\calT_m)$.  Variation of the sequence
  $m = (m_i)_{i\in \N}$ allows one to construct induced
  representations with various different zeta functions.  For
  instance, if we apply the theorem to the $d$-regular tree $\calT_m$
  with $m_i=d$ for all $i$ and some fixed integer $d \geq 2$, then the
  zeta function of the representation on the boundary is
  $\zeta_{\rho_\partial}(s) = 1+ \frac{(d-1)^{-s}}{1-d^{-s}}$ which
  admits a meromorphic continuation to the complex plane.  By
  contrast, suppose that the sequence $(m_i)_{i\in\N}$ tends to
  infinity with $i$. In this case the zeta function
  $\zeta_{\rho_\partial}(s)$ cannot be extended analytically and the
  vertical axis ${\rm{Re}}(s) = 0$ is the natural boundary; see
  \cite[Thm.~VI.2.2]{Ma72}.
\end{rmk}

\subsection{Induction from maximal $(1,n)$-parabolic
    subgroups to $\GL_{n+1}(\Delta)$}

In this section $\fo$ denotes a complete discrete valuation ring with
maximal ideal~$\Ip$ and finite residue field $\kappa = \fo/\Ip$ of
cardinality $q$.  Let $\pi \in \Ip$ denote a uniformiser.  We emphasise
that here $\fo$ may have positive characteristic, e.g.,
$\fo = \F_p\PS{t}$.

Consider a central division algebra $\fd$ of index $d$ over the
fraction field~$\ff$ of $\fo$ so that $\dim_\ff \fd = d^2$.  It is
known that $\fd$ contains a unique maximal $\fo$-order~$\Delta$.  Up
to isomorphism, $\fd$ and $\Delta$ can be described explicitly, in
terms of the index $d$ and a second invariant $r$ satisfying
$1 \le r \le d$ and $\gcd(r,d)=1$; see~\cite[\S14]{Re03} and the more
explicit description given in Section~\ref{sec:schiefkoerper}.  Here
it suffices to fix a uniformiser $\Pi \in \fd$ so that $\Delta$ has
maximal ideal $\IP = \Pi \Delta$.  Recall that every element of
$\Delta$ can be written as a converging power series
$\sum_{k=0}^\infty c_k \Pi^k$ with coefficients coming from any set of
representatives for the elements of the residue field $\Delta / \IP$,
which is a finite field of size~$q^d$.

Fix $n \in \N$, and consider the general linear group
$\GL_{n+1}(\Delta)$ with a maximal parabolic subgroup $P_\xi$ which is
the stabiliser of a point $\xi \in \mathbb{P}^n(\Delta)$ under the right
$\GL_{n+1}$-action.  For instance, if
$\xi_0 = (1 \!:\! 0 \!:\! \ldots \!:\! 0)$, then
\begin{equation*}
   P_{\xi_0}  =  
   \begin{pmatrix}[c|ccc]
     \GL_1(\Delta) & 0 & \cdots &  0\\ \hline
     \Delta & & & \\
     \vdots & & \GL_n(\Delta) & \\
     \Delta & & & \\
   \end{pmatrix} 
   \le \GL_{n+1}(\Delta).
\end{equation*}

\begin{pro}\label{prop:formulaInductionTrees}
  For all $\xi \in \mathbb{P}^n(\Delta)$ the pair
  $(\GL_{n+1}(\Delta),P_\xi)$ is a Gelfand pair and the induced
  representation
  $\rho = \Ind_{P_\xi}^{\GL_{n+1}(\Delta)}(\one_{P_\xi})$ has the zeta
  function
  \begin{equation*}
    \zeta_\rho(s) = 1 + \Big( \frac{q^{dn} - 1}{q^d - 1} \Big)^{-s} \left(
    q^{-ds} + \frac{(q^{d(n+1)}-1)^{-s}}{1-q^{-dns}} \right). 
  \end{equation*}
\end{pro}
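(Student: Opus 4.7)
The plan is to realise $\mathbb{P}^n(\Delta)$ as the boundary of a suitable spherically homogeneous rooted tree on which $\GL_{n+1}(\Delta)$ acts distance-transitively, and then to apply Theorem~\ref{thm:distanceTransitiveFormula}. Since $\GL_{n+1}(\Delta)$ acts transitively on $\mathbb{P}^n(\Delta)$, all parabolic subgroups $P_\xi$ are conjugate, so it suffices to treat $\xi = (1 \!:\! 0 \!:\! \ldots \!:\! 0)$. I will construct the tree $\calT_m$ whose level-$k$ vertices are the points of $\mathbb{P}^n(\Delta/\IP^k)$, with edges given by the canonical reduction maps $\mathbb{P}^n(\Delta/\IP^{k+1}) \to \mathbb{P}^n(\Delta/\IP^k)$. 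The natural identification $\partial\calT_m \cong \mathbb{P}^n(\Delta)$ transports the projective right action of $\GL_{n+1}(\Delta)$ to the boundary, and the resulting tree metric coincides with the usual $\Ip$-adic distance on $\mathbb{P}^n(\Delta)$.

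Normalising homogeneous coordinates so that the first unit entry equals $1$, a direct count gives
\[
  \lvert \mathbb{P}^n(\Delta/\IP^k) \rvert = q^{dn(k-1)} \cdot \frac{q^{d(n+1)}-1}{q^d-1},
\]
which shows that $\calT_m$ is spherically homogeneous with branching sequence $m_1 = (q^{d(n+1)}-1)/(q^d-1)$ and $m_i = q^{dn}$ for $i \geq 2$. The key geometric step is the verification of distance-transitivity. By the lemma preceding Theorem~\ref{thm:distanceTransitiveFormula} and the transitivity of $\GL_{n+1}(\Delta)$ on each layer $\mathbb{P}^n(\Delta/\IP^k)$, it suffices to show that $P_\xi$ acts transitively on the sphere of radius $1/(k+1)$ around~$\xi$. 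Writing elements of $P_\xi$ as block matrices $\left(\begin{smallmatrix} c & 0 \\ u & B \end{smallmatrix}\right)$ with $c \in \Delta^\times$, $u \in \Delta^n$, $B \in \GL_n(\Delta)$, the induced action on a nearby point $(1 \!:\! a_1 \!:\! \ldots \!:\! a_n)$ reduces, after projective normalisation, to $(a_1,\ldots,a_n) \mapsto (a_1,\ldots,a_n)B$. The claim then follows from transitivity of $\GL_n(\Delta)$ on primitive row vectors in $\Delta^n$, a standard consequence of the fact that any such vector extends to a basis of the free right $\Delta$-module of rank~$n$.

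With distance-transitivity established, Theorem~\ref{thm:distanceTransitiveFormula} immediately yields the Gelfand pair property together with
\[
  \zeta_\rho(s) = 1 + (m_1-1)^{-s} + m_1^{-s} (q^{dn}-1)^{-s} \sum_{\ell=2}^{\infty} q^{-dn(\ell-2)s}.
\]
An elementary simplification finishes the argument: the identity $m_1 - 1 = q^d(q^{dn}-1)/(q^d-1)$ recasts the $\ell = 1$ term, the geometric series in $q^{-dns}$ sums to $1/(1-q^{-dns})$, and factoring out $\bigl((q^{dn}-1)/(q^d-1)\bigr)^{-s}$ produces the expression asserted in the proposition. The main obstacle will be the careful verification of distance-transitivity in the noncommutative setting of the division algebra $\Delta$, which hinges on a clean description of the $\GL_n(\Delta)$-orbits on $\Delta^n$; once this is in place, the remainder of the proof is routine algebra.
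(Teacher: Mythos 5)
Your proposal follows the same route as the paper: build the spherically homogeneous tree with level-$k$ vertices $\mathbb{P}^n(\Delta/\IP^k)$, verify distance-transitivity by reducing to transitivity of $\GL_n(\Delta)$ on primitive row vectors, and then apply Theorem~\ref{thm:distanceTransitiveFormula}. The only cosmetic difference is that you normalise the first homogeneous coordinate to exactly $1$ and hence get away with matrices $\left(\begin{smallmatrix}1&0\\0&B\end{smallmatrix}\right)$, whereas the paper only normalises modulo $\IP^k$ and therefore also uses the affine translation block; both are harmless variants, though your phrase that the action of $P_\xi$ ``reduces, after projective normalisation, to $a\mapsto aB$'' is literally true only for that subgroup, since in general there is a left factor $(c+au)^{-1}$ coming from the unit normalisation.
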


\begin{proof}
  Consider the (left) projective $n$-space
  $\mathbb{P}^n(\Delta)$.  Concretely, we describe elements $x$ in
  terms of homogeneous coordinates $x = (x_0 \!:\! \ldots \!:\! x_n)$,
  where $(x_0, \ldots, x_n) \in \Delta^n$ is \emph{primitive}, i.e.,
  has at least one entry not contained in~$\IP$, and
  $(x_0 \!:\! \ldots \!:\! x_n) = (zx_0 \!:\! \ldots \!:\! zx_n )$ for
  all units $z \in \Delta^\times$.

  We construct a rooted tree $\calT$ representing
  $\mathbb{P}^n(\Delta)$ as follows.  The root $\epsilon$ is the
  unique vertex of level $0$ and, for each $k \in \N$, the vertices of
  level $k$ are the points of $\mathbb{P}^n(\Delta / \IP^k)$.  There
  is an edge between the root and every vertex of level~$1$.
  Moreover, a vertex $x\in \mathbb{P}^n(\Delta/\IP^k)$ of level $k$
  and a vertex $y \in \mathbb{P}^n(\Delta/\IP^{k+1})$ of level $k+1$
  are connected by an edge if and only if $x \equiv y \bmod \IP^k$.
  The boundary of $\calT$ is
  $\partial \calT \cong \mathbb{P}^n(\Delta)$ since $\Delta$ is
  complete.  Note that $\calT$ is spherically homogeneous and
  $\calT \cong \calT_m$ for
  $m_1 = \lvert \mathbb{P}^n(\Delta/\IP) \rvert = (q^{d(n+1)}-1)/(q^d-1)$ and
  $m_k = q^{dn}$ for $k\ge 2$.

  The group $\GL_{n+1}(\Delta)$ acts on the tree $\calT$ from the
  right.  We show below that the action on the boundary is
  distance-transitive.  Consequently,
  Theorem~\ref{thm:distanceTransitiveFormula} implies that
  $(\GL_{n+1}(\Delta),P_\xi)$ is a Gelfand pair and
  \begin{align*}
    \zeta_\rho(s) %
    & = 1 + ( \lvert \mathbb{P}^n(\Delta/\IP) \rvert - 1 )^{-s} +
      \lvert \mathbb{P}^n(\Delta/\IP) \rvert^{-s} (q^{dn}-1)^{-s}
      \sum_{k=0}^\infty q^{-dnks} \\ 
    & = 1 + \Big( \frac{q^{d(n+1)}-q^d}{q^d-1} \Big)^{-s} +
      \Big( \frac{q^{d(n+1)}-1}{q^d-1} \Big)^{-s}
      (q^{dn}-1)^{-s} \frac{1}{1-q^{-dns}} \\ 
    & = 1 + \Big( \frac{q^{dn} - 1}{q^d - 1} \Big)^{-s} \Big(
      q^{-ds} + \frac{(q^{d(n+1)}-1)^{-s}}{1-q^{-dns}} \Big). 
  \end{align*}

  It remains to show that $\GL_{n+1}(\Delta)$ acts
  distance-transitively on~$\mathbb{P}^n(\Delta)$.  Since
  $\GL_{n+1}(\Delta)$ acts transitively on $\mathbb{P}^n(\Delta)$, it
  suffices to show that the group $P_0 = P_{\xi_0}$ acts transitively
  on the spheres around
  $\xi_0 = (1 \!:\! 0 \!:\! \ldots \!:\! 0) \in \mathbb{P}^n(\Delta)$.
  Let $x,y \in \mathbb{P}^n(\Delta)$ lie on such a sphere, that is,
  $x \equiv \xi_0 \equiv y \bmod \IP^{k}$ but
  $x \not \equiv \xi_0 \bmod \IP^{k+1}$ and
  $y \not\equiv \xi_0 \bmod \IP^{k+1}$ for some $k\ge 0$.  Note that,
  for $k=0$, the first congruence holds trivially for all $x,y$.

  We need to find $g \in P_0$ such that $x g = y$.  We may write
  \[
  x = (x_0 \!:\! \Pi^k u_1 \!:\! \ldots \!:\! \Pi^k u_n) \qquad \text{and} \qquad
  y = (y_0 \!:\! \Pi^k v_1 \!:\! \ldots \!:\! \Pi^k v_n),
  \]
  where $x_0 \equiv y_0 \equiv 1 \bmod{\IP^k}$ and
  $u = (u_1, \ldots, u_n), v = (v_1, \dots, v_n) \in \Delta^n$ are
  primitive.  There is a matrix $h \in \GL_n(\Delta)$ such that $u h = v$,
  since $\GL_n(\Delta)$ acts transitively on primitive vectors.
  Furthermore, we find $a_1,\dots, a_n \in \Delta$ such that
  $\Pi^k \sum_{i=1}^n u_i a_i = y_0 - x_0$.  Thus
  \begin{equation*}
    g = \begin{pmatrix}[c|ccc]
      1 & 0 & \cdots & 0\\ \hline
      a_1 & & & \\
      \vdots & & h & \\
      a_n & & &\\
    \end{pmatrix} \in P_0
  \end{equation*}
  has the property that~$xg = y$.
\end{proof}

\begin{rmk}
  Distance-transitivity is, in general, a strong requirement and it
  would be desirable to have similar methods available under weaker
  assumptions.  For instance, to compute zeta functions of
  representations induced from other maximal parabolic subgroups to
  $\GL_{n+1}(\fo)$ (or even $\GL_{n+1}(\Delta)$), one would like to
  take advantage of the action of $\GL_{n+1}(\fo)$ on a tree
  constructed from a suitable Grassmannian scheme. However, this
  action is not distance-transitive since the general linear group
  $\GL_{n+1}(\F_q)$ does not act $2$-transitively on the set of
  $r$-dimensional subspaces of $\F_q^{n+1}$ whenever $1<r<n$.  By
  contrast, we can use the Kirillov orbit method to discuss induction
  from maximal parabolic subgroups in
  Section~\ref{sec:ind-maximal-parabolic}, under the additional
  assumption that $\fo$ has characteristic~$0$; compare
  Problem~\ref{problem:function-fields}.  In~\cite{Ki18} a weak form
  of distance-transitivity and a strengthening of our
  Theorem~\ref{thm:distanceTransitiveFormula} are discussed.
\end{rmk}


\section{Examples of induced representations of potent pro-$p$
  groups} \label{sec:examples-orbit-method} Let $\fo$ be the ring of
integers of a $p$-adic field $\ff$, with maximal
ideal~$\Ip \trianglelefteq \fo$ and finite residue field
$\kappa = \fo/\Ip$ of cardinality~$q$. Let $\pi \in \Ip$ denote a
uniformiser.


\subsection{Induction from a Borel subgroup to
  $\GL_3^r(\fo)$}\label{sec:GL3}
We consider the general linear group $G = \GL_3(\fo)$ and, for
$r \in \N$, its principal congruence subgroup
\begin{equation*}
   G^r = \ker\bigl( \GL_3(\fo) \to \GL_3(\fo/\Ip^r) \bigr).
\end{equation*}
Let $B \le_\mathrm{c} G$ be the Borel subgroup consisting of
upper-triangular matrices, and set $B^r = B \cap G^r$.  Our aim is to
compute the zeta function of the induced representation
$\Ind_{B^r}^{G^r}(\one_{B^r})$.  We remark that the zeta function of
the induced representation $\Ind_B^G(\one_B)$ has already been
computed by Onn and Singla~\cite[Thm.~6.5]{OnSi14}, using direct
representation-theoretic considerations.

Let $\fg = \gl_3(\fo)$ denote the $\fo$-Lie lattice of
$3 \times 3$-matrices and let $\fb$ denote the sublattice of all
upper-triangular matrices.  The groups $G^r$ and $B^r$ are finitely
generated torsion-free potent pro-$p$ groups whenever $r \ge 2e$ for
$p=2$ and $r \ge e (p-2)^{-1}$ for $p>2$, where $e$ denotes the
ramification index of~$\ff$ over~$\Q_p$; compare
\cite[Prop.~2.3]{AvKlOnVo13}.  In this case $\pi^r \fg$ (respectively
$\pi^r \fb$) regarded as a $\Z_p$-Lie lattice, is the Lie lattice
associated to $G^r$ (respectively~$B^r$).

\begin{pro} \label{pro:ind-Br-Gr} In the set-up described above and
  subject to $r \geq 2e$ for $p=2$ and $r \ge e (p-2)^{-1}$ for $p>2$,
  the zeta function of the representation
  $\rho = \Ind_{B^r}^{G^r}(\one_{B^r})$ is
  \begin{equation*}
    \zeta_\rho(s) = q^{3(r-1)} \frac{q^{1-3s}(q - q^{-s})(u(q^s) -
      q u(q^{-s}))}{(1-q^{-s})(1-q^{1-6s})}, 
  \end{equation*}
  where $u(X) = X^2 - 2X + 1 -2X^{-1} + X^{-2} - X^{-3}$.  

  The abscissa of convergence is $\alpha(\rho) = \nicefrac{1}{6}$.
\end{pro}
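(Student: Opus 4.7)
The plan is to apply Proposition~\ref{pro:zeta-integral-formula-coord} with $\fg = \gl_3(\fo)$, $\fh = \fb$ (the upper-triangular Lie sublattice), and complement $\fk = \langle E_{21}, E_{31}, E_{32}\rangle_\fo$, so $n = 9$ and $m + 1 = 3$. Write $T_1, T_2, T_3$ for the coordinates of $w \in \Hom_\fo(\fk,\ff)$ dual to the chosen basis of $\fk$. The first step is to reinterpret the alternating form $A_w$ through the non-degenerate trace form on $\gl_3$: a direct calculation gives
\[
A_w(X,Y) = w([X,Y]) = \tr\bigl(X\,[Y, M_w]\bigr), \qquad M_w = T_1 E_{12} + T_2 E_{13} + T_3 E_{23}.
\]
Since $\gl_3(\fo)$ is self-dual under the trace pairing, this identifies $\rad(A_w)$ with $\{Y \in \fg : [Y, M_w] \in \fg\}$, reducing the whole computation to lattice centralizers of the strictly upper-triangular matrix~$M_w$.

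Next I would pin down the Pfaffian ideals $F_k \subseteq \fo[T_1, T_2, T_3]$ for $0 \le k \le 4$. One has $F_0 = (1)$ and $F_1 = (T_1, T_2, T_3)$ at once. Because a regular nilpotent $3 \times 3$ matrix has a three-dimensional centralizer, the generic rank of $A_w$ equals $6$; in particular $F_4 = 0$. The remaining content is to identify $F_2$ and $F_3$: stratifying by the three nilpotent orbits of $M_w$ (namely $M_w = 0$, $\mathrm{rank}\,M_w = 1$, $\mathrm{rank}\,M_w = 2$) and evaluating a small number of Pfaffians of well-chosen $4 \times 4$ and $6 \times 6$ principal submatrices of the commutator matrix $\calR(\mathbf{T})$ will reveal that $F_3$ is a principal ideal whose vanishing locus is $\{T_1 T_3 = 0\}$ (precisely where $\mathrm{rank}\,M_w \le 1$), while $F_2$ is generated by quadratic forms vanishing exactly on $\{T_1 = T_2 = T_3 = 0\}$.

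With the $F_k$ in hand, the second formula of Proposition~\ref{pro:zeta-integral-formula-coord} writes $\zeta_\rho(s)$ as
\[
(1 - q^{-1})\,q^{3r} \sum_{\ell \in \Z} q^{-3\ell} \int_{\mathbb{P}^2(\fo)} \Bigl( \max_{0 \le k \le 4}\bigl\| \pi^{k\ell} F_k(\underline{y}) \bigr\|_\Ip \Bigr)^{-1-s} \di\mu_{\mathbb{P}^2,\Ip}.
\]
For $\ell \ge 0$ the integrand is constantly $1$, contributing an $s$-independent geometric series. For $\ell < 0$ I would stratify $\mathbb{P}^2(\fo)$ by the $\Ip$-adic valuations of the homogeneous coordinates $y_1$ and $y_3$, which govern the rank drop of $M_w$. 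On each stratum the integrand becomes a monomial in $\pi^\ell, \pi^{v(y_1)}, \pi^{v(y_3)}$, so each piece reduces to a standard Igusa-type sum. The stratum where both $y_1$ and $y_3$ are units carries the rank-$6$ contribution coming from $F_3$ and produces the factor $(1-q^{1-6s})^{-1}$ after summing in $\ell$, while the complementary strata (where $y_1 \in \Ip$ or $y_3 \in \Ip$) contribute the $(1 - q^{-s})^{-1}$ factor; assembling all the numerator data yields $q^{1-3s}(q - q^{-s})(u(q^s) - q u(q^{-s}))$ together with the overall prefactor $q^{3(r-1)}$.

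Finally, $\zeta_\rho$ is a Dirichlet series with non-negative coefficients, so by Landau's theorem its abscissa of convergence coincides with the largest real singularity of its meromorphic continuation; the denominator $(1 - q^{-s})(1 - q^{1-6s})$ exhibits real singularities only at $s = 0$ and $s = 1/6$, and the latter dominates, so $\alpha(\rho) = 1/6$. The principal obstacle in this program is the explicit identification of $F_2$ and $F_3$ together with the careful bookkeeping of the stratification of $\mathbb{P}^2(\fo)$; once those are settled, the remaining integrals and the summation over $\ell$ collapse into elementary geometric series.
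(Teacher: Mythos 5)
Your plan correctly follows the paper's own strategy: apply Proposition~\ref{pro:zeta-integral-formula-coord} to $\fg = \gl_3(\fo)$, $\fh = \fb$, $\fk$ the strictly lower triangular matrices, compute the Pfaffian data $F_k$, and then evaluate the resulting integral by stratifying $\mathbb{P}^2(\fo)$.  The reformulation of $A_w$ via the trace pairing and the reduction to centralisers of the nilpotent matrix $M_w$ is a nice observation that the paper does not make explicit (it simply writes down the $9\times 9$ reduced commutator matrix), and the qualitative analysis is right: the generic rank is $6$ (so $F_4 = 0$), the rank drops to $4$ on $\{T_1 T_3 = 0\}$, and to $0$ at the origin, so the co-supports of $F_3$ and $F_2$ are as you say.

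However, two of the concrete claims you rely on are not correct.  First, $F_3$ is \emph{not} a principal ideal: the paper's computation of the degree-$3$ Pfaffians gives (in your notation, with the paper's $x = T_1$, $y = T_3$)
\[
\| F_3 \|_\Ip = \| T_1^2 T_3,\; T_1 T_3^2 \|_\Ip = \lvert T_1 T_3 \rvert_\Ip \cdot \max(\lvert T_1 \rvert_\Ip, \lvert T_3 \rvert_\Ip),
\]
which is not of the form $\lvert f \rvert_\Ip$ for a single polynomial $f$.  Knowing only the vanishing locus $\{T_1 T_3 = 0\}$ does not determine $\|F_3\|_\Ip$; replacing the correct set $\{T_1^2 T_3, T_1 T_3^2\}$ by, say, a single generator $T_1^2 T_3$ changes the value of the integral and would give a different final answer.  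Since you do propose to stratify by the valuations of $T_1$ and $T_3$, the error would be recoverable once the correct Pfaffians are in place, but as written the claim is false.  Second, the attribution of the denominator factors to strata is not how the calculation pans out.  On the stratum where both $T_1, T_3$ are units the integrand only sees $\|\pi^{3\ell}\|_\Ip$, and summing over $\ell$ produces a factor $(1 - q^{-3s})^{-1}$, \emph{not} $(1 - q^{1-6s})^{-1}$; the factor $(1 - q^{1-6s})$ only appears after one also works through the fibre over $(0\!:\!0\!:\!1)$, where a genuine three-dimensional cone decomposition (in $\ell$ and the valuations of $T_1, T_3$) is required, and the final denominator $(1-q^{-s})(1-q^{1-6s})$ emerges only after adding all three strata and simplifying.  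So the outline is the right one, but the two ``expected'' intermediate conclusions you state do not hold, and you have not actually carried out the integral calculation that the result hinges on.
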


It would be interesting to find an interpretation of the value
$\nicefrac{1}{6}$ for the abscissa of convergence; compare the
discussion in the introduction.  Note that from the given formula it
is easy to verify that $\zeta_\rho$ satisfies a functional equation,
in accordance with Theorem~\ref{thm:main-summary}.

\begin{proof}[Proof of Proposition~\ref{pro:ind-Br-Gr}]
  We indicate how the result can be obtained, by applying the
  method described in Section~\ref{sec:LieLattices}.  As most steps
  consist of simple computations we leave the verification of some
  details to the reader.

  Consider the $\fo$-basis $(E_{i,j})_{i,j=1}^3$ of $\fg$ which
  consists of all elementary matrices; i.e., the only non-zero entry
  of $E_{i,j}$ is a $1$ in position~$(i,j)$.  The subalgebra $\fk$ of
  strictly lower triangular matrices is a complement of $\fb$ in
  $\fg$, and the matrices $E_{i,j}$ with $j<i$ form a basis of $\fk$.

  Using the commutator relations
  $[E_{i,j},E_{s,t}] = \delta_{j,s} E_{i,t} - \delta_{i,t} E_{s,j}$,
  one can determine the commutator matrix
  $\calR(\mathbf{T}) \in \Mat_9(\Z[T_{i,j} \mid 1 \le i,j \le
  3])$,
  introduced in Section~\ref{sec:LieLattices}, for a set of variables
  $T_{i,j}$ corresponding to the chosen basis of elementary
  matrices~$E_{i,j}$.  Substituting $T_{2,1} = x$, $T_{3,2} = y$,
  $T_{3,1} = z$, and $T_{i,j} = 0$ whenever $i \le j$ we obtain the
  \emph{reduced} commutator matrix
  \begin{equation*}
    \overline{\calR}(x,y,z) = \begin{pmatrix}[ccc|cccccc]
      0 & 0 & 0 & z & 0 & -z & -x & y & 0 \\
      0 & 0 & -z & x & -x & 0 & 0 & 0 & 0 \\
      0 & z & 0 & 0 & y & -y & 0 & 0 & 0 \\ \hline
      -z & -x & 0 & 0 & 0 & \cdots &  & \cdots & 0 \\
      0 & x & -y & 0 & 0 & \cdots &  & \cdots & 0 \\
      z & 0 & y & \vdots & \vdots & \ddots &  &  & \vdots \\
      x & 0 & 0 &  &  &  &  &  &  \\
      -y & 0 & 0 & \vdots & \vdots &  &  & \ddots & \vdots \\
      0 & 0 & 0 & 0 & 0 & \cdots & & \cdots & 0 \\
    \end{pmatrix}.
  \end{equation*}
  An easy (but, if done by hand, lengthy) calculation yields the sets
  $F_k$ of degree-$k$ Pfaffians.  Removing those Pfaffians whose
  modulus cannot dominate the others we obtain
  \begin{align*}
    &\| F_0(x,y,z) \|_\Ip = 1, && \| F_1(x,y,z) \|_\Ip = \|x,y,z\|_\Ip, \\
    &\| F_2(x,y,z) \|_\Ip = \| x^2, y^2, z^2 \|_\Ip, && \| F_3(x,y,z) \|_\Ip = \|x^2y,
                                                        xy^2 \|_\Ip.
  \end{align*}
  Hence Proposition~\ref{pro:zeta-integral-formula-coord} yields the
  formula
  \begin{multline} \label{equ:our-integral-vs-Duke} \zeta_\rho(s) =
    q^{3r} \int_{\ff^3} \| 1, x^2,y^2,z^2,
    x^2y, xy^2\|_\Ip^{-1-s} \,\di \mu(x,y,z) \\
    = q^{3r} + q^{3r}(1-q^{-1}) \sum_{\ell = 1}^\infty q^{3\ell}
    \underbrace{\int_{\mathbb{P}^2(\fo)} \| \pi^{-2\ell}, \pi^{-3\ell}
      \calI_3 \|_\Ip^{-1-s} \,\di \mu_{\mathbb{P}^2,\Ip}}_{(\ast)},
  \end{multline}
  where $\calI_3$ is the sheaf of ideals on $\mathbb{P}^2/\fo$
  generated by the image of $\calO(3)^\vee = \calO(-3)$ after pairing
  with the sections $x^2y$ and $xy^2$ of the $3$rd twisting
  sheaf~$\calO(3)$.

  The fibres of the reduction map
  $\mathbb{P}^2(\fo) \to \mathbb{P}^2(\kappa)$ admit global charts and
  via these charts the contribution to the integral $(\ast)$ from any
  one fibre can be written as an integral over~$\pi\fo^2$.  Thus it
  remains to compute the integral over every fibre.

  Let $(\bar{x}\!:\!\bar{y}\!:\!\bar{z}) \in \mathbb{P}^2(\kappa)$ be
  a point in projective coordinates.  (a) There are $q(q-1)$ points
  satisfying $\bar{x}, \bar{y} \neq 0$.  In these fibres the integral
  has constant value $q^{-2} q^{-3\ell - 3\ell s}$ and working out the
  geometric series, we obtain the overall contribution
  \[
  q^{3r} (1-q^{-1}) \cdot q(q-1) \cdot q^{-2} \sum_{\ell=1}^\infty q^{-3\ell s} =
  q^{3r}  (1-q^{-1})^2 \frac{q^{-3s}}{1-q^{-3s}}
  \]
  to~$\zeta_\rho(s)$.  (b) There are $q$ points satisfying
  $\bar{x} \neq 0 $ and $\bar{y} = 0$.  In these fibres one can compute
  the integral by distinction of the cases
  $\lvert y \rvert_\Ip \ge q^{-\ell}$ and
  $\lvert y \rvert_\Ip < q^{-\ell}$.  Similarly, there are $q$ points
  satisfying $\bar{x} = 0$ and $\bar{y} \neq 0$; these points yield
  the same contributions, because the integral formula is symmetric in
  $x$ and~$y$.  We obtain the overall contribution
  \[
  q^{3r} 2 (1-q^{-1}) \frac{q^{-2s}-q^{-1-5s}}{(1-q^{-3s})(1-q^{-2s})}
  \]
  to~$\zeta_\rho(s)$.  (c) Finally, the integral over the fibre of the
  remaining point $(0\!:\!0\!:\!1)$ is more intricate: the resulting
  $3$-dimensional summation, over $\ell$ and the valuations of $x$ and
  $y$, can be calculated by choosing a suitable cone decomposition of
  the parameter space. For example, we first considered the case where
  $x$ and $y$ have the same valuation and then used the symmetry in
  $x$ and $y$ to treat the remaining cases. Skipping the details, we
  record the overall contribution
  \begin{multline*}
    q^{3r} (1-q^{-1})^3 \left(\tfrac{q^{-1-2s} + q^{-4s} +
        q^{1-6s}}{(1-q^{-2})(1-q^{1-6s})} +
      \tfrac{q^{1-9s}}{(1-q^{-3s})(1-q^{1-6s})} \right. \\
      \left. \ + \tfrac{2q^{-2-2s}}{(1-q^{-2})(1-q^{-1})(1-q^{-2s})} 
  \ + \tfrac{2(q^{-1-4s} + q^{-6s}+
        q^{-1-8s})}{(1-q^{-2})(1-q^{-2s})(1-q^{1-6s})} +
      \tfrac{2q^{1-8s}}{(1-q^{-2s})(1-q^{-3s})(1-q^{1-6s})} \right)
  \end{multline*}
  to~$\zeta_\rho(s)$. Addition and simplification yields the explicit
  formula.
 %
%
%
\end{proof}

\begin{rmk}
  The referee encouraged us to indicate how the integral
  description~\eqref{equ:our-integral-vs-Duke} compares to the
  methodology from~\cite[Sec.~3.2]{AvKlOnVo13}.  This requires juggling
  two sets of notation; watch out.  Following~\cite{AvKlOnVo13}, we
  would derive from Proposition~\ref{pro:zeta-algebraic-formula} that
  \[
  \zeta_\rho(s) = q^{d_2 r}
  \mathcal{P}_{\overline{\calR},\fo}(s+1) = q^{3r}
  \mathcal{P}_{\overline{\calR},\fo}(s+1),
  \]
  where $d_2 = 3$ is the dimension of the space
  $\{ w \in \Hom_\fo(\fg,\ff) \mid w(\fb) \subseteq \fo \}$ and
  $\mathcal{P}_{\overline{\calR},\fo}(s)$ is a Poincar\'e series
  associated to the $d_1 \times d_1$ matrix $\overline{\calR}(x,y,z)$,
  for $d_1 = 9$, that one defines in analogy
  to~\cite[(3.3)]{AvKlOnVo13}.  From \cite[(3.4) and
  (3.5)]{AvKlOnVo13}, we would deduce that
  \begin{align*}
    \zeta_\rho(s) & = q^{d_2 r} \big( 1 + (1-q^{-1})^{-1}
                    \mathcal{Z}_\fo'(-s-1,\rho s - d_2 - 1)\big) \\
                  & = q^{3 r} \big( 1 + (1-q^{-1})^{-1}
                    \mathcal{Z}_\fo'(-s-1,3 s - 4)\big),
  \end{align*}
  where $\rho = 3$ is defined as in~\cite[(3.6)]{AvKlOnVo13} and
  \begin{align*}
    \mathcal{Z}_\fo'(r,t) %
    & = \int_{(\alpha,x,y,z) \in \Ip \times W(\fo)}
      \lvert \alpha \rvert_{\Ip}^{\, t} \, \prod_{j=1}^\rho \frac{\Vert
      F_j(x,y,z) \cup \alpha^2 F_{j-1}(x,y,z) \Vert_{\Ip}^{\, r}}{\Vert
      F_{j-1}(x,y,z) \Vert_{\Ip}^{\, r}} \,\di \mu(\alpha,x,y,z) \\
    & = \int_{\alpha \in \Ip} \lvert \alpha \rvert_{\Ip}^{\, t}
      \left( \int_{(x,y,z) \in W(\fo)} \Vert \alpha , x^2 y, x y^2
      \Vert_{\Ip}^{\, r} \,\di \mu(x,y,z) \right) \di \mu(\alpha) 
  \end{align*}
  is a slight adaptation of~\cite[(3.5)]{AvKlOnVo13}, given that we
  have already defined the sets $F_k$ of degree-$k$ Pfaffians; the
  region of integration involves
  $W(\fo) = (\fo^{d_2})^* = \fo^3 \smallsetminus \pi \fo^3$, as
  in~\cite{AvKlOnVo13}.  Carrying out the integration over $\alpha$
  would lead to
  \begin{align*}
    \zeta_\rho(s) %
    & = q^{3r} + q^{3r} \sum_{l=1}^\infty q^{-3ls} \int_{(x,y,z) \in
      W(\fo)} \Vert \pi^l , x^2 y, x y^2 \Vert_{\Ip}^{-1-s} \,\di
      \mu(x,y,z) \\
    & = q^{3r} + q^{3r} \sum_{l=1}^\infty q^{3l}  \int_{(x,y,z) \in
      W(\fo)} \Vert \pi^{-2l}, \pi^{-3l} x^2 y, \pi^{-3l} x y^2 \Vert_{\Ip}^{-1-s} \,\di
      \mu(x,y,z), 
  \end{align*}
  where $\mu(\fo^3) = 1$.  This is
  indeed~\eqref{equ:our-integral-vs-Duke}, written in affine
  coordinates.

  The concrete example illustrates how one can translate quite
  generally between our approach and the methodology used
  in~\cite{AvKlOnVo13} and elsewhere.
 \end{rmk}


\subsection{Induction from a Borel subgroup to
  $\Un_3^r(\fo)$}\label{sec:U3}

Let $\fe$ be a quadratic extension field of $\ff$
and let $\fo_\fe$ denote the ring of integers of
$\fe$.  Denote the non-trivial Galois automorphism of
$\fe$ over $\ff$ by $\sigma$.

Let $\Un_3$ be the unitary group scheme over $\fo$ associated to the
extension $\fo_\fe$ and the non-degenerate hermitian matrix
$W = \left(\begin{smallmatrix}
    0 & 0 & 1 \\
    0 & 1 & 0 \\
    1 & 0 & 0 \\
  \end{smallmatrix}\right).$
We study the group
\[
G = \Un_3(\fo) = \{ g \in \GL_3(\fo_\fe) \mid
\sigma(g)^\mathrm{tr} \, W \, g = W \}
\]
of $\fo$-rational points and its principal congruence subgroups
$G^r = G \cap \GL_3^r(\fo_\fe)$ as in
Section~\ref{sec:GL3}. Further we write $B$ and
$B^r = B \cap \GL_3^r(\fo_\fe)$ to denote the Borel subgroups
of upper-triangular matrices in $G$ and $G^r$ respectively.  Consider
the $\fo$-Lie lattice
\[
\fu_3 = \{ Z \in \Mat_3(\fo_\fe) \mid \sigma(Z)^\mathrm{tr}
\, W + WZ = 0 \}.
\]
The groups $G^r$ and $B^r$ are finitely generated
torsion-free potent pro-$p$ groups whenever $r \ge 2e$ for $p=2$ and
$r \ge e (p-2)^{-1}$ for $p>2$, where $e$ denotes the ramification
index of~$\ff$ over~$\Q_p$.  In this case $\pi^r \fu_3$, regarded
as a $\Z_p$-Lie lattice, is the Lie lattice associated to~$G^r$.

\begin{pro} \label{pro:ind-Br-Ur} Suppose that $p$ is odd and that
  $\fe$ is unramified over $\ff$.  In the set-up described above and
  subject to $r \ge e (p-2)^{-1}$, the zeta function of the
  representation $\rho = \Ind_{B^r}^{G^r}(\one_{B^r})$
  is
 \begin{equation*}
   \zeta_\rho(s) = q^{3(r-1)} \frac{q^{1-2s}(q-q^{-s}) (q^{-s}u(q^s) +
     q u(q^{-s}) )}{(1-q^{1-6s})}, 
  \end{equation*}
  where $u(X) = X^2 + X + X^{-2}$.  The abscissa of convergence is
  $\alpha(\rho) = \nicefrac{1}{6}$.
\end{pro}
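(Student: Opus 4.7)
The plan is to follow exactly the strategy used for Proposition~\ref{pro:ind-Br-Gr}: fix explicit coordinates on $\fu_3$, compute the reduced commutator matrix and extract its Pfaffians, apply the integral formula of Proposition~\ref{pro:zeta-integral-formula-coord}, and evaluate the resulting $p$-adic integral by a fibre-wise analysis over $\mathbb{P}^2(\kappa)$.

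For the setup, since $p$ is odd and $\fe/\ff$ is unramified, one can pick $\delta \in \fo_\fe$ with $\sigma(\delta) = -\delta$ and $\delta^2 \in \fo^\times$, yielding the decomposition $\fo_\fe = \fo \oplus \fo\delta$. Unpacking the defining condition $\sigma(Z)^{\mathrm{tr}} W + W Z = 0$ entry by entry, one sees that $\fu_3$ is free over $\fo$ of rank $9$, that $\fb$ has rank $6$, and that a complementary $\fo$-sublattice $\fk$ of rank $3$ admits the explicit basis
\begin{equation*}
  K_1 = E_{21} - E_{32}, \qquad K_2 = \delta(E_{21}+E_{32}), \qquad K_3 = \delta E_{31}.
\end{equation*}
Since $\delta^2$ is a unit, all resulting structure constants lie in $\fo$ and no extra powers of $\pi$ appear in the Pfaffian computation.

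Next, using $[E_{ij}, E_{kl}] = \delta_{jk} E_{il} - \delta_{li} E_{kj}$, one computes the reduced commutator matrix $\overline{\calR}(x,y,z) \in \Mat_9(\fo[x,y,z])$, where $x,y,z$ are the coordinates on $\fk$ associated to $K_1,K_2,K_3$ and the coordinates along $\fb$ are set to zero. A direct computation, parallel to the $\gl_3$ case but incorporating the scalars $\pm\delta^2$ imposed by unitarity, produces the sets $F_0,F_1,F_2,F_3$ of Pfaffians and hence explicit formulae for $\|F_k(x,y,z)\|_\Ip$ as maxima of monomial absolute values in $(x,y,z) \in \ff^{\,3}$. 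Inserting these into the integral of Proposition~\ref{pro:zeta-integral-formula-coord} gives, as in the $\GL_3$ case, an expression
\begin{equation*}
  \zeta_\rho(s) = q^{3r} + q^{3r}(1-q^{-1}) \sum_{\ell \geq 1} q^{3\ell}\!\!
  \int_{\mathbb{P}^2(\fo)} \!\!\! \Big\| \pi^{-2\ell},\, \pi^{-3\ell}\calI_3 \Big\|_\Ip^{-1-s}\! \di\mu_{\mathbb{P}^2,\Ip},
\end{equation*}
(with $\calI_3$ the ideal sheaf associated to the degree-$3$ Pfaffians). This splits according to the vanishing behaviour of the $F_k$ modulo $\Ip$ at the $q^2+q+1$ residue points of $\mathbb{P}^2(\kappa)$; on each fibre a suitable cone decomposition of the $(\ell,\, v_\Ip(x),\, v_\Ip(y))$-parameter space reduces the computation to a finite sum of geometric series. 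Collecting the contributions and simplifying yields the stated closed form, from which $\alpha(\rho) = \nicefrac{1}{6}$ is read off as the unique pole of largest real part of the denominator factor $1-q^{1-6s}$.

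The main obstacle is the combinatorial bookkeeping in the cone decomposition at the exceptional residue points: since the set of Pfaffians differs from the $\GL_3$ case, the grouping of residue points and the resulting case split are different, and it is this recombination of fibre contributions that produces the numerator polynomial $u(X) = X^2 + X + X^{-2}$ in place of its $\GL_3$ counterpart. The unramifiedness of $\fe/\ff$ is essential here to keep structure constants in $\fo^\times$, while the oddness of $p$ ensures that the Kirillov correspondence underlying Proposition~\ref{pro:zeta-integral-formula-coord} is canonical (compare Proposition~\ref{pro:multiplicities}).
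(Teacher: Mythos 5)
Your general plan---choose coordinates on the complement $\fk$ of $\fb$ in $\fu_3$, reduce the commutator matrix, read off Pfaffians, and evaluate the resulting $\Ip$-adic integral fibre by fibre over $\mathbb{P}^2(\kappa)$---is the right shape, and your basis $K_1, K_2, K_3$ is exactly the paper's $B, A, C$ up to relabelling.  However, there is a genuine gap at the heart of the argument.  You assert that a direct computation on $\fu_3$ "produces the sets $F_0,F_1,F_2,F_3$ of Pfaffians and hence explicit formulae for $\|F_k\|_\Ip$ as maxima of monomial absolute values."  This is not automatic, and it is precisely what must be proved.  Unlike in the $\gl_3$ case, the Pfaffians of $\fu_3$ are \emph{not} monomials in the coordinates $u=\omega(K_2)$, $v=\omega(K_1)$, $w=\omega(K_3)$.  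For instance, taking the diagonal basis vectors $D_1 = E_{11}-E_{33}$ and $D_2 = \delta(E_{11}+E_{33})$ of $\fb$, one finds $[K_2,D_2]=\delta^2 K_1$, $[K_1,D_2]=K_2$, $[K_2,D_1]=K_2$, $[K_1,D_1]=K_1$, $[D_1,D_2]=0$, and $[K_2,K_1]=-2K_3$; the corresponding $4\times 4$ Pfaffian minor is (up to sign) $u^2-\delta^2 v^2$.  To conclude $|u^2-\delta^2 v^2|_\Ip = \max\{|u|_\Ip^2,|v|_\Ip^2\}$ one needs the fact that $\delta^2 \in \fo^\times$ is a \emph{nonsquare} unit, which is what $\sigma(\delta)=-\delta$ with $p$ odd and $\fe/\ff$ unramified actually buys.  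Your remark that "$\delta^2$ is a unit, [so] all resulting structure constants lie in $\fo$ and no extra powers of $\pi$ appear" catches the integrality but misses the nonsquareness; if $\delta^2$ were a square unit, the norm $|u^2-\delta^2 v^2|_\Ip$ could drop arbitrarily and the cone decomposition you envisage would fail.

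The paper bypasses this cleanly: since $p$ is odd, base change gives $\fo_\fe\otimes_\fo\fu_3\cong\gl_3(\fo_\fe)$, so the Pfaffian ideals may be read off from the already-computed $\gl_3$ case after the linear change of variables $x=\tfrac12(\delta^{-1}u+v)$, $y=\tfrac12(\delta^{-1}u-v)$, $z=\delta^{-1}w$.  The non-monomial issue then evaporates because the absolute value on $\fe$ is \emph{multiplicative}: $|x|_\Ip = |y|_\Ip = \max\{|u|_\Ip,|v|_\Ip\}$ by the $\fo$-linear independence of $1,\delta$, so $|x^2y|_\Ip = |x|_\Ip^2|y|_\Ip = \max\{|u|_\Ip,|v|_\Ip\}^3$ without touching the factorisation of $u^2-\delta^2v^2$ at all.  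This yields the integrand $\|1,u^2,v^2,w^2,u^3,v^3\|_\Ip^{-1-s}$ directly.  To carry out your direct computation honestly you would need to insert the nonsquareness argument (or the equivalent multiplicativity-via-base-change observation) at the Pfaffian stage; without it, the claim that the norms are monomial maxima does not follow, and the subsequent fibre-wise cone decomposition has no valid starting point.
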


As before, it would be interesting to find an interpretation for the
value $\nicefrac{1}{6}$; it is highly suggestive that the abscissa
agrees with the one found in Proposition~\ref{pro:ind-Br-Gr}.  From
the given formula it is easy to verify that $\zeta_\rho$ satisfies a
functional equation, in accordance with
Theorem~\ref{thm:main-summary}.  A similar formula holds in case $\fe$
is ramified over $\ff$, however the resulting zeta function does not
admit a functional equation.  In the ramified case the denominator is
still $(1-q^{1-6s})$, whereas the numerator is given by a more
complicated formula.

\begin{proof}[Proof of Proposition~\ref{pro:ind-Br-Ur}]
  Since $p$ is odd, the canonical map
  $\fo_\fe \otimes_\fo \fu_3 \to \gl_3(\fo_\fe)$ is
  an isomorphism of $\fo_\fe$-Lie lattices.  Since
  $\fe$ is unramified over $\ff$, we find
  $\delta \in \fo^\times_\fe \smallsetminus \fo$ with
  $\delta^2 \in \fo^\times$.  Then $1,\delta$ form an $\fo$-basis for
  $\fo_\fe$ and $\sigma(\delta) = - \delta$.

  Let $\fk \le \fu_3$ be the Lie sublattice of strictly lower
  triangular matrices; clearly $\fk \oplus \fb = \fu_3$,  and the matrices
  \begin{equation*}
    A = \left(\begin{smallmatrix}
      0 & 0 & 0 \\
      \delta & 0 & 0 \\
      0 & \delta & 0 \\
    \end{smallmatrix}\right), \quad %
    B = \left(\begin{smallmatrix}
      0 & 0 & 0 \\
      1 & 0 & 0 \\
      0 & -1 & 0 \\
    \end{smallmatrix}\right), \quad %
    C = \left(\begin{smallmatrix}
      0 & 0 & 0 \\
      0& 0 & 0 \\
      \delta & 0 & 0 \\
    \end{smallmatrix}\right)
  \end{equation*}
  form an $\fo$-basis for~$\fk$.  Consider an $\fo$-linear map
  $\omega\colon \fu_3 \to \ff$ that vanishes on $\fb$, and
  put $u=\omega(A)$, $v=\omega(B)$ and $w=\omega(C)$.  We need to find
  the symplectic minors of the symplectic form $\tilde \omega$ on
  $\fu_3$, given by $\tilde{\omega}(X,Y) = \omega([X,Y])$ for
  $X,Y \in \fu_3$.

  The symplectic minors are invariant under base change. As
  $\fo_\fe \otimes_\fo \fu_3 \cong \gl_3(\fo_\fe)$
  we simply need to compare the basis $A,B,C$ with the basis
  $E_{2,1}, E_{3,2}, E_{3,1}$ used in the proof of
  Proposition~\ref{pro:ind-Br-Gr}.  In the notation used there, we
  obtain $x = \frac{1}{2}(v + \delta^{-1}u)$,
  $y = \frac{1}{2}(\delta^{-1}u-v)$, and $z= \delta^{-1}w$.  Observe
  that
  $\lvert v + \delta^{-1}u \rvert_\Ip = \max \{ \lvert u \rvert_\Ip,
  \lvert v \rvert_\Ip \}$,
  Because $u, v \in \fo$ and $1, \delta$ are $\fo$-linearly
  independent.  We finally get the formula
  \begin{equation*}
    \zeta_\rho(s) = q^{3r} \int_{\ff^3} \| 1, u^2,v^2,w^2,
    u^3, v^3 \|_\Ip^{-1-s} \, \di\mu (u,v,w). 
  \end{equation*}
  Now the explicit formula can be computed as in the proof of
  Proposition~\ref{pro:ind-Br-Gr}.
\end{proof}


\subsection{Induction from maximal parabolic subgroups to
  $\GL_n^r(\fo)$}\label{sec:ind-maximal-parabolic}
Let $n,t \in \N$ such that $n = t + t'$ with
$1 \le t \le t' \coloneqq n-t$.  We consider the general linear group
$G = \GL_n(\fo)$ and, for $r \in \N$, its principal congruence
subgroup $G^r = \ker (\GL_n(\fo) \to \GL_n(\fo/\Ip^r))$.  We are
interested in the maximal parabolic subgroup of type $(t,n-t)$,
defined by
\begin{align*}
  H = H_{n,t} & = \bigl\{ (g_{ij}) \in \GL_n(\fo) \mid g_{ij} = 0
                \text{ for } i \le t < j \bigr\} \\
              & =    
                \begin{pmatrix}[ccc|ccc]
                  &&& 0 & \cdots & 0 \\
                  & \GL_t(\fo)&  & \vdots & \ddots & \vdots \\
                  &&& 0 & \cdots & 0 \\ \hline
                  \fo & \cdots & \fo & & & \\
                  \vdots & \ddots & \vdots & & \GL_{n-t}(\fo) & \\
                  \fo & \cdots & \fo & & &
                \end{pmatrix},
\end{align*}
and we set $H^r = H \cap G^r$.  The groups $G^r$ and $H^r$ are
finitely generated torsion-free potent pro-$p$ groups whenever
$r \ge 2e$ for $p=2$ and $r \ge e (p-2)^{-1}$ for $p>2$, where $e$
denotes the ramification index of~$\ff$ over~$\Q_p$.  Our aim is to
compute the zeta function of the induced representation
$\Ind_{H^r}^{G^r}(\one_{H^r})$ for such~$r$.

For $m \in \N_0$, define $\calV_q(m) = \prod_{j=1}^m(1-q^{-j})$, viz.\
the volume of $\GL_m(\fo)$ with respect to the additive Haar measure
on $\Mat_{m,m}(\fo)$.  For a subset
$J = \{x_1,\dots, x_{|J|}\} \subseteq \{1, \ldots, t\}$ with
$x_1 < \ldots < x_{|J|}$ we write
\[
V_{n,t}(J) =
\frac{\calV_q(t)\calV_q(n-t)}{\calV_q(t-x_{|J|})\calV_q(n-t-x_{|J|})}
\prod_{j=1}^{|J|} \calV_q(x_j -
x_{j-1})^{-1},
\]
with the convention $x_0 = 0$.

\begin{thm}\label{thm:InductionMaxPara}
  In the set-up described above and subject to $r \ge 2e$ if $p=2$ and
  $r \geq e(p-2)^{-1}$ if $p > 2$, the zeta function of the
  representation $\rho = \Ind_{H^r}^{G^r}(\one_{H^r})$ is
 \begin{equation} \label{equ:max-para-ind}
   \zeta_{\rho}(s) = q^{r t (n-t)} \sum_{J \subseteq \{1,\ldots,t\} } V_{n,t}(J) \prod_{j \in J}
   \frac{ q^{-j(n-j)s}}{1-q^{-j(n-j)s}}.
 \end{equation}
 The abscissa of convergence is
 $\alpha(\rho) = 0$.
\end{thm}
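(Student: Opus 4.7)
The plan is to apply Proposition~\ref{pro:zeta-integral-formula}(1) with $\fg = \gl_n(\fo)$, $\fh$ the Lie lattice of $H_{n,t}$, and the $\fo$-module complement $\fk$ spanned by the elementary matrices $E_{ij}$ with $i \le t < j$. The hypothesis $\lvert \fg : \fh + [\fg, \fg] \rvert < \infty$ holds since $\fk \subseteq [\fg, \fg]$ (via $E_{ij} = [E_{ii}, E_{ij}]$), and $W = \{w \in \Hom_\fo(\fg, \ff) : w(\fh) \subseteq \fo\}$ is naturally parametrised by $Y \in \Mat_{t, n-t}(\ff)$ via $Y_{i, j-t} = w(E_{ij})$, yielding
\[
\zeta_\rho(s) = q^{rt(n-t)} \int_{\Mat_{t, n-t}(\ff)} \Bigl\| \bigcup\nolimits_{k \ge 0} \Pfaff_k(w) \Bigr\|_\Ip^{-1-s} \,\di\mu(Y),
\]
where $\mu$ is the additive Haar measure normalised so that $\mu(\Mat_{t, n-t}(\fo)) = 1$.

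First I would show that the integrand depends only on the Smith normal form type of~$Y$. Since $[\fk, \fk] = 0$ and $w$ vanishes on $\fh \supseteq [\fh, \fh]$, both $\fh$ and $\fk$ are totally isotropic for $A_w$, so its Gram matrix in a basis adapted to $\fg = \fh \oplus \fk$ is block anti-diagonal $\left(\begin{smallmatrix} 0 & M \\ -M^{\mathrm{tr}} & 0 \end{smallmatrix}\right)$. A direct bracket computation identifies $M \colon \fh \to \fk^*$ as the map that vanishes on the strictly lower block of $\fh$ and sends $(A, B) \in \gl_t \oplus \gl_{n-t}$ to the $\fk^*$-element dual to $AY - YB$ via the trace pairing. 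Standard identities on Pfaffians of block anti-diagonal matrices identify $\Pfaff_k(A_w)$ with the ideal of $k \times k$ minors of $M$. Using the $(\GL_t(\fo) \times \GL_{n-t}(\fo))$-covariance of the construction $Y \mapsto M$ under $(g, h) \cdot Y = g Y h^{-1}$, one may restrict to $Y$ in Smith normal form, and a direct matrix computation yields an explicit formula for $\|\bigcup_k \Pfaff_k(w)\|_\Ip$ in terms of the Smith type $\lambda = (\lambda_1 \le \cdots \le \lambda_t) \in (\Z \cup \{\infty\})^t$ of~$Y$.

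With the integrand understood, I would stratify the domain by $(\GL_t(\fo) \times \GL_{n-t}(\fo))$-double coset, indexed by Smith type~$\lambda$. On each stratum the integrand is constant and the Haar volume of the stratum is a classical $q$-hypergeometric expression attached to partial flags of $\fo$-modules with the prescribed Smith type. Organising the resulting multi-indexed sum by the subset $J = \{j \in \{1, \ldots, t\} : \lambda_j < \lambda_{j+1}\}$ of positions where the Smith sequence strictly increases (convention $\lambda_{t+1} = \infty$, so $|J|$ equals the number of distinct finite Smith values and $\max J$ equals the rank of $Y$), the sums over the ``gap'' variables at each $j \in J$ collapse to geometric series equal to $q^{-j(n-j)s}/(1 - q^{-j(n-j)s})$; here the exponent $j(n-j)$ is the dimension of the $\GL_t(\ff) \times \GL_{n-t}(\ff)$-orbit of a rank-$j$ matrix in $\Mat_{t, n-t}$, which in turn equals half the rank of $A_w$ when $Y$ has rank~$j$. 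The combinatorial prefactors collapse to the quantities $V_{n, t}(J)$, giving~\eqref{equ:max-para-ind}. The abscissa $\alpha(\rho) = 0$ is then immediate, since each summand in \eqref{equ:max-para-ind} converges for $\mathrm{Re}(s) > 0$ while the $J = \{1\}$ summand contains the factor $(1 - q^{-(n-1)s})^{-1}$, producing a pole at $s = 0$.

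The hard part will be the explicit computation of $\|\bigcup_k \Pfaff_k(w)\|_\Ip$ as a function of the Smith type of~$Y$: although the reduction to minors of $M$ is formal, $M$ is a much larger matrix than $Y$, whose entries are linear forms in the entries of~$Y$, and determining its minor ideals (equivalently, its elementary divisors) requires a careful direct computation with $Y$ in Smith normal form. The subsequent combinatorial matching of double-coset volumes to the prefactors $V_{n, t}(J)$ is standard but intricate, involving $q$-analogue manipulations of partial-flag counts that must be bookkept carefully to obtain the exact form of $V_{n,t}(J)$.
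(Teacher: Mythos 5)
Your plan is, in all essentials, the proof given in the paper: start from the orbit-method integral of Propositions~\ref{pro:zeta-algebraic-formula} and \ref{pro:zeta-integral-formula}, identify $W$ modulo $\Hom_\fo(\fg,\fo)$ with matrices in $\Mat_{t,n-t}$ via the trace pairing, reduce to Smith normal form by covariance under the Levi $\GL_t(\fo)\times\GL_{n-t}(\fo)$, compute both the Pfaffian norm and the stratum volume as functions of the Smith type (these are precisely Lemmata~\ref{lem:indexFormula} and \ref{lem:orbitFormula}), and then reorganise the resulting sum according to the jump set~$J$. The only divergence is cosmetic: you propose to read the Pfaffian norm off the elementary divisors of the rectangular block $M$ of the anti-diagonal Gram matrix on $\fh\oplus\fk$, while the paper's Lemma~\ref{lem:indexFormula} avoids writing down $M$ by decomposing $(\fg,A_\xi)$ directly into orthogonal $1$-, $2$- and $3$-dimensional summands whose Pfaffians are immediate; both routes produce the same exponent $\sum_i \alpha_i(\xi)(n-2i+1)$.
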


\begin{rmk}
  Even though the formula in~\eqref{equ:max-para-ind} seems
  complicated, it is rather easy to evaluate the occurring terms for
  small values of~$t$.  For instance, for $t=1$ we obtain
  \begin{equation*}
    \zeta_{\rho}(s) = q^{r(n-1)} \left( 1 + \big( 1- q^{-(n-1)} \big)
      \frac{q^{-(n-1)s}}{1-q^{-(n-1)s}} \right) =
    q^{r(n-1)} \frac{1-q^{-(n-1) - (n-1)s}}{1-q^{-(n-1)s}}. 
  \end{equation*}
  This formula will be generalised in Section~\ref{sec:schiefkoerper}.
  It should also be compared with the formula obtained in
  Proposition~\ref{prop:formulaInductionTrees}.
\end{rmk}

The proof of Theorem~\ref{thm:InductionMaxPara} requires some
preparations.  For $N, k \in \N_0$, a \emph{composition} of length $k$
of $N$ is a $k$-tuple $\mathbf{u} = (u_1,\dots,u_k)$ of positive
integers such that $N = u_1 + \ldots + u_k$; we put
$N(\mathbf{u}) = N$ and $\lambda(\mathbf{u}) = k$.  In particular, the
empty tuple is a composition of $0$ of length~$0$.  We denote by
$\comp(N)$ the set of all compositions of~$N$.

Set $\fg = \gl_n(\fo)$, and let
\[
\fh = \{ (X_{ij}) \in \fg \mid X_{ij} = 0 \text{ for } i \le t
< j\}
\]
denote the parabolic $\fo$-Lie sublattice of type $(t,n-t)$, related
to~$H$.  We identify the quotient $\fg/\fh$ with the $\fo$-lattice
$\Mat_{t,t'}(\fo)$ of $t \times t'$-matrices, by projecting onto the
upper right $t \times t'$-block.  Furthermore, we identify the
$\ff$-dual space $(\fg/\fh)^* = \Hom_\fo(\fg/\fh, \ff)$ with the
$\ff$-vector space $\Mat_{t', t}(\ff)$ via the trace form
\[
\underbrace{\Mat_{t',t}(\ff)}_{\cong (\fg/\fh)^*}
\times \underbrace{\Mat_{t,t'}(\fo)}_{\cong \fg/\fh} \to
\ff, \qquad (M,X) \mapsto \Tr(MX),
\]
and we identify the Pontryagin dual
$(\fg/\fh)^\vee \cong \Hom_{\fo}(\fg/\fh,\ff/\fo)$ with the space
$\Mat_{t', t}(\ff/\fo)$; compare Section~\ref{sec:LieLattices}.
Indeed, writing $\overline{M} \in \Mat_{t', t}(\ff/\fo)$ for the image
of $M \in \Mat_{t', t}(\ff)$, we obtain an isomorphism
\begin{align*}
  \Mat_{t',t}(\ff/\fo) %
  & \to (\fg/\fh)^\vee, \quad \overline{M} \mapsto
    \omega_{\overline{M}}, \qquad \text{where} \\  
  & \omega_{\overline{M}} \colon \fg/\fh \to \ff/\fo, \quad
    \omega_{\overline{M}}(X) = \Tr(MX) + \fo \qquad \text{for $M \in
    \Mat_{t', t}(\ff)$.}
\end{align*}

In order to apply Proposition~\ref{pro:zeta-algebraic-formula}, we
need to calculate $\lvert \fg : \stab_\fg (\omega) \rvert$ for every
form $\omega \in (\fg/\fh)^\vee$.  We make use of the
Levi subgroup $L \le_\mathrm{c} H$.  This is the group of block
diagonal matrices
\begin{equation*}
    L = \bigl\{ (g_{ij}) \in H \mid g_{ij} = 0 \text{ for } j \le t <
    i \bigr\} \cong \GL_t(\fo) \times \GL_{n-t}(\fo), 
\end{equation*}
and accordingly we write elements of $L$ as pairs
$(g,h) \in \GL_t(\fo) \times \GL_{n-t}(\fo)$.   The action of $L$ on
$(\fg/\fh)^\vee \cong \Mat_{t',t}(\ff/\fo)$ via the co-adjoint
representation is described explicitly by
\[
(g,h).\overline{M} = \overline{h M g^{-1}} \qquad \text{for $(g,h) \in L$ and
$M \in \Mat_{t',t}(\ff)$;}
\]
indeed, for $X \in \Mat_{t,t'}(\fo) \cong \fg/\fh$ we obverse that
\begin{equation*}
  \bigl((g,h) . \omega_{\overline{M}}\bigr)(X) = \Tr(Mg^{-1}Xh) = \Tr(h M g^{-1} X)
  = \omega_{\overline{h M g^{-1}}}(X). 
\end{equation*} 
  
Next we determine an explicit set of orbit representatives
for the $L$-orbits in $\Mat_{t',t}(\ff/\fo)$.
To this end we introduce the parameter set $\Xi = \Xi_{n,t}$ consisting
of all pairs $\xi = (\mathbf{u},\boldsymbol{\gamma})$, where
$\mathbf{u}$ is a composition satisfying $N(\mathbf{u}) \le t$ and
$\boldsymbol{\gamma} = (\gamma_1, \ldots,
\gamma_{\lambda(\mathbf{u})})$
is a strictly increasing sequence of negative integers so that
$\gamma_1 < \ldots < \gamma_{\lambda(\mathbf{u})} < 0$.  For
$\xi = (\mathbf{u},\boldsymbol{\gamma}) \in \Xi$ we define
\begin{equation*}
  \alpha_i(\xi) = \begin{cases}
    \gamma_k  & \text{if  $\sum_{j=1}^{k-1} u_j < i \le \sum_{j=1}^{k}
      u_j$,} \\
    0  &\text{if  $N(\mathbf{u}) < i$,}
  \end{cases}
  \qquad \text{where $1 \le i \le t$,}
\end{equation*}
and we associate to $\xi$ the matrices 
\begin{equation*}
   M_\xi = \begin{pmatrix}
               \pi^{\alpha_1(\xi)} & & & \\
               & \pi^{\alpha_2(\xi)} & & \\
               & & \ddots & \\
               & & & \pi^{\alpha_t(\xi)} \\ \hline
               0 & \cdots & & 0 \\
               \vdots &  & & \vdots\\
               0 & \cdots & & 0
           \end{pmatrix}
           \in \Mat_{t',t}(\ff) 
\end{equation*}
and $\overline{M_\xi} \in \Mat_{t',t}(\ff/\fo)$.  The elementary
divisor theorem implies that
$\calM = \calM_{n,t} = \{ \overline{M_\xi}\mid \xi \in \Xi \}$ is a
system of representatives of the $L$-orbits in~$\Mat_{t',t}(\ff/\fo)$.

Since $\lvert \fg : \stab_\fg(\omega) \rvert$ is constant on
the co-adjoint orbit of a form $\omega$ with regard to~$L$, it
suffices to determine, for $\xi \in \Xi$, the index
$\lvert \fg : \stab_\fg(\omega_{\overline{M_\xi}}) \rvert$ and the size
of the $L$-orbit of~$\overline{M_\xi}$.

\begin{lem}\label{lem:indexFormula}
  Let $\xi = (\mathbf{u},\boldsymbol{\gamma}) \in \Xi$.  The
  stabiliser of $\omega_{\overline{M_\xi}}$ in $\fg$ satisfies
 \begin{equation*}
   \left|\fg : \stab_\fg(\omega_{\overline{M_\xi}})\right|^{\nicefrac{1}{2}} = 
   \prod_{i=1}^{\lambda(\mathbf{u})} q^{- u_i \gamma_i(n -u_i -2 \sum_{j=1}^{i-1}
     u_j)}.
 \end{equation*}
\end{lem}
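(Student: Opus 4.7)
The plan is to determine $\stab_\fg(\omega_{\overline{M_\xi}})$ explicitly via a block-matrix analysis and then compute its index as a product of elementary local contributions.

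Decompose matrices with respect to $n = t + t'$: for $Y \in \fg$ write
\[
Y = \begin{pmatrix} Y_1 & Y_2 \\ Y_3 & Y_4 \end{pmatrix}
\]
with $Y_1 \in \Mat_{t,t}(\fo)$, $Y_2 \in \Mat_{t,t'}(\fo)$, $Y_3 \in \Mat_{t',t}(\fo)$ and $Y_4 \in \Mat_{t',t'}(\fo)$, and similarly for $X$. A short calculation shows that the upper-right $t \times t'$-block of $[X,Y]$ equals $X_1 Y_2 - Y_1 X_2 + X_2 Y_4 - Y_2 X_4$, so that
\[
\omega_{\overline{M_\xi}}([X,Y]) = \Tr\bigl(M_\xi(X_1 Y_2 - Y_1 X_2 + X_2 Y_4 - Y_2 X_4)\bigr) + \fo.
\]
Requiring this to vanish in $\ff/\fo$ for all $X \in \fg$ and specialising $X$ to be supported on a single block (the $X_3$-block contributes nothing) yields, by the nondegeneracy of the trace pairing on $\fo$-lattices, three independent conditions:
\[
Y_2 M_\xi \in \Mat_{t,t}(\fo), \qquad Y_4 M_\xi - M_\xi Y_1 \in \Mat_{t',t}(\fo), \qquad M_\xi Y_2 \in \Mat_{t',t'}(\fo),
\]
while $Y_3$ is unconstrained.

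Since $M_\xi$ is ``diagonal'' with entries $\pi^{\alpha_i(\xi)}$ at positions $(i,i)$ for $1 \leq i \leq t$ and a zero block of rows below, each of these matrix conditions reduces to an entrywise congruence involving the nonnegative integers $-\alpha_i(\xi)$. The entries of $Y_2$ and the pairs $((Y_1)_{ij}, (Y_4)_{ij})$ decouple across distinct $(i,j)$, so the index factorises. From the conditions on $Y_2$ we obtain a factor $q^{\max(-\alpha_i, -\alpha_j)}$ for each $(i,j)$ with $1 \leq i, j \leq t$ and a factor $q^{-\alpha_i}$ for each $1 \leq i \leq t < j \leq t'$. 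The coupled condition $(Y_4 M_\xi - M_\xi Y_1)_{ij} \in \fo$ is the only subtle step: a short Smith-normal-form argument shows that the sublattice of $\fo \oplus \fo$ cut out by $\pi^{\alpha_j} (Y_4)_{ij} - \pi^{\alpha_i} (Y_1)_{ij} \in \fo$ has index $q^{\max(-\alpha_i, -\alpha_j)}$ for $1 \leq i, j \leq t$, whereas for $t < i \leq t'$, $1 \leq j \leq t$ only $(Y_4)_{ij}$ is constrained, contributing the factor $q^{-\alpha_j}$.

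Adding the exponents one finds
\[
\log_q \lvert \fg : \stab_\fg(\omega_{\overline{M_\xi}}) \rvert = 2 \sum_{i,j=1}^{t} \max(-\alpha_i, -\alpha_j) - 2(t'-t) \sum_{i=1}^t \alpha_i.
\]
Finally, since $-\alpha_i(\xi)$ takes the value $-\gamma_k$ with multiplicity $u_k$ for $k = 1,\dots,\lambda(\mathbf{u})$ and equals $0$ for the remaining indices, a routine combinatorial rearrangement -- grouping the ordered pairs $(i,j)$ according to the block in which the maximum $\max(-\alpha_i, -\alpha_j)$ is attained -- converts the right-hand side into
\[
2\sum_{k=1}^{\lambda(\mathbf{u})} (-\gamma_k)\, u_k \bigl(n - u_k - 2\textstyle\sum_{j=1}^{k-1} u_j\bigr),
\]
which after division by $2$ is exactly the exponent appearing in the asserted formula. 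The main obstacle is the bookkeeping in the coupled $(Y_1, Y_4)$-block; once that coupling is resolved, the remaining calculations are elementary.
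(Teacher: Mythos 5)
Your proof is correct and reaches exactly the asserted exponent, but by a genuinely different route. The paper appeals to the Pfaffian/elementary-divisor formula $\lvert\fg:\stab_\fg(\omega)\rvert^{\nicefrac{1}{2}}=\bigl\Vert\bigcup_k\Pfaff_k(w)\bigr\Vert_\Ip$ from Section~\ref{sec:LieLattices}: it evaluates the alternating form $A_\xi$ on elementary matrices and decomposes $\fg$ into an orthogonal sum of a radical block, rank-$2$ blocks spanned by $E_{i,j},E_{j,i+t}$ (for $1\le i\le t$, $j>2t$) with Pfaffian $\Ip^{\alpha_i(\xi)}$, and rank-$3$ blocks spanned by $E_{i,j},E_{j,i+t},E_{i+t,j+t}$ (for $1\le i,j\le t$) with degree-$1$ Pfaffian $\Ip^{\min(\alpha_i(\xi),\alpha_j(\xi))}$, then reads off elementary divisors blockwise. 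You bypass the symplectic machinery and determine $\stab_\fg(\omega_{\overline{M_\xi}})$ directly as a congruence sublattice of $\fg$ from the three block conditions $Y_2M_\xi\in\Mat_{t,t}(\fo)$, $M_\xi Y_2\in\Mat_{t',t'}(\fo)$, $Y_4M_\xi-M_\xi Y_1\in\Mat_{t',t}(\fo)$, then factorise the index entry by entry. The two organisations regroup the same data: the paper's rank-$3$ block ties together the entries $(Y_1)_{ij}$, $(Y_2)_{ji}$, $(Y_4)_{ij}$, whereas you treat the $Y_2$-conditions separately from the $(Y_1,Y_4)$-coupling; after summing over $(i,j)$ both give $2\sum_{i,j\le t}\max(-\alpha_i,-\alpha_j)+2(t'-t)\sum_i(-\alpha_i)$, which the paper records equivalently as $-2\sum_{i=1}^t\alpha_i(\xi)(n-2i+1)$ before the same regrouping into the $\gamma_k$'s. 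Your argument is more elementary and closer to the definition of the stabiliser; the paper's stays inside the Pfaffian framework already set up for the integral formulas, so the same bookkeeping is reused throughout Sections~\ref{sec:ind-uniform-groups} and \ref{sec:examples-orbit-method}.
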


\begin{lem}\label{lem:orbitFormula}
  Let $\xi = (\mathbf{u},\boldsymbol{\gamma}) \in \Xi$ and put
  $N = N(\mathbf{u})$.  The $L$-orbit of $\overline{M_\xi}$ has the
  cardinality
 \begin{equation*}
   \lvert L. \overline{M_\xi} \rvert = \frac{\calV_q(t) \calV_q(n-t)}{\calV_q(t-N)
     \calV_q((n-t)-N)} 
   \prod_{i=1}^{\lambda(\mathbf{u})} \Bigl(\calV_q(u_i)^{-1} q^{-(n-N)\gamma_i
     u_i + \sum_{j=1}^{i-1} (\gamma_i-\gamma_j)u_iu_j} \Bigr). 
 \end{equation*}
\end{lem}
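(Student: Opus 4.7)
The plan is to apply the orbit–stabiliser theorem in the profinite setting. Normalise the Haar measure $\mu_L$ on $L = \GL_t(\fo) \times \GL_{n-t}(\fo)$ so that $\mu_L(L) = 1$; since $L$ acts continuously on the discrete set $\Mat_{n-t,t}(\ff/\fo)$, the stabiliser $\Stab_L(\overline{M_\xi})$ is open in~$L$, and $\lvert L.\overline{M_\xi}\rvert = \mu_L(\Stab_L(\overline{M_\xi}))^{-1}$. Since $\mu_L$ is the restriction of the additive Haar measure $\mu_{\mathrm{add}}$ on $\Mat_t(\fo)\times\Mat_{n-t}(\fo)$ divided by $\calV_q(t)\calV_q(n-t)$, the task reduces to computing $\mu_{\mathrm{add}}(\Stab_L(\overline{M_\xi}))$.

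First I would partition $\{1,\dots,t\}$ into consecutive blocks $I_1,\dots,I_k,I_\infty$ of sizes $u_1,\dots,u_k,t-N$, matching the level structure of $\alpha_j(\xi)$, and set $\gamma_\infty := 0$. Writing each $h \in \GL_{n-t}(\fo)$ in $2\times 2$ block form relative to $n-t = t + (n-2t)$ and refining its top-left block $h_{11}$ into $(k+1)\times(k+1)$ sub-blocks indexed by the partition (and decomposing $g \in \GL_t(\fo)$ analogously), the equation $h M_\xi \equiv M_\xi g \pmod{\fo}$ splits block-wise: the $(I_a,I_b)$-block gives $(h_{11})_{I_a,I_b}\pi^{\gamma_b} \equiv \pi^{\gamma_a} g_{I_a,I_b} \pmod{\fo}$, while the $I_b$-columns of $h_{21}$ (for $b\le k$) must lie in $\Ip^{-\gamma_b}\Mat_{n-2t,u_b}(\fo)$; the blocks $h_{12}$ and $h_{22}$ remain free. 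A case analysis shows that each off-diagonal pair $\bigl((h_{11})_{I_a,I_b},g_{I_a,I_b}\bigr)$ with $a \ne b$ determines a constrained coset of additive volume $q^{\gamma_{\min(a,b)} u_a u_b}$ (with exactly one of the two blocks free), while each diagonal pair with $a \le k$ yields coupled cosets of volume $q^{\gamma_a u_a^2}$.

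Next I would incorporate the invertibility requirements $g \in \GL_t(\fo)$ and $h \in \GL_{n-t}(\fo)$. After reduction modulo $\Ip$, all coset entries lying in $\Ip^{-\gamma_c} \subseteq \Ip$ vanish, forcing $g \bmod \Ip$ to be block lower triangular and $h_{11} \bmod \Ip$ to be block upper triangular with respect to the ordering $I_1,\dots,I_k,I_\infty$. After reordering rows and columns, $h \bmod \Ip$ attains a $2 \times 2$ block form whose upper-left block, of size $N \times N$, is block upper triangular with diagonal blocks of sizes $u_1,\dots,u_k$; whose lower-left block is zero; and whose lower-right block, of size $(n-t-N) \times (n-t-N)$, assembles the $(I_\infty,I_\infty)$-block of $h_{11}$, the $I_\infty$-columns of $h_{12}$, the $I_\infty$-columns of $h_{21}$, and $h_{22}$. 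Invertibility of $g \bmod \Ip$ and $h \bmod \Ip$ then decouples into: (i) $(h_{11})_{I_a,I_a} \in \GL_{u_a}(\fo)$ for each $a \le k$ (which via the diagonal coupling places $g_{I_a,I_a}$ in the same group); (ii) $g_{I_\infty,I_\infty} \in \GL_{t-N}(\fo)$; and (iii) the bottom-right composite block of $h$ lies in $\GL_{n-t-N}(\fo)$. Multiplying the additive volumes of the free and constrained factors yields
\[
  \mu_{\mathrm{add}}(\Stab_L(\overline{M_\xi})) = q^{E} \, \calV_q(t-N) \, \calV_q(n-t-N) \prod_{a=1}^k \calV_q(u_a),
\]
where $E = \sum_a \gamma_a u_a^2 + 2 \sum_{a<b\le k} \gamma_a u_a u_b + (n-2N)\sum_a \gamma_a u_a$.

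Dividing $\calV_q(t)\calV_q(n-t)$ by this product gives the claimed $\calV_q$-prefactor times $q^{-E}$. A direct grouping of $-E$ by the index $i$, using $\sum_{j\ne i} u_j = N - u_i$ to rewrite $-\gamma_i u_i(n - u_i - 2\sum_{j<i} u_j)$ as $-(n-N)\gamma_i u_i + \gamma_i u_i \sum_{j<i} u_j - \gamma_i u_i \sum_{j>i} u_j$, identifies $-E$ with $\sum_i \bigl[-(n-N)\gamma_i u_i + \sum_{j<i}(\gamma_i - \gamma_j) u_i u_j\bigr]$, completing the proof. The main obstacle is the decoupling step: one must verify carefully that the invertibility conditions on $g$ and $h$ split cleanly into the three factor groups described above without residual coupling among the free parameters, which requires the mod-$\Ip$ analysis of the block-triangular structure imposed by the coset constraints.
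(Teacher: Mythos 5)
Your proof is correct and follows essentially the same strategy as the paper: compute the additive Haar volume of $\Stab_L(\overline{M_\xi})$ by a block analysis of the congruence $hM_\xi \equiv M_\xi g$ and then apply the orbit--stabiliser relation. The only difference is granularity: the paper splits $g$ and $h$ into coarse $2\times 2$ blocks of sizes $N$ and $t-N$ (resp.\ $(n-t)-N$) and packages the $N\times N$ corner as a parahoric-type group $G(\xi)$, whereas you refine into $(k+1)\times(k+1)$ sub-blocks indexed by the distinct $\gamma$-levels and read off the volume directly, block by block; the two computations are equivalent and give the same formula. Two small inaccuracies worth noting: the parenthetical ``with exactly one of the two blocks free'' is only a parametrisation shortcut (both coordinates of each off-diagonal pair are constrained to a coset, and one solves for one in terms of the other), and in the description of the reassembled lower-right block of $h$ you want the $I_\infty$-\emph{rows} of $h_{12}$, not its columns; neither affects the volume count.
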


We postpone the proofs of the lemmata and explain first how
Theorem~\ref{thm:InductionMaxPara} can be deduced.

\begin{proof}[Proof of Theorem \ref{thm:InductionMaxPara}]
  For any composition $\mathbf{u}$, we set
  \[
  W_\mathbf{u} = \frac{\calV_q(t)
    \calV_q(n-t)}{\calV_q(t-N(\mathbf{u}))
    \calV_q((n-t)-N(\mathbf{u}))} \prod_{i=1}^{\lambda(\mathbf{u})}
  \calV_q(u_i)^{-1}.
  \]
  It follows from Proposition~\ref{pro:zeta-algebraic-formula} and
  from Lemmata~\ref{lem:indexFormula} and \ref{lem:orbitFormula} that
  $\rho = \Ind_{H^r}^{G^r}(\one_{B^r})$ satisfies
  \begin{align*}
    \zeta_\rho(s)  %
    & = q^{r t (n-t)} \sum_{\omega \in (\fg/\Ip)^\vee} \left( \lvert
      \fg : \stab_\fg(\omega) \rvert^{\nicefrac{1}{2}}
      \right)^{-1-s} \\
    &= q^{r t (n-t)} \!\!\sum_{\xi
      =(\mathbf{u},\boldsymbol{\gamma}) \in \Xi}  \left| L .
      \overline{M_\xi}\right| \prod_{i=1}^{\lambda(\mathbf{u})} q^{
      u_i \gamma_i(n - u_i - 2 \sum_{j=1}^{i-1} u_j) (1+s)}\\ 
    &= q^{r t (n-t)} \!\!\sum_{\xi
      =(\mathbf{u},\boldsymbol{\gamma}) \in \Xi} 
      W_{\mathbf{u}}\prod_{i=1}^{\lambda(\mathbf{u})}
      \Bigl(q^{N(\mathbf{u}) \gamma_i u_i - \gamma_i u^2_i -
      \sum_{j<i} (\gamma_i+\gamma_j)u_iu_j}  q^{u_i
      \gamma_i(n - u_i - 2 \sum_{j=1}^{i-1} u_j) s}\Bigr) \\ 
    &= q^{r t (n-t)} \!\!\sum_{\xi
      =(\mathbf{u},\boldsymbol{\gamma}) \in \Xi} 
      W_{\mathbf{u}}\prod_{i=1}^{\lambda(\mathbf{u})}
      q^{u_i \gamma_i(n - u_i - 2 \sum_{j=1}^{i-1}
      u_j) s}. 
  \end{align*}
  Let $\xi = (\mathbf{u},\boldsymbol{\gamma}) \in \Xi$. Since
  $\gamma_1 < \gamma_2 < \ldots < \gamma_{\lambda(u)} < 0$ is strictly
  increasing, we may write
  $\gamma_i = - \sum_{k = i}^{\lambda(\mathbf{u})} \beta_k$ for
  certain positive integers
  $\beta_1,\dots,\beta_{\lambda(\mathbf{u})} \in \N$.  Using this
  reparametrisation, we obtain
   \begin{align*}
     \zeta_\rho(s) 
     &= q^{r t (n-t)} \!\!\sum_{\xi =(\mathbf{u},\boldsymbol{\gamma})
       \in \Xi} W_{\mathbf{u}}
       \prod_{i=1}^{\lambda(\mathbf{u})} q^{u_i \gamma_i ( n - u_i -
       2 \sum_{j=1}^{i-1} u_j ) s}\\
     &= q^{r t (n-t)} \!\!\sum_{\substack{N \in \{0,\ldots,t \} \\  \mathbf{u} \in
     \comp(N)} } W_{\mathbf{u}}
     \sum_{\substack{\beta_1, \beta_2, \ldots, \\ \beta_{\lambda(\mathbf{u})}
     \in \N}}  \,
     \prod_{i=1}^{\lambda(\mathbf{u})} \Bigl( q^{- u_i
     \sum_{k=i}^{\lambda(\mathbf{u})}\beta_k (n - u_i - 2 \sum_{j<i}
     u_j) s} \Bigr) \\
     & = q^{r t (n-t)} \!\!\sum_{\substack{N \in \{ 0,\dots,t \}  \\
     \mathbf{u} \in \comp(N)} }  W_{\mathbf{u}}
     \sum_{\beta_1=1}^\infty \cdots \sum_{\beta_{\lambda(\mathbf{u})}=1}^\infty 
     \Bigl( q^{-\sum_{k=1}^{\lambda(\mathbf{u})} \beta_k
     \sum_{i=1}^{k}  u_i (n - u_i - 2 \sum_{j=1}^{i-1} u_j) s} \Bigr)\\
     &= q^{r t (n-t)} \!\!\sum_{\substack{N \in \{ 0,\ldots,t \} \\
     \mathbf{u} \in \comp(N)} }  W_{\mathbf{u}}
     \prod_{k=1}^{\lambda(\mathbf{u})} \frac{ q^{-\sum_{i=1}^{k}
     u_i(n-u_i-2\sum_{j=1}^{i-1} u_j) s} }{1-q^{- \sum_{i=1}^{k}
     u_i(n-u_i-2\sum_{j=1}^{i-1} u_j) s}}, 
  \end{align*}
  where the last step is derived using the geometric series.
  Finally, we write $x_k = \sum_{i=1}^k u_i$ for $k \in \{ 1, \ldots,
  \lambda(\mathbf{u}) \}$.  Then every composition $\mathbf{u}$ yields
  a set 
  \[
  J = J_\mathbf{u} = \{x_1, x_2, \dots, x_{\lambda(u)}\} \subseteq
  \{1, \ldots, t \}
  \]
  and every such set $J$ corresponds uniquely to a composition of the
  number $N = \max(J)$.  This proves the theorem since
  $\sum_{i=1}^{k} u_i(n-u_i-2\sum_{j=1}^{i-1} u_j) = x_k(n-x_k)$ and
  $W_{\mathbf{u}} = V_{n,t}(J)$.
\end{proof}

\begin{proof}[Proof of Lemma \ref{lem:indexFormula}]
  Let $\xi = (\mathbf{u},\boldsymbol{\gamma}) \in \Xi$.  As explained in
  Section~\ref{sec:LieLattices}, the index
  $\lvert\fg : \stab_\fg(\omega_{\overline{M_\xi}})\rvert$ can be
  computed via the Pfaffians of the symplectic form
  \[
  A_\xi \colon \fg \times \fg \to \ff, \qquad
  A_\xi(X,Y) = w_{M_\xi}([X,Y] + \fh) \coloneqq \Tr(M_\xi \widetilde{[X,Y]}),
  \]
  where $\widetilde{Z} \in \Mat_{t,t'}(\fo)$ denotes the projection of
  $Z \in \fg$ onto its upper right $t \times t'$-block.  The
  elementary $n\times n$-matrices $E_{i,j}$, with entry $1$ in
  position $(i,j)$ and entries $0$ elsewhere, form an $\fo$-basis
  of~$\fg$.  A short calculation yields
 \begin{equation*}
   A_\xi(E_{i_1,j_1},E_{i_2,j_2}) = \begin{cases}
     \pi^{\alpha_{i_1}(\xi)}  &\text{ if } j_1 = i_2 \text{, } j_2 =
     i_1 + t \text{ and }  i_1 \le t \\ 
     -\pi^{\alpha_{i_2}(\xi)} &\text{ if } j_2 = i_1 \text{, } j_1 =
     i_2 + t \text{ and } i_2 \le  t \\ 
     0 & \text{ otherwise }
   \end{cases}.
 \end{equation*}
 Thus $\fg$, equipped with the form~$A_\xi$, decomposes into an
 orthogonal direct sum of subspaces of three different types.

 \smallskip

 \noindent \textsl{Type 1.} For $i>t$ and $j \notin \{t+1,\ldots,2t\}$,
 the space $\fo E_{i,j}$ is contained in the radical of $A_\xi$, this means,
 $A_\xi(E_{i,j},X) = 0$ for all $X \in \fg$.

 \smallskip

 \noindent \textsl{Type 2.} For $1\le i \le t$ and $j > 2t$, the
 free $\fo$-module of rank~$2$ spanned by $E_{i,j}$ and $E_{j, i+t}$
 is orthogonal to all other elementary matrices.  The form $A_\xi$
 restricted to this space has the Pfaffian $\Ip^{\alpha_i(\xi)}$.
 For each $i$ with $1\le i \le t$ there are $(n-2t)$ possible
 choices for a corresponding index~$j$.

 \smallskip

 \noindent \textsl{Type 3.} For $1 \le i,j \le t$ the $\fo$-module
 of rank~$3$ spanned by $E_{i,j}$,$E_{j, i+t}$, and $E_{i+t,j+t}$ is
 orthogonal to all other elementary matrices.  The form $A_\xi$
 restricted to this space has
 the degree-$1$ Pfaffian $\Ip^{\min(\alpha_i(\xi),\alpha_j(\xi))}$.
       
 \smallskip

 The sequence of numbers $\alpha_i(\xi)$, $i \in \{1, \ldots, t \}$,
 is increasing.  Thus we can determine the maximal Pfaffian of $A_\xi$,
 and hence 
 \begin{equation*}
   \lvert\fg :
   \stab_\fg(\omega_{\overline{M_\xi}})\rvert^{\nicefrac{1}{2}} =
   q^{-\sum_{i=1}^t \alpha_i(\xi) (n- 2i + 1 ) } =
   q^{-\sum_{i=1}^{\lambda(\mathbf{u})} \gamma_i u_i(n- u_i
     -2\sum_{j<i}u_j)}. \qedhere
 \end{equation*}
\end{proof}

\begin{proof}[Proof of Lemma \ref{lem:orbitFormula}]
  Let $\xi = (\mathbf{u},\boldsymbol{\gamma}) \in \Xi$ and put
  $N = N(\mathbf{u})$.  We compute the stabiliser
  $\Stab_L(\overline{M_\xi})$ of
  $\overline{M_\xi} \in \Mat_{t',t}(\ff/\fo)$ and its volume with
  respect to the normalised additive Haar measure on
  $\Mat_{t,t}(\fo) \times \Mat_{t',t'}(\fo)$.  For simplicity we
  denote by $\vol$ the normalised additive Haar measure on any
  implicitly given $\fo$-lattice.
 
  Suppose $(g,h) \in L = \GL_t(\fo) \times \GL_{t'}(\fo)$ lies in the
  stabiliser of~$\overline{M_\xi}$.  Write
 \begin{equation*}
     g = \begin{pmatrix}
            A & C \\
            U & V \\
         \end{pmatrix} \quad \text{ and } \quad
     h = \begin{pmatrix}
            B & X \\
            D & Y \\
         \end{pmatrix} 
 \end{equation*}
 with 
 \begin{align*}
   & A,B \in \Mat_{N,N}(\fo), && C, U^\mathrm{tr} \in \Mat_{N, t-N}(\fo), &&
                                                                   D, X^\mathrm{tr} \in \Mat_{t'-N, N}(\fo), \\
   & V \in \Mat_{t-N, t-N}(\fo), && Y \in \Mat_{t'-N,t'-N}(\fo).
 \end{align*}
 Let $M_\xi^\circ$ denote the upper left $N\times N$-block of the
 matrix $M_\xi$.  The assumption
 $\overline{hM_\xi} = \overline{M_\xi g}$ is equivalent to the
 following three conditions
 \begin{enumerate}[ (i) ]
 \item\label{cond1} $BM_\xi^\circ - M_\xi^\circ A \equiv_\fo  0 $,
 \item\label{cond2} $DM_\xi^\circ \equiv_\fo 0$,
 \item\label{cond3} $M_\xi^\circ C \equiv_\fo 0$.
 \end{enumerate}
 Writing $D = (d_{ij})$, we observe that
 condition~\eqref{cond2} is equivalent to
 $d_{ij} \in \Ip^{-\alpha_j(\xi)}$ for $1 \le i \le t'-N$ and
 $1 \le j \le N$.  Thus the set $Z_2$ of all matrices
 $D \in \Mat_{t'-N, N}(\fo)$ that satisfy
 condition~\eqref{cond2} has volume
 \begin{equation*}
   \vol(Z_2) = \prod\nolimits_{j=1}^{N} q^{\alpha_j(\xi)(t'-N)} =
   \prod\nolimits_{i=1}^{\lambda(\mathbf{u})} q^{\gamma_i u_i (t'-N)}. 
 \end{equation*}
 
 Similarly, writing $C = (c_{ij})$, we see that
 condition~\eqref{cond3} means:
 $c_{ij} \in \Ip^{-\alpha_i(\xi)}$ for $1 \le i \le N$ and
 $ j \le t-N$.  Thus the volume of the set $Z_3$ of all matrices
 $C \in \Mat_{N, t-N}(\fo)$ that satisfy
 condition~\eqref{cond3} is
 \begin{equation*}
   \vol(Z_3) = \prod\nolimits_{i=1}^{N} q^{\alpha_i(\xi)(t-N)} =
   \prod\nolimits_{i=1}^{\lambda(\mathbf{u})} q^{\gamma_i u_i (t-N)}. 
 \end{equation*}
 Since $\alpha_i(\xi) < 0$ for $1 \le i \le N$, the matrices $C$ and
 $D$ vanish modulo~$\pi$.  Consequently, the square matrices
 $A,B,V, Y$ are invertible over~$\fo$.
 
 Now consider condition~\eqref{cond1}.  Writing
 $A = (a_{ij})$ and $B = (b_{ij})$, we see that
 condition~\eqref{cond1} is equivalent to
 \begin{equation} \label{equ:cond-iii}
   a_{ij} \pi^{\alpha_i(\xi)} \equiv_\fo b_{ij} \pi^{\alpha_j(\xi)}
   \qquad \text{ for } 1 \le i, j \le N. 
 \end{equation}
 Let $Z_1$ denote the set of all pairs
 $(A,B) \in \GL_N(\fo) \times \GL_N(\fo)$
 satisfying~\eqref{equ:cond-iii}.  We observe that, if
 $ (A,B) \in Z_1$, then $A$ is contained in the group 
 \begin{align*}
   G(\xi) & = \{ (z_{ij}) \in \GL_N(\fo) \mid z_{ij} \in
            \pi^{\alpha_j(\xi)-\alpha_i(\xi)} \fo \text{ for } i \le
            j\} \\
          & = 
            \begin{pmatrix}
              \GL_{u_1}(\fo) & \Ip^{\gamma_2-\gamma_1} & \cdots &
              \Ip^{\gamma_{\lambda(\mathbf{u})} - \gamma_1} \\
              \fo &  \GL_{u_2}(\fo) & \ddots& \vdots \\
              \vdots & \ddots & \ddots & \Ip^{\gamma_{\lambda(\mathbf{u})} -
                \gamma_{\lambda(u)-1}}\\
              \fo & \cdots & \fo & \GL_{u_{\lambda(\mathbf{u})}}(\fo)\\
            \end{pmatrix}.
 \end{align*}
 Moreover, for any $A \in G(\xi)$, there are matrices $B$ such that
 $(A,B) \in Z_1$ and each entry $b_{ij} \in \fo$ is uniquely
 determined modulo~$\pi^{-\alpha_j(\xi)}$.  We deduce that
 \begin{equation*}
   \vol(Z_1) = \vol(G(\xi)) \prod_{k=1}^N q^{\alpha_k(\xi)N} =
   \prod_{i=1}^{\lambda(\mathbf{u})}\Bigl( \vol(\GL_{u_i}(\fo))
   q^{N\gamma_iu_i} \prod_{j=i+1}^{\lambda(\mathbf{u})} q^{(\gamma_i -
     \gamma_j)u_iu_j}\Bigr).
 \end{equation*}
 Recalling that $\vol(\GL_{m}(\fo)) = \calV_q(m)$ for $m \in \N$, we
 combine the three volume computations to conclude that
 \begin{align*}
   \vol(\Stab_L(\overline{M_\xi})) %
   & = \vol(Z_1) \vol(Z_2) \vol(Z_3) \vol(\GL_{t-N}(\fo))
     \vol(\GL_{t'-N}(\fo)) \\ 
   & = \calV_q(t-N) \calV_q(t'-N) \prod_{i=1}^{\lambda(\mathbf{u})}
     \Bigl( \calV_q(u_i) q^{(t+t'-N) u_i\gamma_i +
     \sum_{j=i+1}^{\lambda(\mathbf{u})} (\gamma_i - \gamma_j)u_iu_j}
     \Bigr). 
 \end{align*}
 The claim now follows from the observation that the orbit length can
 be expressed in terms of the volume as follows:
 \begin{equation*}
   \lvert L . \overline{M_\xi} \rvert = \lvert L : \Stab_L(\overline{M_\xi}) \rvert =
   \frac{\calV_q(t) \calV_q(n-t)}{\vol(\Stab_L(\overline{M_\xi}))}. \qedhere
 \end{equation*}
\end{proof}


\subsection{Induction from maximal $(1,n)$-parabolic subgroups to
  $\GL_{n+1}^{dr}(\Delta)$} \label{sec:schiefkoerper} Let $\fo$ be a
compact discrete valuation ring of characteristic~$0$, residue
characteristic~$p$ and residue field cardinality~$q$.  Fix a
uniformiser $\pi$ so that the valuation ideal of $\fo$ takes the form
$\Ip = \pi \fo$.  Let $\ff$ denote the fraction field of $\fo$, a
finite extension of~$\Q_p$, and consider a central division algebra
$\fd$ of index $d$ over~$\ff$ so that $\dim_\ff \fd = d^2$.  It is
known that $\fd$ contains a unique maximal $\fo$-order~$\Delta$.  Up
to isomorphism, $\fd$ and $\Delta$ can be described explicitly, in
terms of the index $d$ and a second invariant $h$ satisfying
$1 \le h \le d$ and $\gcd(h,d)=1$; see~\cite[\S14]{Re03}.

Fix $n \in \N$ and set $G = \GL_{n+1}(\Delta)$.  For $r \in \N$, we
take interest in the principal congruence subgroup
\begin{equation*}
  G^{dr} = \ker\bigl( \GL_{n+1}(\Delta) \to \GL_{n+1}(\Delta/\pi^r\Delta) \bigr).
\end{equation*}
The maximal parabolic subgroup
\begin{equation*}
  H  =  
  \begin{pmatrix}[c|ccc]
    \GL_1(\Delta) & 0 & \cdots & 0\\ \hline
    \Delta & & & \\
    \vdots & & \GL_n(\Delta) & \\
    \Delta & & & \\
  \end{pmatrix}
  \le \GL_{n+1}(\Delta)
\end{equation*}
gives rise to a subgroup
$H^{dr} = H \cap G^{dr} \le_\mathrm{c} G^{dr}$.  The groups $G^{dr}$
and $H^{dr}$ are finitely generated torsion-free potent pro-$p$ groups
whenever $r \ge 2e$ for $p=2$ and $r \ge e (p-2)^{-1}$ for $p>2$,
where $e$ denotes the ramification index of~$\ff$ over~$\Q_p$; compare
\cite[Prop.~2.3]{AvKlOnVo13}.
  
\begin{pro} \label{pro:ind-max-par-div-ring} In the set-up described
  above and subject to $r \geq 2e$ for $p=2$ and $r \ge e (p-2)^{-1}$
  for $p>2$, the zeta function of the induced representation
  $\rho = \Ind_{H^{dr}}^{G^{dr}}(\one_{H^{dr}})$ is
  \begin{equation*}
    \zeta_\rho(s) =  q^{rnd^2} \frac{1  -
      q^{-dn(1+s)}}{1 - q^{-dns}}.      
  \end{equation*}
\end{pro}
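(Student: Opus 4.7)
The proof applies the Kirillov orbit method (Proposition~\ref{pro:zeta-algebraic-formula}) following the strategy of Theorem~\ref{thm:InductionMaxPara}, with the base ring $\fo$ replaced by the non-commutative division ring $\Delta$. Let $\fg = \gl_{n+1}(\Delta)$ be regarded as an $\fo$-Lie lattice of rank $(n+1)^2 d^2$, and let $\fh \subset \fg$ be the $\fo$-Lie sublattice corresponding to $H$, with quotient $\fg/\fh \cong \Mat_{1,n}(\Delta)$ of $\fo$-rank $m+1 = nd^2$. The condition $|\fg : \fh + [\fg,\fg]| < \infty$ is easily checked, and the assumption on $r$ ensures that $G^{dr} = \exp(\pi^r \fg)$ and $H^{dr} = \exp(\pi^r \fh)$ are potent pro-$p$ groups, so that Proposition~\ref{pro:zeta-algebraic-formula}, together with the standard rescaling by $\pi^r$, yields
\begin{equation*}
  \zeta_\rho(s) = q^{rnd^2} \sum_{\omega \in (\fg/\fh)^\vee} \bigl(|\fg : \stab_\fg(\omega)|^{1/2}\bigr)^{-1-s}.
\end{equation*}

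The Levi subgroup $L = \GL_1(\Delta) \times \GL_n(\Delta)$ acts on $(\fg/\fh)^\vee$ via $\overline{M} \mapsto \overline{hMg^{-1}}$. Identifying the dual with $\Mat_{n,1}(\fd/\Delta^*)$ via the reduced-trace pairing (with $\Delta^* = \Pi^{1-d}\Delta$ the inverse different), and using the isomorphism $\fd/\Delta^* \cong \fd/\Delta$ of $\Delta$-bimodules given by right multiplication by $\Pi^{d-1}$, the elementary divisor theorem over the non-commutative DVR $\Delta$ classifies orbits by the height $k \geq 0$ of a non-zero representative's annihilator $\Pi^k\Delta$. Using $|\Pi^{-k}\Delta/\Delta| = q^{dk}$, one obtains $|\calO_0| = 1$ and
\begin{equation*}
  |\calO_k| = q^{dkn} - q^{d(k-1)n} = (q^{dn} - 1)\,q^{d(k-1)n} \qquad \text{for } k \geq 1.
\end{equation*}

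The main technical step is the stabilizer-index formula $|\fg : \stab_\fg(\omega_k)|^{1/2} = q^{dkn}$ for $k \geq 1$, which extends the block decomposition of Lemma~\ref{lem:indexFormula} to the non-commutative setting. Taking $\omega_k$ to correspond to $\Pi^{-(k+d-1)}$ in the first slot of $\Mat_{n,1}(\fd/\Delta^*)$, one checks that the positions in $\fg$ outside row $1$ and column $2$ lie in the radical of the alternating form $A_{\omega_k}(X,Y) = \omega_k([X,Y])$, and that the form on the remaining $2n+1$ positions decomposes orthogonally into $n-1$ rank-$2$ $\Delta$-blocks (for position pairs $(1,l), (l,2)$ with $l = 3, \ldots, n+1$) and one rank-$3$ $\Delta$-block (positions $(1,1), (1,2), (2,2)$). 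The key observation is that the $\fo$-bilinear pairing $\Delta \times \Delta \to \ff/\fo$, $(u,v) \mapsto \psi(\mathrm{trd}(uv\Pi^{-(k+d-1)}))$, has radical exactly $\Pi^k\Delta$ on each side, since $u\Pi^{-(k+d-1)} \in \Delta^*$ if and only if $u \in \Pi^k\Delta$. Each rank-$2$ block contributes $q^{dk}$ to $|\fg : \stab_\fg(\omega_k)|^{1/2}$, and an analogous analysis of the rank-$3$ block (using the inner automorphism $a \mapsto \Pi^{-(k+d-1)}a\Pi^{k+d-1}$ of $\Delta$) contributes another $q^{dk}$, yielding the claimed $q^{dkn}$ in total.

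Assembling these pieces, the contribution of $\calO_k$ for $k \geq 1$ is $(q^{dn}-1)\,q^{d(k-1)n} \cdot q^{-dkn(1+s)} = (1 - q^{-dn})\,q^{-dkns}$, and summing the geometric series together with the $k = 0$ term gives
\begin{equation*}
  \zeta_\rho(s) = q^{rnd^2}\Bigl(1 + (1 - q^{-dn})\sum_{k=1}^\infty q^{-dkns}\Bigr) = q^{rnd^2}\,\frac{1 - q^{-dn(1+s)}}{1 - q^{-dns}}.
\end{equation*}
The main obstacle is the stabilizer computation: the $\Pi^{d-1}$-shift arising from the inverse different $\Delta^*$ must be tracked carefully, but in the end the $\Pi^{-(k+d-1)}$ appearing in the representative and the $\Pi^{1-d}$ in $\Delta^*$ combine to leave the effective $\fo$-valuation at precisely $k$, producing the clean analog of the commutative formula with $q$ replaced by $q^d$.
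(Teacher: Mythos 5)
Your proposal is correct and follows essentially the same route as the paper: apply Proposition~\ref{pro:zeta-integral-formula} (or equivalently Proposition~\ref{pro:zeta-algebraic-formula} with the $q^{rnd^2}$ rescaling), parametrise the $L$-orbits on the dual of $\fg/\fh \cong \Mat_{1,n}(\Delta)$, compute orbit sizes and stabiliser indices via the decomposition of the alternating form into one $3d^2\times 3d^2$ block and $n-1$ blocks of size $2d^2\times 2d^2$, and sum the resulting geometric series. The only genuine difference is stylistic: the paper computes the elementary divisors of the structure matrix $B_m$ of the reduced-trace form $\beta_m$ explicitly with respect to the $\fo$-basis $(\xi^i\Pi^j)$ of $\Delta$ (giving $\pi^{-m_0}$ with multiplicity $(m_1+1)d$ and $\pi^{-m_0+1}$ with multiplicity $(d-m_1-1)d$, where $m=m_0 d+m_1$), and then reads off the stabiliser index from the commutator matrix $\calR(\underline{x}_m)$; you instead work intrinsically with the $\Delta$-module structure and observe directly that the radical of the pairing $(u,v)\mapsto \mathrm{trd}(uv\Pi^{-(k+d-1)})\bmod\fo$ is $\Pi^k\Delta$, which yields the same index $q^{dk}$ per $\Delta$-block without choosing a basis. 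Your treatment of the rank-$3$ block (tracking the conjugation $a\mapsto\Pi^{-(k+d-1)}a\Pi^{k+d-1}$ to show the $(x,z)$-constraint cuts down by index $q^{dk}$) is a clean way to handle the non-commutativity that the paper handles via the explicit zero column/row in $\calB_0$; both give the same total $q^{dkn}$, and the final summation is identical.
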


\begin{proof}
  The division algebra $\fd$ contains a splitting field $\fe$ that is
  unramified of degree $d$ over $\ff$.  Thus $\fe = \ff(\xi)$, where
  $\xi$ is a primitive $(q^d-1)$th root of unity, and $\fe \vert \ff$
  is a cyclic Galois extension.  The ring of integers in $\fe$ is
  $\fo[\xi]$.  Furthermore there exists a uniformiser $\Pi \in \fd$ so
  that
\[
\Pi^d = \pi \qquad \text{and} \qquad \Pi x = \sigma(x) \Pi \qquad
\text{for all $x \in \fe$,}
\]
where $\sigma$ is the generator of $\mathrm{Gal}(\fe \vert
\ff)$ satisfying $\sigma(\xi) = \xi^{q^h}$.

In this way we obtain an $\ff$-basis $(\xi^i \Pi^j)_{0
  \le i,j \le d-1}$ for $\fd$ that is at the same time an
$\fo$-basis for $\Delta$:
\[
\Delta = \sum\nolimits_{i=0}^{d-1} \sum\nolimits_{j=0}^{d-1}
\fo\, \xi^i \Pi^j.
\]
The reduced trace $\tr_{\fd \vert \ff}(x)$ of an
element $x = \sum_{i=0}^{d-1} \sum_{j=0}^{d-1} a_{i,j}\, \xi^i \Pi^j$
satisfies
\[
\tr_{\fd \vert \ff}(x) = \Tr_{\fe \vert
  \ff} \left(\sum\nolimits_{i=0}^{d-1} a_{i,0}\, \xi^i
\right).
\]
Observe that the symmetric $\ff$-bilinear trace form
$\fe \times \fe \rightarrow \ff$, $(x,y)
\mapsto \Tr_{\fe \vert \ff}(xy)$ restricts to a
symmetric $\fo$-bilinear form $\fo[\xi] \times
\fo[\xi] \rightarrow \fo$ that is non-degenerate,
i.e.\ non-degenerate modulo $\Ip$ at the level of residue
fields.  Consequently, the $\ff$-bilinear pairing
\[
\Delta \times \fd \rightarrow \ff, \quad (x,y)
\mapsto \tr_{\fd \vert \ff}(xy)
\]
induces isomorphisms
\[
\Hom_\fo(\Delta,\ff) \cong \fd \qquad \text{and} \qquad
\Hom_\fo(\Delta,\ff/\fo) \cong \fd / \Pi^{-d+1} \Delta.
\]
For $m = m_0 d + m_1$ with $m_0 \ge 0$ and $0 \le m_1 \le d-1$, we
take a closer look at the related $\fo$-bilinear form
\[
\beta_m \colon \Delta \times \Delta \rightarrow \ff, \quad (x,y)
\mapsto \tr_{\fd \vert \ff}(x y \Pi^{-m}) = \pi^{-m_0} \tr_{\fd \vert
  \ff}(x y \Pi^{-m_1}).
\]
Its structure matrix with respect to the $\fo$-basis
$(\xi^i \Pi^j)_{0 \le i,j \le d-1}$ of $\Delta$, ordered as
\[
(1,\xi, \ldots, \xi^{d-1}, \Pi, \xi \Pi, \ldots, \xi^{d-1}
\Pi, \ldots \ldots, \Pi^{d-1}, \xi \Pi^{d-1}, \ldots, \xi^{d-1}
\Pi^{d-1}),
\]
takes the shape
\[
B_m = \pi^{-m_0}
\begin{pmatrix}[cccc|cccc]
  & & & A_0 & & & & \\
  & & A_1 & & & & & \\
  & \cdots & & & & & & \\
  A_{m_1} & & & & & & & \\ \hline
  & & & & & & & \pi A_{m_1+1} \\
  & & & & & & \pi A_{m_1+2} & \\  
  & & & & & \cdots & & \\
  & & & & \pi A_{d-1} & & & \\
\end{pmatrix}
\in \Mat_{d^2}(\fo),
\]
where $A_i \in \Mat_d(\fo)$ denotes the structure matrix of
the $\fo$-bilinear form 
\[
\fo[\xi] \times \fo[\xi] \rightarrow \fo, \quad (x,y) \mapsto \Tr_{\fe
  \vert \ff}(x \sigma^i(y))
\]
with respect to the $\fo$-basis $(1,\xi, \ldots, \xi^{d-1})$.  Since
each of the latter forms is non-degenerate, we infer that the
elementary divisors of $B_m$ are $\pi^{-m_0}$, with multiplicity
$(m_1+1)d$, and $\pi^{-m_0+1}$, with multiplicity $(d-m_1-1)d$.

\smallskip

We consider the $\fo$-Lie lattice $\fg = \gl_{n+1}(\Delta)$.  The
elementary matrices $E_{ij}$, with $1 \le i,j \le {n+1}$, form a $\Delta$-basis
of the left $\Delta$-module $\fg$.  Let $\fh$ denote the maximal
parabolic $\fo$-Lie sublattice of $\fg$ that is spanned as a
$\Delta$-submodule by those $E_{ij}$ satisfying $(i,j) = (1,1)$ or
$i \ge 2$.  For $m \in \N_0$ with $m \ge d-1$, we take interest in the
$\fo$-linear form
\[
w_m \colon \fg \rightarrow \ff, \quad z = \sum_{1 \le
  i,j \le n+1} z_{ij} E_{ij} \mapsto \tr_{\fd \vert
  \ff}(z_{1,n+1} \Pi^{-m})
\] 
and the induced $\fo$-linear form $\omega_m \colon
\fg \rightarrow \ff/\fo$, $z \mapsto w_m(z)
+ \fo$.  Clearly, $w_m(\fh) = \{0\} \subseteq
\fo$.  Furthermore, the co-adjoint action of the Levi
subgroup
\[
L = 
   \begin{pmatrix}[c|ccc]
     \GL_1(\Delta) & 0 & \cdots &  0\\ \hline
     0 & & & \\
     \vdots & & \GL_n(\Delta) & \\
     0 & & & \\
   \end{pmatrix} 
   \le \GL_{n+1}(\Delta),
\]
associated with $\fh$, on $\Hom_\fo(\fg,\ff)$ maps
$W = \{ w \in \Hom_\fo(\fg,\ff) \mid w(\fh) \subseteq \fo \}$ to
itself.  We observe that, modulo $\Hom_\fo(\fg,\fo)$, the $L$-orbits
on $W$ are parametrised by the elements $w_m$, $m \ge d-1$.
Furthermore, the volume of $W_m = L.w_m + \Hom_\fo(\fg,\fo)$ is equal
to
\[
\mu(W_m) = 
\begin{cases}
  (q^{dn}-1) q^{dn(m-d)} & \text{if $m \ge d$,} \\
  1 & \text{if $m=d-1$,}
\end{cases}
\]
where $\mu$ denotes the Haar measure normalised so that
$\mu(\Hom_\fo(\fg,\fo)) = 1$.

We choose a total order $\prec$ on index pairs $\{ (k,l) \mid 1 \le
k,l \le n+1 \}$ such that
\begin{align*}
  (1,1) & \prec (1,n+1) \prec (n+1,n+1) \\
        & \prec\; (1,2) \prec (2,n+1) \;\prec\; (1,3) \prec (3,n+1) \;\prec\;
          \ldots \;\prec\; (1,n)
          \prec (n,n+1) \\ 
        & \prec\; \text{any $(k,l)$ with $k \ne n+1$ and $l \ne 1$.}
\end{align*}
Let $\mathbf{Y}$ denote the $\fo$-basis of $\fg$
consisting of the basis elements
\[ 
\xi^i \Pi^j \, E_{kl}, \quad \text{where $0 \le i,j \le d-1$ and
  $1 \le k,l \le n+1$,}
\]
ordered according to
\[
(i,j,k,l) < (i',j',k',l') \quad \Leftrightarrow \quad
\begin{cases}
  & \text{$(k,l) \prec (k',l')$, or} \\
  & \text{$(k,l)=(k',l')$ and $j<j'$, or} \\
  & \text{$(k,l)=(k',l')$ and $j=j'$ and $i<i'$.}
\end{cases}
\]
Evaluating the commutator matrix $\calR(\mathbf{T})$ of the
$\fo$-Lie lattice $\fg$ with respect to $\mathbf{Y}$
at the point $\underline{x}_m$ corresponding to $w_m$, we obtain
\[
\calR(\underline{x}_m) =
\begin{pmatrix}[ccccc|cc]
\calB_0 &&&&&&\\
 & \calB_1 &&&&&\\
 & & \calB_2 &&&&\\
 & & & \ddots &&&\\
 & & & & \calB_{n-1} &&\\ \hline
 & & & & & \phantom{\ddots} \\
 & & & & & & \phantom{\ddots}
\end{pmatrix}
\in \Mat_{(n+1)^2d^2}(\fo),
\]
where
\[
\calB_0 =
\begin{pmatrix}
  0 & B_m & 0 \\
  -B_m^\mathrm{tr} & 0 & B_m \\
  0 & -B_m^\mathrm{tr} & 0 
\end{pmatrix}
\in \Mat_{3d^2}(\fo)
\]
and
\[
\calB_1 = \ldots = \calB_{n-1} =
\begin{pmatrix}
  0 & B_m \\
  -B_m^\mathrm{tr} & 0
\end{pmatrix}
\in \Mat_{2d^2}(\fo)
\]
each have elementary divisors $\pi^{-m_0}$, with multiplicity
$2(m_1+1)d$, and $\pi^{-m_0+1}$, with multiplicity $2(d-m_1-1)d$, the
remaining $d^2$ elementary divisors of $\calB_0$ being
$\pi^\infty = 0$.  This gives
\begin{align*}
  \lvert \fg : \stab_\fg(\omega_m) \rvert^{\nicefrac{1}{2}} & =
  \left\| \bigcup \{ \Pfaff_k(w_m) \mid 0 \le k \le \lfloor
    \nicefrac{d^2(n+1)^2}{2} \rfloor \} \right\|_\Ip \\
  & = \left\| \Pfaff_{nd^2}(w) \right\|_\Ip \\
  & = \lvert (\pi^{-m_0})^{(m_1+1)d n} (\pi^{-m_0+1})^{(d-m_1-1)d
    n} \rvert_\Ip \\
  & = q^{nd(m-(d-1))}.
\end{align*}

Thus, by Proposition \ref{pro:zeta-integral-formula}, the zeta function of
$\rho = \Ind_{H^{dr}}^{G^{dr}}(\mathbb{1}_{H^{dr}})$ is given by
\begin{align*}
  \zeta_\rho(s) & = q^{rnd^2} \left( \sum_{m=d-1}^\infty \mu(W_m)
    \left\| \bigcup \{ \Pfaff_k(w_m) \mid 0 \le k \le \lfloor
      \nicefrac{d^2(n+1)^2}{2} \rfloor \} \right\|_\Ip^{\, -1-s} \right) \\
  & = q^{rnd^2} \left( 1 + \sum_{m=d}^\infty \left(
      \left(q^{dn}-1\right) q^{dn(m-d)} \right)
    \left( q^{nd(m-(d-1))} \right)^{-1-s} \right) \\
  & = q^{rnd^2} \left( 1 + \left(1 - q^{-dn}\right)
    \frac{q^{-dns}}{1 - q^{-dns}} \right) \\
  & =  q^{rnd^2} \frac{1  -
      q^{-dn(1+s)}}{1 - q^{-dns}}. \qedhere
\end{align*}
\end{proof}

\addtocontents{toc}{\setcounter{tocdepth}{0}} 
\section*{Acknowledgements}
\addtocontents{toc}{\setcounter{tocdepth}{1}} 

We thank the anonymous referee for many careful and detailed comments.
In particular, we are grateful for the encouragement to clarify
whether the main result of~\cite{GoJaKl14} extends to zeta functions
of induced representations; this led us to prove and include
Proposition~\ref{pro:vanishing-result}.


\end{document}